\documentclass[12pt,leqno,oneside]{article} 

\usepackage{amsfonts}
\usepackage{latexsym}
\usepackage{amsmath}
\usepackage{amssymb}
\usepackage{textcomp}
\usepackage{theorem}
\usepackage[dvips]{graphicx,color}
\usepackage{makeidx}
\usepackage[all,2cell,dvips]{xy}
\setlength{\parskip}{0.1cm}

\topmargin -0.2in
\textheight 9in 
\textwidth 6.4in 
\oddsidemargin 0in 
\evensidemargin 0in

\numberwithin{equation}{section}

{\theorembodyfont{\sl} \newtheorem{Def}{\indent Definition}[section]
{\theorembodyfont{\sl} \newtheorem{Lemma}{\indent Lemma}[section]
{\theorembodyfont{\sl} \newtheorem{Th}{\indent Theorem }[section]
{\theorembodyfont{\sl} \newtheorem{Cor}{\indent Corollary}[section]
{\theorembodyfont{\sl} \newtheorem{Prop}{\indent Proposition}[section]
{\theorembodyfont{\rmfamily} \newtheorem{Rem}{\indent Remark}[section]}
{\theorembodyfont{\sl}\newtheorem{Conj}{\indent Conjecture}[section]
{\theorembodyfont{\rmfamily} \newtheorem{Assumption}{\indent Assumption}[section]}

\newenvironment{proof}[1][Proof]{\begin{trivlist}
\item[\hskip \labelsep {\bfseries #1}]}{\end{trivlist}}

\newcommand{\ZZ}{\mathbb{Z}}
\newcommand{\QQ}{\mathbb{Q}}

\newcommand{\RR}{\mathbb{R}}
\newcommand{\CC}{\mathbb{C}}

\newcommand{\PP}{\mathbb{P}}
\newcommand{\XX}{\mathbb{X}}

\newcommand{\s}{\hspace{3pt}}

\newcommand{\laa}{\langle}
\newcommand{\raa}{\rangle}
\newcommand{\bs}{\backslash}

\newcommand{\fin}{\hspace{4 pt} $\square$}

\newcommand{\ca}{\mathcal}
\newcommand{\mk}{\mathfrak}
\newcommand{\wt}{\widetilde}
\newcommand{\wh}{\widehat}
\newcommand{\ord}{\textrm{ord}}

\newcommand{\Norm}{\mathbf{N}}

\newcommand{\indicator}{1\hspace{-4pt}1}
\newcommand{\M}[4]
{\left(
\begin{array}{cc}
#1 & #2 \\
#3 & #4
\end{array} 
\right)}
\newcommand{\V}[2]
{\left(
\begin{array}{c}
#1\\
#2
\end{array} \right)}
\newcommand{\R}[2]{\left(
\begin{array}{cc}
#1 & #2\\
\end{array} 
\right)}

\newcommand{\mint}[2]{\times \hspace{-0.16in} \int_{#1}^{#2}} 

\DeclareMathOperator{\disc}{disc}
\DeclareMathOperator{\Gal}{Gal}
\DeclareMathOperator{\End}{End}
\DeclareMathOperator{\cond}{cond}
\DeclareMathOperator{\Div}{Div}
\DeclareMathOperator{\Deg}{deg}
\DeclareMathOperator{\sign}{sign}

\begin{document}
\title{Relationships between $p$-unit constructions for real quadratic fields}
\author{Hugo Chapdelaine}
\date{Avril, 2010}
\maketitle
\begin{abstract}
Let $K$ be a real quadratic field and let $p$ be a prime number which is inert in $K$. Let $K_p$ be the completion
of $K$ at $p$. In a previous paper, we constructed a $p$-adic invariant $u_C\in K_p^{\times}$,
and we proved a $p$-adic Kronecker limit formula relating $u_C$ to the first derivative at $s=0$
of a certain $p$-adic zeta function. By analogy with the $p$-adic Gross-Stark conjectures, 
we conjectured that $u_C$ is a $p$-unit in a suitable narrow ray class field of $K$.
Recently, Dasgupta has proposed an exact 
$p$-adic formula for the Gross-Stark units of an arbitrary totally real number
field. In our special setting, i.e., where one deals with a real quadratic number field, 
his construction
produces a $p$-adic invariant $u_D\in K_p^{\times}$. 
In this paper we show precise relationships between the $p$-adic invariants 
$u_C$ and $u_D$. In order to do so, we extend Dasgupta's construction of $u_D$
to a broader setting.
\end{abstract}
\tableofcontents
\section{Introduction}
Let $K$ be a real quadratic number field of discriminant $d_K$.
Let $p$ be a prime number which is inert in $K$ and denote by $K_p$ the completion 
of $K$ at $p$. Let $\ca{H}_p=\PP^1(\CC_p)\bs\PP^1(\QQ_p)$ be the
$p$-adic upper half plane endowed with the structure of a rigid analytic space. 
Note that $\ca{H}_p\cap K=K\bs\QQ$ since $p$ was assumed to be inert in $K$.
We denote the maximal order of $K$ by $\ca{O}_K=\ZZ+\omega\ZZ$ where $\omega=\frac{d_K+\sqrt{d_K}}{2}$ 
and its unique order of conductor $n\geq 1$ by 
$\ca{O}_n=\ZZ+n\omega\ZZ$. For $\tau\in\ca{H}_p\cap K$ we let $\Lambda_{\tau}=\ZZ+\tau\ZZ$ and 
$\ca{O}_{\tau}=\End_K(\Lambda_{\tau})=\{\lambda\in K:\lambda\Lambda_{\tau}\subseteq\Lambda_{\tau}\}$. 
Let $N$ be a positive square-free integer coprime to $d_K$. The square-free condition
holds only for the introduction since its simplifies the presentation. Now
make the following crucial assumption: 
there exists an ideal $\mk{N}$ such that $\ca{O}_K/\mk{N}\simeq\ZZ/N\ZZ$. 
The last condition is equivalent to saying that $N=\prod_{i=1}^r l_i$ where
each $l_i$ is a prime number which splits in $K$, the $l_i$'s being distinct. The latter assumption is
often called the \textit{Heegner hypothesis} by analogy with the construction of Heegner points
on modular elliptic curves (see for example Chap. 3 of \cite{Dar2}). 
Now choose a positive integer $n$ which is coprime to $pNd_K$. Let 
$\alpha(z)$ be a modular unit for the congruence group $\Gamma_0(N)$ which has no zeros
nor poles on the set $\Gamma_0(N)\{\infty\}$ where $\infty$ corresponds to the cusp
$\frac{1}{0}$ (for an example of such modular unit see equation \eqref{beate} in Appendix \ref{goeland}). 
Let $\tau\in\ca{H}_p\cap K$ be such that $\ca{O}_{\tau}=\ca{O}_{N\tau}
=\ca{O}_n$. In \cite{Dar-Das}, Darmon and Dasgupta constructed a $p$-adic invariant $u_{DD}(\alpha,\tau)\in K_p^{\times}$.
They conjectured that $u_{DD}(\alpha,\tau)$ is a global $p$-unit in $K(\ca{O}_n\infty)$, 
the \textit{narrow ring class field} of $K$ of
conductor $n$. From now on we assume that the modular unit $\alpha(z)$ is fixed so we omit it 
from the notation and therefore we write $u_{DD}(\tau)$ instead of $u_{DD}(\alpha,\tau)$.

Conjecturally, the construction mentioned above produces $p$-units in the \textit{ring class fields}
of $K$. It is then natural to try to extend it to the larger 
setting of \textit{ray class fields} of $K$.
This was accomplished in the PhD thesis of the author, see \cite{Ch_T} and \cite{Ch1}. 
Let us explain very briefly what kind of units we expect to construct in this more general setting. 
Let $f$ and $n$ be fixed positive integers which are prime to $pNd_K$. Let $\{\beta_{r}(z)\}_{r\in\ZZ/f\ZZ}$ be a
family of modular units associated to $\alpha(z)$ (see Appendix \ref{goeland} 
for more details). By definition, $\beta_r(z)$ has no zeros nor poles 
on the set $\Gamma_0(fN)\{\infty\}$ and moreover one has that 
\begin{align}\label{fromulaa}
\prod_{r=0}^{f-1}\beta_r(z)=\alpha(z).
\end{align}
In \cite{Ch1}, we proposed a construction of a $p$-adic invariant 
$u_C(\beta_r,\tau)\in K_p^{\times}$ for certain pairs 
$(r,\tau)\in\ZZ/f\ZZ\times(\ca{H}_p\cap K)$ such that $\ca{O}_{\tau}=\ca{O}_{N\tau}=\ca{O}_n$
(See Section \ref{def_invariants} for the definition of $u_C(r,\tau)$). 
From now on we simply denote $u_C(\beta_r,\tau)$ by $u_C(r,\tau)$. 
The elements $u_C(r,\tau)$ are conjectured to be global $p$-units in $K(\mk{f}_n\infty)^{\laa\sigma_{\wp_n}\raa}$ 
where $\mk{f}_n=f\ca{O}_n$, $\wp_n=p\ca{O}_n$, $K(\mk{f}_n\infty)$ 
is the \textit{narrow extended class field of conductor $\mk{f}_n$ of $K$},
and $K(\mk{f}_n\infty)^{\laa\sigma_{\wp_n}\raa}$ is the subfield of $K(\mk{f}_n\infty)$
which is fixed by the Frobenius $\sigma_{\wp_n}$ at $\wp_n$. 
(For the precise definition of $K(\mk{f}_n\infty)$ see Definition \ref{affine}). 
In the case where $n=1$, so
that $\mk{f}_1=f\ca{O}_K$, the abelian extension $K(\mk{f}_1\infty)$ is nothing else than the
usual narrow ray class field of $K$ of conductor $f$. Note that 
$K(\ca{O}_n\infty)\subseteq K(\mk{f}_n\infty)^{\laa\sigma_{\wp_n}\raa}$ and that
$K(\mk{f}_n\infty)^{\laa\sigma_{\wp_n}\raa}$ corresponds to the largest subfield of $K(\mk{f}_n\infty)$
for which $\wp_n$ splits completely. Our construction of $u_C(r,\tau)$ may be viewed as a natural generalization of
$u_{DD}(\tau)$ in the following sense:
\begin{enumerate}
\item If $f=1$ then $u_{C}(0,\tau)=u_{DD}(\tau)$.
\item Moreover, if one sets $n=f$ then
\begin{align}\label{Norm}
\prod_{r=0}^{f-1}u_C(r,\tau)=u_{DD}(\tau).
\end{align}
\end{enumerate}
Note that $(2)$ is equivalent to $(1)$ in the case where $f=n=1$. The identity \eqref{Norm}
is a direct consequence of the identity \eqref{fromulaa}. See Appendix \ref{goeland} (and also Proposition \ref{ano_way})
where this is explained in greater details.

We want to say here a few words about the method that was used to construct
the $p$-adic invariants $u_C$ and $u_{DD}$. 
One of the key feature of the method is to use
periods of a family of Eisenstein series which varies in the weight. If 
one renormalizes these Eisenstein series in order to clear the 
\lq\lq transcendental period\rq\rq (a certain power of $2\pi i$ which depends on the weight) 
one obtains rational numbers.
One of the breakthrough that appeared in \cite{Dar-Das} was to realize that these
rational periods can be \lq\lq packaged\rq\rq\; in a certain way in order to construct 
a partial modular symbol of $\ZZ$-valued measures on $\XX$. Here $\XX=(\ZZ_p\times\ZZ_p)\bs 
(p\ZZ_p\times p\ZZ_p)$ is the set of primitive vectors of $\ZZ_p^2$. The invariants
 $u_{DD}(\tau)$ and $u_C(r,\tau)$ are then defined as a certain $p$-adic integral of such a
measure on the space $\XX$. 
Many numerical examples which support the algebraicity of 
$u_{DD}(\tau)$ and $u_C(r,\tau)$ can be found in \cite{Das2} and \cite{Ch3}. 
Note that it is desirable to have a theory which allows $N$ to be non-square free since 
in all the numerical examples which appear in the two papers above one has $N=4$.

On the other hand well known conjectures of Gross and Stark
relate special values of abelian $L$-functions to global units. Let $F$ be a totally real number field. 
In the seventies, Stark formulated a series of conjectures relating 
the special values at $s=0$ of Archimedean abelian $L$-functions of $F$ to global units in $F^{ab}$.
Then in the early eighties (see \cite{Gr1}), Gross formulated a $p$-adic 
analogue of Stark's conjecture by replacing Archimedean abelian $L$-functions of $F$
by $p$-adic abelian $L$-functions of $F$, and global units in $F^{ab}$ by global $p$-units in $F^{ab}$. 
A few years later, Gross made a refinement of the previous conjecture. This refined
conjecture will be referred as the \lq\lq Strong Gross conjecture\rq\rq (for a precise 
statement see Conjecture \ref{Gross_strong} below). It turns out that
the Strong Gross Conjecture is closely related to the Darmon-Dasgupta 
construction explained above and therefore we would like to recall it.
In order to do so we need to set up some notation.

Let $\mk{f}$ be an integral ideal of
$F$ and let $M=F(\mk{f}\infty)$ be the narrow ray class field of $F$ of conductor $\mk{f}$. 
Let $\mk{p}$ be a prime of 
$F$ above $p$ that does not ramify in $M$. 
Let $S$ be a finite set of places of $F$ containing all the infinite places
of $F$, the prime $\mk{p}$ and the primes which ramify in $M/F$. Now we 
make following crucial assumption on $S$:
\begin{Assumption}\label{ass_1}
The only place of $S$ (Archimedean or non-Archimedean) that splits completely in $M$ is $\mk{p}$.
\end{Assumption}
In particular, because of this assumption on $S$, $M$ has to be a totally imaginary number field.
For an ideal $\mk{a}$ of $F$ we denote by $\sigma_{\mk{a}}\in \Gal(M/F)$ the 
Frobenius at $\mk{a}$. For any $\sigma\in \Gal(M/F)$ let
\begin{align}\label{rabbit}
\zeta_S(M/F,\sigma,s):=\sum_{\substack{(\mk{a},S)=1\\ \sigma_{\mk{a}}=\sigma}}\frac{1}{\Norm(\mk{a})^s}
=\left(1-\frac{1}{\Norm(\mk{p})^s}\right)\zeta_R(M/F,\sigma,s),
\end{align}
where $R=S\bs\{\mk{p}\}$. The first 
summations (resp. the summation which defines $\zeta_R(\sigma,s)$) is taken over all integral ideals of $F$ coprime 
to $S$ (resp. coprime to $R$). The second equality follows from Assumption \ref{ass_1}
and implies that $ord_{s=0}\zeta_S(\sigma,s)=1$. It was proved by Siegel and Klingen that
the special values at negative integers of 
$\zeta_R(\sigma,s)$ are \textit{rational numbers} (see \cite{Kli} and \cite{Sie3}). This key 
fact is the starting point for the $p$-adic Gross-Stark conjecture.

In order to state the version of Gross's conjecture which is convenient in our context, 
we need to introduce one more
set of places of $F$ in order to \lq\lq regularize\rq\rq the special values at negative integers
of $\zeta_{R}(M/F,\sigma,s)$. Let $T$ be a finite set of places of $F$ which is disjoint from $S$.
The role played by the set $T$ here is similar to the role played by the integer $N$
in the Darmon-Dasgupta construction. For each complex number $s$, consider the group ring element
\begin{align}\label{talus}
\prod_{\eta_i\in T}
(1-(\Norm\eta_i)^{1-s}[\sigma_{\eta_i}])=\sum_{\mk{d}|\mk{N}}(-1)^{\wt{\mk{e}}(\mk{d})}
\Norm(\mk{d})^{1-s}[\sigma_\mk{d}]\in\CC[\Gal(M/F)],
\end{align}
where $\mk{N}=\prod_{\eta_i\in T}\eta_i$ and $\wt{\mk{e}}(\mk{d})=\mbox{{$\#$ of prime divisors of $\mk{d}$}}$. 
For $\sigma\in \Gal(M/F)$, define the partial zeta function
associated to the sets $S$ and $T$ by the group ring equation
\begin{align}\label{protugal}
\zeta_{S,T}(M/F,\sigma,s)=\sum_{\mk{d}|\mk{N}}(-1)^{\wt{\mk{e}}(\mk{d})}\Norm(\mk{d})^{1-s}\zeta_{S}(M/F,\sigma\sigma_{\mk{d}}^{-1},s).
\end{align}

In order to force the integrality at $s=0$,
Gross introduced the following assumption
\begin{Assumption}\label{ass_inter}
The set $T$ contains at least two primes of different residue characteristic or at least one prime
$\eta$ ($\Norm\eta=l$) with absolute ramification of degree at most $l-2$.
\end{Assumption}

In Appendix \ref{appendix1} the interested reader may find some key properties of special 
values of abelian $L$-functions at negative integers attached to $F$ which 
motivate Assumption \ref{ass_inter}.

The Strong Gross Conjecture predicts the existence of a special kind of $\mk{p}$-units in $M$
for which their $\mk{p}$-adic valuation can be related to the value $\zeta_{R,T}(M/F,\sigma,0)\in\ZZ$.
Define the following subgroup of the group of $\mk{p}$-units of $M$:
\begin{align*}
U_{\mk{p}}:=\{x\in M^{\times}:|x|_{\nu}=1\s\textrm{for all $\nu\nmid\mk{p}$}\s\}.
\end{align*} 
Here $\nu$ ranges over all \textit{finite and infinite} places of $M$. 
An element of $U_{\mk{p}}$ will be called a \textit{strong $\mk{p}$-unit}. This terminology is
justified for the following two reasons: First of all, an element $u\in U_{\mk{p}}$ is necessarily a 
global $p$-unit, i.e., $u\in\ca{O}_M[\frac{1}{p}]^{\times}$. But it is more (stronger) than just being a $p$-unit
since for all complex embeddings $\tau:M\rightarrow\CC$ one has that $|u^{\tau}|=1$, i.e.,
the image of $u$ by any complex embedding lies on the unit circle. There is also a linguistic
reason behind this terminology: If one translates from English to German the word
\lq\lq strong\rq\rq, then one gets the word \lq\lq stark\rq\rq\s and it is expected that these
strong $p$-units are related to the Gross-Stark $p$-units.

For any finite abelian extension $L$ of $F$ which contains $M$ and which is unramified outside 
$S$ we let
\begin{align*}
rec_{\mk{p}}^L:F_{\mk{p}}^{\times}\rightarrow\mathbb{A}_{F}^{\times}\rightarrow \Gal(L/F),
\end{align*} 
denote the reciprocity map given by local class field theory. From $M\subseteq M_{\mk{P}}\simeq F_{\mk{p}}$
we may evaluate $rec_{\mk{p}}^L$ on any element of $M^{\times}$; the image will be contained in $\Gal(L/M)$.

We can now give the precise statement of the strong Gross conjecture 
(see Conjectures 7.4 and 7.6 of \cite{Gr2}). Here the use of the adjective strong emphasizes the fact  
Conjecture \ref{Gross_strong} below is a strengthening of Conjecture 3.13 of \cite{Gr1}. 
\begin{Conj}{(Strong Gross Conjecture)}\label{Gross_strong}
Let $S$ and $T$ be two finite sets of places of $F$ which satisfy Assumptions \ref{ass_1} and
\ref{ass_inter} and let $R=S\bs\{\mk{p}\}$. 
For every prime $\mk{P}$ of $M$ above $\mk{p}$, there exists a unique strong $\mk{p}$-unit 
$u_T\in U_{\mk{p}}$ such that $u_T\equiv 1\pmod{T}$ and such that 
for all $\sigma\in \Gal(M/F)$ one has
\begin{align}\label{pierre1}
\zeta_{R,T}(M/F,\sigma,0)=\ord_{\mk{P}}(u_T^{\sigma}).
\end{align}
Moreover, for all finite abelian extension $L$ of $F$ which contains $M$ and which is unramified outside
$S$ one has
\begin{align}\label{pierre2}
rec_{\mk{p}}^L(u_T^{\sigma})=\prod_{\substack{\tau\in \Gal(L/F)\\ \tau|_{H}=\sigma^{-1}}}
\tau^{\zeta_{S,T}(L/F,\tau^{-1},0)}\in \Gal(L/M).
\end{align}
\end{Conj}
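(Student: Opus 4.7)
The plan is to construct a candidate strong $\mk{p}$-unit $u_T$ explicitly and then verify both \eqref{pierre1} and \eqref{pierre2}. The natural candidate is a $p$-adic analytic invariant of the type surveyed in this introduction: in the real quadratic case one takes $u_T$ to be (a suitable power of) $u_{DD}(\tau)$, while for a general totally real $F$ one uses Dasgupta's multiplicative integral formula built from periods of Eisenstein series varying in weight. Uniqueness of $u_T$ is the easiest part and comes for free from Assumption \ref{ass_1} together with Assumption \ref{ass_inter}: the kernel of the valuation map $U_{\mk{p}}\to\prod_{\mk{P}\mid\mk{p}}\ZZ$ consists of roots of unity of $M$, and those congruent to $1\pmod{T}$ are forced to be trivial by the choice of $T$.

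Next I would address the valuation identity \eqref{pierre1}. Since $\zeta_{R,T}(M/F,\sigma,0)\in\ZZ$ by Siegel-Klingen together with the $T$-smoothing (cf.\ Appendix \ref{appendix1}), this is an integral statement that can be tested $\mk{p}$-adically. The approach due to Gross expresses $\ord_{\mk{P}}(u_T^{\sigma})\cdot\log_p\Norm(\mk{p})$ as the leading term at $s=0$ of $\zeta_{S,T}(M/F,\sigma,s)$ via \eqref{rabbit}, and then matches it against the first derivative of the Deligne-Ribet $p$-adic $L$-function. Thus \eqref{pierre1} is reduced to the original Gross-Stark conjecture on derivatives of $p$-adic $L$-functions, which I would attack by deforming an Eisenstein series on $F$ of weight one in a Hida family of Hilbert cusp forms and extracting, \`a la Ribet and Greenberg-Stevens, a cohomology class whose $\mk{p}$-adic valuation recovers the desired derivative.

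The harder step is the reciprocity formula \eqref{pierre2}, which pins down the global unit rather than just its valuation. I would work with the same Hida family, now allowing $L$ to vary among finite abelian extensions of $F$ containing $M$ that are unramified outside $S$. The associated two-dimensional Galois representation degenerates to a reducible representation at the Eisenstein point, and its first-order deformation, restricted to a decomposition group at $\mk{p}$, yields via local Tate duality a class in $H^1(G_{F,S},\QQ_p(1))$ whose image under $rec_{\mk{p}}^L$ matches the right-hand side of \eqref{pierre2}. Identifying this cohomology class with the Kummer class of $u_T$ is the crux of the argument, and for $F=\QQ$ it reduces to the explicit reciprocity laws underlying the Gross-Koblitz formula.

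The principal obstacle is precisely this last identification. The global unit produced by the Eisenstein deformation method is a priori determined only up to a universal norm from the cyclotomic $\ZZ_p$-extension, and showing that it lies in the \emph{strong} $\mk{p}$-unit subgroup $U_{\mk{p}}$ rather than the full group of $p$-units, let alone that it coincides on the nose with $u_T$, requires an independent input. The most promising such input is a direct comparison with an analytic $p$-adic construction such as $u_{DD}$ or $u_C$; this is exactly where the compatibility results established in the body of the paper become essential, as they allow one to transfer information from the more flexible invariants $u_C(r,\tau)$ down to the ring class field invariant $u_{DD}(\tau)$ via the norm relation \eqref{Norm}.
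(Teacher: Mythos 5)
This statement is a \emph{conjecture}, not a theorem: the paper labels it ``Strong Gross Conjecture'' and attributes it to Gross (Conjectures~7.4 and~7.6 of \cite{Gr2}); it is stated as background and no proof of it is given or claimed anywhere in the paper. So there is no proof in the paper against which your proposal can be compared. You have treated an open conjecture as though it were a result to be established.

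Beyond that, your sketch has a fundamental circularity. You propose to take as the candidate unit $u_T$ ``a suitable power of $u_{DD}(\tau)$'' or Dasgupta's $u_D$. But the paper is entirely about the fact that the algebraicity of $u_{DD}$, $u_C$, and $u_D$, and their identification with Gross--Stark units, is itself conjectural (Conjectures~\ref{u_H_conj} and~\ref{alg_conj}). Using these constructions to ``prove'' Conjecture~\ref{Gross_strong} assumes exactly what one is trying to prove. Likewise, the appeal to the norm relation \eqref{Norm} and the $u_C \leftrightarrow u_D$ compatibilities established in the body of the paper only transfers information \emph{between} the conjectural invariants; it does not show that any of them is a global unit satisfying \eqref{pierre1} and \eqref{pierre2}. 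The remaining ingredients you list (Hida deformations of Eisenstein series, Greenberg--Stevens, identification of a Galois cohomology class with a Kummer class) correctly name the circle of ideas used in the literature on the Gross--Stark valuation formula, but at the level of a one-paragraph outline they do not constitute a proof, and you yourself flag the crucial identification step as unresolved. In short: the conjecture is not proved in the paper, and the outline does not fill that gap.
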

We remark that
\begin{align*}
\sum_{\substack{\tau\in \Gal(L/F)\\ \tau|_{H}=\sigma^{-1}}}\zeta_{S,T}(L/F,\tau,0)=\zeta_{S,T}(M/F,\sigma^{-1},0)=0,
\end{align*}
where the second equality follows from the presence of the Euler factor in \eqref{rabbit}.
So indeed the right hand side of \eqref{pierre2} lies in $\Gal(L/M)$. 

The condition $u_T\in U_{\mk{p}}$ together with \eqref{pierre1} specify the valuation of $u_T$ 
at all places of $F$ and therefore determine $u_T$ uniquely up to a root of
unity in $L$. Thanks to Assumption \ref{ass_inter} on the set $T$, we see that 
the additional condition $u_T\equiv 1\pmod{T}$ guarantees the uniqueness of $u_T$. 
We call $u_T$
the \textit{Gross-Stark unit} for the data $(S,T,M,\mk{P})$. 

Recently, in \cite{Das3}, Dasgupta has proposed an exact $\mk{p}$-adic formula for the Gross-Stark unit
$u_T$ which can be viewed as a refinement of the identity \eqref{pierre2} (See Appendix \ref{appendix1} 
where this is explained in greater details). 
One of the key ideas of his approach
is to replace the special values of partial zeta functions of $F$ 
by special values of \textit{Shintani Zeta functions}. 
In order to control the denominator
of these special values at $s=0$
he makes an additional assumption on the set $T$:
\begin{Assumption}\label{ass_2}
Assume that no prime of $S$ has the same residue characteristic as any prime of $T$
and that no two primes in $T$ have the same residue characteristic. 
\end{Assumption}

Dasgupta considers special values at $s=0$ of \lq\lq Shintani zeta functions\rq\rq\s 
which depend on the sets $S$, $T$ and on a choice of a fundamental domain for the action 
of $\ca{O}_K(\mk{f}\infty)^{\times}$ on the totally positive quadrant $\RR_{>0}^n$. Here $\ca{O}_K(\mk{f}\infty)^{\times}$
corresponds to the group of totally positive units of $\ca{O}_K$ which are congruent to one modulo
$\mk{f}$. For every ideal $\mk{a}$ of $F$ which is coprime to $S$ he uses these special values 
to construct a $\ZZ$-valued measure $\mu(\mathcal{D},\mk{a})$ 
on $\ca{O}_{F_\mk{p}}$. He then defines a $p$-adic invariant 
\begin{align}\label{turtle}
u_D(\mk{a},\mk{f})\in F_{\mk{p}}^{\times}
\end{align} 
as a certain multiplicative $p$-adic integral (defined as a limit of Riemann products)
on the space $O:=\ca{O}_{F_\mk{p}}\bs \mk{p}^e\ca{O}_{F_\mk{p}}$ of
the measure $\mu(\mathcal{D},\mk{a})$ against the identity function on $O$. 
Here $e\geq 1$ is the smallest integer such that $\mk{p}^e=(\pi)$
where $\pi$ is a totally positive element of $F$ such that $\pi\equiv 1\pmod{\mk{f}}$.
He then shows that under some technical conditions (which are always satisfied 
when $F$ is a real quadratic field!) that his $p$-adic invariant $u_{D}(\mk{a},\mk{f})$
does not depend on the choice of the Shintani domain $\mathcal{D}$ and that 
it only depends on $\sigma_\mk{a}$ rather than $\mk{a}$ itself. The previous fact is rather
remarkable since the measure $\mu(\ca{D},\mk{a})$ really depends on the Shintani 
domain and the ideal itself. So the process of integrating the measure $\mu(\ca{D},\mk{a})$ may be viewed as
a way of removing the dependence on the fundamental domain and relaxing the dependence on $\mk{a}$. 
Let us choose a prime $\mk{P}$ of $M$ above $\mk{p}$ so that $u_D(\mk{a},\mk{f})$ may be viewed as an element
of $M_{\mk{P}}\simeq F_{\mk{p}}$. Then Dasgupta conjectures
that $u_D(\mk{a},\mk{f})$ is equal to the Gross-Stark unit $u_T^{\sigma_{\mk{a}}}\in M_{\mk{P}}$ 
for the set of data $(S,T,M,\mk{P})$. We will not say more about his construction and we encourage the 
interested reader to look at \cite{Das3} for more details.

Let us go back to our original setting where $F=K=\QQ(\sqrt{d_K})$ is a real quadratic field.
Let $(p,f,N)$ be a triple as in the first paragraph of the introduction and let 
$\wp=p\ca{O}_K$ and $\mk{f}=f\ca{O}_K$. Choose a \textit{splitting 
of $N$}, i.e., a factorization of the form $N\ca{O}_K=\mk{N}\mk{N}^{\sigma}$ where $\mk{N}$ is 
an $\ca{O}_K$-ideal. Such a splitting exists because of the Heegner hypothesis. Let
\begin{enumerate}
\item[$(i)$] $S=\{\nu:\nu|\mk{f}\s\mbox{or}\s\nu|\infty\}\cup\{\wp\}$,
\item[$(ii)$] $T=\{\nu:\nu|\mk{N}\}$,
\end{enumerate}
where $\nu$ ranges over all places of $K$. We thus see that the triple $(p,f,\mk{N})$ 
encodes the same data as the triple $(\wp,S,T)$. Note that the set $S$ and $T$ satisfy the 
previous three assumptions.
Let $L=K(\mk{f}\infty)^{\laa\sigma_{\wp}\raa}$ and let $\mk{a}$ be an integral 
ideal coprime to $S$. Let us fix a prime $\mk{P}$ of $L$ above $\wp$. 
Then Dasgupta's construction
produces a $p$-adic invariant $u_{D}(\mk{a},\mk{f})$ which is conjectured to be equal to $u_T^{\sigma_{\mk{a}}}$ where
$u_T$ is the Gross-Stark unit for the set of data $(S,T,L,\mk{P})$. On the other hand, the author's construction
provides a $p$-adic invariant $u_C(r,\tau)$ which is also conjectured to be a strong $p$-unit in $L$.

We thus have two $p$-adic invariants $u_C$ and $u_D$ which are conjectured to be strong $p$-units
in $L$. Moreover, in each case, one has a conjectural \lq\lq analytic\rq\rq\; description of
how $\Gal(L/K)$ acts on $u_C$ and $u_D$. In each case, this analytic description 
will be referred as the Shimura reciprocity law.
(For the precise definitions of $u_C$ and $u_D$ and their respective Shimura reciprocity law, 
see Section \ref{def_invariants}). The main goal of this paper is to give precise relations 
between the $p$-adic invariants $u_C$ and $u_D$. Our main theorem is the following:
\begin{Th}\label{vag_th}
The invariant $u_C^{\varphi(f)}$ (relative to the order $\ca{O}_K$) 
belongs to the group generated by the $u_D$'s (relative to the order $\ca{O}_K$) where $\varphi$ 
corresponds to the Euler function. The invariant 
$u_D^{12}$ (relative to the order $\ca{O}_K$) belongs to the group generated by the $u_C$'s (relative to 
the order $\ca{O}_K$). 
Moreover, all the previous relations between the $u_D$'s and $u_C$'s 
are compatible with the Shimura reciprocity laws.
\end{Th}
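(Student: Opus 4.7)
The plan is to work entirely at the level of the $\ZZ$-valued measures that underlie the two definitions. By construction, $u_C(r,\tau)$ is a multiplicative $p$-adic integral over $\XX$ of a measure $\mu_C$ attached to the modular unit $\beta_r$ and a partial modular symbol, whereas $u_D(\mk{a},\mk{f})$ is a multiplicative $p$-adic integral over $O\subseteq\ca{O}_{K_p}$ of the Shintani-type measure $\mu_D(\ca{D},\mk{a})$. Since $p$ is inert in $K$, once we fix a $\tau\in\ca{H}_p\cap K$ with $\ca{O}_{\tau}=\ca{O}_K$, the pair $(1,\tau)$ trivializes $\ca{O}_{K_p}$ as a free $\ZZ_p$-module of rank $2$ and identifies its set of primitive vectors with $\XX$. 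This identification is the geometric bridge on which the entire argument rests.

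First I would set up this identification explicitly and translate the Shintani cone decomposition of $\RR_{>0}^2$ underlying $\mu_D(\ca{D},\mk{a})$ into data on the Bruhat--Tits tree of $PGL_2(\QQ_p)$. Each rational cone of the Shintani domain corresponds, via $(1,\tau)$, to an edge of the tree and hence to a value of the Darmon--Dasgupta modular-symbol-valued measure. This expresses $\mu_D(\ca{D},\mk{a})$ as a finite $\ZZ$-linear combination of translates of the cocycle used to build $u_{DD}$, with signs dictated by the alternating factor $(-1)^{\wt{\mk{e}}(\mk{d})}$ in \eqref{protugal}. Integrating and invoking \eqref{Norm} then expresses $u_D^{12}$ as a product of $u_C$'s; the exponent $12$ reflects the fact that modular units are naturally determined only up to twelfth roots of unity (via the $\eta$-quotient normalization of Siegel units), and that ambiguity must be cleared before the two multiplicative integrals can be compared on the nose.

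For the reverse inclusion I would run the identification backwards, decomposing the measure $\mu_C$ attached to a single $\beta_r$ as a sum of measures $\mu_D(\ca{D},\mk{a})$ for $\mk{a}$ ranging over coset representatives in the natural surjection from narrow ray classes of conductor $\mk{f}_n$ onto narrow ring classes of conductor $n$. The Galois group $\Gal(K(\mk{f}_n\infty)/K(\ca{O}_n\infty))$ is a quotient of $(\ca{O}_K/\mk{f})^{\times}$, and its orbit on the $r$-index set $\ZZ/f\ZZ$ via the Shimura reciprocity law for $u_C$ has size $\varphi(f)$; raising $u_C(r,\tau)$ to the $\varphi(f)$-th power and using \eqref{fromulaa} collapses the $\beta_r$-dependence to a dependence on $\alpha=\prod_r\beta_r$, at which point the right-hand side is literally a product of $u_D$'s. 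Shimura reciprocity compatibility then follows from the measure-level identities themselves, because Galois acts on $\mu_C$ through its action on the $r$-index and on $\mu_D$ via the Artin symbol of $\mk{a}$, and these two actions match tautologically under the identification of the first step.

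The main obstacle will be carrying out this measure-level matching rigorously. The geometric identification of Shintani cones with edges of the Bruhat--Tits tree is clean, but verifying the combinatorial weights requires a direct computation of $\mu_C$ and $\mu_D$ on the standard basic open sets of $\XX$ while simultaneously tracking the signs from $(-1)^{\wt{\mk{e}}(\mk{d})}$, the twelfth-power normalization of $\alpha(z)$, and the various conductor-level ray class cocycles. Once the integral identities are established, the Shimura reciprocity statement of Theorem \ref{vag_th} follows by a bookkeeping argument using the explicit Galois actions recalled in Section \ref{def_invariants}.
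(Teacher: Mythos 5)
Your plan misses the central mechanism of the paper, which is the Atkin--Lehner involution $\tau\mapsto\tau^*=\frac{-1}{fN\tau}$.  The measure-level comparison you describe does happen (Proposition \ref{tricky2}, which compares $\wt{\mu}_{\delta}$ with a $\gamma$-twist of $\nu_{\wt{\delta}}$ on balls of $\XX$), but it compares $u_C(r,\tau)$ not to a $u_D$ at the same $\tau$ and the same order, but to $u_D(\wt{r}I_{\tau^*},\mk{f}^*)$ where $\tau^*=\frac{-1}{fN\tau}$ and, crucially, $\mk{f}^*=f\ca{O}_{\tau^*}$ lives in the order $\ca{O}_{\tau^*}$ of conductor $f\cdot\cond(\ca{O}_{\tau})$.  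This is Proposition \ref{klasse}, and the paper stresses repeatedly that it is precisely this change of order across $*$ that makes Theorem \ref{vag_th} deeper than the single identity \eqref{ville}.  Your proposal never confronts this: you keep both $u_C$ and $u_D$ anchored to the order $\ca{O}_K$ throughout, and so you cannot reach the order $\ca{O}_{\tau^*}$ on the other side of the bridge, nor descend back from it.

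The second missing ingredient is the distribution relation across orders, Proposition \ref{pointe}, which writes $u_D(\mk{b},\mk{g})^e$ (relative to an order $\ca{O}$) as a product $\prod_\lambda u_D(\mu_\lambda\mk{b}',\mk{g}')^{h_\lambda}$ of $u_D$'s relative to a different order $\ca{O}'$.  This is what lets one move the $u_D$ produced by the Atkin--Lehner step (relative to $\ca{O}_{\tau^*}$) back to a product of $u_D$'s relative to $\ca{O}_K$, and it is explicitly named in the introduction as ``the key tool which is used in the proof of Theorem \ref{vag_th}.''  Without it, your plan of decomposing $\mu_D(\ca{D},\mk{a})$ as a sum of translates of the Darmon--Dasgupta cocycle and invoking \eqref{Norm} cannot close: \eqref{Norm} relates $\prod_r u_C(r,\tau)$ to $u_{DD}(\tau)$ only when $n=f$ (ring class fields), whereas the $u_D$'s you are trying to express live in ray class fields, and there is no direct measure-level identity bridging those without passing through the involution $*$ and the cross-order norm formula.

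Finally, your derivation of the exponent $\varphi(f)$ does not work as stated.  If the Galois orbit of $u_C(r,\tau)$ under $\Gal(K(\mk{f}_n\infty)/K(\ca{O}_n\infty))$ has size $\varphi(f)$, then the product over that orbit is Galois-invariant, but this product is $\prod_{\sigma}u_C(r,\tau)^{\sigma}$, not $u_C(r,\tau)^{\varphi(f)}$; conflating the two is a genuine gap.  In the paper, the exponent comes out of the unit indices $e=[\Gamma_{\mk{b}}(\mk{g}):\Gamma_{\mk{b}'}(\mk{g}')]$ and the numbers $h_\lambda$ and $\nu(r,\tau)$ appearing in Propositions \ref{klasse} and \ref{pointe}, specialized in Theorem \ref{prec_th} and Corollary \ref{fatigue}; $\varphi(f)$ is only an upper bound for $e$.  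Your heuristic for the exponent $12$ (modular units up to twelfth roots via $\eta$-quotients) is morally in the right direction but again not the operative mechanism: in the paper the $12$ arises from the integrality comparison $12\nu_{\wt{\delta}}^{\gamma}-\wt{\mu}_{\delta}$ in Proposition \ref{tricky2} and the denominator analysis in Proposition \ref{Eleni}.  In short, a correct proof needs (i) the twist by $\tau\mapsto\tau^*$ to relate $u_C$ and $u_D$ at the measure level, and (ii) the cross-order norm formula to land back on the order $\ca{O}_K$; your plan has neither.
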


The meaning of \lq\lq relative to the order $\ca{O}_K$\rq\rq\s is important and 
will be explained below (see Definition \ref{blow}). 
We will give in the text precise formulas that describe all these relations (see Theorem \ref{prec_th}
and Corollary \ref{fatigue}).
In order to prove Theorem \ref{vag_th} it is desirable to work in 
greater generality and extend the definition of Dasgupta's invariant $u_{D}(\mk{a},\mk{f})$ to invertible
$\ca{O}$-ideals $\mk{a}$ where $\ca{O}$ is allowed to be an \textit{arbitrary} order of $K$ (not just the maximal one)
of conductor coprime to $N$ and where $\mk{a}$ is no longer assumed to be coprime to $\mk{f}=f\ca{O}$. 
Let us explain the motivation behind this
level of generality since it introduces some technicalities. 
Let $\tau\in K\bs\QQ$ be such that $\ca{O}_{\tau}=\ca{O}_{N\tau}=\ca{O}$ where $\ca{O}$ is a fixed
\textit{ambient order} of conductor coprime to $N$. Note that we do not assume that
that $\cond(\ca{O})$ is corpime to $f$. In fact, in our applications 
it will be important to allow $\cond(\ca{O})$  to be divisible by $f$. To fix the idea, suppose that
$\cond(\ca{O})=n$ so that $\ca{O}=\ca{O}_n$, $(N,n)=1$.
Let $Q_{\tau}(x,y)=Ax^2+Bxy+Cy^2$ be the unique primitive quadratic form associated to $\tau$ 
for which $A=A_{\tau}>0$ and let $\mk{a}=A_{\tau}\Lambda_{\tau}$ be the associated integral invertible $\ca{O}$-ideal.
We note that $N|A$ and that $\disc(Q_{\tau})=B^2-4AC=n^2d_K$. The Atkin-Lehner involution $\tau\mapsto\tau^*=\frac{-1}{fN\tau}$ 
is the key ingredient which allows us to relate the invariant $u_C$ to the invariant $u_D$ (see identity \eqref{ville} below). 
Now a direct computation shows that
\begin{align*}
Q_{\tau^*}(x,y)=
\sign(C)\left(\frac{f^2}{d}CNx^2-B\frac{f}{d}xy+\frac{A}{dN}y^2\right),
\end{align*} 
where $0<d=(A,f)$. In particular, we have $\disc(Q_{\tau^*})=
\left(\frac{f}{d}\right)^2n^2d_K$. Thus, if $d\neq f$, the
Atkin Lehner involution changes the conductor of the order in the sense that 
$\cond(\ca{O}_{\tau^*})=\frac{f}{d}\cond(\ca{O}_{\tau})$. Moreover, in general,
the invertible integral $\ca{O}_{\tau^*}$-ideal 
$\mk{a}^*:=A_{\tau^*}\Lambda_{\tau^*}=C\frac{f^2}{d}N\Lambda_{\tau^*}$
is no longer coprime to $\mk{f}^{*}=f\ca{O}_{\tau^*}$ in the sense that $(\mk{f}^*,\mk{a}^*)
:=\mk{f}^{*}+\mk{a}^*\neq\ca{O}_{\tau^*}$. 
This explains in part the need of a theory which deals with
arbitrary orders (not just the maximal one) and arbitrary ideals (not just the ones coprime to $f$). 
Having extended the definition of $u_D$ to this broader setting it is useful to make the following
definition:
\begin{Def}\label{blow}
Let $\tau\in K\bs\QQ$ and let $\mk{b}=A_{\tau}\Lambda_{\tau}\subseteq K$ be the associated
normalized discrete $\ZZ$-module of rank $2$. Assume furthermore that 
$\End_K(\mk{b})=\ca{O}$ where $\cond(\ca{O},N)=1$. Let $\mk{g}\subseteq\ca{O}$ be 
an $\ca{O}$-ideal (not necessarily invertible). In this case we say that the $p$-adic invariant $u_D(\mk{b},\mk{g})$
(resp. $(u_C(r,\tau))$) is {\bf relative to the order $\ca{O}$}.
\end{Def}

We keep the same notation as the paragraph above Definition \ref{blow}. 
One of the key steps in the proof of Theorem \ref{vag_th} will be to show the following relation
(see Proposition \ref{klasse})
\begin{align}\label{ville}
u_{D}(\mk{a}^*,\mk{f}^*)^{12}=u_C(1,\tau)^e,
\end{align}
where $u_C(1,\tau)$ 
is the $p$-adic invariant associated to a suitable choice of a
family of modular units $\{\beta_r(z)\}_{r\in\ZZ/f\ZZ}$ and $e$ is a certain 
rational number that can be computed explicitly. We note that the construction of the
$p$-adic invariants $u_D$ and $u_C$ are different in nature and therefore the identity
\eqref{ville} is far from trivial. Roughly, the strategy that we 
use  to prove \eqref{ville} consists in relating the underlying $p$-adic measure
of $u_D$ (say $\mu_D$), to the underlying $p$-adic measure of $u_C$ (say $\mu_C$). Both of
$\mu_D$ and $\mu_C$ may be viewed as $\ZZ$-valued measures on the space $\ZZ_p^2$. 
In some suitable sense one can
show that $\mu_D$ is equal to $\mu_C$ (when evaluated on a ball of $\ZZ_p^2$) 
up to some error term which disappears when one integrates over the space $\XX$
(see Proposition \ref{tricky2} for a precise statement involving the error term and the measures
$\mu_D$ and $\mu_C$). However, we would like to emphasize 
here that the $p$-adic invariant
on the left hand side of \eqref{ville} is relative to the order $\ca{O}_{\tau^*}$ and the one on the
right hand side is relative to the order $\ca{O}_{\tau}$. As pointed out earlier, 
in general these two orders differ. It is precisely the discrepancy of these
two orders which makes Theorem \ref{vag_th} deeper than identity \eqref{ville}.

Let us explain in greater details why this discrepancy is 
problematic regarding Theorem \ref{vag_th}. In \cite{Das3},
Dasgupta considered $p$-adic invariants of the form $u_D(\mk{b},\mk{f})$
where $\mk{f}=f\ca{O}_K$ and $\mk{b}=A_{\tau}\Lambda_{\tau}$ where
$\tau\in K\bs\QQ$ is such that $End_K(\Lambda_{\tau})=\ca{O}_K$ and $(A_{\tau},f)=1$. 
Set $\rho=\tau^*$ where $\tau^*=\frac{-1}{fN\tau}$. Note that $\rho^*:=\frac{-1}{fN\rho}=\tau$. 
Using \eqref{ville}, one may deduce that
\begin{align}\label{botte}
u_D(A_{\rho^*}\Lambda_{\rho^*},f\ca{O}_{\rho^*})^{12}=u_D(\mk{b},f\ca{O}_K)^{12}=u_C(1,\rho)^{e},
\end{align}
where $e$ is a certain integer such that $e|\varphi(f)$. The first equality in \eqref{botte} 
follows from the definition of $\rho$ and the second equality follows from \eqref{ville}.
One immediately sees that \eqref{botte} gives a precise relation between Dasgupta's $p$-adic invariant $u_D$
(as considered in \cite{Das3})
and a special case of the $p$-adic invariant $u_C$ considered in \cite{Ch1}. 
However, the left hand side of \eqref{botte} is relative
to the order $\ca{O}_K$ and the one on the right hand side is relative to the order
$\ca{O}_f$. Therefore, identity \eqref{botte} does not quite fulfill the statement
of Theorem \ref{vag_th}. In order to prove Theorem \ref{vag_th}, one needs to work harder
and prove some \lq\lq distribution relations\rq\rq\s which are satisfied by the $p$-adic invariants
$u_D$'s \textit{relative to different orders}. Our Proposition \ref{pointe} gives an explicit example of such 
distribution relations. This proposition is the key tool which is used in the proof of Theorem \ref{vag_th}.

Finally, for the end of the introduction, we would like to introduce some new notions
that emerged naturally from our work. Let $\ca{O}$ be an arbitrary order of $K$ of conductor coprime to $N$.
Let $\mk{b}\subseteq K$ be an arbitrary
$\ca{O}$-invertible ideal, i.e., $\End_K(\mk{b})=\End_K(\mk{g})=\ca{O}$ and let
$\mk{g}\subseteq\ca{O}$ be an arbitrary $\ca{O}$-module which we assume to be 
invertible only for the sake of simplicity.
We say that $\mk{b}$ is \textit{$\mk{g}$-int} if $\mk{b}$ may be written as
$\mk{b}=\mk{c}\mk{d}^{-1}$ where $\mk{c},\mk{d}\subseteq\ca{O}$ are invertible
$\ca{O}$-ideals and where $(\mk{d},\mk{g})=\mk{d}+\mk{g}=\ca{O}$. Note in particular that if 
$\mk{b}\subseteq\ca{O}$ then $\mk{b}$ is automatically $\mk{g}$-int. 
A pair $[\mk{b},\mk{g}]$ is said to be 
\textit{primitive} of conductor $[\ca{O},\lambda\mk{g}]$ 
if there exists $\lambda\in K^{\times}$ such that $\lambda\mk{b},\lambda\mk{g}\subseteq\ca{O}$ 
and such that $(\lambda\mk{b},\lambda\mk{g})=\ca{O}$. We have put brackets 
$[\mk{b},\mk{g}]$ in order to distinguish the 
\lq\lq ordered pair $(\mk{b},\mk{g})$\rq\rq\s from the standard notation $(\mk{b},\mk{g})=\mk{b}+\mk{g}$. 
From now on we drop the $\ca{O}$ in the notation $[\ca{O},\lambda\mk{g}]$ since it was assumed 
from the outset that $\ca{O}$ was a fixed \textit{ambient} order. 
Note that even though
$\lambda$ is not unique, the $\ZZ$-lattice $\lambda\mk{g}$ is well defined. 
If no such $\lambda$ exists then we say that
the pair $[\mk{b},\mk{g}]$ is \textit{non-primitive}. For example, 
let $\mk{a}=A_{\tau}\Lambda_{\tau}$ where $\ca{O}_{\tau}=\ca{O}_K$ and
let $\mk{f}=f\ca{O}_K$. Furthermore, assume that $(\mk{a},\mk{f})=\ca{O}_K$ which is 
equivalent to $(A_{\tau},f)=1$. Then one may check that
$[\mk{a},\mk{f}]$ is primitive of conductor $\mk{f}$. Now consider the pair
$[\mk{a}^*,\mk{f}^*]$ where $\tau^*=\frac{-1}{fN\tau}$, $\mk{a}^*=A_{\tau^*}\Lambda_{\tau^*}$ and
$\mk{f}^*=f\ca{O}_{\tau^*}$. Then one may check that $[\mk{a}^*,\mk{f}^*]$ is a non-primitive pair if
$f>1$. 

Let us explain how this notion of primitive pairs intervenes in the context of the
$p$-adic invariant $u_D$ that was introduced earlier.
Let $[\mk{b},\mk{g}]$ be a pair as in the paragraph above of conductor $\mk{g}$. Then Conjecture
\ref{alg_conj} predicts that $u_D(\mk{b},\mk{g})$ is a strong $p$-unit in $K(\mk{g}\infty)^{\laa\sigma_{\wp}\raa}$.
The author expects, that for such a fixed primitive pair $[\mk{b},\mk{g}]$
and for a \lq\lq generic divisor $\wt{\delta}$\rq\rq\s (see Section \ref{div_tilde} for the definition of
$\wt{\delta}$) that the $p$-adic invariant $u_{D,\wt{\delta}}(\mk{b},\mk{g})$ 
generates the maximal CM subfield of $K(\mk{g}\infty)^{\laa\sigma_{\wp}\raa}$. 
Thus we expect that (for a generic divisor $\wt{\delta}$) 
$u_{D,\wt{\delta}}(\mk{a},\mk{f})$ generates the
maximal CM subfield of $K(\mk{f}\infty)^{\laa\sigma_{\wp}\raa}$, say $L_{CM}$. It is important here to use the word
generic since explicit computations done in 
\cite{Das2} and \cite{Ch3} reveal that it may happen that for some divisor $\wt{\delta}$,
$u_{D,\wt{\delta}}(\mk{a},\mk{f})$ lies in a proper subfield of
$L_{CM}$. On the other hand, the author expects things to be different for a non-primitive pair.
For example, we may consider the non-primitive pair $[\mk{a}^*,\mk{f}^*]$.
A priori, Conjecture \ref{alg_conj}, only predicts that
for a divisor $\wt{\delta}$ one has that $u_{D,\delta}(\mk{a}^*,\mk{f}^*)
\in K(\mk{f}^*\infty)^{\laa\sigma_{\wp}\raa}$. Note that 
$K(\mk{f}\infty)\subsetneqq K(\mk{f}^*\infty)$. However, a more careful analysis of Conjecture \ref{alg_conj} (see
Section \ref{down}), implies that 
$u_{D,\wt{\delta}}(\mk{a}^*,\mk{f}^*)\in K(\mk{f}\infty)^{\laa\sigma_{\wp}\raa}$. This strange phenomenon can be accounted
by the observation that the pair $[\mk{a}^*,\mk{f}^*]$ is non-primitive! The interested reader may also find many numerical 
evidence in \cite{Ch3} supporting the conjectural relation 
$u_{D,\wt{\delta}}(\mk{a}^*,\mk{f}^*)^{12}=u_C(1,\tau)\in K(\mk{f}\infty)^{\laa\sigma_{\wp}\raa}$.

One of the main goal of this paper is to extend Dasgupta's construction in the following broader setting:
to an arbitrary order $\ca{O}\subseteq K$ (not necessarily the maximal one) of conductor coprime to $N$ 
and to an arbitrary pair $[\mk{b},\mk{g}]$ (not necessarily primitive) where
$\mk{g}\subset\ca{O}$ is an $\ca{O}$-ideal and where $\mk{b}$ 
is $\ca{O}$-invertible and $\mk{g}$-int, we construct
a $p$-adic invariant $u_D(\mk{b},\mk{g})\in K_{p}^{\times}$.
\section{Notation and some basic notions about orders}\label{notations}
Let $K$ be a real quadratic number field and let $\ca{O}_K=\ZZ+\omega\ZZ$ be its maximal $\ZZ$-order. 
An arbitrary $\ZZ$-order of $K$ will be denoted by the letter $\ca{O}$. For every positive integer
$n\geq 1$ there exists a unique order of $K$ with \textit{conductor $n$} which we denote by $\ca{O}_n=\ZZ+n\omega\ZZ$. 
Let $\ca{O}$ be a fixed $\ZZ$-order of $K$.
A discrete $\ca{O}$-module $\mk{a}\subseteq K$ will be called an $\ca{O}$-ideal. 
If $\mk{a},\mk{b}\subseteq K$ are $\ZZ$-lattices (always assumed of rank $2$) we denote by 
$(\mk{a},\mk{b}):=\mk{a}+\mk{b}$. We remark here that if $\mk{a},\mk{b}\subseteq K$ are 
$\ca{O}$-ideals then $(\mk{a},\mk{b})$ is the smallest $\ca{O}$-ideal of $K$ which contains
$\mk{a}$ and $\mk{b}$ (note however that the $\ZZ$-module $(\mk{a},\mk{b})$ is completely independent of
the ambient order $\ca{O}$!).
By an \textit{invertible} $\ca{O}$-ideal (or an $\ca{O}$-invertible ideal) we mean an 
$\ca{O}$-ideal $\mk{a}$ such that 
\begin{align*}
\End_K(\mk{a}):=\{\lambda\in K:\lambda\mk{a}\subseteq\mk{a}\}=\ca{O}.
\end{align*}
Note that every $\ZZ$-lattice $\Lambda\subseteq K$ is an invertible $\ca{O}$-ideal
for $\ca{O}=\End_K(\Lambda)$. For an arbitrary $\ZZ$-lattice 
$\Lambda\subseteq K$ we define
\begin{align}\label{tot}
\Lambda^{-1}:=\{\lambda\in K:\lambda\Lambda\subseteq\End_K(\Lambda)\}. 
\end{align}
A $\ZZ$-lattice $\Lambda\subseteq K$ will be called 
\textit{integral} if $\Lambda\subseteq \End_K(\Lambda)$. A $\ZZ$-lattice
$\Lambda$ is said to be \textit{$\ca{O}$-integral} if $\Lambda$ is an $\ca{O}$-ideal such that 
$\Lambda\subseteq\ca{O}$. We note that the notion of integrality is absolute
since it does not depend on the choice of an ambient order while the notion
of $\ca{O}$-integrality is relative since it depends on the choice of an ambient order $\ca{O}$.

We recall some facts about invertible $\ca{O}$-ideals. 
It is well known that an $\ca{O}$-ideal $\mk{a}$ is invertible if and only if for every 
prime ideal $\mk{p}\neq 0$ of $\ca{O}$ one has $\mk{a}_{\mk{p}}$ is a principal
$\ca{O}_{\mk{p}}$-ideal (here $\mk{a}_{\mk{p}}$ denotes the localization $\mk{a}_{\mk{p}}$ 
of $\mk{a}$ at $\mk{p}$). 
For a proof see Proposition 12.4 of \cite{Neu}).
From this we may deduce that a prime $\mk{p}$ of $\ca{O}$ is invertible if and only if 
$\ca{O}_{\mk{p}}$ is a discrete valuation ring. We have
the following criterion for invertible prime ideals of $\ca{O}$:
$$
\mk{p}\nmid \cond(\ca{O})\Longleftrightarrow\mk{p}\s\mbox{is invertible},
$$
where $\cond(\ca{O})$ denotes the conductor of $\ca{O}$. 
For a proof of this fact see Proposition 12.10 of \cite{Neu}. 

If $\Lambda$ and $\Lambda'$ are two lattices in $K$ then one always has that
\begin{align}\label{tramp}
\End_K(\Lambda)\cap \End_K(\Lambda')\subseteq \End_K(\Lambda\cap\Lambda').
\end{align}
In general the inclusion in \eqref{tramp} could be strict. 
If $\mk{a}$ and $\mk{b}$ are $\ca{O}$-ideals the two lattices 
$\mk{a}\cap\mk{b}$ and $(\mk{a},\mk{b})$ are again $\ca{O}$-ideals and one always has that
$$
\ca{O}\subseteq \End_K(\mk{a}\cap\mk{b})\s\s\s\mbox{and}\s\s\s 
\ca{O}\subseteq \End_K((\mk{a},\mk{b})).
$$ 
In general, the two inclusions above
could be strict, even when restricted to invertible $\ca{O}$-ideals. 
However, in the special case where $\mk{a}$ and $\mk{b}$ are invertible $\ca{O}$-ideals 
supported only on invertible prime ideals of $\ca{O}$, then the two inclusions above become equalities.

Let $\mk{a}$ and $\mk{b}$ be $\ca{O}$-integral ideals. Then we have the following short exact
sequence
\begin{align*}
0\rightarrow\mk{a}/\mk{a}\mk{b}\rightarrow\ca{O}/\mk{a}\mk{b}\rightarrow\ca{O}/\mk{a}\rightarrow 0.
\end{align*}
From this we may deduce that
\begin{align}\label{tigre}
[\ca{O}:\mk{a}\mk{b}]=[\mk{a}:\mk{a}\mk{b}][\ca{O}:\mk{a}].
\end{align}
Under the additional assumption that $\mk{a}$ is an invertible $\ca{O}$-ideal, an easy localization argument
shows that $\mk{a}/\mk{a}\mk{b}$ is isomorphic (non-canonically!) to $\ca{O}/\mk{b}$ as an abelian group. 
Combining the previous observation with \eqref{tigre} we may conclude that if either $\mk{a}$ or $\mk{b}$
is $\ca{O}$-invertible then
\begin{align}\label{tigre2}
[\ca{O}:\mk{a}\mk{b}]=[\ca{O}:\mk{a}][\ca{O}:\mk{b}].
\end{align}

We have the following diagram
$$
\xymatrix{
Spec(\ca{O}_K)\ar[rr]^{\pi}\ar[rd]_{p_1} & & Spec(\ca{O}_n)\ar[ld]^{p_2}\\
& Spec(\ZZ) &
}
$$
Since $\ca{O}_K[\frac{1}{n}]=\ca{O}_n[\frac{1}{n}]$ we see that 
the map $\pi$, when restricted to $p_1^{-1}(Spec(\ZZ[\frac{1}{n}]))$, is an isomorphism. 
Let $l$ be a prime divisor of $n$ and let $\mk{I}_l:=l\ZZ+n\omega\ZZ$. Note that
$\mk{I}_l$ is a prime ideal of $\ca{O}_n$ such that $\ca{O}_n/\mk{I}_{l}\simeq\ZZ/l\ZZ$.
The fiber above $l\ZZ$ (always for $l|n$) is given by one of the following two possibilities:
\begin{enumerate}
\item if $l$ is inert or ramified in $K$: $p_2^{-1}(l\ZZ)=\{\mk{I}_l\}$,  
\item if $l$ splits in $K$: 
$p_2^{-1}(l\ZZ)=\{\mk{I}_l,\eta',\eta'^{\sigma}\}$ where 
$l\ca{O}_K=\eta\eta^{\sigma}$ and $\eta'=\eta\cap\ca{O}_n$,
\end{enumerate} 
In general, for a fixed integral $\ca{O}_K$-ideal $\mk{N}$ and an arbitrary order $\ca{O}$ 
(we think of $\ca{O}$ here as varying), one has a natural inclusion
\begin{align}\label{ineq}
\ca{O}/(\mk{N}\cap\ca{O})\hookrightarrow\ca{O}_K/\mk{N},
\end{align}
which may fail to be onto. However, there is a special type of 
$\ca{O}_K$-ideal $\mk{N}$ for which \eqref{ineq} is an isomorphism for 
\textit{all} orders $\ca{O}$ of $K$, namely the case where $\ca{O}_K/\mk{N}\simeq\ZZ/N\ZZ$ 
is a cyclic abelian group. We leave the proof of this elementary fact to the reader. 
In particular, let $\{l_i\}_{i=1}^r$ be a set of prime numbers that split in 
$K$ and let $l_i\ca{O}_K=\eta_i\eta_i^{\sigma}$. Then if one sets $\mk{N}=\prod_{i=1}^r\eta_i^{e_i}$, for some
arbitrary positive integers $e_i$,
one has that $\ca{O}_K/\mk{N}\simeq\ZZ/N\ZZ$ where $N=\prod_{i=1}^r l_i^{e_i}$. Therefore the map in 
\eqref{ineq} is onto. Always in this special case, 
one may furthermore check that for all orders $\ca{O}$, one has that $\End_K(\ca{O}\cap\mk{N})=\ca{O}$, so that
\eqref{tramp} becomes an equality with $\Lambda=\ca{O}$ and $\Lambda'=\mk{N}$.

We let $\mathcal{L}$ be the set of all free $\ZZ$-modules contained in $K$. The
set $\mathcal{L}$ comes equipped with a natural stratification
\begin{align}\label{trianglee}
\mathcal{L}=\coprod_{\ca{O}}\mathcal{L}^\mathcal{\ca{O}},
\end{align}
where $\mathcal{L}^{\ca{O}}=\{L\in\mathcal{L}:\End_K(L)=\mathcal{O}\}$ and 
the disjoint union is taken over all orders of $K$.

We now introduce equivalence relations on $\mathcal{L}$ and on a stratum $\mathcal{L}^{\ca{O}}$. These equivalence
relations may be viewed as natural generalizations of the usual equivalence relation on the set of ideals of
a Dedekind domain which gives rise to the ideal class group. Let $\lambda\in K^{\times}$. The notation $\lambda\gg 0$
is taken to mean that $\lambda$ is a totally positive element, i.e., $\lambda,\lambda^{\sigma}>0$.
\begin{Def}
Let $f\in\ZZ_{>0}$ be a fixed integer and let
$L_1,L_2\in\mathcal{L}$. We say that $L_1\sim_{f} L_2$ if and only if
there exists $\lambda\in L_1^{-1}f+1$, $\lambda\gg 0$, such that $\lambda L_1=L_2$. 
(see \eqref{tot} for the definition of $L_1^{-1}$). Let $\ca{O}$ be a fixed order
and let $\mk{f}$ be an $\ca{O}$-integral ideal. For $L_1,L_2\in\mathcal{L}^{\ca{O}}$ we say that
$L_1\sim_{\mk{f}} L_2$ if and only if
there exists $\lambda\in L_1^{-1}\mk{f}+1$, $\lambda\gg 0$, such that $\lambda L_1=L_2$.
\end{Def}
It is easy to see that $\sim_f$ is reflexive, transitive and symmetric. Therefore 
$\sim_{f}$ gives rise to an equivalence relation on $\mathcal{L}$. A similar statement holds
for $\sim_{\mk{f}}$ if we replace $\mathcal{L}$ by $\mathcal{L}^{\ca{O}}$. We note that
$\sim_f$ preserves the stratification given by \eqref{trianglee} and that if 
$L_1,L_2\in\mathcal{L}^{\ca{O}}$ and $L_1\sim_f L_2$
then $L_1$ is $\ca{O}$-integral if and only if $L_2$ is $\ca{O}$-integral. 

Let $L_1,L_2,\mk{a}\in\mathcal{L}^{\ca{O}}$ 
and assume that $L_1\sim_f L_2$. Then if $\mk{a}$ is $\ca{O}$-integral one can show that
\begin{align}\label{prise}
L_1\mk{a}\sim_f L_2\mk{a}. 
\end{align}
Now suppose that $L_1,L_2,\mk{a},\mk{b}\in\mathcal{L}^{\ca{O}}$, $L_1\sim_f L_2$
and $\mk{a}\sim_f\mk{b}$. If $L_1$ and $\mk{b}$ are $\ca{O}$-integral then applying \eqref{prise}
twice we find that $\mk{a}L_1\sim_f\mk{b}L_1$ and $\mk{b}L_1\sim_f\mk{b}L_2$. Thus by transitivity of
$\sim_f$ we find that
\begin{align}\label{prise2}
\mk{a}L_1\sim_f\mk{b}L_2.
\end{align}
Note that in general, if $L_1,L_2\in\mathcal{L}^{\ca{O}}$ and $L_1\sim_f L_2$, it is not 
necessarily true that $L_1^{-1}\sim_f L_2^{-1}$ (unless $f=1$).

Let $\ca{O}$ be a fixed order. Let $\mk{f}$ be an $\ca{O}$-integral ideal which is not necessarily 
$\ca{O}$-invertible. We define the set
$$
I_{\ca{O}}(\mk{f}):=
\{\mk{b}\subseteq\ca{O}:\textrm{$\mk{b}$ is an invertible integral $\ca{O}$-ideal
coprime to $\mk{f}$, i.e., $(\mk{f},\mk{b})=\ca{O}$}\}.
$$
For $m\in\ZZ_{\geq 1}$ we also let $I_{\ca{O}}(m)=I_{\ca{O}}(m\ca{O})$.
Consider the monoid $I_{\ca{O}}(1)$ where the multiplication is given by the
usual multiplication of ideals. For every $\ca{O}$-integral ideal $\mk{f}$ we
have the  equivalence relation $\sim_{\mk{f}}$ on $I_{\ca{O}}(1)$. 
Note that if $\mk{a}\sim_{\mk{f}}\mk{b}$ then $(\mk{a},\mk{f})=(\mk{b},\mk{f})$. In fact,
if for every $\ca{O}$-integral ideal $\mk{d}\supseteq\mk{f}$ we let
$$
I_{\ca{O}}[\mk{d},\mk{f}]:=
\{\mk{b}\subseteq\ca{O}:\textrm{$\mk{b}$ is an invertible integral $\ca{O}$-ideal
such that $(\mk{f},\mk{b})=\mk{d}$}\},
$$
then the set $I_{\ca{O}}(1)/\sim_{\mk{f}}$ admits the following stratification:
\begin{align*}
I_{\ca{O}}(1)/\sim_{\mk{f}}\;=\;\bigcup_{\mk{d}|\mk{f}}\;I_{\ca{O}}[\mk{d},\mk{f}]/\sim_{\mk{f}}.
\end{align*} 
Using \eqref{prise2}, we see that the monoid structure on $I_{\ca{O}}(1)$ descends
to a (finite) monoid structure on $I_{\ca{O}}(1)/\sim_{\mk{f}}$.
The set of invertible elements of $I_{\ca{O}}(1)/\sim_{\mk{f}}$ is exactly
$I_{\ca{O}}(\mk{f})/\sim_{\mk{f}}$.
\begin{Def}\label{affine}
We set $C_{\ca{O}}(\mk{f}):=I_{\ca{O}}(\mk{f})/\sim_{\mk{f}}$. 
We call $C_{\ca{O}}(\mk{f})$ the \textit{narrow extended ideal class group of $K$ of conductor $[\ca{O},\mk{f}]$}.
In the special case where $\mk{f}$ is $\ca{O}$-invertible we simply say that $C_{\ca{O}}(\mk{f})$ is 
the \textit{narrow extended ideal class group of $K$ of conductor $\mk{f}$}. 
Note that this makes sense since the order $\ca{O}$ is already encoded in $\mk{f}$ 
($\End_K(\mk{f})=\ca{O}$). By class field theory, the ideal class group $C_{\ca{O}}(\mk{f})$ corresponds to an abelian extension 
of $K$ which we denote by $K([\ca{O},\mk{f}]\infty)$ where $\infty=\infty_1\infty_2$ stands for the
product of the two distinct real places of $K$. We call $K([\ca{O},\mk{f}]\infty)$ the 
\textit{narrow extended class field} of conductor $[\ca{O},\mk{f}]$. 
In the case where $\mk{f}$ is $\ca{O}$-invertible we simply
write  $K(\mk{f}\infty)$ rather than the more cumbersome notation $K([\ca{O},\mk{f}]\infty)$. 
\end{Def}
We note that $K([\ca{O},\mk{f}]\infty)$ is an extension of $K$ unramified outside
the finite places of $K$ not dividing $[\ca{O}:\mk{f}]\cdot\cond(\ca{O})$. On the
other hand, the ramification above $\infty_1$ and $\infty_2$ is a more delicate question and
depends on the sign and congruences of a fundamental unit of $K$. 

We define
\begin{enumerate}
\item $P_{\ca{O},1}(\mk{f})=\left\{\frac{\alpha}{\beta}\ca{O}:\alpha,\beta\in\ca{O},
(\alpha,\mk{f})=(\beta,\mk{f})=\ca{O},
\alpha\equiv\beta\pmod{\mk{f}}\right\}$,
\item $P_{\ca{O},1}(\mk{f}\infty)=\left\{\frac{\alpha}{\beta}\ca{O}:\alpha,\beta\in\ca{O},
(\alpha,\mk{f})=(\beta,\mk{f})=\ca{O},
\alpha\equiv\beta\pmod{\mk{f}},\frac{\alpha}{\beta}\gg 0\right\}$.
\item $Q_{\ca{O},1}(\mk{f}):=\left\{\frac{\alpha}{\beta}\in K:\alpha,\beta\in\ca{O},
(\alpha,\mk{f})=(\beta,\mk{f})=\ca{O},
\alpha\equiv\beta\pmod{\mk{f}}\right\}$.
\item $Q_{\ca{O},1}(\mk{f}\infty):=\left\{\frac{\alpha}{\beta}\in K:\alpha,\beta\in\ca{O},
(\alpha,\mk{f})=(\beta,\mk{f})=\ca{O},
\alpha\equiv\beta\pmod{\mk{f}},\frac{\alpha}{\beta}\gg 0\right\}$.
\end{enumerate}
We have a natural map $Q_{\ca{O},1}(\mk{f})\rightarrow P_{\ca{O},1}(\mk{f})$ (resp. 
$Q_{\ca{O},1}(\mk{f}\infty)\rightarrow P_{\ca{O},1}(\mk{f}\infty)$) which is
given by $\lambda\mapsto\lambda\ca{O}$. One may prove that for $\mk{a},\mk{b}\in I_{\ca{O}}(\mk{f})$, 
$\mk{a}\sim_{\mk{f}}\mk{b}$ 
if and only if there exists a $\lambda\ca{O}\in P_{\ca{O},1}(\mk{f}\infty)$ such that $\lambda\mk{a}=\mk{b}$. 
Keeping in mind that $P_{\ca{O},1}(\mk{f}\infty)\not\subseteq I_{\ca{O}}(\mk{f})$
we may nevertheless identify $C_{\ca{O}}(\mk{f})$ 
with the \lq\lq quotient\rq\rq\s$I_{\ca{O}}(\mk{f})/P_{\ca{O},1}(\mk{f}\infty)$ in the sense just explained
above.

We will need the following elementary proposition:
\begin{Prop}\label{lilas}
Let $n,f\in\ZZ_{\geq 1}$ and let $\ca{O}$ and $\ca{O}'$ be orders such that
$\cond(\ca{O})=m$ and $\cond(\ca{O}')=nm$ (in particular $\ca{O}'\subseteq\ca{O}$). Set $\mk{f}=f\ca{O}$ and 
$\mk{f}'=f\ca{O}'$. 

$\mbox{(i)}$ Let $\mk{a}\in I_{\ca{O}}(n\mk{f})$ then $\mk{a}\cap\ca{O}'$ is $\ca{O}'$-invertible. Moreover,
the natural map
\begin{align}\label{papillon1}
\Theta:I_{\ca{O}}(n\mk{f})&\rightarrow I_{\ca{O}'}(\mk{f}')\\ \notag
\mk{a}&\mapsto\Theta(\mk{a})=\mk{a}\cap\ca{O}'
\end{align}
is multiplicative, i.e., for all
$\mk{a},\mk{a}'\in I_{\ca{O}}(n\mk{f})$ one has $\Theta(\mk{a}\mk{a}')=\Theta(\mk{a})\Theta(\mk{a}')$. 

$\mbox{(ii)}$ If $\mk{a},\mk{a}'\in I_{\ca{O}}(n\mk{f})$ are such that
$\mk{a}\sim_{nf}\mk{a}'$ then one has $\Theta(\mk{a})\sim_{f}\Theta(\mk{a}')$ and therefore
$\Theta$ induces a natural onto map
\begin{align}\label{papillon2}
\wt{\Theta}:C_{\ca{O}}(n\mk{f})\rightarrow C_{\ca{O}'}(\mk{f}').
\end{align}
\end{Prop}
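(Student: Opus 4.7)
The plan rests on two structural facts about the pair $\ca{O}'\subseteq\ca{O}$: the inclusion $n\ca{O}\subseteq\ca{O}'$ (which gives $nf\ca{O}\subseteq f\ca{O}'=\mk{f}'$) and the identity $\ca{O}[1/n]=\ca{O}'[1/n]$, both immediate from $\ca{O}=\ZZ+m\omega\ZZ$ and $\ca{O}'=\ZZ+nm\omega\ZZ$. These establish a bijection $\mk{p}\leftrightarrow\mk{p}'=\mk{p}\cap\ca{O}'$ between primes of $\ca{O}$ coprime to $n$ and primes of $\ca{O}'$ coprime to $n$ with matching localizations $\ca{O}_\mk{p}=\ca{O}'_{\mk{p}'}$, while any prime $\mk{p}'\mid n$ of $\ca{O}'$ is non-invertible (since $\mk{p}'\mid nm=\cond(\ca{O}')$). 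For (i), given $\mk{a}\in I_\ca{O}(n\mk{f})$ I write $1=a+b$ with $a\in\mk{a}$ and $b\in nf\ca{O}\subseteq\mk{f}'\subseteq\ca{O}'$; then $a\in\mk{a}\cap\ca{O}'=\Theta(\mk{a})$ with $a\equiv 1\pmod{\mk{f}'}$, yielding coprimality $\Theta(\mk{a})+\mk{f}'=\ca{O}'$. I would verify $\ca{O}'$-invertibility locally: at $\mk{p}'\nmid n$ we have $\Theta(\mk{a})_{\mk{p}'}=\mk{a}_\mk{p}$, principal by $\ca{O}$-invertibility of $\mk{a}$; at $\mk{p}'\mid n$, the explicit form $\mk{p}'=l\ZZ+nm\omega\ZZ$ with $l\mid n$ shows $nf\ca{O}\subseteq\mk{p}'$, so $a\equiv 1\pmod{\mk{p}'}$ is an $\ca{O}'_{\mk{p}'}$-unit and $\Theta(\mk{a})_{\mk{p}'}=\ca{O}'_{\mk{p}'}$. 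Multiplicativity $\Theta(\mk{a}\mk{a}')=\Theta(\mk{a})\Theta(\mk{a}')$ then follows from the same local dichotomy.

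For (ii), I claim the \emph{same} $\lambda=1+\mu\in 1+\mk{a}^{-1}nf\ca{O}$ that witnesses $\mk{a}\sim_{nf}\mk{a}'$ also witnesses $\Theta(\mk{a})\sim_f\Theta(\mk{a}')$. The inclusion $(\lambda-1)\Theta(\mk{a})=\mu(\mk{a}\cap\ca{O}')\subseteq\mu\mk{a}\subseteq nf\ca{O}\subseteq\mk{f}'$ gives $\lambda-1\in\Theta(\mk{a})^{-1}\mk{f}'$, and total positivity is preserved, so only the identity $\lambda\Theta(\mk{a})=\Theta(\mk{a}')$ remains. Localizing, at $\mk{p}'\nmid n$ this collapses to $\lambda\mk{a}_\mk{p}=\mk{a}'_\mk{p}$, immediate from $\lambda\mk{a}=\mk{a}'$; at $\mk{p}'\mid n$ both sides already equal $\ca{O}'_{\mk{p}'}$ by part (i), and the identity reduces to $\lambda\in(\ca{O}'_{\mk{p}'})^\times$. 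To see this, choose $a_0\in\mk{a}\cap(1+n\ca{O})\subseteq\mk{a}\cap\ca{O}'$ (which exists since $\mk{a}+n\ca{O}=\ca{O}$); then $a_0$ is an $\ca{O}'_{\mk{p}'}$-unit, and $\mu a_0\in\mu\mk{a}\subseteq nf\ca{O}\subseteq\mk{p}'\subseteq\ca{O}'$. Hence $\mu=(\mu a_0)/a_0\in\mk{p}'\ca{O}'_{\mk{p}'}$, so $\lambda=1+\mu$ is the required unit.

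For surjectivity of $\wt{\Theta}$, I would invoke a standard approximation argument inside $C_{\ca{O}'}(\mk{f}')$: given $\mk{b}'\in I_{\ca{O}'}(\mk{f}')$, replace it by $\mk{b}''\sim_f\mk{b}'$ coprime to $n\ca{O}'$, then set $\mk{a}:=\mk{b}''\ca{O}$. Local checks analogous to part (i) show that $\mk{a}\in I_\ca{O}(n\mk{f})$ and that $\Theta(\mk{a})=\mk{b}''$, so $\wt{\Theta}([\mk{a}])=[\mk{b}']$. The main obstacle throughout is the local behavior at the non-invertible primes $\mk{p}'\mid n$, where $\ca{O}_\mk{p}$ and $\ca{O}'_{\mk{p}'}$ genuinely differ; once one recognizes that $\Theta(\mk{a})$ is automatically trivial at such primes for $\mk{a}$ coprime to $n$, and that elements of $1+nf\ca{O}$ behave well in $\ca{O}'_{\mk{p}'}$, all remaining arguments become routine local computations.
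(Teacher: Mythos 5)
Your argument is correct and proceeds by what is probably the most natural route for this proposition: reduce everything to local checks using the two facts $n\ca{O}\subseteq\ca{O}'$ and $\ca{O}[1/n]=\ca{O}'[1/n]$, which split the primes of $\ca{O}'$ into those matching primes of $\ca{O}$ (where $\Theta(\mk{a})$ localizes to $\mk{a}$) and those above divisors of $n$ (where $\Theta(\mk{a})$ is forced to be the unit ideal because $\mk{a}+nf\ca{O}=\ca{O}$ produces an element $\equiv 1\pmod{\mk{p}'}$). Parts (i), multiplicativity, and the equivalence-preservation in (ii) all go through cleanly this way, and your observation that the \emph{same} $\lambda$ witnessing $\mk{a}\sim_{nf}\mk{a}'$ also witnesses $\Theta(\mk{a})\sim_f\Theta(\mk{a}')$ — because $\mu\mk{a}\subseteq nf\ca{O}\subseteq\mk{f}'$ and $\mu\in\mk{p}'\ca{O}'_{\mk{p}'}$ at each $\mk{p}'\mid n$ — is exactly the right mechanism.

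The only step I would not call routine is the surjectivity of $\wt{\Theta}$. You invoke the existence of a representative $\mk{b}''$ of $[\mk{b}']$ that is additionally coprime to $n\ca{O}'$. For a maximal order this is classical; for the non-maximal order $\ca{O}'$ it still holds, but the weak-approximation argument needs a word of justification since the primes of $\ca{O}'$ above $n$ are singular (one has to use that $\mk{b}'$ is $\ca{O}'$-invertible, hence locally principal, so local generators exist and can be adjusted by a totally positive $\lambda\equiv 1\pmod{\mk{f}'}$). Alternatively, surjectivity drops out of the fact that both $C_{\ca{O}}(n\mk{f})$ and $C_{\ca{O}'}(\mk{f}')$ are natural quotients of $C_K$ compatible with $\wt{\Theta}$ (cf.\ the remark after the proposition in the text identifying $\wt{\Theta}$ with a Galois restriction map), which avoids the approximation step altogether. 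Everything else is complete; I would only ask you to spell out, or replace, that last step.
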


\begin{proof} 
See Appendix \ref{appendix2}.
\end{proof}

We note that the map $\wt{\Theta}$ in the above proposition corresponds to the restriction map 
\begin{align*}
\Gal(K(\mk{f}n\infty)/K)\stackrel{res}{\rightarrow} \Gal(K(\mk{f}'\infty)/K).
\end{align*}
\section{An equivalence relation on pair of lattices with cyclic quotient}
For every $\tau\in K\bs\QQ$ we define 
$$
Q_{\tau}(x,y):=A(x-\tau y)(x-\tau^{\sigma}y)=
Ax^2+Bxy+Cy^2,
$$ 
with $A>0$, $A,B,C\in\ZZ$ and $(A,B,C)=1$ to be the unique \textit{primitive quadratic form} associated to $\tau$.
Usually, we will denote the coefficient of $x^2$ of $Q_{\tau}(x,y)$ by $A_{\tau}=A$. We note here 
that if $a$ is positive integer such that $a\Lambda_{\tau}\subseteq\ca{O}_{\tau}$ then $A_{\tau}|a$.
Recall that for every $\ZZ$-lattice $L\subseteq K$ we define 
$\End_K(L)=\{\lambda\in K:\lambda L\subseteq L\}$ and
when $\tau\in K\bs\QQ$ we denote $\End_K(\Lambda_{\tau})$ simply by $\ca{O}_{\tau}$. 
For an element $\tau\in K\bs\QQ$ one can check 
that $\ca{O}_{\tau}=\ZZ+A_{\tau}\tau\ZZ$. 
For every $\ZZ$-lattice $L\subseteq K$ we define $\Norm(L)$ to be the absolute value
of the determinant of a matrix with rational coefficients that takes a $\ZZ$-basis of $\ca{O}_K$ to a $\ZZ$-basis
of $L$. Note that for an element $a\in K^{\times}$ and an arbitrary order $\ca{O}$ one has
the formula $\Norm(a\ca{O})=|\Norm_{K/\QQ}(a)|\Norm(\ca{O})$. 
One may also check that 
\begin{align}\label{titi}
\Norm(\Lambda_{\tau})=\frac{\cond(\ca{O}_{\tau})}{A_{\tau}}\s\s\s\mbox{and}\s\s\s 
\Lambda_{\tau}\Lambda_{\tau^{\sigma}}=\frac{1}{A_{\tau}}\ca{O}_{\tau}.
\end{align}
In particular, if $\mk{a}$ is an invertible integral $\ca{O}$-ideal we obtain from the first equality of
\eqref{titi} that $\Norm(\mk{a})=\cond(\ca{O})[\ca{O}:\mk{a}]$.

Let $\mathcal{L}$ be the set of all lattices of $K$.
It is convenient to define a map that takes an element of $K\bs\QQ$ to an \textit{integral lattice} of $K$.
\begin{Def}\label{ini_def}
We set
\begin{align*}
I_{\_}:K\bs\QQ &\rightarrow \mathcal{L}\\
           \tau &\mapsto I_{\tau}:=A_{\tau}\Lambda_{\tau}, 
\end{align*}
where $A_{\tau}$ is the $x^2$-coefficient of $Q_{\tau}(x,y)$.
\end{Def}
We record the following  two useful formulas 
\begin{align}\label{blum}
\mbox{(i)}\s\s\s I_{\tau}^{-1}=\Lambda_{\tau^{\sigma}}\s\s\s\s\mbox{and}
\s\s\s\s\mbox{(ii)}\s\s\s\tau^{\sigma}I_{\tau}=I_{\frac{1}{\tau}}.
\end{align}

From now on we take the following convention:
\begin{Def}
Let $\ca{O}\subseteq\ca{O}'$ be two orders and let
$\mk{a}\in\mathcal{L}^{\ca{O}'}$ and $L\in\mathcal{L}^{\ca{O}}$. Assume furthermore 
that $\mk{a}$ is $\ca{O}'$-integral. Then the product $\mk{a}L$ is
taken to mean $(\mk{a}\cap\ca{O})L$ where the last product is the product as two
$\ca{O}$-ideals. 
\end{Def}
Let $N=\prod_{i=1}^r l_i^{e_i}$ where $l_i$'s are distinct prime numbers which split in $K$ and where 
the $e_i$'s are arbitrary positive integers. 
Choose a splitting of $\mk{N}$, i.e., an integral $\ca{O}_K$-ideal such that $\mk{N}\mk{N}^{\sigma}=N\ca{O}_K$.
Note that for any lattice $L\in\mathcal{L}^{\ca{O}}$ one has that
$\mk{N}L$ is again $\ca{O}$-invertible and that $L/\mk{N}L\simeq\ZZ/N\ZZ$.

We set $H(\mk{N})=\{\tau\in K:\mk{N}\Lambda_{\tau}=\Lambda_{N\tau},\tau-\tau^{\sigma}>0\}$,
and $H^{\ca{O}}(\mk{N})=\{\tau\in H(\mk{N}):\ca{O}_{\tau}=\ca{O}\}$. We note that
\begin{align}\label{thrill}
\mk{N}\Lambda_{\tau}=\Lambda_{N\tau}
\Longleftrightarrow
N\Lambda_{\tau}=\mk{N}^{\sigma}\Lambda_{N\tau}.
\end{align}

We have a natural stratification
\begin{align}\label{stratif}
H(\mk{N})=\bigcup_{\ca{O}}H^{\ca{O}}(\mk{N}),
\end{align}
where the union is disjoint and taken over all orders of $K$.
We note that if $\tau\in H^{\ca{O}}(\mk{N})$ then necessarily $N|A_{\tau}$. In particular, 
one may deduce from this that
\begin{align}\label{rouge}
\tau\in H^{\ca{O}}(\mk{N})\Longleftrightarrow \frac{-1}{N\tau}\in H^{\ca{O}}(\mk{N}).
\end{align}
We also define $\mathcal{L}(\mk{N})=\{[L,\mk{N}L]\in\mathcal{L}^2\}$
and $\mathcal{L}^{\ca{O}}(\mk{N})=\{[L,\mk{N}L]\in\mathcal{L}(\mk{N}):\End_K(L)=\ca{O}\}$,
where $\ca{O}$ is an arbitrary order of $K$. Similarly to \eqref{stratif} we have a natural stratification
\begin{align}\label{strat1}
\mathcal{L}(\mk{N})=\bigcup_{\ca{O}}\mathcal{L}^{\ca{O}}(\mk{N}).
\end{align}
\begin{Def}\label{tion}
Let $\ca{O}$ be a fixed order and let $\mk{g}\subseteq\ca{O}$ be a fixed $\ca{O}$-ideal. Let
$[\mk{a},\mk{b}],[\mk{a}',\mk{b}']\in\mathcal{L}^{\ca{O}}(\mk{N})$. We say that 
$[\mk{a},\mk{b}]\sim_{\mk{g}}[\mk{a}',\mk{b}']$ if and only if 
there exists $\lambda\in(\mk{a}^{-1}\mk{g}+1)$, $\lambda\gg 0$, such that 
$[\lambda\mk{a},\lambda\mk{b}]=[\mk{a}',\mk{b}']$.
\end{Def}
One may check that $\sim_{\mk{g}}$ gives rise to an equivalence relation on the
set $\mathcal{L}^{\ca{O}}(\mk{N})$. 

Now let $f\in\ZZ_{\geq 1}$ be an integer coprime to $Nd_K$. We now focus our attention on  
the two sets  $\ZZ/f\ZZ\times H(\mk{N})$ and $\mathcal{L}(\mk{N})$. On each of these
two sets we will define an equivalence relation and it will be shown that their respective quotients 
are in natural bijection. Consider the map
\begin{align*}
\psi:\ZZ/f\ZZ\times H(\mk{N})&\rightarrow\mathcal{L}(\mk{N})\\
(r,\tau)&\mapsto\left[\wt{r}I_{\tau},\wt{r}\mk{N}I_{\tau}\right]
\end{align*}
where $\sim\;:\ZZ/f\ZZ\rightarrow\{1,\ldots,f\}$ is the unique map such that for
any $r\in\ZZ/f\ZZ$ one has $\wt{r}\equiv r\pmod{f}$.
\begin{Def}
Let $(r,\tau),(r',\tau')\in\ZZ/f\ZZ\times H(\mk{N})$. We say that $(r,\tau)\sim_f (r',\tau')$
if and only if there exists a matrix $\gamma=\M{a}{b}{c}{d}\in\Gamma_0(fN)$ such that
$d^{-1}r\equiv r'\pmod{f}$ and $\tau'=\gamma\tau$. Let $[L,M],[L',M']\in
\mathcal{L}(\mk{N})$. We say that $[L,M]\sim_f [L',M']$ if and
only if there exists $\lambda\in(L^{-1}f+1)$, $\lambda\gg 0$, such that $[\lambda L,\lambda M]=[L',M']$. 
\end{Def}

We denote the class $(r,\tau)$ modulo $\sim_f$ by $[(r,\tau)]$ and the class
$[L,M]$ modulo $\sim_f$ by $[[L,M]]$.
\begin{Prop}\label{hard_prop}
The map
\begin{align*}
\wt{\psi}:(\ZZ/f\ZZ\times H(\mk{N}))/\sim_f &\rightarrow\mathcal{L}(\mk{N})/\sim_f\\
[(r,\tau)]&\mapsto\left[\psi(r,\tau)\right],
\end{align*}
is well defined and induces a bijection of sets.
\end{Prop}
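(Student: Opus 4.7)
The plan is to verify in turn that $\wt{\psi}$ is (i) well-defined, (ii) surjective, and (iii) injective. The central ingredient is the transformation law for $I_{\tau}$ under $SL_2(\ZZ)$: for $\gamma = \M{a}{b}{c}{d} \in SL_2(\ZZ)$, combining the standard identity $\Lambda_{\gamma \tau} = (c\tau+d)^{-1}\Lambda_{\tau}$ with the formulas \eqref{titi} for $A_{\tau}$ and $\Norm(\Lambda_{\tau})$ yields
\[
I_{\gamma \tau} \;=\; (c\tau^{\sigma}+d)\, I_{\tau}
\]
whenever $N_{K/\QQ}(c\tau+d) > 0$, and this positivity is automatic as soon as both $\tau$ and $\gamma \tau$ lie in $H(\mk{N})$, since that is precisely the condition that the orientation $\tau - \tau^{\sigma} > 0$ be preserved.

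For well-definedness, suppose $(r,\tau) \sim_f (r',\tau')$ via $\gamma \in \Gamma_0(fN)$. Setting $L = \wt{r}\, I_{\tau}$ and $M = \wt{r}\,\mk{N}\,I_{\tau}$, the transformation law gives $\psi(r',\tau') = [\mu L, \mu M]$ with $\mu = \wt{r'}(c\tau^{\sigma}+d)/\wt{r}$ (after possibly replacing $\gamma$ by $-\gamma$ so that $\mu \gg 0$). Using $L^{-1} = \wt{r}^{-1} \Lambda_{\tau^{\sigma}}$ from \eqref{blum}, the required membership $\mu - 1 \in f L^{-1}$ reduces, in the $\ZZ$-basis $\{1, \tau^{\sigma}\}$ of $\Lambda_{\tau^{\sigma}}$, to the two congruences $\wt{r'}d - \wt{r} \in f\ZZ$ and $\wt{r'}c \in fN\ZZ$; both are immediate from $d^{-1}r \equiv r' \pmod f$ and $c \equiv 0 \pmod{fN}$.

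For surjectivity, take $[L, \mk{N}L] \in \mathcal{L}(\mk{N})$ and write $L = \alpha I_{\tau}$ for some $\tau \in K \setminus \QQ$ with $\tau - \tau^{\sigma} > 0$ and $\alpha \in K^{\times}$; the equation $\mk{N}L = \alpha\, \mk{N} I_{\tau}$, combined with \eqref{thrill}, forces $\tau$ into $H(\mk{N})$ after possibly conjugating by $\sigma$. The remaining task is to find, inside the $\Gamma_0(fN)$-orbit of $\tau$, a representative for which the scalar $\alpha$ lies in $\ZZ + f\Lambda_{\tau^{\sigma}}$; once this is done, reducing $\alpha$ modulo $f\Lambda_{\tau^{\sigma}}$ produces a positive integer $\wt{r} \in \{1,\ldots,f\}$ with $\alpha - \wt{r} \in f L^{-1}$, and hence $[L, \mk{N}L] \sim_f \psi(r,\tau)$.

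For injectivity the argument reverses step (i): from $\psi(r,\tau) \sim_f \psi(r',\tau')$ one extracts $I_{\tau'} = (\mu \wt{r}/\wt{r'})\, I_{\tau}$, which realizes $\Lambda_{\tau'}$ as a $K^{\times}$-homothetic copy of $\Lambda_{\tau}$ and hence determines $\gamma \in SL_2(\ZZ)$ with $\tau' = \gamma \tau$; the compatibility $\mk{N} I_{\tau'} = (\mu\wt{r}/\wt{r'})\, \mk{N} I_{\tau}$ together with $\tau, \tau' \in H(\mk{N})$ forces $c \equiv 0 \pmod{fN}$, so $\gamma \in \Gamma_0(fN)$, while reading $\mu - 1 \in f L^{-1}$ in the $\{1,\tau^{\sigma}\}$-basis yields $d^{-1}r \equiv r' \pmod f$. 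The hard part will be surjectivity: one must show that the $\Gamma_0(fN)$-action on $H(\mk{N})$ is rich enough to push the homothety scalar $\alpha$ into $\ZZ + f\Lambda_{\tau^{\sigma}}$, and simultaneously track orientation so that the residue representative $\wt{r}$ genuinely lies in $\{1,\ldots,f\}$. The sign subtleties arising from $I_{\gamma\tau} = (c\tau^{\sigma}+d)\, I_{\tau}$ — in particular that $c\tau^{\sigma}+d$ can be negative even when $N(c\tau+d) > 0$ — have to be handled using the $\pm I$ ambiguity in $\Gamma_0(fN)$ together with the $r \leftrightarrow -r$ involution on $\ZZ/f\ZZ$.
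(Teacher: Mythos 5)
Your transformation law $I_{\gamma\tau}=(c\tau^{\sigma}+d)I_{\tau}$ (valid once the orientation condition on $H(\mk{N})$ forces $\Norm(c\tau+d)>0$), and the reduction of the congruence $\mu-1\in fL^{-1}=\frac{f}{\wt{r}}\Lambda_{\tau^{\sigma}}$ to the pair of integer congruences in the $\{1,\tau^{\sigma}\}$-basis, are both correct and are the right ingredients. But the clause ``after possibly replacing $\gamma$ by $-\gamma$ so that $\mu\gg0$'' hides a genuine gap. Replacing $\gamma$ by $-\gamma$ leaves $\gamma\tau$ unchanged but replaces $d$ by $-d$, hence replaces $r'=d^{-1}r$ by $-r'$. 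So after that replacement your computation establishes $\psi(r,\tau)\sim_f\psi(-r',\tau')$, not $\psi(r,\tau)\sim_f\psi(r',\tau')$. Closing the gap requires the separate assertion that $\psi(r',\tau')\sim_f\psi(-r',\tau')$, i.e.\ that $\wt{r'}I_{\tau'}\sim_f\wt{-r'}I_{\tau'}$. Unwinding $\sim_f$ and using $\wt{-r'}=f-\wt{r'}$, this amounts to finding a totally positive $\epsilon\in\ca{O}_{\tau'}^{\times}$ with $\wt{r'}(\epsilon+1)\in f\Lambda_{(\tau')^{\sigma}}$, i.e.\ $\epsilon\equiv-1$ modulo an appropriate index-$f$ sublattice, and there is no a priori reason such $\epsilon$ exists for a general real quadratic order and general $f$. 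And since $-I\in\Gamma_0(fN)$ always, $(r,\tau)\sim_f(-r,\tau)$ holds for \emph{every} pair, so this is not a corner case you may dodge: it is forced by well-definedness itself. The proof as written therefore does not establish well-definedness; a correct treatment must either prove the missing sublemma or handle the sign of $c\tau^{\sigma}+d$ without resorting to $\gamma\leftrightarrow-\gamma$.

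Separately, you acknowledge that surjectivity is the hard part and you leave it unproved: the claim that the $\Gamma_0(fN)$-action is ``rich enough to push the scalar $\alpha$ into $\ZZ+f\Lambda_{\tau^{\sigma}}$'' is exactly where the work lies, and no argument is given. So even setting aside the sign issue, the proposal is an outline rather than a proof. Your injectivity sketch inherits the same sign subtlety (the $\pm I$ ambiguity in recovering $\gamma$ from the homothety) and would need to be written with the same care.
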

\begin{proof} 
See Appendix \ref{appendix2}. 
\end{proof}

\subsection{An adelic action on $\mathcal{L}(\mk{N})$}
In this subsection we first define an ad\'elic action on the set
$\mathcal{L}^{\ca{O}}(\mk{N})/\sim_\mk{g}$.  
This ad\'elic action is the key ingredient that allows to define the
conjectural Shimura reciprocity law for the $p$-adic invariant $u_D$ 
(see Conjecture \ref{alg_conj}).
Second of all, using the identification between $\mathcal{L}(\mk{N})/\sim_f$ and  $\ZZ/f\ZZ\times H(\mk{N})/\sim_f$
(see Proposition \ref{hard_prop}) we transport this ad\'elic action to the set $\ZZ/f\ZZ\times H(\mk{N})/\sim_f$.
This ad\'elic action is used to define the
conjectural Shimura reciprocity law for the $p$-adic invariant $u_C$.

Let $C_K=\mathbf{A}_K^{\times}/K^{\times}$
be the id\`ele class group of $K$. By class field theory we have a short exact sequence
\begin{align*}
0\rightarrow C_{K}^{\circ}\rightarrow C_K\stackrel{rec}{\rightarrow} G_{K^{ab}/K}\rightarrow 0,
\end{align*}
where $C_{K}^{\circ}$ is the connected component of $C_K$ which contains the identity. 
For every integral $\ca{O}_K$-ideal $\mk{m}$ class field theory gives us a natural onto map
\begin{align*}
\pi_{\mk{m}}:C_{K}\rightarrow C_{\ca{O}_K}(\mk{m})
\end{align*} 
where $C_{\ca{O}_K}(\mk{m})=I_{\ca{O}_K}(\mk{m})/P_{\ca{O}_K,1}(\mk{m}\infty)$ stands for the usual 
narrow ray class group of conductor $\mk{m}$ of $K$.
\begin{Def}\label{atyran}
Let $\ca{O}$ be a fixed order of $K$ and let $\mk{g}\subseteq\ca{O}$ be an $\ca{O}$-ideal.
Let $n=\cond{\ca{O}}$, $g=[\ca{O}:\mk{g}]$ and let
\begin{align}\label{kettle}
\pi_{gn}:C_K\rightarrow C_{\ca{O}_K}(gn),
\end{align} 
be the surjective map given by class field theory. For every $c\in C_K$ and 
$[[L,M]]\in\mathcal{L}^{\ca{O}}(\mk{N})/\sim_{\mk{g}}$ (see Definition \ref{tion} for the meaning of $\sim_{\mk{g}}$) we define
\begin{align}\label{action}
c\star[[L,M]]=[[(\mk{a}\cap\ca{O})L,(\mk{a}\cap\ca{O})M]],
\end{align}
where $\pi_{gn}(c)=[\mk{a}]$.
\end{Def}
We note that the group action \eqref{action} is well defined. Indeed 
if $\mk{a},\mk{a}'\in I_{\ca{O}_K}(gn)$ and $\mk{a}\sim_{gn}\mk{a}'$ 
then from Proposition \ref{lilas} one has that $(\mk{a}\cap\ca{O})\sim_{g} (\mk{a}'\cap\ca{O})$ which 
in turns implies that $(\mk{a}\cap\ca{O})\sim_{\mk{g}} (\mk{a}'\cap\ca{O})$. 
Note also that since $(\mk{a},n\ca{O}_K)=\ca{O}_K$ one has that $\mk{a}\cap\ca{O}$ is $\ca{O}$-invertible.
Thus $(\mk{a}\cap\ca{O})L$ and $(\mk{a}\cap\ca{O})M$ are again $\ca{O}$-invertible ideals.

Using Proposition \ref{lilas}
we have a natural onto projection map 
\begin{align*}
\wt{\Theta}:C_{\ca{O}_K}(gn)&\rightarrow C_{\ca{O}}(g)\\
[\mk{a}]&\mapsto[\mk{a}\cap\ca{O}].
\end{align*}
Moreover, the natural map $p:C_{\ca{O}}(g)\rightarrow C_{\ca{O}}(\mk{g})$ is also onto. It thus 
follows that the map $p\circ\wt{\Theta}$ is onto. If one restricts the action of $C_K$ to a stratum 
$\mathcal{L}^{\ca{O}}(\mk{N})/\sim_{\mk{g}}$ then one may check that the action of $C_K$ factors
through the generalized ideal class group $C_{\ca{O}}(\mk{g})$ in the sense that for all
$[[L,M]]\in\mathcal{L}^{\ca{O}}(\mk{N})/\sim_{\mk{g}}$ one has that
\begin{align*}
c\star[[L,M]]=[[\mk{c}L,\mk{c}M]],
\end{align*}
where $[\mk{c}]=p\circ\wt{\Theta}\circ\pi_{gn}(c)$ and $\pi_{gn}$ is the map which appears in \eqref{kettle}.

Thanks to Proposition \ref{hard_prop}, we may now define an action of $C_K$ on 
the set $(\ZZ/f\ZZ\times H(\mk{N}))/\sim_f$.
\begin{Def}
Let $[(r,\tau)]\in(\ZZ/f\ZZ\times H(\mk{N}))/\sim_f$ and let
$\wt{\psi}[(r,\tau)]=[[L,M]]\in\mathcal{L}^{\ca{O}_{\tau}}(\mk{N})/\sim_f$. For $c\in C_K$ we define 
$$
c\star[(r,\tau)]:=\wt{\psi}^{-1}(c\star[[L,M]]).
$$ 
\end{Def}
\section{Zeta functions attached to real quadratic number fields}
In this section we introduce certain zeta functions attached to $K$ that play a key role
in the construction of the two $p$-adic invariants $u_C$ and $u_{D}$. There
are two sets of notation available if one wants to define these zeta functions. 
The first set of notation uses the language of quadratic 
forms. The second set of notation uses the language of ideals. Each of these two ways has its
own advantages and inconveniences. The language of quadratic forms was privileged in \cite{Dar-Das} and 
\cite{Ch1} while the language of ideals  was used in \cite{Das3}. 
One should note that for a general number field, 
the language of quadratic form has no equivalent. The reader will find in this section
the precise definitions of the zeta functions that appear in the construction of 
$u_C$ and $u_D$ and the relations between them.
We also collected some key facts about special values of partial zeta functions at negative integers 
which are used implicitly in the proof of Theorem \ref{vag_th} 

Let $\ca{O}$ be an arbitrary order of $K$ and let $\mk{f}$ be an $\ca{O}$-integral ideal.
We denote the group of
units (resp. totally positive units) of $\ca{O}^{\times}$ which are congruent to $1$ modulo $\mk{f}$ by $\ca{O}(\mk{f})^\times$ 
(resp. $\ca{O}(\mk{f}\infty)^\times$). In the case where $\mk{f}$ is a principal ideal generated
by an integer $f$ we will write $\ca{O}(f\infty)^\times$ rather than the ugly notation $\ca{O}(f\ca{O}\infty)^\times$.

\begin{Def}\label{matrix_def}
Let $\tau\in K\bs\QQ$. We define $\eta_{\tau}=\M{a}{b}{c}{d}$ to be the unique 
generator of $Stab_{\Gamma_1(f)}(\tau)$ such that $c\tau+d>1$. We also set
$\epsilon(\eta_{\tau}):=c\tau+d$.
\end{Def} 
\begin{Def}\label{tarte}
For every pair $(r,\tau)\in\ZZ/f\ZZ\times K\bs\QQ$ we define
\begin{enumerate}
\item[$(1)$] $\zeta((r,\tau),s)=-\displaystyle\sum_{\{(m,n)\in\ZZ^2\bs(0,0)\}/\laa\eta_{\tau}\raa}
\frac{\sign(Q_{\tau}(m,n))}{
|Q_{\tau}(m,n)|^s}e^{\frac{-2\pi ir n}{f}}$,\s\s\s $\Re(s)>1$,
\item[$(2)$] $\wh{\zeta}((r,\tau),s)=f^{2s}\displaystyle\sum_{
\left\{0\neq(m,n)\equiv (r,0)\pmod{f}\right\}/\laa\eta_{\tau}\raa} 
\frac{\sign(Q_{\tau}(m,n))}{|Q_{\tau}(m,n)|^s}$,\s\s\s $\Re(s)>1$.
\end{enumerate}
\end{Def}
The first summation is taken over a complete set of representatives of 
$\left\{(m,n)\in\ZZ^2\bs(0,0)\right\}$ modulo a right action 
of $\eta_{\tau}$ which we define below. Similarly the second summation
is taken over a complete set of representatives of 
\begin{align}\label{sett}
\left\{0\neq(m,n)\in\ZZ^2:(m,n)\equiv (r,0)\pmod{f}\right\}
\end{align}
modulo the right action of $\eta_{\tau}$.

The right action of a matrix $\M{a}{b}{c}{d}\in GL_2(\ZZ)$ on a pair $(x,y)$ is given by
\begin{align}\label{taille}
\R{x}{y}\sharp\M{a}{b}{c}{d}:=(-by+dx,ay-cx). 
\end{align}
We have used the symbol $\sharp$ in order to distinguish
it from the standard right multiplication of a row vector by a $2$ by $2$ matrix. We note that
\begin{align*}
\R{x}{y}\sharp\gamma=\left(\gamma^{-1}\V{x}{y}\right)^t.
\end{align*}
However, we have preferred to avoid the use of the transpose operation on vectors and matrices
so that the notation which appears in \eqref{taille} will be privileged. This right action is forced upon us for the following reason:
Let $\ca{O}(f\infty)^{\times}=\laa\epsilon\raa$ with $\epsilon>1$. Then the action of the unit $\epsilon$
on the column vector $\R{\tau}{1}^t$ is given by the standard left multiplication 
by the matrix $\eta_{\tau}$, i.e.,
\begin{align*}
\eta_{\tau}\V{\tau}{1}=\epsilon\V{\tau}{1}.
\end{align*} 
Now say that $\eta_{\tau}=\M{a}{b}{c}{d}$ then since 
\begin{align*}
\R{-y}{x}\V{\tau}{1}=x-\tau y
\end{align*}
we find that
\begin{align}\label{corbeille}
\R{-y}{x}\V{\epsilon\tau}{\epsilon}=\R{-y}{x}\M{a}{b}{c}{d}\V{\tau}{1},
\end{align}
which forces the right action defined in \eqref{taille}. We note that $(2)$ of Definition \ref{tarte} makes sense since for
$\gamma\in\Gamma_1(f)$ one has
\begin{align*}
\R{r}{0}\sharp\;\gamma\equiv\R{r}{0}\pmod{f}.
\end{align*}
We also note that the summations in $(1)$ and $(2)$ 
don't depend on the choice of the representatives.

\begin{Rem}
This subtle point concerning the right action of $\eta_{\tau}$ on the set
\eqref{sett} was overlooked in \cite{Ch1}. 
It was wrongly stated on p. 24 of \cite{Ch1} that 
the matrix $\eta_{\tau}$ was acting on the left by the rule $(x,y)\mapsto(ax+by,cx+dy)$.
This mistake had no implications for the results stated in \cite{Ch1} but it should nevertheless
be corrected. In fact the right action defined in \eqref{taille} will play 
a key role later on (see for example \eqref{apple} and \eqref{apple2}).
\end{Rem}
\begin{Rem}
The zeta functions appearing in $(1)$ and $(2)$ of Definition \ref{tarte} satisfy the identities
$\zeta((r,\tau),s)=\zeta((-r,\tau),s)$ and $\wh{\zeta}((r,\tau),s)=\wh{\zeta}((-r,\tau),s)$. 
In the special case where $f=1$ one has $\zeta((0,\tau),s)=-\wh{\zeta}((0,\tau),s)$.
One may also check that $\wh{\zeta}((0,\tau),s)$ coincides with the zeta function $\zeta_{Q_{\tau}}(s)$
which appears in equation (55) of \cite{Dar-Das}.
\end{Rem}

Recall that there is an action of $\Gamma_0(f)$ on the set $\ZZ/f\ZZ\times K\bs\QQ$ given by 
\begin{align*}
\gamma\star(r,\tau)=\left(d^{-1}r,\frac{a\tau+b}{c\tau+d}\right)\s\s\mbox{for}
\s\s\gamma=\M{a}{b}{c}{d}\in\Gamma_0(f)\s\s\mbox{and}\s\s (r,\tau)\in\ZZ/f\ZZ\times K\bs\QQ.
\end{align*} 
Let $(r,\tau),(r',\tau')\in\ZZ/f\ZZ\times K\bs\QQ$. A direct calculation shows that if $(r,\tau)$ is equivalent 
to $(r',\tau')$ modulo $\Gamma_0(f)$ then $\zeta((r,\tau),s)=\zeta((r',\tau'),s)$ and
$\wh{\zeta}((r,\tau),s)=\wh{\zeta}((r',\tau'),s)$.

The zeta functions $(1)$ and $(2)$ are related by a functional equation.
\begin{Th}\label{coal}
The function $\zeta((r,\tau),s)$ admits a meromorphic continuation to all of $\CC$. Moreover,
let $F_{w_1}(s)=\disc(Q_{\tau})^{s/2}\pi^{-s}\Gamma\left(\frac{s+1}{2}\right)^2$. Then
\begin{align}\label{charbon}
-F_{w_1}(s)\zeta((r,\tau),s)=F_{w_1}(1-s)\wh{\zeta}((r,\tau),1-s).
\end{align}
\end{Th}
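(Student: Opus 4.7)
The plan is to establish Theorem \ref{coal} by the classical Hecke--Siegel method: construct an integral representation of $\zeta((r,\tau),s)$ as a Mellin transform of an indefinite theta series, then apply Poisson summation on $\ZZ^2$ to obtain the functional equation. First I would introduce the eigen-coordinates $u(m,n) = m - n\tau$ and $v(m,n) = m - n\tau^{\sigma}$, so that $Q_{\tau}(m,n) = A_{\tau}\, u v$. A direct computation using \eqref{corbeille} shows that the right action of $\eta_{\tau}$ sends $u \mapsto \epsilon u$ and $v \mapsto \epsilon^{-1} v$ where $\epsilon = \epsilon(\eta_{\tau})$ (with $\epsilon\cdot\epsilon^{\sigma}=1$ since $\det \eta_{\tau} = 1$); in particular $uv$ is invariant. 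One checks directly that both the character $e^{-2\pi i r n/f}$ and the congruence $(m,n) \equiv (r,0)\pmod{f}$ are preserved under this action (since $\eta_{\tau}\in\Gamma_1(f)$), so both sums in Definition \ref{tarte} make sense as orbit sums.

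Next I would apply the Mellin representation
\begin{align*}
\sign(x)\,|x|^{-s} \;=\; \frac{2\pi^{(s+1)/2}}{\Gamma\!\left(\frac{s+1}{2}\right)} \int_0^\infty x\, e^{-\pi t x^2}\, t^{(s-1)/2}\, dt
\end{align*}
to both $u$ and $v$, turning each summand of $\zeta((r,\tau),s)$ into a double integral over $(t_1,t_2) \in \RR_{>0}^2$ with integrand $u v\, e^{-\pi(t_1 u^2 + t_2 v^2)}$. Under the substitution $(t_1,t_2) = (T/y,\, Ty)$, the $\eta_{\tau}$-action on the integration variables becomes $y \mapsto \epsilon^{-2} y$ (with $T$ unchanged), so integrating $y$ over the fundamental domain $[1,\epsilon^{2})$ absorbs the orbit average. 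The outcome is an integral representation of $F_{w_1}(s)\,\zeta((r,\tau),s)$ as $-\int_0^\infty T^{s-1}\,\Theta_{r,\tau}(T)\,dT$ (plus a boundary correction that is elementary to analyse), where
\begin{align*}
\Theta_{r,\tau}(T) \;=\; \sum_{(m,n)\in\ZZ^2\setminus 0} u v\, e^{-\pi T(y^{-1}u^2 + y v^2)}\, e^{-2\pi i r n/f}
\end{align*}
is an indefinite theta-type series. The factor $\disc(Q_{\tau})^{s/2}$ in $F_{w_1}(s)$ then arises from the Jacobian of the change of variables $(m,n)\mapsto(u,v)$ combined with the factor $A_{\tau}^{-s}$ coming from $Q_{\tau}=A_{\tau}uv$.

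The final step is Poisson summation in the $(m,n)$ variables applied to $\Theta_{r,\tau}$. The Gaussian $u v\, e^{-\pi(t_1 u^2 + t_2 v^2)}$ is self-dual under Fourier transform up to the exchange $(t_1,t_2) \leftrightarrow (t_1^{-1},t_2^{-1})$, so after Poisson summation the Mellin substitution $T \mapsto T^{-1}$ implements the involution $s \mapsto 1-s$ and exchanges the Gamma factors $\Gamma((s+1)/2)^2$ and $\Gamma(1-s/2)^2 = \Gamma(((1-s)+1)/2)^2$. Simultaneously, the character $e^{-2\pi i r n/f}$ on $\ZZ^2$ dualizes (via the pairing induced by $Q_{\tau}$, not the Euclidean one) to a restriction of the dual summation to the coset congruent to $(r,0) \pmod{f}$, producing the factor $f^{-2(1-s)}$ which is precisely cancelled against the $f^{2(1-s)}$ in the definition of $\wh\zeta((r,\tau),1-s)$. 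Matching all prefactors then yields the identity \eqref{charbon}, and the meromorphic continuation of both $\zeta$ and $\wh\zeta$ is a standard by-product of the integral representation, since $\Theta_{r,\tau}(T)$ decays exponentially as $T\to\infty$ and its $T\to 0$ behaviour is controlled by the Poisson-dual theta series (with only constant Fourier modes producing potential poles). The main obstacle is the bookkeeping in the Poisson-summation step: carefully identifying the dual coset under the $Q_{\tau}$-adapted pairing (which explains why a character in the $n$-variable on one side corresponds to a congruence in the first coordinate on the other side), tracking all the powers of $f$, $A_{\tau}$, and $\disc(Q_{\tau})$, and correctly handling the signs coming from $\sign(u)\sign(v)=\sign(Q_{\tau})$.
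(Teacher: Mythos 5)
The paper does not prove Theorem \ref{coal} in the text: it cites Siegel's computations \cite{Sie2}, the proof of Theorem 8.2 of \cite{Ch} (described as ``builds on ideas of \cite{Sie2}''), and Theorem 1.1 of \cite{Ch4} (``a different proof which uses the functional equation of a theta function''). Your sketch is the second of these two methods --- Poisson summation applied to an indefinite theta series built from the factorization $Q_\tau(m,n)=A_\tau uv$ in eigen-coordinates --- and it is correct in outline and in spirit. The key observation you flag, that the additive character $e^{-2\pi i rn/f}$ dualizes (under the $Q_\tau$-adapted change of variables rather than the Euclidean pairing) to the congruence $(m,n)\equiv(r,0)\pmod f$ in the \emph{first} coordinate, does check out: writing $(\xi',\eta')$ for the dual of $(\xi,\eta)$ under the matrix $M=\bigl(\begin{smallmatrix}1&-\tau\\1&-\tau^\sigma\end{smallmatrix}\bigr)$, one finds $\xi'\eta'=-\,\tfrac{1}{A_\tau(\tau-\tau^\sigma)^2}\,Q_\tau(\eta,-\xi)$, so the roles of the two slots are indeed interchanged, and the shift $(0,r/f)$ forced by the character translates into exactly the shift of $Q_\tau$'s \emph{first} argument. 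The $f^{2(1-s)}$ prefactor and the $\operatorname{disc}(Q_\tau)^{s/2}$ normalization come out of this bookkeeping essentially as you describe, and the overall minus sign in \eqref{charbon} comes from the sign $-i^2=-1$ picked up when taking the Fourier transform of $xe^{-\pi tx^2}$ twice.

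Two small inaccuracies you should tidy up. First, the Mellin formula has a stray factor of $2$: the correct identity is
\begin{align*}
\sign(x)\,|x|^{-s} \;=\; \frac{\pi^{(s+1)/2}}{\Gamma\!\left(\frac{s+1}{2}\right)} \int_0^\infty x\, e^{-\pi t x^2}\, t^{(s-1)/2}\, dt,
\end{align*}
which you can verify by the substitution $w=\pi t x^2$; with your normalization you would pick up spurious powers of $2$. Second, the object you call $\Theta_{r,\tau}(T)$ is not a function of $T$ alone --- the auxiliary variable $y$ still appears in the exponent. What you really want is a double integral over $(T,y)$ with $y$ restricted to a fundamental domain $[1,\epsilon^{2})$ for the scaling action, and then to apply Poisson summation to the full $\ZZ^2$-sum inside the $y$-integral; the $T\mapsto T^{-1}$ symmetry is then imposed by splitting the $T$-integral at $T=1$ and flipping, exactly as in the Riemann/Hecke treatment. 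None of this affects the strategy; it is a matter of keeping the regularization of the indefinite theta series honest. Given the paper's primary citation is to a Siegel-style argument in \cite{Ch} and only secondarily to the theta method in \cite{Ch4}, it is worth being aware that the paper's ``main'' proof route (whatever \cite{Ch} actually does) may look superficially different from yours, but your method is explicitly endorsed by the paper as an alternative.
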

\begin{proof}
This result can be deduced from the computations carried by Siegel in \cite{Sie2}. 
For a detailed proof which builds on ideas of \cite{Sie2}, see the proof of 
Theorem 8.2 of \cite{Ch}. 
For a different proof which uses the functional equation of a theta function see Theorem 1.1 of \cite{Ch4}. \fin
\end{proof}
\begin{Rem}
In particular, Theorem \ref{coal} allows us to give a meaning to the special values of 
$\wh{\zeta}((r,\tau),s)$ at negative integers. 
The zeta function appearing in $(1)$ will not be used in this paper. The author 
included it only in order to state the functional equation \eqref{charbon}.
\end{Rem}

Let $\ca{O}$ be a fixed order of $K$ and let $\mk{f}$ be an
$\ca{O}$-integral ideal (not necessarily invertible) which is distinct from $\ca{O}$. 
\begin{Def}
We say that an invertible $\ca{O}$-ideal $\mk{c}$ is $\mk{f}$-int if $\mk{c}$ can be written as
$\mk{c}=\mk{a}\mk{b}^{-1}$ where $\mk{a}$ and $\mk{b}$ are invertible integral $\ca{O}$-ideals such that
$(\mk{b},\mk{f})=\ca{O}$. 
\end{Def}
\begin{Rem}
We have used the notation $\mk{f}$-int in order to avoid any confusion with the 
notion of $\ca{O}$-integrality that was defined in Section \ref{notations}. Note also
that when $\mk{c}$ is an integral invertible $\ca{O}$-ideal then it is automatically
$\mk{f}$-int.
\end{Rem}
\begin{Def}\label{units}
Let $\mk{c}$ be an invertible $\ca{O}$-ideal which is $\mk{f}$-int and choose a writing
$\mk{c}=\mk{a}\mk{b}^{-1}$ such that $(\mk{b},\mk{f})=\ca{O}$.
We define
\begin{align*}
\Gamma_{\mk{c}}(\mk{f}):=\ca{O}(\infty)^{\times}\cap(\mk{f}\mk{c}^{-1}+n_{\mk{c}}),
\end{align*}
where $n_{\mk{c}}\in\ZZ$ is an arbitrarily chosen integer contained in $\mk{b}\subseteq\mk{c}^{-1}$ 
such that $n_{\mk{c}}\equiv 1\pmod{\mk{f}}$.
Note that the existence of such an integer $n_{\mk{c}}$ is guaranteed precisely because 
$\mk{c}$ is assumed to be $\mk{f}$-int.
\end{Def}
\begin{Rem} 
Let $\mk{a}$ and $\mk{b}$ be $\mk{f}$-int invertible $\ca{O}$-ideals. Then 
if there exists a $\lambda\in K^{\times}$ such that $\mk{a}=\lambda\mk{b}$ and that
$(\mk{a},\mk{f})=(\mk{b},\mk{f})$ then one has that $\Gamma_{\mk{a}}(\mk{f})=\Gamma_{\mk{b}}(\mk{f})$.
In particular, if one has that $\mk{a}\sim_{\mk{f}}\mk{b}$ then 
$\Gamma_{\mk{b}}(\mk{f})=\Gamma_{\mk{a}}(\mk{f})$.
\end{Rem}

We would like now to introduce partial zeta functions twisted by a \textit{sign character} 
$w:K^{\times}\rightarrow\{\pm 1\}$.
\begin{Def}
Let $\mk{c}$ be an invertible $\ca{O}$-ideal which is $\mk{f}$-int. We define
\begin{align*}
\zeta(\mk{c},\mk{f},w,s)=\Norm(\mk{c})^{-s}
\sum_{\Gamma_{\mk{c}}(\mk{f})\bs\{0\neq\mu\in(\mk{c}^{-1}\mk{f}+n_{\mk{c}})\}}
\frac{w(\mu)}{|\Norm_{K/\QQ}(\mu)|^s},\hspace{1.5cm} \Re(s)>1,
\end{align*}
where $n_{\mk{c}}\in\ZZ$ is an arbitrarily chosen integer as in Definition \ref{units}.
\end{Def}
The summation is taken over a complete set of representatives of $\{0\neq\mu\in(\mk{f}\mk{c}^{-1}+n_{\mk{c}})\}$
modulo $\Gamma_{\mk{c}}(\mk{f})$. One may check that $\zeta(\mk{c},\mk{f},w,s)$ is independent of 
the choice of $n_{\mk{c}}$ and that the first entry of $\zeta(\mk{c},\mk{f},w,s)$ depends on $\mk{c}$
only modulo $\sim_{\mk{f}}$. Let $\mk{b}$ be an invertible integral $\ca{O}$-ideal. Then 
if $\mk{c}$ is $\mk{f}$-int the $\ca{O}$-ideal $\mk{c}\mk{b}$ is automatically $\mk{f}\mk{b}$-integral and one has
\begin{align}\label{habiba}
\zeta(\mk{c},\mk{f},w,s)=\zeta(\mk{c}\mk{b},\mk{b}\mk{f},w,s).
\end{align}
\begin{Rem}
Similarly, one can define such zeta functions for an arbitrary number field. 
A functional equation similar to \eqref{charbon} holds, see \cite{Ch4}.
In \cite{Ch8}, the author worked out some of their arithmetic properties. 
\end{Rem}

We also define \textit{classical partial zeta functions} attached to an arbitrary order $\ca{O}$. 
\begin{Def}
For every invertible $\ca{O}$-ideal $\mk{c}$  we define
\begin{align*}
\zeta(\mk{c},\mk{f}\infty,s)=\Norm(\mk{c})^{-s}
\sum_{\Gamma_{\mk{c}}(\mk{f})\bs\{0\neq\mu\in(\mk{f}\mk{c}^{-1}+n_{\mk{c}}),\mu\gg 0\}}
\frac{1}{|\Norm_{K/\QQ}(\mu)|^s}\hspace{1.5cm} \Re(s)>1.
\end{align*}
\end{Def}
This function admits a meromorphic continuation to all of $\CC$ with a single pole (of order one)
at $s=1$. If $[\mk{a},\mk{f}]$ is primitive of conductor $[\ca{O},\mk{g}]$ we say that
$\zeta(\mk{a},\mk{f},w,s)$ (resp. $\zeta(\mk{a},\mk{f}\infty,s)$)
is primitive of conductor $[\ca{O},\mk{g}]$.
\begin{Rem}
If the ideal $\mk{a}$ is an integral $\ca{O}_K$-ideal (necessarily invertible) 
coprime to $\mk{f}$ one readily sees that
\begin{align}\label{fishy}
\zeta(\mk{a},\mk{f}\infty,s)=\zeta_R(K(\mk{f}\infty)/K,\sigma_{\mk{a}},s),
\end{align}
where $\sigma_{\mk{a}}\in G_{\mk{f}}:=\Gal(K(\mk{f}\infty)/K)$ is the Frobenius at $\mk{a}$ and
$R$ is the set of finite places of $K$ which divide $\mk{f}$.
The zeta function $\zeta_R(K(\mk{f}\infty)/K,\sigma_{\mk{a}},s)$ 
is the one that appears on the left hand side of \eqref{rabbit}.
\end{Rem}

The next two identities below give us a way to write a   
partial zeta function weighted by a sign character $w$
as a linear combination of classical partial zeta functions and vice-versa.
Let $\{\lambda_{i}\}_{i=1}^4$ be a complete set of representatives
of $Q_{\ca{O},1}(\mk{f})/Q_{\ca{O},1}(\mk{f}\infty)$ (for the definitions of $Q_{\ca{O},1}(\mk{f})$
and $Q_{\ca{O},1}(\mk{f}\infty)$ see Section \ref{notations}). Then for any invertible $\ca{O}$-ideal
$\mk{b}$ which is $\mk{f}$-int and any sign character $w$, a direct computation shows that
\begin{align}\label{tortue}
\zeta(\mk{b},\mk{f},w,s)=\sum_{i=1}^4|\Norm(\lambda_i)|^{-s}w(\lambda_i)\zeta(\mk{b}\lambda_i,\mk{f}\infty,s),
\end{align}
and that
\begin{align}\label{tortue2}
4\zeta(\mk{b},\mk{f}\infty,s)=
\sum_{\mbox{\tiny{$w$ is a sign character}}}\zeta(\mk{b},\mk{f},w,s).
\end{align}

There are two sign characters that will play a special role in our context, namely 
$w_0:=1$ and $w_1:=\sign\circ\Norm_{K/\QQ}$. 
We have the following lemma which relates special values 
at negative integers of $\zeta(\mk{a},\mk{f},w_i,s)$ (for $i\in\{0,1\}$) 
with the ones of $\zeta(\mk{a},\mk{f}\infty,s)$.
\begin{Lemma}\label{quiero}
Let $k\in\ZZ_{\geq 1}$ and $k\equiv i\pmod{2}$. Then
\begin{align*}
4\zeta(\mk{a},\mk{f}\infty,1-k)=\zeta(\mk{a},\mk{f},w_i,1-k).
\end{align*}
\end{Lemma}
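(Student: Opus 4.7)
The plan is to combine identity \eqref{tortue2} with a vanishing statement for $\zeta(\mk{a},\mk{f},w,1-k)$ at those sign characters $w$ whose parity is incompatible with $k$. The full group of sign characters on $Q_{\ca{O},1}(\mk{f})/Q_{\ca{O},1}(\mk{f}\infty)$ has order four: in addition to $w_0 = 1$ and $w_1 = \sign\circ\Norm_{K/\QQ}$, there are two ``mixed'' characters $w_2, w_3$ corresponding to the sign at a single one of the two archimedean places. So \eqref{tortue2} expands as
\begin{align*}
4\zeta(\mk{a},\mk{f}\infty,s) = \zeta(\mk{a},\mk{f},w_0,s) + \zeta(\mk{a},\mk{f},w_1,s) + \zeta(\mk{a},\mk{f},w_2,s) + \zeta(\mk{a},\mk{f},w_3,s),
\end{align*}
and the lemma reduces to showing that at $s=1-k$ only the single term indexed by $w_i$ with $i\equiv k\pmod 2$ is non-zero.

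Each $\zeta(\mk{a},\mk{f},w,s)$ satisfies a functional equation of the type proved in Theorem \ref{coal}; this is exactly the content referenced in \cite{Ch4} and alluded to in the remark following \eqref{habiba}. The archimedean factor $F_w(s)$ is a product of two Gamma factors, one per real place, of the form $\Gamma(s/2)$ at a place where $w$ is even and $\Gamma((s+1)/2)$ at a place where $w$ is odd. Consequently $F_{w_0}(s)$ is proportional to $\Gamma(s/2)^2$, $F_{w_1}(s)$ to $\Gamma((s+1)/2)^2$, and $F_{w_2}(s), F_{w_3}(s)$ to $\Gamma(s/2)\Gamma((s+1)/2)$.

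At $s=1-k$ with $k\geq 1$, the reflected point $s=k$ is a positive integer where $F_w(k)$ is finite and $\wh{\zeta}(\mk{a},\mk{f},w,k)$ is a convergent series, hence finite. Writing the functional equation in the form $F_w(s)\zeta(\mk{a},\mk{f},w,s) = \pm F_w(1-s)\wh{\zeta}(\mk{a},\mk{f},w,1-s)$ and evaluating at $s=1-k$, one sees that whenever $F_w(1-k)$ has a pole while the right-hand side stays finite, the value $\zeta(\mk{a},\mk{f},w,1-k)$ must vanish. A direct pole count now gives: $F_{w_0}(1-k)$, which involves $\Gamma((1-k)/2)^2$, has a pole iff $k$ is odd; $F_{w_1}(1-k)$, which involves $\Gamma(1-k/2)^2$, has a pole iff $k$ is even; and $F_{w_2}(1-k)$, $F_{w_3}(1-k)$, which involve $\Gamma((1-k)/2)\Gamma(1-k/2)$, have a pole for every $k\geq 1$. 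Thus among the four terms on the right of the expansion above, exactly the one indexed by $w_i$ with $i\equiv k\pmod 2$ survives at $s=1-k$, yielding the identity of the lemma.

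The main obstacle is the fact that Theorem \ref{coal} as stated in the paper only records the functional equation for the character $w_1$. To make the above argument rigorous one must either invoke the analogous functional equations for $w_0, w_2, w_3$ from \cite{Ch4}, or rerun the proof of Theorem \ref{coal} (following Siegel's computations in \cite{Sie2}) separately for each sign character, tracking how the Gamma factor varies with the parity of $w$ at each archimedean place. Once this is granted, the argument is entirely mechanical.
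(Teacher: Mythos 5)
Your proof is correct in outcome but takes a more self-contained route than the paper, and in doing so it actually makes explicit a step the paper glosses over. The paper's own proof simply cites \eqref{tortue2} together with the vanishing $\zeta(\mk{a},\mk{f},w,1-k)=0$ for the two \emph{mixed} sign characters $w=\pm\sign$, deferring to \cite{Ch8} for that fact. But \eqref{tortue2} is a sum over all four sign characters, so even after discarding the mixed ones one is left with $4\zeta(\mk{a},\mk{f}\infty,1-k)=\zeta(\mk{a},\mk{f},w_0,1-k)+\zeta(\mk{a},\mk{f},w_1,1-k)$; the lemma still requires the \emph{wrong-parity} term $\zeta(\mk{a},\mk{f},w_{1-i},1-k)$ to vanish as well. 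Your Gamma-factor pole count supplies exactly this missing ingredient in a uniform way, and it correctly sorts the four characters by parity. The cost is that you must invoke (or reprove) functional equations for all four $w$, not just the $w_1$ case recorded in Theorem \ref{coal}; you acknowledge this honestly. One small imprecision: for $k=1$ the claim that $\wh{\zeta}(\mk{a},\mk{f},w,k)$ is ``a convergent series, hence finite'' is not literally right, since $s=1$ sits at the boundary of convergence and, for $w=w_0$, the completed zeta function has a simple pole at $s=0,1$ coming from the trivial character component. The conclusion still goes through because that simple pole is dominated by the double pole of $\Gamma(s/2)^2$ at $s=0$, forcing $\zeta(\mk{a},\mk{f},w_0,0)=0$, but the reasoning should be phrased in terms of comparing pole orders of the completed function and of the Gamma factor rather than asserting finiteness of $\wh{\zeta}$ at $s=1$.
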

\begin{proof}
This lemma follows from \eqref{tortue2} and the key observation 
that $\zeta(\mk{a},\mk{f},w,1-k)=0$
for $w=\pm \sign$ and $k\in\ZZ_{\geq 1}$. For a proof and an explanation 
of the latter fact see p. 812 of \cite{Ch8}. \fin
\end{proof}
\begin{Rem}
One can show that if there exists an $\epsilon\in\ca{O}(\mk{f})^{\times}$ such that 
$w_1(\epsilon)=\sign(\Norm_{K/\QQ}(\epsilon))=-1$ then 
$\zeta(\mk{a},\mk{f},w_1,s)$ is identically equal to $0$.
\end{Rem}
\begin{Def}\label{index}
Let $G_1$ and $G_2$ be subgroups of $\ca{O}_K(\infty)^{\times}$ where 
$G_1=\laa\epsilon_1\raa$, $G_2=\laa\epsilon_2\raa$ and $\epsilon_1,\epsilon_2>1$. 
We define 
\begin{align*}
[G_1:G_2]\in\QQ_{>0}
\end{align*} 
to be the unique positive rational number $r$ such that $\epsilon_1^{r}=\epsilon_2$.
\end{Def}

The next lemma says the zeta function associated to a quadratic form which appears in $(2)$ 
of Definition \ref{tarte} is the same (up to a simple fudge factor) to a partial zeta function twisted by the sign 
character $w_1$. 
\begin{Lemma}\label{convert}
Let $(r,\tau)\in\ZZ/f\ZZ\times K\bs\QQ$ and let $\mk{f}=f\ca{O}_{\tau}$.
Then 
\begin{align}\label{kase}
\wh{\zeta}((r,\tau),s)=\mu(r,\tau)\Norm(\mk{f})^s
\zeta(\wt{r}I_{\tau},\mk{f},w_1,s),
\end{align}
where 
\begin{align}\label{sang}
\mu(r,\tau):=[\Gamma_{\wt{r}I_{\tau}}(\mk{f}):\laa\epsilon(\eta_{\tau})\raa]\in\QQ_{>0}.
\end{align}
\end{Lemma}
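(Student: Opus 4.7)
The plan is to rewrite $\wh{\zeta}((r,\tau),s)$ as a sum indexed by a coset of a lattice in $K$, and then identify it with $\mu(r,\tau)\Norm(\mk{f})^s\zeta(\wt{r}I_\tau,\mk{f},w_1,s)$. Two bijections do the work: the $\ZZ$-linear isomorphism $\ZZ^2\xrightarrow{\sim}\Lambda_{\tau^\sigma}$ given by $(m,n)\mapsto m-\tau^\sigma n$, and multiplication by $\wt{r}$ on $K$. Setting $\mk{c}=\wt{r}I_\tau$, the containment $I_\tau\subseteq\ca{O}_\tau$ lets us choose $n_{\mk{c}}\in 1+f\ZZ$ in Definition \ref{units}. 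From $I_\tau^{-1}=\Lambda_{\tau^\sigma}$ (equation \eqref{blum}(i)) one gets $\mk{c}^{-1}\mk{f}=(f/\wt{r})\Lambda_{\tau^\sigma}$, and hence multiplication by $\wt{r}$ sends $\mk{c}^{-1}\mk{f}+n_{\mk{c}}$ bijectively onto $\wt{r}+f\Lambda_{\tau^\sigma}$; under the identification $(m,n)\leftrightarrow m-\tau^\sigma n$, the latter is exactly the indexing set of Definition \ref{tarte}(2).

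Next I match the Euler factors. From $Q_\tau(m,n)=A_\tau\Norm_{K/\QQ}(m-\tau^\sigma n)$, the scaling $|\Norm(\wt{r}\mu)|=\wt{r}^2|\Norm(\mu)|$, and the formulas $\Norm(\mk{c})=\wt{r}^2A_\tau\cond(\ca{O}_\tau)$ and $\Norm(\mk{f})=f^2\cond(\ca{O}_\tau)$ coming from \eqref{titi}, all prefactors collapse to
\begin{align*}
\Norm(\mk{f})^s\,\Norm(\mk{c})^{-s}\cdot\wt{r}^{2s}\,A_\tau^s=f^{2s},
\end{align*}
which is precisely the prefactor of $\wh{\zeta}((r,\tau),s)$. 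Since $A_\tau>0$ and $\wt{r}>0$, we also have $w_1(\mu)=\sign\Norm_{K/\QQ}(\mu)=\sign Q_\tau(m,n)$, so the summands agree term by term.

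It remains to compare the two group actions defining the quotients. For $\eta_\tau=\M{a}{b}{c}{d}$, applying $\sigma$ to the fixed-point equation $c\tau^2+(d-a)\tau-b=0$ and a direct computation give
\begin{align*}
(-by+dx)-\tau^\sigma(ay-cx)=\epsilon(\eta_\tau)^\sigma\,(x-\tau^\sigma y),
\end{align*}
so the right action of $\eta_\tau$ on $(m,n)$ corresponds under the bijection of Step 1 to multiplication by $\epsilon(\eta_\tau)^\sigma\in\ca{O}_\tau(\infty)^\times$; since $\epsilon(\eta_\tau)^\sigma$ and $\epsilon(\eta_\tau)$ generate the same cyclic subgroup, $\laa\eta_\tau\raa$-orbits coincide with $\laa\epsilon(\eta_\tau)\raa$-orbits. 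On the other side, $\Gamma_{\wt{r}I_\tau}(\mk{f})$ acts on the same coset $\wt{r}+f\Lambda_{\tau^\sigma}$ by multiplication (well-definedness follows from $\mk{c}\subseteq\ca{O}_\tau$, which gives $\mk{f}\subseteq\mk{f}\mk{c}^{-1}$). Both $\laa\epsilon(\eta_\tau)\raa$ and $\Gamma_{\wt{r}I_\tau}(\mk{f})$ sit inside the infinite cyclic group $\ca{O}_\tau(\infty)^\times$, and the summand is invariant under the full unit group, so comparing fundamental domains introduces exactly the factor $[\Gamma_{\wt{r}I_\tau}(\mk{f}):\laa\epsilon(\eta_\tau)\raa]=\mu(r,\tau)$ from Definition \ref{index}, yielding the identity \eqref{kase}. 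The main obstacle is precisely this bookkeeping: checking that both group actions are well-defined on the common coset $\wt{r}+f\Lambda_{\tau^\sigma}$ and that the right action of $\eta_\tau$ on $(m,n)$ really amounts to multiplication by a totally positive unit of $\ca{O}_\tau$; everything else reduces to routine lattice and norm arithmetic.
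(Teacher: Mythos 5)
The paper dispatches this lemma with ``This is a straightforward computation,'' and your proof is exactly the detailed working-out of that computation. The three ingredients you isolate — the isomorphism $(m,n)\mapsto m-\tau^\sigma n$ matching the coset $\wt{r}+f\Lambda_{\tau^\sigma}$ with the congruence set of Definition \ref{tarte}(2), the cancellation $\Norm(\mk{f})^s\Norm(\wt{r}I_\tau)^{-s}\wt{r}^{2s}A_\tau^s=f^{2s}$ via \eqref{titi}, and the identification of the $\sharp$-action of $\eta_\tau$ with multiplication by $\epsilon(\eta_\tau)^\sigma$ (which generates the same cyclic group as $\epsilon(\eta_\tau)$ since $\epsilon\epsilon^\sigma=1$) — are all correct, and the orbit-counting factor $[\Gamma_{\wt{r}I_\tau}(\mk{f}):\laa\epsilon(\eta_\tau)\raa]$ drops out precisely as you describe.
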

\begin{proof}
This is straightforward computation. \fin
\end{proof}
\begin{Rem}\label{fromage}
We note that in the special case where $\tau\in H^{\ca{O}_K}(\mk{N})$ is such that $(A_{\tau},f)=1$, 
it may be shown that $\epsilon(\eta_{\tau})$ is a generator of $\ca{O}_K(\mk{f}\infty)^{\times}$
and therefore if $r\in(\ZZ/f\ZZ)^{\times}$ one has that $\mu(r,\tau)=1$.
\end{Rem}
\section{Two groups of divisors}
In this section we introduce two groups of divisors $\Div_f(N)$ and $\wt{\Div}(\mk{N})$ 
which are free $\ZZ$-modules of finite type
endowed with an additional structure of a $\ZZ/f\ZZ$-module. The first group of divisors 
captures in essence the choice of the family of modular units $\{\beta_r(z)\}_{r\in\ZZ/f\ZZ}$ 
which appeared in the introduction. The 
second group of divisors may be viewed as a convenient way of \lq\lq twisting\rq\rq\;Shintani zeta functions
in order to regularize their special values at negative integers. 

For the rest of the section we fix a triple $(p,f,N)$ as in the introduction where now $N$ 
is no longer assumed to be square-free. We let 
$N=\prod_{i=1}^r l_i^{e_i}$ and $\mk{N}=\prod_{i=1}^r\eta_i^{e_i}$ where $\eta_i$ is a choice of
a prime of $\ca{O}_K$ above $l_i$. Thus one has $\ca{O}_K/\mk{N}\ca{O}_K\simeq\ZZ/N\ZZ$.
\subsection{The group $\Div_f(N)$}
Let $\Div(N)$ be the free abelian group generated by the symbols 
$\{[d]:d|N,d>0\}$ and let
\begin{align*}
\Deg:\Div(N)&\rightarrow\ZZ\\[2mm]
\sum_{d|N}n(d)[d]&\mapsto\sum_{d|N}n(d)d,
\end{align*}
be the degree map. We denote by
$\Div^0(N)$ the kernel of $\Deg$. It is also convenient to define an involution 
$*:\Div(N)\rightarrow \Div(N)$ given by the rule $[d]^*=\left[\frac{N}{d}\right]$. Thus
if $\delta=\sum_{d|N}n(d)[d]$ one has $\delta^*=\sum_{d|N}n(d)\left[\frac{N}{d}\right]$. We define 
\begin{align*}
\Div_f(N):=\Div(N)\times\ZZ/f\ZZ.
\end{align*} 
A typical element of $\delta\in \Div_f(N)$ will be denoted by
\begin{align*}
\delta=\sum_{d|N,r\in\ZZ/f\ZZ}n(d,r)[d,r]\s\s\s\mbox{where}\s\s n(d,r)\in\ZZ,
\end{align*} 
where $[d,n]$ is a shorthand notation for the element $([d],r)\in\Div(N)\times\ZZ/f\ZZ$.

We endow the abelian group $\Div_f(N)$ with a left $\ZZ/f\ZZ$-module structure
which is given on a generator $[d,j]$ by the rule $r\star[d,j]=[d,rj]$ for $r\in\ZZ/f\ZZ$.
Then we extend $\star$ to all of $\Div_f(N)$ by $\ZZ$-linearity.
For a divisor $\delta\in \Div_f(N)$
we will also use the short hand notation $\delta_r$ which is taken to mean $r\star\delta$.
We extend the involution $*$ to $\Div_f(N)$ by the rule $[d,r]^*=[\frac{N}{d},-r]$. The 
$\ZZ$-module $\Div_f(N)$ admits a direct sum decomposition given by
\begin{align*}
\Div_f(N)=\bigoplus_{t\in\ZZ/f\ZZ}\Div_f(N)_t,
\end{align*}
where $\Div_f(N)_t=\{\delta\in \Div_f(N):\delta=\sum_{d|N}n(d)[d,t]\}$. 
For every $t\in\ZZ/f\ZZ$, we have a natural projection maps $\pi_t:\Div_f(N)\rightarrow \Div_f(N)_t$ given by
$\sum_{d|N,r\in\ZZ/f\ZZ}n(d,r)[d,r]\mapsto\sum_{d|N}n(d,t)[d]$. We also have 
a degree map on $\Div_f(N)_t$ which is given by $\sum_{d|N}n(d,t)[d]\mapsto\sum_{d|N}n(d,t)d$.

Let $(p,f,N)$ be a triple as in the introduction. The next definition is crucial for 
the construction of the $p$-adic invariants $u_{DD}$ and $u_C$.
\begin{Def}\label{good_div}
We say that a divisor $\delta\in \Div_f(N)$ is a good divisor (with respect to the triple $p,f,N$) if
the following two conditions are satisfied: 
\begin{enumerate}
\item[$(1)$] For all $t\in\ZZ/\ZZ$ one have $\Deg(\pi_t(\delta))=0$,
\item[$(2)$] $p\star\delta=\delta$.
\end{enumerate}
\end{Def}
\begin{Rem}
In Appendix \ref{goeland}, it is explained how to associate a modular unit $\beta_{\delta}(z)$ to a divisor 
$\delta\in \Div_f(N)$. The condition $(1)$ implies that the 
modular unit $\beta_{\delta}(z)$ has no zeros nor
poles on the set $\Gamma_0(fN)\{\infty\}$ and $(2)$ implies 
that $\frac{\beta_{\delta}(z)}{\beta_{\delta}(pz)}$
is $U_p$-invariant (see Section 4 of \cite{Ch_T} for further details). 
\end{Rem}
\subsection{The group $\wt{\Div}_f(\mk{N})$}\label{div_tilde}
In this subsection we fix an order $\ca{O}$, an $\ca{O}$-integral ideal
$\mk{f}=f\ca{O}$ and an element $\tau\in H^{\ca{O}}(\mk{N})$. We let
$S$ be the set of places of $K$ that consists exactly of all the infinite ones, $\wp=p\ca{O}_K$ 
and the ones that ramify in $K(\mk{f}\infty)$. We note that $K(\mk{f}\infty)$ is an 
unramified extension of $K$ outside $f\cdot \cond(\ca{O})$. We let $R=S\bs\{\wp\}$. 

Define $\wt{\Div}(\mk{N})$ be the free $\ZZ$-module generated by the symbols 
$[\mk{d}]$ where $\mk{d}|\mk{N}$. An element
$\wt{\delta}\in\wt{\Div}(\mk{N})$ will be denoted by
\begin{align*}
\sum_{\mk{d}|\mk{N}} n(\mk{d})[\mk{d}],
\end{align*}
where $n(\mk{d})\in\ZZ$. We also define an involution $*$
on $\wt{\Div}(\mk{N})$ which is given on generators by 
$[\mk{d}]^*=\left[\frac{\mk{N}}{\mk{d}}\right]$.

We now fix an identification between $\wt{\Div}(\mk{N})$ and $\Div(N)$.
\begin{Def}\label{phi}
Let
$$
\Phi:\wt{\Div}(\mk{N})\rightarrow \Div(N)
$$  
be the map which is given on generators by $[\mk{d}]
\mapsto\left[\frac{\Norm(\mk{N})}{\Norm(\mk{d})}\right]$. Then we extend
$\Phi$ to all of $\wt{\Div}(\mk{N})$ by $\ZZ$-linearity.
\end{Def}

We note that $\Phi(\wt{\delta}^*)=\Phi(\wt{\delta})^*$

Because of our choice of $\Phi$ we define the degree map on $\wt{\Div}(\mk{N})$ in the following way:
\begin{align*}
\Deg:\wt{\Div}(\mk{N})&\rightarrow\ZZ\\
\sum_{\mk{d}\mk{N}} n(\mk{d})[\mk{d}]
&\mapsto\sum_{\mk{d}|\mk{N}} n(\mk{d})\Norm\left(\frac{\mk{N}}{\mk{d}}\right).
\end{align*} 
In this way, elements of degree $0$ in 
$\wt{\Div}(\mk{N})$ map to elements of degree $0$ in $\Div(N)$ under $\Phi$. 
We denote the kernel of $\Deg$ by $\wt{\Div}^{0}(\mk{N})$.
\begin{Def}
Let $\mk{d}'$ and $\mk{d}$ be two divisors of $\mk{N}$. We say that 
$\mk{d}'$ is consecutive to $\mk{d}$ if $\frac{\mk{d}'}{\mk{d}}=\eta_i$
for some $i$.
\end{Def}

The next lemma will play a key role later on
\begin{Lemma}\label{tired}
The group $\wt{\Div}^{0}(\mk{N})$ is generated by elements of the form 
\begin{align*}
d'[\mk{d}']-d[\mk{d}],
\end{align*}
where $\mk{d}'$ is consecutive to $\mk{d}$, $d'=\Norm(\mk{d}')$ and $d=\Norm(\mk{d})$.
\end{Lemma}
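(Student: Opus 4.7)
The plan is to argue in two steps: first, that each proposed generator $d'[\mk{d}']-d[\mk{d}]$ does lie in $\wt{\Div}^{0}(\mk{N})$, and second, that these generators exhaust the kernel. Verifying the first step is direct: since $\mk{d}'=\mk{d}\eta_{i}$, we have $d'=\Norm(\mk{d}')=l_{i}d$ and $\Norm(\mk{N}/\mk{d}')=\Norm(\mk{N}/\mk{d})/l_{i}$, so $\Deg(d'[\mk{d}']-d[\mk{d}])=\Norm(\mk{d}')\Norm(\mk{N}/\mk{d}')-\Norm(\mk{d})\Norm(\mk{N}/\mk{d})=\Norm(\mk{N})-\Norm(\mk{N})=0$.

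For the spanning step, let $G\subseteq\wt{\Div}(\mk{N})$ be the subgroup generated by the proposed relations. The guiding observation is that the relation $\Norm(\mk{d}')[\mk{d}']-\Norm(\mk{d})[\mk{d}]\in G$ for each consecutive pair, iterated along any chain $\mk{d}=\mk{d}_{0}\subsetneq\mk{d}_{1}\subsetneq\cdots\subsetneq\mk{d}_{k}=\mk{N}$ of consecutive divisors, yields $\Norm(\mk{d})[\mk{d}]\equiv\Norm(\mk{N})[\mk{N}]\pmod{G}$ for every $\mk{d}\mid\mk{N}$. In other words, in the quotient $\wt{\Div}(\mk{N})/G$ the class of $\Norm(\mk{d})[\mk{d}]$ is independent of $\mk{d}$. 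The spanning claim then amounts to rewriting an arbitrary $\wt{\delta}=\sum_{\mk{d}}n(\mk{d})[\mk{d}]\in\wt{\Div}^{0}(\mk{N})$ modulo $G$ as a scalar multiple of $[\mk{N}]$ whose coefficient one reads off as $\Deg(\wt{\delta})=0$.

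The main obstacle is the bookkeeping required to carry out this last rewriting inside the free $\ZZ$-module $\wt{\Div}(\mk{N})$ itself, rather than after tensoring with $\QQ$: the individual congruences naturally involve $\Norm(\mk{d})[\mk{d}]$ and not $[\mk{d}]$, so the cumulative reduction of $\wt{\delta}$ must be shown to be an \emph{integer} combination of the stated generators, a fact which rests on the degree-zero hypothesis. I would formalize this by induction on the total number $\sum_{i}e_{i}$ of prime ideals (counted with multiplicity) dividing $\mk{N}$: for the base case $\mk{N}=\eta^{e}$ one telescopes along the unique chain $1,\eta,\eta^{2},\ldots,\eta^{e}$, and for the inductive step one peels off a single prime $\eta_{i}$ at a time, using the decomposition $\wt{\Div}(\mk{N})=\bigoplus_{f=0}^{e_{i}}[\eta_{i}^{f}]\cdot\wt{\Div}(\mk{N}/\eta_{i}^{e_{i}})$ and the multiplicativity of the norm to inherit the degree-zero condition on the induction hypothesis. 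Once this integrality bookkeeping is under control, the lemma follows.
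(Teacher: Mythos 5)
Your proposed integrality bookkeeping is exactly where the argument breaks, and the base case of your induction already fails. The telescoping along the chain $1,\eta,\ldots,\eta^{e}$ produces the recurrence $c_{j}=c_{j-1}-n_{j}/l^{j}$ for the coefficients, and the degree-zero hypothesis $\sum_{j}n_{j}l^{e-j}=0$ does \emph{not} force $l^{j}\mid n_{j}$. Concretely, for $\mk{N}=\eta^{2}$ with $l=\Norm(\eta)$, the stated generators are $g_{1}=l[\eta]-[1]$ and $g_{2}=l^{2}[\eta^{2}]-l[\eta]$, whereas $\xi=[\eta]-l[\eta^{2}]$ satisfies $\Deg(\xi)=l\cdot 1-1\cdot l=0$; comparing coefficients in $ag_{1}+bg_{2}=\xi$ forces $a=0$ from the $[1]$-slot and then $-bl=1$ from the $[\eta]$-slot, which has no integer solution. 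The same obstruction already appears for square-free $\mk{N}=\eta_{1}\eta_{2}$: the degree-zero element $[\eta_{1}]-l_{2}[\eta_{1}\eta_{2}]$ is not in the $\ZZ$-span of the four generators because the $[\eta_{1}]$-coefficient would require $l_{1}\mid 1$. So the lemma is in fact false as stated, and your proof cannot be repaired without changing the generating set.

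What your argument does establish is the $\QQ$-span statement. The natural integral fix is to pass to the primitive parts: the rescaled elements $[\mk{d}]-l_{i}[\mk{d}']$ (with $\mk{d}'=\mk{d}\eta_{i}$), which equal $-\Norm(\mk{d})^{-1}\bigl(d'[\mk{d}']-d[\mk{d}]\bigr)$, do generate $\wt{\Div}^{0}(\mk{N})$ over $\ZZ$. With these the telescoping recurrence becomes $c_{j}=n_{j}+lc_{j-1}$, manifestly integral, and $\Deg=0$ supplies exactly the closing compatibility $n_{e}=-lc_{e-1}$. The paper's own proof is only the assertion that this is an easy induction, so it offers no help in resolving the discrepancy; note, however, that the specific divisors fed into the propositions that cite this lemma (e.g.\ $\sum_{\mk{d}\mid\mk{N}}(-1)^{\wt{\mk{e}}(\mk{d})}\Norm(\mk{d})[\mk{d}]$ in the square-free case) do appear to lie in the $\ZZ$-span of the paper's stated generators, so the downstream reductions are likely salvageable once the lemma is restated with the rescaled generators.
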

\begin{proof}
This is an easy induction argument. \fin
\end{proof}
We define $\wt{\Div}_f(\mk{N}):=\wt{\Div}(\mk{N})\times\ZZ/f\ZZ$ and endow it with its natural structure
of $\ZZ/f\ZZ$-module. An element $\wt{\delta}\in\wt{\Div}_f(\mk{N})$ will be denoted as
$\sum_{\mk{d}|\mk{N},r\in\ZZ/f\ZZ}n(\mk{d},r)[\mk{d},r]$. There is 
a natural direct sum decomposition of $\wt{\Div}(\mk{N})$ which is given by
\begin{align*}
\wt{\Div}_f(\mk{N})=\bigoplus_{t\in\ZZ/f\ZZ}\wt{\Div}_f(\mk{N})_t,
\end{align*}  
where $\wt{\Div}_f(\mk{N})_t=\{\wt{\delta}\in\wt{\Div}(\mk{N}):\wt{\delta}=\sum_{\mk{d}|\mk{N}}n(\mk{d})[\mk{d},t]\}$.
The isomorphism $\Phi$
induces in a natural way an isomorphism of $\ZZ/f\ZZ$-module between $\wt{\Div}_f(\mk{N})$ and $\Div_f(N)$ 
which we again denote by $\Phi$. 
For every $t\in\ZZ/f\ZZ$, we let $\pi_t:\wt{\Div}_f(\mk{N})\rightarrow\wt{\Div}_f(\mk{N})_t$ denote the natural projection map. 
Note that $\pi_t\circ\Phi=\Phi\circ\pi_t$.

Finally, we say that a divisor $\wt{\delta}\in\wt{\Div}_f(\mk{N})$ 
is a good divisor (with respect to the triple $(p,f,N)$) if and only if 
$\Phi(\wt{\delta})\in \Div_f(N)$ is a good divisor.
\section{Zeta functions attached to divisors}
Let $(p,f,N)$ be a triple as in the introduction. Now we would like to associate to any divisor $\delta\in \Div_f(N)$ 
and every pair $(r,\tau)\in\ZZ/f\ZZ\times K\bs\QQ$ two zeta functions.
One can think of these zeta functions as being \lq\lq twisted\rq\rq\; by the divisor $\delta$.
\begin{Def}\label{def1}
Let $\delta=\sum_{d|N,r\in\ZZ/f\ZZ}n(d,r)[d,r]\in \Div_f(N)$ be a good divisor and
let $(r,\tau)\in\ZZ/f\ZZ\times K\bs\QQ$. We define
\begin{enumerate}
\item[$(1)$] $\zeta(\delta,(r,\tau),s)=\displaystyle\sum_{d|N,j\in\ZZ/f\ZZ}
n\left(d,j\right)d^{-s}\cdot\wh{\zeta}((rj,d\tau),s)$,
\item[$(2)$] $\zeta^*(\delta,(r,\tau),s)=\displaystyle\sum_{d|N,j\in\ZZ/f\ZZ}
n\left(\frac{N}{d},j\right)\left(\frac{N}{d}\right)^{s}\cdot\wh{\zeta}((-rj,d\tau^*),s)$,
\end{enumerate}
where $\tau^*=\frac{-1}{fN\tau}$ and $\wh{\zeta}$ is the zeta function which appears in $(2)$
of Definition \ref{tarte}.
\end{Def}
\begin{Rem}
We would like to warn the reader about a typo in \cite{Ch1}. In $(2)$ of Definition 5.4 of \cite{Ch1} 
one should replace the term $d_0^{s}$ by $(\frac{N_0}{d_0})^s$. Also the definition of $(1)$ above varies slightly from
the definition which appears in Definition 5.4 of \cite{Ch1} and Definition 9.2 of \cite{Ch_T}. 
This should not result in any inconsistency since all the statements that we will make
in this paper concerning $\zeta(\delta,(r,\tau),s)$ will be proved without using any  
reference to the papers \cite{Ch1} and \cite{Ch_T}. We note however that 
the special value at $s=0$ is independent of the chosen definition. 
\end{Rem}
\begin{Lemma}\label{efkaristo}
We have the following relations between the three types of zeta functions:
\begin{align}\label{chev}
\zeta^*(\delta,(r,\tau),s)=[\epsilon(\eta_{\tau}):\epsilon(\eta_{\tau^*})]N^s\zeta(\delta^*,(r,\tau^*),s).
\end{align}
\end{Lemma}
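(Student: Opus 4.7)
The plan is a direct computation: unfold both sides via Definition \ref{def1}, make a single substitution $d \mapsto N/d$ in the summation index, and track the difference in unit-group conventions used when defining $\wh\zeta$ on each side.

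First I would expand the left-hand side according to Definition \ref{def1}(2):
$$\zeta^*(\delta,(r,\tau),s) = \sum_{d\mid N,\, j\in\ZZ/f\ZZ} n(N/d, j)\,(N/d)^{s}\,\wh\zeta\bigl((-rj,\, d\tau^*),\, s\bigr).$$
Relabeling $d \mapsto N/d$ rewrites this as
$$\zeta^*(\delta,(r,\tau),s) = \sum_{d\mid N,\, j} n(d, j)\, d^{s}\, \wh\zeta\bigl((-rj,\, (N/d)\tau^*),\, s\bigr),$$
with the implicit unit group attached to $\tau$ (i.e., $\langle \epsilon(\eta_\tau)\rangle$).

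Next, because $[d,j]^* = [N/d,-j]$, one has $\delta^* = \sum_{d,j} n(d,j)\,[N/d,-j]$. Unfolding $\zeta(\delta^*,(r,\tau^*),s)$ via Definition \ref{def1}(1) and performing the substitution $d' \mapsto N/d$, $j' \mapsto -j$ gives
$$\zeta(\delta^*,(r,\tau^*),s) = \sum_{d\mid N,\, j} n(d, j)\,(N/d)^{-s}\, \wh\zeta\bigl((-rj,\, (N/d)\tau^*),\, s\bigr),$$
now with the implicit unit group attached to $\tau^*$. Multiplying by $N^{s}$ matches the weight $d^{s}$ appearing on the left-hand side.

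At this point the two sides agree term-by-term except for the cyclic subgroup of $\ca{O}_K(\infty)^\times$ modulo which the orbit sum in each $\wh\zeta$ is taken: on the LHS it is $\langle\epsilon(\eta_\tau)\rangle$, on the RHS it is $\langle\epsilon(\eta_{\tau^*})\rangle$. By Definition \ref{index}, if $\epsilon(\eta_\tau)^{r} = \epsilon(\eta_{\tau^*})$ with $r \in \QQ_{>0}$, then an orbit under the larger subgroup consists of $r$ orbits under the smaller one, so the LHS equals $r$ times the RHS (after multiplication by $N^{s}$). This is precisely the factor $[\epsilon(\eta_\tau):\epsilon(\eta_{\tau^*})]$.

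The main obstacle is the stabilizer bookkeeping in the last step: one must verify rigorously that translating between orbit sums modulo $\langle\epsilon(\eta_\tau)\rangle$ and $\langle\epsilon(\eta_{\tau^*})\rangle$ rescales the series exactly by the rational index of Definition \ref{index}, using the correspondence \eqref{corbeille} between the right action of $\eta_\tau$ on pairs $(m,n)$ and multiplication by the unit $\epsilon(\eta_\tau)$ on $m-\tau n \in \Lambda_\tau$. Once this identification is in place, the rest is a purely formal manipulation.
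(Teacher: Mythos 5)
Your expansion-and-relabeling steps are exactly what the paper's ``straightforward computation'' amounts to, and the algebra is correct: after applying the involution to the divisor and substituting $d\mapsto N/d$, $j\mapsto -j$, both $\zeta^*(\delta,(r,\tau),s)$ and $N^s\zeta(\delta^*,(r,\tau^*),s)$ take the identical form $\sum_{d\mid N,\,j} n(d,j)\,d^s\,\wh\zeta\bigl((-rj,(N/d)\tau^*),s\bigr)$. So the entire content of the lemma is localized to the question of which cyclic group the inner $\wh\zeta$'s are quotiented by, and you are right to focus there.

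The step that needs more justification is the assertion that the inner $\wh\zeta$'s in $\zeta^*(\delta,(r,\tau),s)$ carry the orbit group $\langle\epsilon(\eta_\tau)\rangle$ ``attached to $\tau$.'' Definition \ref{tarte} ties the orbit group of $\wh\zeta((r',\tau'),s)$ to its \emph{own} second argument $\tau'$, so a literal reading of Definition \ref{def1}(2) makes the terms $\wh\zeta((-rj,d\tau^*),s)$ quotients by $\langle\eta_{d\tau^*}\rangle$. For $d\mid N$ and $N\mid A_{\tau^*}$ one checks $Stab_{\Gamma_1(f)}(d\tau^*)=Stab_{\Gamma_1(f)}(\tau^*)$, so $\epsilon(\eta_{d\tau^*})=\epsilon(\eta_{\tau^*})$; that is, under the literal reading \emph{both} sides use $\langle\epsilon(\eta_{\tau^*})\rangle$ and the ratio collapses to $1$, which contradicts the stated factor. (One cannot simply decree that $\eta_\tau$ acts on the summation set for $Q_{d\tau^*}$: the matrix $\eta_\tau$ does not stabilize $d\tau^*$ in $\mathcal H$, so $Q_{d\tau^*}$ is not $\eta_\tau$-invariant and the orbit sum would be ill-defined.) The nontrivial factor $[\epsilon(\eta_\tau):\epsilon(\eta_{\tau^*})]$ therefore comes from a convention for $\zeta^*$ that is not spelled out in Definition \ref{tarte} — one should identify exactly which normalization makes the quantity on the left-hand side of Definition \ref{def1}(2) carry the group $\langle\epsilon(\eta_\tau)\rangle$, rather than asserting it, since the substance of the lemma (whether any ratio appears at all) hinges precisely on this point.
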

\begin{proof}
This is a straightforward computation. \fin
\end{proof}
In a similar way, one may associate a zeta function to a divisor $\wt{\delta}\in\wt{\Div}_f(\mk{N})$.
\begin{Def}\label{def2}
Let $\mk{c}$ be an invertible $\ca{O}$-ideal which is $\mk{f}$-int. For every divisor 
$\wt{\delta}=\sum_{\mk{d}|\mk{N}}n(\mk{d},r)[\mk{d},r]\in\wt{\Div}_f(\mk{N})$ we define
\begin{align*}
\zeta_{\wt{\delta}}(\mk{c},\mk{f}\infty,s)&:=\sum_{\mk{d}|\mk{N},r\in\ZZ/f\ZZ}
n(\mk{d},r)
\Norm(\mk{d})^{-s}\zeta\left(\wt{r}\mk{c}(\mk{d}^{\sigma})^{-1},\mk{f}\infty,s\right).
\end{align*}
\end{Def}
\begin{Rem}
Assume that $\ca{O}=\ca{O}_K$ and that $N$ is square-free. Set $T=\{\eta:\eta|\mk{N}\}$ and
$\wt{\delta}=\prod_{i=1}^r([1]-l_i[\eta_i])\in\wt{\Div}(\mk{N})$. Then in this case one has
\begin{align}\label{silo}
\zeta_{\wt{\delta}}(\mk{a},\mk{f}\infty,s)
&=\zeta_{R,T^{\sigma}}(K(\mk{f}\infty)/K,\sigma_{\mk{a}},s),
\end{align}
where $\sigma_{\mk{a}}\in G_{\mk{f}}$ 
is the Frobenius at $\mk{a}$
and $\zeta_{R,T^{\sigma}}(K(\mk{f}\infty/K)\sigma_{\mk{a}}^{-1},s)$
is the zeta function which appears in \eqref{protugal} and $T^{\sigma}=\{\eta^{\sigma}:\eta\in T\}$.
\end{Rem}
The next lemma relates special values at even negative integers of the 
zeta function which appears in $(1)$ of Definition \ref{def1} with the
one which appears in Definition \ref{def2}.
\begin{Lemma}
Assume that $(r,\tau)\in \ZZ/f\ZZ\times H^{\ca{O}}(\mk{N})$. 
Let $k\in\ZZ_{\geq 1}$ with $k\equiv 1\pmod{2}$. Then we have
\begin{align}\label{lievre}
4\mu(r,\tau)\zeta_{\wt{\delta}}(\wt{r}I_{\tau},\mk{f}\infty,1-k)
=\Norm(\mk{f})^{-(1-k)}\zeta(\delta^{*},(r,\tau),1-k),
\end{align}
where $\Phi(\wt{\delta})=\delta$ and $\mu(r,\tau)$ is the positive rational number which appears in \eqref{sang} and
$I_{\tau}$ is the ideal attached to $\tau$ which appears in Definition \ref{ini_def}.
\end{Lemma}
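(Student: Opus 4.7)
The plan is to rewrite both sides as sums of the $w_1$-twisted partial zeta functions of Section 4 and compare term by term; the substantive input is the ideal identity $I_\tau(\mk d^\sigma)^{-1}=I_{d\tau}$, which follows from the Heegner hypothesis.

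First I would expand the left-hand side via Definition \ref{def2} and apply Lemma \ref{quiero} (with $i=1$, since $k$ is odd) termwise to convert each $4\zeta(\cdot,\mk f\infty,1-k)$ into $\zeta(\cdot,\mk f,w_1,1-k)$. For the right-hand side, unwinding $\delta^*=\Phi(\wt\delta)^*=\Phi(\wt\delta^*)=\sum n(\mk d,j)[\Norm(\mk d),-j]$ from Definition \ref{phi} and using the parity $\wh\zeta((-rj,\cdot),s)=\wh\zeta((rj,\cdot),s)$ yields
\begin{align*}
\zeta(\delta^*,(r,\tau),1-k)=\sum_{\mk d\mid\mk N,\ j\in\ZZ/f\ZZ}n(\mk d,j)\Norm(\mk d)^{k-1}\wh\zeta\bigl((rj,\Norm(\mk d)\tau),1-k\bigr).
\end{align*}
Applying Lemma \ref{convert} to each summand brings the right-hand side into the same shape as the left.

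The key geometric step is the identity $I_\tau(\mk d^\sigma)^{-1}=I_{d\tau}$ with $d=\Norm(\mk d)$. Because $\tau\in H^\ca O(\mk N)$ and $\mk d\mid\mk N$, each prime of $\mk d$ lies above a split rational prime coprime to $\cond(\ca O)$, so both $\mk d\cap\ca O$ and $\mk d^\sigma\cap\ca O$ are $\ca O$-invertible and their product is $d\ca O$. Comparing $\mk d\Lambda_\tau$ and $\Lambda_{d\tau}$ as sublattices of $\Lambda_\tau$ of common index $d$ both containing $\mk N\Lambda_\tau=\Lambda_{N\tau}$, and using that $\Lambda_\tau/\Lambda_{N\tau}\simeq\ZZ/N\ZZ$ has a unique subgroup of index $d$, I conclude $\mk d\Lambda_\tau=\Lambda_{d\tau}$. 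This forces $\ca O_{d\tau}=\ca O$, whence the norm formula \eqref{titi} gives $A_{d\tau}=A_\tau/d$. Combining these facts,
\begin{align*}
I_\tau(\mk d^\sigma)^{-1}=\tfrac{1}{d}A_\tau\Lambda_\tau(\mk d\cap\ca O)=\tfrac{A_\tau}{d}\Lambda_{d\tau}=\tfrac{A_\tau}{dA_{d\tau}}I_{d\tau}=I_{d\tau},
\end{align*}
and the equality $\ca O_{d\tau}=\ca O$ makes the factors $\Norm(\mk f)^{-(1-k)}$ and $\Norm(f\ca O_{d\tau})^{1-k}$ cancel.

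At this point the claim reduces to the termwise equation
\begin{align*}
\mu(r,\tau)\,\zeta(\wt j\wt rI_{d\tau},\mk f,w_1,1-k)=\mu(rj,d\tau)\,\zeta(\widetilde{rj}I_{d\tau},\mk f,w_1,1-k).
\end{align*}
The scalar $\lambda=\wt j\wt r/\widetilde{rj}=1+mf/\widetilde{rj}$ (with $m\in\ZZ_{\geq 0}$, since $\wt j\wt r\geq\widetilde{rj}$ and the two are congruent mod $f$) lies in $1+(\widetilde{rj}I_{d\tau})^{-1}\mk f$ and is a positive rational, hence totally positive, so $\wt j\wt rI_{d\tau}\sim_\mk f\widetilde{rj}I_{d\tau}$ and the two zeta values coincide by $\sim_\mk f$-invariance of the first entry. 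The remaining equality $\mu(r,\tau)=\mu(rj,d\tau)$ is the main obstacle. Here I would parametrize $Stab_{\Gamma_1(f)}(\tau)$ by units $\alpha=d'+kA_\tau\tau\in\ca O^\times$ of norm one satisfying $d'\equiv 1\pmod f$ and $kA_\tau\equiv 0\pmod f$, and similarly $Stab_{\Gamma_1(f)}(d\tau)$: the relation $A_{d\tau}(d\tau)=A_\tau\tau$ expresses the corresponding unit as the same element $\alpha$, and a careful bookkeeping of the resulting congruence conditions together with the explicit descriptions of $\Gamma_{\wt rI_\tau}(\mk f)$ and $\Gamma_{\widetilde{rj}I_{d\tau}}(\mk f)$ inside $\ca O^\times_+$ yields the desired equality of indices. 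This last verification is the delicate part of the proof and relies essentially on the Heegner hypothesis on $\mk d$.
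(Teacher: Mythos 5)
Your proposal follows the same overall route as the paper's own proof: expand the left side via Definition~\ref{def2}, apply Lemma~\ref{quiero} termwise (with $i=1$), apply Lemma~\ref{convert} termwise on the right side, and reduce to the geometric identity $I_\tau(\mk d^\sigma)^{-1}=I_{d\tau}$. Your derivation of $\mk d\Lambda_\tau=\Lambda_{d\tau}$ (via the unique-subgroup argument inside $\Lambda_\tau/\mk N\Lambda_\tau\simeq\ZZ/N\ZZ$) and of $A_{d\tau}=A_\tau/d$ (via \eqref{titi}) is fine and is actually spelled out more carefully than the paper, which simply asserts both.

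Where you genuinely add value is in the bookkeeping. The paper's proof drops the $\wt j$ factor coming from the summation variable in Definition~\ref{def2} (it writes $\zeta(\tfrac{\mk a\mk d}{\Norm(\mk d)},\mk f\infty,s)$ where the definition requires $\zeta(\wt j\,\mk a(\mk d^\sigma)^{-1},\mk f\infty,s)$), and when it applies Lemma~\ref{convert} to the pair $(r,d\tau)$ it writes $\tfrac{1}{\mu(r,\tau)}$ where the lemma produces $\tfrac{1}{\mu(r,d\tau)}$. You track both issues: you note $\wt j\wt rI_{d\tau}\sim_{\mk f}\widetilde{rj}I_{d\tau}$ (your scalar $\lambda$ argument is correct, since $\wt j\wt r\equiv\widetilde{rj}\pmod f$ and $m\in\Lambda_{(d\tau)^\sigma}$ because $m\in\ZZ$), and you reduce cleanly to the index identity $\mu(r,\tau)=\mu(rj,d\tau)$. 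This is precisely the step the paper leaves implicit, so you have correctly identified a gap that the published proof also has.

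However, your argument for $\mu(r,\tau)=\mu(rj,d\tau)$ is only a sketch, and as you yourself flag, it is \emph{the} substantive remaining step. As written it is not a proof: you say a parametrization of the stabilizers together with ``careful bookkeeping'' would yield the equality, but you do not carry it out. Moreover, your proposed parametrization of $Stab_{\Gamma_1(f)}(\tau)$ by units $a+bA_\tau\tau$ with $a\equiv 1$ and $bA_\tau\equiv 0\pmod f$ omits the third congruence $bB_\tau\equiv 0\pmod f$, which also comes from the $\Gamma_1(f)$ condition on the upper-left entry. One really does need to compare four subgroups of $\ca O(\infty)^\times$ (namely $\Gamma_{\wt r I_\tau}(\mk f)$, $\laa\epsilon(\eta_\tau)\raa$, $\Gamma_{\widetilde{rj}I_{d\tau}}(\mk f)$, $\laa\epsilon(\eta_{d\tau})\raa$) and verify that the two index ratios coincide; that the conditions for $d\tau$ differ from those for $\tau$ (replacing $bA_\tau\equiv 0$ by $b\tfrac{A_\tau}{d}\equiv 0$ and shrinking $\Lambda_{\tau^\sigma}$ to $\Lambda_{(d\tau)^\sigma}$ in the $\Gamma$-group) means the two subgroup inclusions each become stricter in a way that must be shown to be compensating. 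Until this is done, the proof is incomplete at its crux.
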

\begin{proof}
Let $\wt{\delta}=\sum_{d|N,j\in\ZZ/f\ZZ}n(\mk{d},j)[\mk{d}]\in\wt{\Div}_f(\mk{N})$.
By definition of the application $\Phi$ (see Definition \ref{phi}) we have
\begin{align*}
\Phi(\wt{\delta})=\sum_{d|N}n\left(d,j\right)\left[\frac{N}{d}\right]\s\s\s\mbox{and therefore}\s\s\s
\Phi(\wt{\delta}^*)=\sum_{d|N}n\left(d,j\right)\left[d\right],
\end{align*}
where for all $j\in\ZZ/f\ZZ$ and $\mk{d}|\mk{N}$ one has that $n(\Norm(\mk{d}),j)=n(\mk{d},j)$. 
Set $\mk{a}=\wt{r}I_{\tau}$. By definition we have
\begin{align}\label{bruit_1}
\zeta_{\wt{\delta}}(\mk{a},\mk{f}\infty,s)
&=\sum_{\mk{d}|\mk{N},j\in\ZZ/f\ZZ}n(\mk{d},j)\Norm(\mk{d})^{-s}\zeta
\left(\frac{\mk{a}\mk{d}}{\Norm(\mk{d})},\mk{f}\infty,s\right).
\end{align}
For every divisor $\mk{d}|\mk{N}$ one has that
\begin{align}\label{bruit_demi}
\zeta\left(\frac{\mk{a}\mk{d}}{\Norm(\mk{d})},\mk{f},w_1,s\right)=
\zeta\left(\frac{\wt{r}A_{\tau}}{d}\Lambda_{\tau}\mk{d},
\mk{f},w_1,s\right)=\frac{1}{\mu(r,\tau)}\Norm(\mk{f})^{-s}\wh{\zeta}((r,d\tau),s),
\end{align}
where the first equality follows from the 
fact that $\frac{A_{\tau}}{d}=A_{d\tau}$, $\mk{d}\Lambda_{\tau}
=\Lambda_{d\tau}$ and the second equality follows from Lemma \ref{convert}.

Combining \eqref{bruit_demi} and Lemma \ref{quiero} one finds that for every odd integer $k\in\ZZ_{\geq 1}$
\begin{align}\label{bruit_2}
4\zeta\left(\frac{\wt{r}A_{\tau}}{d}\Lambda_{\tau}\mk{d},
\mk{f}\infty,1-k\right)
=\Norm(\mk{f})^{-(1-k)}\wh{\zeta}((r,d\tau),1-k).
\end{align}
Finally, combining \eqref{bruit_1} and \eqref{bruit_2} we find that
$$
4\zeta_{\wt{\delta}}(\wt{r}I_{\tau},\mk{f}\infty,1-k)=
\mu(r,\tau)\Norm(\mk{f})^{-(1-k)}\zeta(\delta^*,(r,\tau),1-k),
$$
where $\Phi(\wt{\delta})=\delta$. \fin
\end{proof}
\begin{Rem}
Suppose that $\wt{\delta}\in\wt{\Div}(\mk{N})$ is chosen so that $p\star\wt{\delta}=\wt{\delta}$. 
Then a direct computation shows that
\begin{align}\label{fille}
\zeta_{\wt{\delta},\{p\}}
(\mk{a},\mk{f}\infty,s)=\left(1-\frac{1}{p^{2s}}\right)\zeta_{\wt{\delta}}
(\mk{a},\mk{f}\infty,s).
\end{align}
The subscript $\{p\}$ means that we only sum over
elements which are coprime to $p$. Assume that every prime divisor $\mk{q}|\mk{f}$ is invertible (for example
this is automatically satisfied when $\ca{O}$ is the maximal order). Then under this assumption,
for all residue class $r\in\ZZ/f\ZZ$ one has that 
$(\mk{f},\wt{r}\mk{a})$ is again $\ca{O}$-invertible. Then, 
in light of the identity \eqref{habiba} we see that the 
zeta function $\zeta_{\wt{\delta},\{p\}}(\mk{a},\mk{f}\infty,s)$ may be written in the following form
 \begin{align}\label{key_eqn}
\zeta_{\wt{\delta},\{p\}}(\mk{a},\mk{f}\infty,s)&=\sum_{\mk{d}|\mk{N},r\in\ZZ/f\ZZ}
n(\mk{d},r)
\Norm(\mk{d})^{-s}\zeta_{\{p\}}\left(\frac{\wt{r}\mk{a}\mk{d}}{(\mk{f},\wt{r}\mk{a})},
\frac{\mk{f}}{(\mk{f},\wt{r}\mk{a})}\infty,s\right),
\end{align}
which is a linear combination of \textit{primitive zeta functions} of conductor $\mk{f}'$ for various
divisors $\mk{f}'|\mk{f}$. In particular, it follows from the
work of Deligne and Ribet (see \cite{D-R}) that the special values of 
$\zeta_{\wt{\delta},\{p\}}(\mk{a},\mk{f}\infty,s)$ at negative even integers which are congruent 
to $0$ modulo $2(p-1)$ can be $p$-adically interpolated. We denote the corresponding $p$-adic zeta function by 
$\zeta_{\wt{\delta},p}(\mk{a},\mk{f}\infty,s)$. 
\end{Rem}
\section{Shintani zeta functions}
In this section we first recall the notion of a cone decomposition 
in the setting of a real quadratic number field. Then we introduce the so-called 
Shintani zeta function associated to the choice of such a cone decomposition. Recall that $K$ comes 
equipped with a fixed embedding $K\subseteq\RR$ and that $\sigma:K\rightarrow K$ denotes the
non-trivial automorphism of $K$.
Let $\iota:K\to\RR^2$ be the embedding given by $x\mapsto(x,x^{\sigma})$. Under
$\iota$, every invertible $\ca{O}$-ideal $\mk{a}\subseteq K$ may be viewed as a lattice
in $\RR^2$. For the rest of this section, we will view elements of $K$ as elements of $\RR^2$
via $\iota$. Note that the group $K^{\times}$ acts naturally on $\RR^2$ and that
its subgroup $K_+^{\times}=\{x\in K^{\times}:x\gg 0\}$ acts naturally on the positive
quadrant $Q:=\RR_{>0}^2$.
\begin{Def}
For $\RR$-linearly independent vectors $v_1,\ldots,v_r\in Q\cap K$ (in our setting $r\leq 2$)
we let
\begin{align*}
C=C(v_1\ldots,v_r)=\left\{\sum_{i=1}^r c_iv_i\in Q:c_i\in\RR_{>0}\right\}.
\end{align*}
We call $C$ a Shintani cone of dimension $r$. We say that $\ca{D}\subseteq Q$ is a Shintani set
if it can be written as a finite disjoint union of Shintani cones.
\end{Def}

Let $W$ be a finite union of Shintani cones. One can show that
$W$ can always be written as a finite
disjoint union of possibly smaller Shintani cones. Therefore $W$ 
is a Shintani set.

Let $\ca{O}$ be an arbitrary order of $K$ and let $\mk{f}=f\ca{O}$.
Let $\ca{O}(\mk{f}\infty)^{\times}=\laa\epsilon\raa$ 
where $\epsilon$ is the unique generator such that $\epsilon>1$.
The group $\ca{O}(\mk{f}\infty)^{\times}$ acts discretely on $Q$ and admits a fundamental 
domain (the existence of a fundamental in the setting of 
real quadratic field is obvious). It is convenient to define 
a privileged choice of a fundamental domain for the action of $\ca{O}(\mk{f}\infty)^{\times}$ on $Q$, namely
\begin{align}\label{acro}
\mathcal{D}_{\mk{f}}^{can}:=C(1,\epsilon)\cup C(1).
\end{align}

Now we would like to extend slightly a key definition that was introduced in \cite{Das3}.
\begin{Def}\label{good_prime}
A prime $\ca{O}_K$-ideal $\eta$ is called $\ca{O}$-{\bf good} for a Shintani cone $C$ if
\begin{enumerate}
\item[$(1)$] $\Norm(\eta)$ is a rational prime $\ell$;
\item[$(2)$] the cone $C$ may be written $C=C(v_1,\ldots,v_r)$ such that 
for all $i$, $v_i\in\ca{O}$ and $v_i\notin(\eta\cap\ca{O})$.
\end{enumerate}
In general we say that $\eta$ is $\ca{O}$-good for a subset $W\subseteq Q$
if $W$ may be written as a finite disjoint union of Shintani cones 
$W=\bigcup_i C_i$ such that
$\eta$ is $\ca{O}$-good for each $C_i$.
\end{Def}
\begin{Rem}
Let $\mk{f}=f\ca{O}_K$. Then any prime $\ca{O}_K$-ideal $\eta$ of degree $1$ is 
$\ca{O}_K$-good for the canonical fundamental domain $\mathcal{D}_{\mk{f}}^{can}$. Suppose that $\eta$ is
$\ca{O}_K$-good for a Shintani cone $C=C(v_1,v_2)$ where $v_1,v_2\in\ca{O}_K$
and $v_1,v_2\notin\eta$. Let $l=\Norm(\eta)$. Then if $n$ is a positive
integer coprime to $l$ we readily see that $nv_1,nv_2\in\ca{O}_n$ and
$nv_1,nv_2\notin\eta_n=(\ca{O}_n\cap\eta)$. In particular, $\eta$ is 
$\ca{O}_n$-good for the Shintani cone $C$.
\end{Rem}
\begin{Def}
A finite set of places $T$ of $K$ is said to be $\ca{O}$-good for a Shintani set $\ca{D}$ if $\ca{D}$ can be written as a finite
disjoint union of Shintani cones $\ca{D}=\bigcup C_i$ such that for each cone $C_i$ there are
at least two primes in $T$ of different residue characteristic which are $\ca{O}$-good for $C_i$ or 
one prime $\eta\in T$ which is $\ca{O}$-good for $C_i$ and has absolute ramification $\leq l-2$ where
$l=\Norm(\eta)$.
\end{Def}

For the rest of the section we fix an order $\ca{O}$ of $K$ and an $\ca{O}$-integral ideal $\mk{f}$.
\begin{Def}
Let $W\subseteq Q$ be an arbitrary subset, $\mk{c}$ be an invertible $\ca{O}$-ideal
 and let $x\in K$. For complex numbers $s$ such that $\Re(s)>1$ we define
\begin{align}\label{corbeau}
\zeta(\mk{c},\mk{f},x,W,s):=
\Norm(\mk{c})^{-s}\sum_{0\neq\mu\in(\mk{c}^{-1}\mk{f}+x)\cap W}
\frac{1}{|\Norm(\mu)|^s}.
\end{align} 
\end{Def}

For $\lambda\in K_+^{\times}$ and $\Re(s)>1$ one has the formula
\begin{align}\label{paon1}
\Norm(\lambda)^{-s}\zeta(\mk{c},\mk{f},x,W,s)=
\zeta(\lambda^{-1}\mk{c},\mk{f},\lambda x,\lambda W,s).
\end{align}

In general, for an arbitrary subset $W\subseteq Q$ the function $\zeta(\mk{c},\mk{f},x,W,s)$
will not admit a meromorphic continuation to all of $\CC$. However, there is an important
special case where it does namely in the case where $W$ is a Shintani cone.
\begin{Prop}\label{Shin_prol}(Shintani)
Let $C$ be a Shintani cone. Then the function $\zeta(\mk{c},\mk{f},x,C,s)$ admits a
meromorphic to all of $\CC$. Moreover, if $k\in\ZZ_{\geq 1}$, the special value
$\zeta(\mk{c},\mk{f},x,C,1-k)$ is a rational number.
\end{Prop}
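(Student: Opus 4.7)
The plan is to reduce the statement to Shintani's classical theorem on zeta functions attached to rational simplicial cones. Since $C\subseteq Q$ consists of totally positive elements, for every $\mu\in C$ we have $\Norm(\mu)>0$, so the sum defining $\zeta(\mk{c},\mk{f},x,C,s)$ is a Dirichlet series with positive terms $\Norm(\mu)^{-s}$. Write $C=C(v_1,\ldots,v_r)$ with $r\in\{1,2\}$ and $v_i\in K\cap Q$. Using the quasi-homogeneity relation \eqref{paon1} with a totally positive $\lambda\in K^{\times}$ chosen to clear denominators simultaneously for $v_1$ and $v_2$, we may assume without loss of generality that each $v_i$ belongs to the lattice $L:=\mk{c}^{-1}\mk{f}$.

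Next I would parametrize the sum explicitly. Let $L':=\ZZ v_1+\cdots+\ZZ v_r\subseteq L$. Because $C$ is open, each $\mu\in(x+L)\cap C$ has a unique representation
\begin{align*}
\mu=z+n_1v_1+\cdots+n_rv_r,\qquad (n_1,\ldots,n_r)\in\ZZ_{>0}^r,
\end{align*}
with $z$ running through a finite set $Z$ of base points: in dimension $r=2$, $[L:L']<\infty$ and one takes $Z=\{x+y_j:y_j\text{ representatives of }L/L'\}$; in dimension $r=1$, the set $(x+L)\cap\RR v_1$ is empty or a coset of $L\cap\RR v_1$, so modulo $\ZZ v_1$ it also yields a finite set of base points. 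Thus
\begin{align*}
\zeta(\mk{c},\mk{f},x,C,s)=\Norm(\mk{c})^{-s}\sum_{z\in Z}Z_z(s),\qquad Z_z(s):=\sum_{n\in\ZZ_{>0}^r}\Norm\Big(z+\sum_{i=1}^{r}n_iv_i\Big)^{-s}.
\end{align*}

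Each $Z_z(s)$ is now a \emph{Shintani zeta function of rational type}: after fixing a $\QQ$-basis of $K$, the polynomial $(u_1,\ldots,u_r)\mapsto\Norm(z+\sum_iu_iv_i)$ is homogeneous of degree two with coefficients in $\QQ$, and $z\in K$ has rational coordinates in the same basis. Shintani's theorem, proved via the contour-integral representation of the Mellin transform of the generating function $\prod_{i=1}^{r}(1-e^{-t_iv_i})^{-1}$ together with the Eulerian expansion for Bernoulli numbers, then yields both the meromorphic continuation of $Z_z(s)$ to all of $\CC$ and an explicit formula expressing $Z_z(1-k)$ for $k\geq 1$ as a finite $\QQ$-linear combination of products of Bernoulli polynomials evaluated at rational arguments. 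In particular $Z_z(1-k)\in\QQ$, and summing over $z\in Z$ and scaling by $\Norm(\mk{c})^{k-1}\in\QQ$ proves the proposition.

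The main (indeed only) substantive obstacle is Shintani's special-value formula itself; the reduction above is essentially bookkeeping, whereas obtaining rationality at $s=1-k$ requires Shintani's explicit combinatorial identification of the value with a polynomial expression in the coordinates of $z$ and the $v_i$ against a Bernoulli-number background. Granting that classical result (and noting that the $r\leq 2$, real-quadratic case is the simplest nontrivial instance), the proposition follows.
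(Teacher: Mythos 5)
Your proof is correct and follows the same route the paper takes: the paper's proof of this proposition is literally the one-line citation ``This follows from Proposition 1 of \cite{Shin},'' and what you have written is the standard reduction (clear denominators via \eqref{paon1}, decompose the translated lattice modulo $L'=\ZZ v_1+\cdots+\ZZ v_r$, recognize each piece as a Shintani zeta function of rational type, invoke Shintani's explicit Bernoulli-polynomial formula for the values at nonpositive integers). One small bookkeeping point you should tighten: the base points $z$ should be taken as the elements of $(x+L)$ lying in the half-open box $\{c_1v_1+\cdots+c_rv_r:0<c_i\le 1\}$, with the remaining indices $n_i$ then ranging over $\ZZ_{\geq 0}$ (not over $\ZZ_{>0}$ with $z=x+y_j$ for arbitrary coset representatives $y_j$ of $L/L'$, since such a $z$ need not itself lie in or near the cone). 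With that corrected the parametrization is exactly Shintani's and the argument is complete.
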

\begin{proof}
This follows from Proposition 1 of \cite{Shin}.\fin
\end{proof}
Let $W=\bigcup_i C_i$ (disjoint union) be a Shintani domain. Since
\begin{align*}
\zeta(\mk{c},\mk{f},x,W,s)=\sum_{i}\zeta(\mk{c},\mk{f},x,C_i,s),
\end{align*}
we see that Proposition \ref{Shin_prol} continues to hold for such $W$.
In particular, Proposition \ref{Shin_prol} holds true with $W=\mathcal{D}$ where $\mathcal{D}$ is
any fundamental domain for the action of $\ca{O}(\mk{f}\infty)^{\times}$ on $Q$. 
\begin{Def}
We say that a meromorphic function $f(s)$ on the complex plane is a Shintani zeta function
(for the real quadratic field $K$) if there exist quantities $\mk{c},\mk{f}\subseteq K$, $x\in K$ and
a Shintani set $W$ such that 
$f(s)=\zeta(\mk{c},\mk{f},x,W,s)$.
\end{Def}
\subsection{$\QQ$-valued distributions on $\ca{O}_{K_p}$}
In this subsection, we want to define $\QQ$-valued distributions on $\ca{O}_K$ using
special values at $s=0$ of Shintani zeta functions.
Let $(p,f,N)$ be a triple as in the introduction and let 
$\mk{N}=\prod_{i=1}^r\eta_i^{e_i}$ where $\mk{N}\mk{N}^{\sigma}=N\ca{O}_K$.
For the rest of the section we fix an order $\ca{O}$ and we let $\mk{f}=f\ca{O}$.

From now on, for an arbitrary compact-open set $U\subseteq\ca{O}_{K_p}$ and an arbitrary subset $W\subseteq Q$ 
the notation $W\cap U$ is taken to mean $W\cap (U\cap K)$. 
The next proposition is an easy corollary of Proposition \ref{Shin_prol}.
\begin{Prop}
Let $W$ be a finite union of Shintani cones and 
let $U\subseteq\ca{O}_{K_p}$ be a compact open set. Then the function
\begin{align*}
\zeta(\mk{c},\mk{f},x,W\cap U,s),
\end{align*}
admits a meromorphic continuation to all of $\CC$ and its special values at negative
integers are rational numbers.
\end{Prop}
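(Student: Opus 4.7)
The plan is to reduce, via a double decomposition of $W$ and of $U$, to the case already handled by Proposition \ref{Shin_prol}. First, by hypothesis I can write $W=\bigsqcup_i C_i$ as a finite disjoint union of Shintani cones. Second, since $U$ is compact-open in $\ca{O}_{K_p}$, it admits a finite disjoint decomposition $U=\bigsqcup_j(z_j+p^{n_j}\ca{O}_{K_p})$ into basic $p$-adic balls. Using the density of $K$ in $K_p$, I would replace each center $z_j$ by some $y_j\in\ca{O}_K$ lying in the same ball, and whenever the ball meets $\mk{c}^{-1}\mk{f}+x$, further arrange $y_j\in(\mk{c}^{-1}\mk{f}+x)\cap(z_j+p^{n_j}\ca{O}_{K_p})$.

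Since the defining sum of $\zeta(\mk{c},\mk{f},x,W\cap U,s)$ is a sum over a set, the decomposition
$$
(\mk{c}^{-1}\mk{f}+x)\cap W\cap U=\bigsqcup_{i,j}(\mk{c}^{-1}\mk{f}+x)\cap C_i\cap(y_j+p^{n_j}\ca{O}_{K_p})
$$
produces a finite splitting
$$
\zeta(\mk{c},\mk{f},x,W\cap U,s)=\sum_{i,j}\Norm(\mk{c})^{-s}\sum_{0\neq\mu\in(\mk{c}^{-1}\mk{f}+x)\cap C_i\cap(y_j+p^{n_j}\ca{O}_{K_p})}\frac{1}{|\Norm(\mu)|^s}.
$$
For every non-empty $(i,j)$-contribution, the choice $y_j\in\mk{c}^{-1}\mk{f}+x$ gives the lattice identification
$$
(\mk{c}^{-1}\mk{f}+x)\cap(y_j+p^{n_j}\ca{O}_{K_p})=y_j+\bigl(\mk{c}^{-1}\mk{f}\cap p^{n_j}\ca{O}_{K_p}\bigr),
$$
and the ideal in parentheses equals $\mk{c}^{-1}\cdot(p^{n_j}\mk{f})$, which is again of the form $\mk{c}^{-1}\mk{f}_j$ for the $\ca{O}$-integral ideal $\mk{f}_j:=p^{n_j}\mk{f}$. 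Hence each inner sum is literally the Shintani zeta function $\zeta(\mk{c},\mk{f}_j,y_j,C_i,s)$, to which Proposition \ref{Shin_prol} applies: meromorphic continuation to $\CC$ and rationality of values at negative integers. Since a finite sum of such functions inherits both properties, the proposition follows.

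The main technical obstacle is verifying the identity $\mk{c}^{-1}\mk{f}\cap p^{n_j}\ca{O}_{K_p}=\mk{c}^{-1}\mk{f}\cdot\wp^{n_j}$, where $\wp=p\ca{O}_K$. This rests on the fact that $p$ is coprime to both $f$ and $\cond(\ca{O})$, so $\wp$ is an invertible prime of $\ca{O}$ and the localisation $(\mk{c}^{-1}\mk{f})_\wp$ equals $\ca{O}_{K_p}$. Consequently an element of the $\ZZ$-lattice $\mk{c}^{-1}\mk{f}$ lies in $p^{n_j}\ca{O}_{K_p}$ if and only if its $\wp$-adic valuation is at least $n_j$, which is equivalent to being divisible by $\wp^{n_j}$. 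Once this local computation is in place, everything else reduces to routine bookkeeping about finite disjoint decompositions.
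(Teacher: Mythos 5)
Your overall strategy is the same route the paper takes: decompose $(\mk{c}^{-1}\mk{f}+x)\cap W\cap U$ into finitely many pieces of the form (lattice coset)$\,\cap\,$(Shintani cone), identify each piece with the parameter set of a genuine Shintani zeta function, and invoke Proposition \ref{Shin_prol}. (The paper skips the explicit cone decomposition of $W$, since Proposition \ref{Shin_prol} already extends to finite disjoint unions of cones, but this is cosmetic.) The bookkeeping with centers $y_j\in K$ and the observation $(\mk{c}^{-1}\mk{f}+x)\cap(y_j+p^{n_j}\ca{O}_{K_p}) = y_j+\bigl(\mk{c}^{-1}\mk{f}\cap p^{n_j}\ca{O}_{K_p}\bigr)$ are correct.

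There is, however, a gap in the local identification you single out as the crux. You claim $\mk{c}^{-1}\mk{f}\cap p^{n_j}\ca{O}_{K_p}=\mk{c}^{-1}\cdot p^{n_j}\mk{f}$ on the strength of $(\mk{c}^{-1}\mk{f})_{\wp}=\ca{O}_{K_p}$, justified only by $p\nmid f\cdot\cond(\ca{O})$. But $(\mk{c}^{-1}\mk{f})_{\wp}=\mk{c}_{\wp}^{-1}\mk{f}_{\wp}=\mk{c}_{\wp}^{-1}$, and this equals the local ring only when $\mk{c}$ is itself coprime to $\wp$ --- which is not among the hypotheses (the proposition allows an arbitrary invertible $\ca{O}$-ideal $\mk{c}$). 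If $v_{\wp}(\mk{c})=m\neq 0$, then $v_{\wp}\bigl(\mk{c}^{-1}\mk{f}\cap p^{n_j}\ca{O}_{K_p}\bigr)=\max(n_j,-m)$ while $v_{\wp}\bigl(p^{n_j}\mk{c}^{-1}\mk{f}\bigr)=n_j-m$, and these differ. The repair is what the paper does implicitly: the intersection $\mk{c}^{-1}\mk{f}\cap p^{n_j}\ca{O}_{K_p}$ is still an invertible $\ca{O}$-lattice $L_j$, so one may write $L_j=\mk{b}_j^{-1}\mk{f}$ for a suitable invertible $\ca{O}$-ideal $\mk{b}_j$ (generally $\mk{b}_j\neq\mk{c}$), identify the inner sum with $\bigl(\Norm(\mk{c})/\Norm(\mk{b}_j)\bigr)^{-s}\,\zeta(\mk{b}_j,\mk{f},y_j,C_i,s)$, and note that the extra prefactor $\bigl(\Norm(\mk{c})/\Norm(\mk{b}_j)\bigr)^{-s}$ is entire with rational values at integers, so it does not disturb either conclusion.
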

\begin{proof}
Note that $(\mk{c}^{-1}\mk{f}+x)\cap U$ may be written as
\begin{align*}
(\mk{c}^{-1}\mk{f}+x)\cap U=\bigcup_i (\mk{b}^{-1}\mk{f}+y_i),
\end{align*}
where the union is finite and disjoint, $\mk{b}$ is a suitable invertible $\ca{O}$-ideal 
and $y_i\in K$. It thus follows that
\begin{align*}
\zeta(\mk{c},\mk{f},x,W\cap U,s)=\sum_i\zeta(\mk{b},\mk{f},y_i,W,s).
\end{align*}
This concludes the proof. \fin
\end{proof}
\begin{Def}
Let $W\subseteq Q$ be an arbitrary subset and let  
\begin{align*}
\wt{\delta}=\sum_{\mk{d}|\mk{N},r\in\ZZ/f\ZZ}n(\mk{d},r)[\mk{d},r]\in\wt{\Div}_f(\mk{N}).
\end{align*}
For an invertible $\ca{O}$-ideal $\mk{c}$ and $\lambda\in K$ we define
\begin{align*}
\zeta_{\wt{\delta}}
(\mk{c},\mk{f},x,W,s):=
\sum_{\mk{d}|\mk{N},r\in\ZZ/f\ZZ}
n(\mk{d},r)\zeta(r\mk{c}(\mk{d}^{\sigma})^{-1},\mk{f},x,W,s).
\end{align*}
\end{Def}

\begin{Def}\label{grenier}
Let $\wt{\delta}\in \wt{\Div}_f(\mk{N})$ and let 
$W$ be a finite union of Shintani cones. We define a $\QQ$-valued distribution 
on $\ca{O}_{K_p}$ by the rule
\begin{align}\label{crapaud}
U\mapsto
\zeta_{\wt{\delta}}(\mk{c},\mk{f},x,W\cap U,0),
\end{align}
where $U$ is an arbitrary compact open set of $\ca{O}_{K_p}$.
\end{Def} 

\begin{Rem}
In the case where $\mk{c}$ is an integral $\ca{O}_K$ ideal coprime to $f\ca{O}_K$,  
$
\wt{\delta}=\prod_{i=1}^r(1-l_i[\eta_i])
$, $x=1$ and $\ca{D}$ is a Shintani set one has
\begin{align*}
\zeta_{\wt{\delta}}(\mk{c},\mk{f},x,\ca{D}\cap U,0)=\nu(\mk{c},\ca{D},U),
\end{align*}
where $\nu(\mk{c},\ca{D},\_)$ is the measure defined on the line $(20)$ of \cite{Das3}.
\end{Rem}
\begin{Prop}
Let $\lambda\in K_+^{\times}$, $x\in K$ and let $\mk{a}$ and $\mk{b}$ be 
two invertible $\ca{O}$-ideals.
Let $W$ be a Shintani domain.
Then the following two formulas hold:
\begin{enumerate}
\item[$(1)$] $\zeta_{\wt{\delta}}
(\mk{a}\mk{b},\mk{f}\mk{b},x,W,0)=\zeta_{\wt{\delta}}
(\mk{a},\mk{f},x,W)$,
\item[$(2)$] $\zeta_{\wt{\delta}}
(\mk{a},\mk{f},x,W\cap U,0)=\zeta_{\wt{\delta}}
(\lambda^{-1}\mk{a},\mk{f},\lambda x,\lambda (W\cap U),0)$.
\end{enumerate}
\end{Prop}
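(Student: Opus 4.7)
The plan is to reduce both identities to the defining sum
$$\zeta_{\wt{\delta}}(\mk{c},\mk{f},x,W,s) = \sum_{\mk{d}|\mk{N},\,r\in\ZZ/f\ZZ} n(\mk{d},r)\,\zeta\bigl(r\mk{c}(\mk{d}^{\sigma})^{-1},\mk{f},x,W,s\bigr),$$
and to prove the corresponding identity for each individual Shintani zeta function on the right, evaluated at $s=0$. In both parts the essential reason the identities hold is that any normalization factor of the shape $\Norm(\cdot)^{-s}$ trivializes at $s=0$.

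For part $(1)$, the key observation is that $(\mk{c}\mk{b})^{-1}(\mk{f}\mk{b})=\mk{c}^{-1}\mk{f}$, so the summation range in
$$\zeta(\mk{c}',\mk{f}',x,W,s) = \Norm(\mk{c}')^{-s}\sum_{0\neq\mu\in((\mk{c}')^{-1}\mk{f}'+x)\cap W}\frac{1}{|\Norm(\mu)|^s}$$
is literally unchanged when one passes from $(\mk{c}',\mk{f}')=(\mk{c},\mk{f})$ to $(\mk{c}',\mk{f}')=(\mk{c}\mk{b},\mk{f}\mk{b})$. Only the leading factor $\Norm(\mk{c}')^{-s}$ differs, but $\Norm(\mk{c})^{0}=\Norm(\mk{c}\mk{b})^{0}=1$. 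Applying this termwise (with $\mk{c}=r\mk{a}(\mk{d}^{\sigma})^{-1}$, so that replacing $\mk{a}$ by $\mk{a}\mk{b}$ and $\mk{f}$ by $\mk{f}\mk{b}$ amounts to the allowed substitution in each summand) yields $(1)$.

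For part $(2)$, I would invoke the identity \eqref{paon1},
$$\Norm(\lambda)^{-s}\zeta(\mk{c}',\mk{f},x,W',s) = \zeta(\lambda^{-1}\mk{c}',\mk{f},\lambda x,\lambda W',s),$$
initially valid for $\Re(s)>1$, applied termwise with $W' = W\cap U$. At $s=0$ the factor $\Norm(\lambda)^{-s}$ becomes $1$, and using the equality $\lambda^{-1}\bigl(r\mk{a}(\mk{d}^{\sigma})^{-1}\bigr) = r(\lambda^{-1}\mk{a})(\mk{d}^{\sigma})^{-1}$ the resulting termwise identities assemble into $(2)$.

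The only nontrivial step is the passage from $\Re(s)>1$ to $s=0$ in part $(2)$: one must know that both sides of \eqref{paon1} extend meromorphically past $s=0$ after intersecting the cone with the compact open set $U$. This is precisely the content of the immediately preceding proposition, which decomposed $((\mk{c}')^{-1}\mk{f}+x)\cap U$ into finitely many cosets $\mk{b}^{-1}\mk{f}+y_i$ and then invoked Proposition \ref{Shin_prol} on each. That is the one place where the argument uses more than bookkeeping of normalization factors.
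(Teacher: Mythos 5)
Your argument is correct and is precisely the computation the paper has in mind — the paper's own proof is simply ``This is a straightforward computation.'' You unwind the definition of $\zeta_{\wt{\delta}}$ to the individual Shintani zeta functions, observe that $(\mk{c}\mk{b})^{-1}(\mk{f}\mk{b}) = \mk{c}^{-1}\mk{f}$ (resp.\ the scaling identity \eqref{paon1}) so the Dirichlet series coincide up to a factor $\Norm(\cdot)^{-s}$ that vanishes at $s=0$, and your remark on carrying the meromorphic continuation through the intersection with $U$ (via the preceding proposition) is exactly the right justification for passing from $\Re(s)>1$ to $s=0$ in part $(2)$.
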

\begin{proof}
This a straightforward computation. \fin
\end{proof}
\begin{Prop}
Let $\mathcal{D}$ be a fundamental domain for the action of $\ca{O}_K(\mk{f}\infty)^{\times}$
on the positive quadrant $Q$ and let $\mk{c}$ be an invertible $\ca{O}$-ideal which is $\mk{f}$-int. 
Then we have
\begin{align}\label{one}
\zeta_{\wt{\delta}}(\mk{c},\mk{f},n_{\mk{c}},\mathcal{D}\cap\ca{O}_{K_p},0)=\zeta_{\wt{\delta}}(\mk{c},\mk{f}\infty,0),
\end{align}
where $\zeta_{\wt{\delta}}(\mk{c},\mk{f}\infty,s)$ is the zeta function which appears in Definition \ref{def2}
and $n_{\mk{c}}$ is an integer chosen as in Definition \ref{units}. 
Moreover, if $p\star\wt{\delta}=\wt{\delta}$ then
\begin{align}\label{two}
\zeta_{\wt{\delta}}
(\mk{c},\mk{f},n_{\mk{c}},\mathcal{D}\cap\ca{O}_{K_p}^{\times},0)=\zeta_{\wt{\delta},\{p\}}(\mk{c},\mk{f}\infty,0)=0.
\end{align}
\end{Prop}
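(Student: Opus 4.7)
The plan is to prove \eqref{one} by a direct unfolding argument and then deduce \eqref{two} using the Euler-factor identity \eqref{fille}. By $\ZZ$-linearity in $\wt{\delta}$, equation \eqref{one} reduces to showing, for each $\mk{d}\mid\mk{N}$ and $r\in\ZZ/f\ZZ$, that with $\mk{c}':=\wt{r}\mk{c}(\mk{d}^\sigma)^{-1}$ one has
\begin{align*}
\zeta(\mk{c}',\mk{f},n_{\mk{c}},\mathcal{D}\cap\ca{O}_{K_p},0)=\zeta(\mk{c}',\mk{f}\infty,0),
\end{align*}
the harmless factor $\Norm(\mk{d})^{-s}$ from Definition \ref{def2} specializing to $1$ at $s=0$. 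The LHS unfolds to a sum over $\mu\in(\mk{c}'^{-1}\mk{f}+n_{\mk{c}})\cap\mathcal{D}\cap\ca{O}_{K_p}$, while the RHS is a sum of $|\Norm(\mu)|^{-s}$ over totally positive $\Gamma_{\mk{c}'}(\mk{f})$-representatives of the same shifted lattice.

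Two ingredients make this identification go through. First, because $\mk{c}$ is $\mk{f}$-int (so its denominator is coprime to $\mk{f}$, hence to $p$) and $\mk{d}\mid\mk{N}$ with $(N,p)=1$, the lattice $\mk{c}'^{-1}\mk{f}$ is $p$-adically integral, and together with $n_{\mk{c}}\in\ZZ$ this makes the intersection with $\ca{O}_{K_p}$ vacuous. Second, I would check that $n_{\mk{c}}$ is a legitimate choice of $n_{\mk{c}'}$ in the sense of Definition \ref{units} (adjusting by an element of $\mk{b}'\mk{f}$ if necessary, using independence of $\zeta(\mk{c}',\mk{f}\infty,s)$ from this choice) and that $\Gamma_{\mk{c}'}(\mk{f})=\ca{O}_K(\mk{f}\infty)^{\times}$, via the local identity $\mk{f}\mk{c}'^{-1}\cap\ca{O}=\mk{f}$ at primes of $\mk{f}$. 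Then $\mathcal{D}$, being a fundamental domain for $\ca{O}_K(\mk{f}\infty)^{\times}$, cross-sections the $\Gamma_{\mk{c}'}(\mk{f})$-action on totally positive elements of $\mk{c}'^{-1}\mk{f}+n_{\mk{c}}$, yielding the desired bijection. The Dirichlet series then match for $\Re(s)>1$, and the equality at $s=0$ follows from Proposition \ref{Shin_prol}.

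For \eqref{two} I would split $\mathcal{D}\cap\ca{O}_{K_p}^{\times}=(\mathcal{D}\cap\ca{O}_{K_p})\setminus(\mathcal{D}\cap p\ca{O}_{K_p})$. The contribution of the first piece is $\zeta_{\wt{\delta}}(\mk{c},\mk{f}\infty,0)$ by \eqref{one}. For the second piece, writing $\mu=p\mu'$ and using $\Norm(p)=p^2$ (since $p$ is inert) produces a factor $p^{-2s}$; the hypothesis $p\star\wt{\delta}=\wt{\delta}$ supplies a reindexing $r\mapsto p^{-1}r\pmod f$, valid since $(p,f)=1$, which identifies the remaining sum with $\zeta_{\wt{\delta}}(\mk{c},\mk{f}\infty,s)$. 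Subtraction yields $(1-p^{-2s})\zeta_{\wt{\delta}}(\mk{c},\mk{f}\infty,s)$, which equals $\zeta_{\wt{\delta},\{p\}}(\mk{c},\mk{f}\infty,s)$ by \eqref{fille}; evaluating at $s=0$ produces zero, since $1-p^{0}=0$ while $\zeta_{\wt{\delta}}(\mk{c},\mk{f}\infty,s)$ is regular at $s=0$ (its only pole being at $s=1$).

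The main obstacle is justifying $\Gamma_{\mk{c}'}(\mk{f})=\ca{O}_K(\mk{f}\infty)^{\times}$ when $\ca{O}$ is non-maximal, particularly when $\cond(\ca{O})$ shares factors with $f$: one must carefully distinguish $\ca{O}(\mk{f}\infty)^{\times}$ from $\ca{O}_K(\mk{f}\infty)^{\times}$ and check that $\mathcal{D}$ simultaneously cross-sections both actions on the cosets in play. A secondary subtlety is that the single shift $n_{\mk{c}}$ is used for every twisted ideal $\mk{c}'=\wt{r}\mk{c}(\mk{d}^\sigma)^{-1}$; one must verify that it satisfies the membership condition of Definition \ref{units} relative to each such $\mk{c}'$, or that one can adjust to an equivalent choice without changing the value at $s=0$.
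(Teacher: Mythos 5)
The paper's own proof of this proposition is a single sentence --- ``The proof of \eqref{one} is straightforward and \eqref{two} follows from \eqref{fille}'' --- so your unfolding argument is certainly the intended one. But two of your supporting steps do not hold up. First, the justification that the intersection with $\ca{O}_{K_p}$ is vacuous is based on the inference ``the denominator of $\mk{c}$ is coprime to $\mk{f}$, hence to $p$.'' Since $\mk{f}=f\ca{O}$ and $(f,p)=1$, coprimality to $\mk{f}$ says nothing about $p$. Moreover, it is the \emph{numerator} $\mk{a}$ in a writing $\mk{c}=\mk{a}\mk{b}^{-1}$ (together with $\wt{r}$) that produces denominators in $\mk{c}'^{-1}=\frac{1}{\wt{r}}\mk{a}^{-1}\mk{b}\mk{d}^{\sigma}$, while $\mk{f}$-int only controls $\mk{b}$. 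The $p$-integrality of the lattice therefore does not follow from $\mk{f}$-int at all; it requires the separate (implicit, inherited from Dasgupta's standing hypothesis that ideals be coprime to $S\ni\wp$) assumption that $\mk{c}$ and $\wt{r}$ are coprime to $p$.

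Second, the point you flag as ``the main obstacle'' is indeed a genuine gap and cannot be left as a flag. Since $\zeta(\mk{c}',\mk{f}\infty,s)$ sums over $\Gamma_{\mk{c}'}(\mk{f})$-orbits while $\mathcal{D}$ is a fundamental domain for the possibly strictly smaller group $G:=\ca{O}(\mk{f}\infty)^{\times}\subseteq\Gamma_{\mk{c}'}(\mk{f})$, the $(\mk{d},r)$-term of the unfolded left-hand side is $[\Gamma_{\wt{r}\mk{c}(\mk{d}^{\sigma})^{-1}}(\mk{f}):G]$ times the corresponding right-hand term. Because $(\mk{d},\mk{f})=\ca{O}$, that index depends only on $(\wt{r}\mk{c},\mk{f})$; when $\mk{c}$ is coprime to $\mk{f}$ it equals $1$ precisely when $r\in(\ZZ/f\ZZ)^{\times}$, and is strictly larger otherwise. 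So without a further argument --- either a restriction on the support of $\wt{\delta}$, a hypothesis that $\mathcal{D}$ is a fundamental domain for $\Gamma_{\mk{c}}(\mk{f})$ rather than for $\ca{O}(\mk{f}\infty)^{\times}$ together with an identity $\Gamma_{\wt{r}\mk{c}(\mk{d}^{\sigma})^{-1}}(\mk{f})=\Gamma_{\mk{c}}(\mk{f})$, or an explicit index factor --- the equality \eqref{one} as written does not drop out of the unfolding. Your treatment of \eqref{two} via the Euler-factor splitting and $p\star\wt{\delta}=\wt{\delta}$ is fine (it essentially re-derives \eqref{fille}), but it inherits the same two issues from \eqref{one}.
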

The subscript $\{p\}$ of $\zeta_{\wt{\delta},\{p\}}$ means that one restricts the sum 
over elements coprime to $p$. 
\begin{proof}
The proof of \eqref{one} is straightforward and \eqref{two} follows from \eqref{fille}. \fin
\end{proof}
Let $T$ be the set of all places of $K$ which divide $\mk{N}$.
For technical reasons we will assume until the end of the section that $(\cond(\ca{O}),N)=1$.
Therefore if $\eta_0\in T=\{\eta|\mk{N}\}$ it gives rise to a discrete valuation on 
$\ca{O}$ which we denote by $v_{\eta_0}$. We can now state a variant of a key proposition that was proved in \cite{Das3}:
\begin{Prop}\label{tourtiere}
Let $C$ be a Shintani cone of dimension $m$ and assume that there exists 
$\eta\in T$ which is $\ca{O}$-good for $C$. Let $\mk{a}$ be an invertible $\ca{O}$-ideal
which is $\mk{f}$-int and such that $v_{\eta}(\mk{a})\leq 0$. Let $l=[\ca{O}:\eta]$ and let $y\in K$. Then
\begin{align*}
\zeta(\mk{a},\mk{f},y,C,0)-l\zeta(\mk{a}\eta^{-1},\mk{f},y,C,0)
\in\ZZ[\mbox{$\frac{1}{l}$}],
\end{align*}
where the denominator is at most $l^{m/(l-1)}$.
\end{Prop}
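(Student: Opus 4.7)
The plan is to adapt the proof of the analogous statement in \cite{Das3} (where $\ca{O} = \ca{O}_K$) to our more general setting of an arbitrary order $\ca{O}$ with $(\cond(\ca{O}), N) = 1$. Because of this coprimality assumption the prime $\eta \cap \ca{O}$ is invertible in $\ca{O}$, so $\ca{O}_{\eta \cap \ca{O}} = (\ca{O}_K)_{\eta}$ is a DVR and the localization arguments from Dasgupta carry over verbatim. The key observation is that the condition $v_{\eta}(\mk{a}) \leq 0$ guarantees that $\eta \mk{a}^{-1}\mk{f}$ is an honest index-$l$ sublattice of $\mk{a}^{-1}\mk{f}$ (they agree at all primes away from $\eta$, and at $\eta$ the inclusion is strict), so that
\[
\zeta(\mk{a}\eta^{-1},\mk{f},y,C,s) = \sum_{0 \neq \mu \in (\eta\mk{a}^{-1}\mk{f}+y)\cap C} |\Norm(\mu)|^{-s}
\]
is a sum over a sublattice of the one defining $\zeta(\mk{a},\mk{f},y,C,s)$, modulo the norm normalization factor which at $s=0$ is trivial.

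First I would reduce to the case where $C = C(v_1,\ldots,v_m)$ is a single Shintani cone whose generators $v_i$ can be chosen in $\ca{O}$ with $v_i \notin \eta\cap\ca{O}$, using additivity of $\zeta(\mk{a},\mk{f},y,\cdot,s)$ in the Shintani set and the $\ca{O}$-goodness of $\eta$ for $C$. Next, I would apply Shintani's explicit formula for the special value $\zeta(\mk{a},\mk{f},y,C,0)$, which expresses it as a finite rational linear combination of products $\prod_i B_{k_i}(x_i)$ of Bernoulli polynomials evaluated at the fractional parts $x_i \in \QQ$ of the coordinates of $y$ with respect to the $\RR$-basis $v_1,\ldots,v_m$ of the cone (completed if necessary). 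The lattice $\mk{a}^{-1}\mk{f}$ produces certain fractional parts, and replacing $\mk{a}^{-1}\mk{f}$ by the index-$l$ sublattice $\eta\mk{a}^{-1}\mk{f}$ subdivides each relevant fractional part: because $\eta$ is coprime to each $v_i$ (by the $\ca{O}$-good hypothesis), multiplication by $\eta$ on the cone-coordinates is invertible mod $\ell = [\ca{O}:\eta]$, so the new coordinates of the lattice points exhaust precisely the $l$-fold refinement $\{(x_i + j_i)/l\}$ of the old ones.

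The heart of the argument is then the classical distribution relation for Bernoulli polynomials:
\[
B_n(x) = l^{n-1} \sum_{j=0}^{l-1} B_n\!\left(\frac{x+j}{l}\right).
\]
Combining this with the above bijection of fractional parts shows that $\zeta(\mk{a},\mk{f},y,C,0) - l\,\zeta(\mk{a}\eta^{-1},\mk{f},y,C,0)$ collapses to a sum of products $\prod_{i=1}^m B_{k_i}(x_i)$ where the contribution of each factor with $k_i \neq 1$ cancels in the distribution combination, leaving only the boundary terms where some $k_i = 1$. The standard Von Staudt--Clausen type estimate shows that each such boundary term has $l$-denominator at most $l^{1/(l-1)}$, and the multiplicativity over the $m$ coordinates of the cone yields the total bound $l^{m/(l-1)}$ as claimed. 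The integrality at primes other than $l$ follows from the rationality part of Proposition \ref{Shin_prol} combined with the fact that the only new denominators introduced by the combination come from dividing by $l$ in the distribution relation.

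The main obstacle will be the bookkeeping in the second step: tracking exactly which fractional parts appear and verifying that the $\ca{O}$-good hypothesis (rather than the $\ca{O}_K$-good hypothesis used by Dasgupta) is still sufficient to make the bijection between the lattice points of $\mk{a}^{-1}\mk{f} \cap C$ and the refined points of $\eta\mk{a}^{-1}\mk{f} \cap C$ behave uniformly. The assumption $(\cond(\ca{O}),N)=1$ is precisely what makes the local picture at $\eta$ identical to the maximal order case, so once one sets up the notation carefully the proof reduces to an application of the distribution relation exactly as in \cite{Das3}.
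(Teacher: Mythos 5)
Your overall strategy — reduce to a single cone, invoke Shintani's formula, and deploy the Bernoulli distribution relation — is indeed the skeleton of Dasgupta's argument, and the paper's proof also reduces to that argument (it sets up the situation so that it matches the zeta function $Z(\mk{b},y,C,s)$ of line $(69)$ of \cite{Das3} and then cites the end of Dasgupta's Proposition~6.1 verbatim). However, you have misidentified the role of the hypothesis $v_{\eta}(\mk{a})\leq 0$, and in doing so you skip the one technical step that actually needs proving in the more general order setting. You say this hypothesis ``guarantees that $\eta\mk{a}^{-1}\mk{f}$ is an honest index-$l$ sublattice of $\mk{a}^{-1}\mk{f}$,'' but that index statement holds for \emph{any} $\mk{a}$ once $\eta'=\eta\cap\ca{O}$ is invertible (which it is, by $(\cond(\ca{O}),N)=1$), since then $[\mk{a}^{-1}\mk{f}:\eta'\mk{a}^{-1}\mk{f}]=[\ca{O}:\eta']=l$ regardless of $v_{\eta}(\mk{a})$.

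The actual purpose of $v_{\eta}(\mk{a})\leq 0$ is subtler and is the crux of the paper's proof: one needs to replace each cone generator $v_i\in\ca{O}$, $v_i\notin\eta'$, by an integer multiple $n_i v_i$ satisfying simultaneously $n_i v_i\in\mk{a}^{-1}\mk{f}$ and $n_i v_i\notin\mk{a}^{-1}\mk{f}\eta'$, because Shintani's formula requires the cone generators to lie in the translating lattice. Since $n_i$ is a positive rational integer, $v_{\eta}(n_i v_i)=v_{\eta}(n_i)\geq 0$; on the other hand one needs $v_{\eta}(n_iv_i)=v_{\eta}(\mk{a}^{-1}\mk{f})=-v_{\eta}(\mk{a})$, which is only achievable when $v_{\eta}(\mk{a})\leq 0$. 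The paper makes this precise by writing $\mk{a}=\mk{m}\mk{n}^{-1}$ with $v_{\eta}(\mk{m})=0$ and choosing $n_i$ so that $v_{\eta}(n_i)=v_{\eta}(\mk{n})$. Your proposal jumps straight to the distribution-relation bookkeeping without ever verifying that the $v_i$ (or suitable rescalings) can be placed in $\mk{a}^{-1}\mk{f}$ with the correct $\eta$-adic valuation, so as written it is incomplete; adding the rescaling lemma would close the gap and then the remainder of your computation coincides with Dasgupta's.
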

\begin{proof}
We follow closely a part of the proof of Proposition 6.1 of \cite{Das3}. 
Let $\eta'=\eta\cap\ca{O}$.
Since $\eta$ is $\ca{O}$-good for $C$ we may write 
$C=C(v_1,\ldots,v_r)$ with $v_i\in\ca{O}$ and $v_i\notin\eta'$. We claim that we can always 
find a positive integer $n_i\geq 1$ such that $n_iv_i\in\mk{a}^{-1}\mk{f}$ but
$n_iv_i\notin\mk{a}^{-1}\mk{f}\eta'$. Let us prove this. 
Since $v_{\eta}(\mk{a})\leq 0$, we may write 
$\mk{a}=\frac{\mk{m}}{\mk{n}}$ where $\mk{m}$ and
$\mk{n}$ are invertible integral $\ca{O}$-ideals and $v_{\eta}(\mk{m})=0$. Note that 
$\mk{a}^{-1}=\mk{n}\mk{m}^{-1}\mk{f}\supseteq\mk{n}\mk{f}$. Let $n_i$ be a positive integer
such that $n_iv_i\in\mk{n}\mk{f}$ and such that 
$v_{\eta}(n_i)=v_{\eta}(\mk{n})$ (such an integer $n_i$ exists since $v_i\notin\ca{O}$). 
Since $v_{\eta}(v_i)=v_{\eta}(\mk{m})=v_{\eta}(\mk{f})=0$ 
we see that $n_iv_i\notin\mk{a}^{-1}\mk{f}\eta'$. So this proves our claim. Therefore, 
without loss of generality, we may assume that 
$v_i\in\mk{a}^{-1}\mk{f}$ and $v_i\notin\mk{a}^{-1}\mk{f}\eta'$. 

Any element $\alpha\in C$ may be written 
uniquely as $\alpha=\sum_{i=1}^r (x_i+z_i)v_i$,
for real numbers $0<x_i\leq 1$ and non-negative integers $z_i$. Since 
$v_i\in\mk{a}^{-1}\mk{f}$, the element $\alpha$ lie in $\mk{a}^{-1}\mk{f}+y$ if and
only if $\sum x_iv_i$ does. Thus if we let
\begin{align*}
\Omega(\mk{c},y,v)=\{x\in\mk{c}+y:x=\sum x_iv_i\s\mbox{with}\s 0<x_i\leq 1\},
\end{align*}
then
\begin{align*}
\zeta(\mk{a},\mk{f},y,C,s)=\sum_{x\in\Omega(\mk{a}^{-1}\mk{f},y,v)}\sum_{z_1,\ldots,z_r=0}
\Norm\left(\sum (x_i+z_i)v_i\right)^{-s}.
\end{align*}
One has $\zeta(\mk{a},\mk{f},y,C,s)=Z(\mk{a}^{-1}\mk{f},y,C,s)$ where
\begin{align}\label{tornado}
Z(\mk{b},y,C,s)=\sum_{\alpha\in(\mk{b}+y)\cap C}\Norm(\alpha)^{-s},
\end{align}
is the zeta function which appears on line $(69)$ of \cite{Das3}.
From this point the rest of the argument is identical to the end of the 
proof of Proposition 6.1 of \cite{Das3}, so we skip it. \fin
\end{proof}
\subsection{$\QQ$-valued distributions on $\ZZ_p^2$}
In this subsection we want define $\QQ$-valued distributions on $\ZZ_p^2$ by using 
the distributions constructed in the previous section on $\ca{O}_{K_p}$.
For every pair of integers $(u,v)$ and for every non-negative integer $n$ we let
\begin{align}\label{gren}
U_{u,v,n}=\{(x,y)\in\ZZ_p^2:(x,y)\equiv(u,v)\pmod{p^n}\},
\end{align}
be the ball of radius $\frac{1}{p^n}$ centered at $(u,v)$. We denote the set of
all balls of $\ZZ_p^2$ by $\ca{B}$. We note that a distribution 
on $\ZZ_p^2$ is completely determined by its values on elements of $\ca{B}$ since the set $\ca{B}$ is
a basis for the topology of $\ZZ_p^2$. There is a right action of $GL_2(\ZZ_p)$ on $\ZZ_p^2$ which is
given by the following rule
\begin{align*}
\R{x}{y}\sharp\M{a}{b}{c}{d}=\R{dx-by}{-cx+ay},
\end{align*}
where $\M{a}{b}{c}{d}\in GL_2(\ZZ_p)$ and $\R{x}{y}\in\ZZ_p^2$. We have chosen this action in agreement
with \eqref{corbeille}. We thus obtain a right action 
of $GL_2(\ZZ_p)$ on $\ca{B}$ which is given on balls by the rule
\begin{align}\label{paon}
(U_{u,v,n})\sharp\;\gamma=U_{du-bv,-cu+av,n},
\end{align}
where $\gamma=\M{a}{b}{c}{d}\in GL_2(\ZZ_p)$ and $U_{u,v,n}\in\ca{B}$.
We note that the right action in \eqref{paon} may be written in terms of the usual left action of
$GL_2(\ZZ_p)$ on $\ZZ_p^2$ namely
\begin{align*}
(U_{u,v,n})\sharp\;\gamma=\gamma^{-1}U_{u,v,n},
\end{align*}
where for a matrix $\eta=\M{a}{b}{c}{d}\in GL_2(\ZZ_p)$, $\eta U_{u,v,n}:=U_{au+bv,cu+dv}$.

The space of $\QQ$-valued distributions on $\ZZ_p^2$, which we denote by $Dist(\ZZ_p^2,\QQ)$, has an 
induced right action by $GL_2(\ZZ_p)$.
For a distribution $\mu\in Dist(\ZZ_p^2,\QQ)$ and an element $\gamma\in GL_2(\ZZ_p)$ we define the 
\textit{right $\gamma$-twist} of $\mu$ by the rule 
\begin{align}\label{apple}
\mu^{\gamma}(U):=\mu(U\sharp\gamma^{-1})=\mu(\gamma U).
\end{align}

We also endow the space of rational binary quadratic forms with a left action of $GL_2(\QQ)$ by the rule
\begin{align}\label{apple2}
\gamma Q(x,y):=Q(\R{x}{y}\sharp\gamma)\s\s\s\mbox{for}\s\s\s\gamma=\M{a}{b}{c}{d}\in GL_2(\QQ).
\end{align}

We want to define a $\QQ$-valued distribution on $\ZZ_p^2$ using the distribution 
which appears in Definition \ref{grenier}. In order to do so we need to choose an identification of
$\ZZ_p^2$ with $\ca{O}_{K_p}$.
\begin{Def}
For every $\tau\in K\bs\QQ$, we define an injective map
\begin{align*}
\phi_{\tau}:\ZZ_p^2 &\rightarrow K_p\\
             (x,y)&\mapsto x-y\tau^{\sigma}.
\end{align*}
\end{Def}
Note that in general, the image $\phi_{\tau}$ is not necessarily equal to $\ca{O}_{K_p}$. 
We say that an element $\tau\in\ca{H}_p\cap K$ is \textit{reduced} if for $j=0,\ldots,p-1$, 
$|\tau-j|_p\geq 1$ and $\left|\frac{1}{\tau}\right|_p\geq 1$. When $\tau$ is reduced, it is easy to 
see that the map $\phi_\tau$ gives an isomorphism between $\ZZ_p^2$ and $\ca{O}_{K_p}$.

We note that
\begin{align}\label{simple2}
\phi_{\tau}(\gamma U)=(-c\tau^{\sigma}+a)\phi_{\gamma^{-1}\tau}(U).
\end{align}
We are now ready to define $\QQ$-valued distributions on $\XX$.
\begin{Def}\label{grenier2}
Let $\tau\in\ca{H}_p\cap K$ where $\tau$ is reduced and let $W$ be a Shintani set. 
Let $x\in K$ and let $\mk{a}$ be a fractional ideal. 
For every divisor $\wt{\delta}\in\wt{\Div}_f(\mk{N})$ we define the following 
$\QQ$-valued distribution on $\ZZ_p^2$:
\begin{align*}
U\mapsto\nu_{\wt{\delta}}(\mk{a},\tau,x,W)(U):=\zeta_{\wt{\delta}}(
\mk{a},\mk{f},x,W\cap\phi_{\tau}(U),0),
\end{align*}
for $U$ an arbitrary compact-open set of $\ZZ_p^2$.
\end{Def}
The next lemma will play a key role later on
\begin{Lemma}\label{Sonne}
Let $\tau\in K\bs\QQ$ and $x\in K^{\times}$. Suppose that $\gamma=\M{a}{b}{c}{d}\in GL_2(\ZZ)$ 
and that $-c\tau^{\sigma}+a\gg 0$. Then we have
\begin{align*}
\nu_{\wt{\delta}}^{\gamma}(I_{\tau},\tau,
x,W)=\nu_{\wt{\delta}}(I_{\gamma\tau},\gamma\tau,
(-c\tau^{\sigma}+a)^{-1}x,(-c\tau^{\sigma}+a)^{-1}W).
\end{align*}
\end{Lemma}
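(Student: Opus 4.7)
The proof is by direct unfolding on a single ball $U\subseteq\ZZ_p^2$ and then invoking two structural identities from the paper: \eqref{simple2} to move $\gamma$ off the argument of $\phi_{\tau}$, and the scaling formula \eqref{paon1}, extended $\ZZ$-linearly to the divisor-twisted zeta function $\zeta_{\wt{\delta}}$, to absorb the resulting multiplicative factor $\lambda:=-c\tau^{\sigma}+a$ into the lattice and the base point.

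Unpacking the definitions,
\[
\nu_{\wt{\delta}}^{\gamma}(I_{\tau},\tau,x,W)(U)=\nu_{\wt{\delta}}(I_{\tau},\tau,x,W)(\gamma U)=\zeta_{\wt{\delta}}\bigl(I_{\tau},\mk{f},x,W\cap\phi_{\tau}(\gamma U),0\bigr),
\]
and \eqref{simple2} rewrites $\phi_{\tau}(\gamma U)=\lambda\,\phi_{\gamma\tau}(U)$. Since $\lambda\gg 0$ by hypothesis, multiplication by $\lambda$ is a bijection of the positive quadrant $Q$, so it commutes with the intersection defining the Shintani set: $W\cap\lambda\,\phi_{\gamma\tau}(U)=\lambda(\lambda^{-1}W\cap\phi_{\gamma\tau}(U))$. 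Applying \eqref{paon1} at $s=0$ termwise in the sum for $\zeta_{\wt{\delta}}$ (with scaling factor $\lambda^{-1}$) then yields
\[
\zeta_{\wt{\delta}}\bigl(I_{\tau},\mk{f},x,W\cap\phi_{\tau}(\gamma U),0\bigr)=\zeta_{\wt{\delta}}\bigl(\lambda I_{\tau},\mk{f},\lambda^{-1}x,\lambda^{-1}W\cap\phi_{\gamma\tau}(U),0\bigr).
\]

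All non-formal content reduces to the identity $\lambda I_{\tau}=I_{\gamma\tau}$. I would prove this by tracking the two factors of $I_{\tau}=A_{\tau}\Lambda_{\tau}$ separately. The lattice transformation is the elementary fact that $\gamma\in GL_2(\ZZ)$ carries a $\ZZ$-basis of $\Lambda_{\tau}$ to a basis, giving $\Lambda_{\gamma\tau}=\mu^{-1}\Lambda_{\tau}$ for a suitable factor $\mu$ linear in $\tau$. The leading-coefficient transformation uses that $Q_{\gamma\tau}$ is the $\gamma$-transform of $Q_{\tau}$ (in the sense of \eqref{apple2}) up to sign; expanding via $\tau+\tau^{\sigma}=-B_{\tau}/A_{\tau}$ and $\tau\tau^{\sigma}=C_{\tau}/A_{\tau}$ produces exactly the factor $A_{\tau}\lambda\lambda^{\sigma}=A_{\tau}\Norm(\lambda)$, which is positive because $\lambda\gg 0$. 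Multiplying out, $I_{\gamma\tau}=A_{\gamma\tau}\Lambda_{\gamma\tau}=\lambda A_{\tau}\Lambda_{\tau}=\lambda I_{\tau}$.

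Substituting this identity into the preceding display and recognizing the result as $\nu_{\wt{\delta}}(I_{\gamma\tau},\gamma\tau,\lambda^{-1}x,\lambda^{-1}W)(U)$ closes the argument. The main obstacle is the transport identity $\lambda I_{\tau}=I_{\gamma\tau}$ itself: the $\Lambda_{\tau}$ part is trivial, but $A_{\tau}$ transforms via the quadratic form $Q_{\tau}$ rather than linearly in $\gamma$, and pinning down the sign requires both the positivity convention $A_{\tau}>0$ for primitive forms and the hypothesis $\lambda\gg 0$. Everything else is formal: \eqref{simple2} is a direct linear-algebra calculation, and \eqref{paon1} is simply the scaling rule read off from the definition \eqref{corbeau} of the Shintani zeta function.
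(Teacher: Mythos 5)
Your structure is exactly the intended ``straightforward computation'': unfold the twist $\nu^{\gamma}(U)=\nu(\gamma U)$, pull $\gamma$ through $\phi_{\tau}$ via \eqref{simple2}, and absorb the scalar $\lambda=-c\tau^{\sigma}+a$ by the $s=0$ scaling invariance of the Shintani zeta function. However, you misquoted \eqref{simple2}: as stated in the paper (and as one verifies by writing $\phi_{\tau}(au+bv,cu+dv)=u(a-c\tau^{\sigma})+v(b-d\tau^{\sigma})=(a-c\tau^{\sigma})\bigl(u-v\cdot\frac{d\tau^{\sigma}-b}{a-c\tau^{\sigma}}\bigr)$), it reads $\phi_{\tau}(\gamma U)=(-c\tau^{\sigma}+a)\,\phi_{\gamma^{-1}\tau}(U)$, with $\gamma^{-1}\tau$ and not $\gamma\tau$.

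This inversion propagates to your transport identity. Writing the lattice transformation $\Lambda_{\gamma^{-1}\tau}=(a-c\tau)^{-1}\Lambda_{\tau}$ and the leading-coefficient transformation $A_{\gamma^{-1}\tau}=A_{\tau}\,\Norm(a-c\tau)$ (the norm is positive precisely because $\lambda\gg 0$, so $a-c\tau>0$ as well), one obtains $I_{\gamma^{-1}\tau}=(a-c\tau^{\sigma})I_{\tau}=\lambda I_{\tau}$. By contrast $I_{\gamma\tau}=\pm(c\tau^{\sigma}+d)I_{\tau}$, which is not $\lambda I_{\tau}$ in general, so the identity $\lambda I_{\tau}=I_{\gamma\tau}$ you assert does not hold. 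Carried out with the correct \eqref{simple2}, your derivation lands on $\nu_{\wt{\delta}}(I_{\gamma^{-1}\tau},\gamma^{-1}\tau,\lambda^{-1}x,\lambda^{-1}W)$; this strongly suggests the Lemma statement itself should have $\gamma^{-1}\tau$ in place of $\gamma\tau$, a discrepancy that is invisible at the lone point of application in Proposition \ref{klasse} because $\gamma=\M{0}{1}{1}{0}$ acts as its own inverse on the upper half-plane. Aside from this sign-of-inversion issue, your argument is complete and is the right one.
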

\begin{proof}
This is a straightforward computation. \fin
\end{proof}
\section{$\ZZ$-valued measures on $\XX$}
In \cite{Ch1}, a family of $\ZZ$-valued measures $\wt{\mu}_{\delta}$ 
(see Definition \ref{def_measure_balls} below) 
on $\XX$ was constructed using periods of Eisenstein series. Here $\XX=\ZZ_p\times\ZZ_p\bs(p\ZZ_p\times p\ZZ_p)$ 
denotes the set of primitive vectors of $\ZZ_p^2$.
The author defined
the $p$-invariant $u_C$ as a certain multiplicative integral on the space $\XX$ 
which involves the measure $\wt{\mu}_{\delta}$. In a similar way, the invariant $u_D$ is
defined as $p$-adic multiplicative integral on the space $\ca{O}_{K_p}$ 
which involves the measure $\nu_{\wt{\delta}}$
of Definition \ref{grenier2}. In Section \ref{def_invariants}, precise definitions
of $u_C$ and $u_D$ are given. 
The key ingredient that allows us to relate the invariant $u_C$ to the invariant
$u_D$ are explicit formulas of the measures $\wt{\mu}_{\delta}$ and $\nu_{\wt{\delta}}$
on balls of $\XX$.  

\begin{Def}\label{def_measure_balls}
Assume that $\mk{d}'$ is consecutive to $\mk{d}$  and let
$\eta=\frac{\mk{d}'}{\mk{d}}$, $l=\Norm(\eta)$, 
$d=\Norm(\mk{d})$ and $d'=ld$. Let us fix an integer $1\leq j\leq f$ and let
$$
\delta=\frac{N}{d}[d,j]-\frac{N}{d'}[d',j]\in \Div_f(N).
$$
Let $\infty=\frac{1}{0}$ be the standard cusp at infinity
and $\frac{a}{c}\in\Gamma_0(fN)\{\infty\}$.
For every ball $U_{u,v,s}\subseteq\XX$, we define
\begin{align}\label{libellule} \notag
&\wt{\mu}_{\delta}\left\{\infty\rightarrow\frac{a}{c}\right\}(U_{u,v,s})\\
&:=-12\left(\frac{N}{d}\right)
\sum_{\substack{1\leq h\leq\frac{c}{fd}}}
\wt{B}_1\left(\frac{a}{\frac{c}{fd}}\left(h+\frac{v}{p^s}+\frac{j}{f}\right)
-\frac{dfu}{p^s}\right)\wt{B}_1
\left(\frac{1}{\frac{c}{fd}}\left(h+\frac{v}{p^s}+\frac{j}{f}\right)\right)\\ \notag
&+12\left(\frac{N}{d'}\right)
\sum_{\substack{1\leq h'\leq\frac{c}{fd'}}}
\wt{B}_1\left(\frac{a}{\frac{c}{fd'}}\left(h'+\frac{v}{p^s}+\frac{j}{f}\right)
-\frac{d'fu}{p^s}\right)\wt{B}_1
\left(\frac{1}{\frac{c}{fd'}}\left(h'+\frac{v}{p^s}+\frac{j}{f}\right)\right),
\end{align}
where $\wt{B}_1(x)=\{x\}-\frac{1}{2}+\frac{\indicator_\ZZ(x)}{2}$, $0\leq\{x\}< 1$
denotes the fractional part of a real number $x$ and $\indicator_\ZZ(x)$ stands for the
characteristic function of $\ZZ$. 
\end{Def}
\begin{Rem}
The expression which appears on the right hand side of \eqref{libellule} is a special case of Dedekinds sums that
have been considered in \cite{Hal}.
\end{Rem}
Using Lemma \ref{tired}, we first extend by linearity, the definition of 
$\wt{\mu}_{\delta}\left\{\infty\rightarrow\frac{a}{c}\right\}$ to any divisor $\delta\in \Div^0(N)_j$.
In a second step, we extend the definition of 
$\wt{\mu}_{\delta}\left\{\infty\rightarrow\frac{a}{c}\right\}$ to all $\delta\in \Div_f(N)$ which satisfies
$(1)$ of Definition \ref{good_div}.

The next proposition justifies the previous definition.
\begin{Prop}\label{tyran}
Let $\delta\in \Div_f(N)$ be a good divisor for the triple $(p,f,N)$. Then the assignment $U_{u,v,s}\mapsto\wt{\mu}_{\delta}
\left\{\infty\rightarrow\frac{a}{c}\right\}(U_{u,v,s})$ gives rise to a $\ZZ$-valued measure
on $\XX$ with total measure zero, i.e., $\wt{\mu}_{\delta_r}\left\{\infty\rightarrow\frac{a}{c}\right\}(\XX)=0$.
\end{Prop}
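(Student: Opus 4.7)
The plan is to verify three properties of the assignment $U \mapsto \wt{\mu}_\delta\left\{\infty \rightarrow \tfrac{a}{c}\right\}(U)$: well-definedness together with integrality, the distribution (refinement) identity, and vanishing of the total mass on $\XX$.

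First, by Lemma \ref{tired}, the group $\wt{\Div}^{0}(\mk{N})$ is generated by the consecutive-pair differences $d'[\mk{d}'] - d[\mk{d}]$; via the isomorphism $\Phi$ these correspond (up to sign and normalization) to the basic divisors $\frac{N}{d}[d,j] - \frac{N}{d'}[d',j]$ on which \eqref{libellule} is directly defined. I would verify that any $\ZZ$-linear relation among these generators corresponds to a vanishing identity among the Dedekind--Rademacher sums on the right-hand side of \eqref{libellule}, evaluated on any ball $U_{u,v,s}$; this is a direct check using the structure of the formula. The extension then passes $\ZZ$-linearly to $\Div^{0}(N)_j$ for each $j$, and thence to all good divisors via the decomposition $\delta = \sum_j \pi_j(\delta)$, where condition (1) of Definition \ref{good_div} ensures that each projection has degree zero. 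Integrality is the main technical point: each sum appearing in \eqref{libellule} is a generalized Dedekind--Rademacher sum of the type studied in \cite{Hal}, whose denominator is controlled by a classical reciprocity law to divide $12$, and the prefactor of $12$ in \eqref{libellule} exactly clears it.

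Second, the distribution identity
\[
\wt{\mu}_\delta\left\{\infty \rightarrow \tfrac{a}{c}\right\}(U_{u,v,s}) \;=\; \sum_{i,k=0}^{p-1} \wt{\mu}_\delta\left\{\infty \rightarrow \tfrac{a}{c}\right\}(U_{u+ip^s,\,v+kp^s,\,s+1})
\]
follows from the classical distribution property of the sawtooth function $\sum_{\ell=0}^{p-1} \wt{B}_1\!\left(x + \tfrac{\ell}{p}\right) = \wt{B}_1(px)$. Substituting $v \mapsto v+kp^s$ and $u \mapsto u+ip^s$ in \eqref{libellule} and summing simultaneously over the refinement indices $i,k$ collapses the two $\wt{B}_1$ factors in each summand at level $s+1$ back to the single product at level $s$: the outer factor, which depends on both $u$ and $v$ through $-dfu/p^s$ and $v/p^s$, collapses under the joint sum over $i$ and $k$, while the inner factor, which depends only on $v/p^s$, collapses under the sum over $k$. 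This gives a distribution on all of $\ZZ_p^2$, whose restriction to $\XX$ is the desired measure.

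Third, for the total mass, I would write
\[
\wt{\mu}_\delta(\XX) \;=\; \wt{\mu}_\delta(\ZZ_p^2) \;-\; \wt{\mu}_\delta(p\ZZ_p \times p\ZZ_p),
\]
and observe that substituting $(u,v) \mapsto (pu,pv)$ in \eqref{libellule}, together with the index shift $h \mapsto ph$ in the inner sums, yields precisely the measure attached to $p \star \delta$ evaluated on the full space $\ZZ_p^2$. Condition (2) of Definition \ref{good_div} asserts $p \star \delta = \delta$, which forces $\wt{\mu}_\delta(p\ZZ_p \times p\ZZ_p) = \wt{\mu}_\delta(\ZZ_p^2)$ and hence $\wt{\mu}_\delta(\XX) = 0$. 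The main obstacle is the integrality claim of Step~1: while Steps~2 and 3 reduce to mechanical identities via the distribution property of $\wt{B}_1$ and the $p$-stabilization, proving $\ZZ$-valuedness requires careful tracking of denominators between the two Dedekind--Rademacher contributions and an appeal to reciprocity in the style of \cite{Hal}, or alternatively a reinterpretation of \eqref{libellule} as a period of $\dlog \beta_\delta$ known to be integral on $\Gamma_0(fN)\{\infty\}$.
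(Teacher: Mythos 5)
The proposal takes a genuinely different route from both of the paper's proofs. The paper's proof of Proposition~\ref{tyran} is by citation to Proposition~14.1 of \cite{Ch}, where the measure is realized as a partial modular symbol built from periods of a family of Eisenstein series; additivity and $\ZZ$-valuedness then come from the structure of those periods. The paper also sketches an alternative integrality argument (Propositions~\ref{tricky2} and \ref{Eleni}) by comparing $\wt{\mu}_\delta$ to the Shintani-zeta measure $\nu_{\wt{\delta}}^{\gamma}$, which is automatically a distribution, and invoking Proposition~\ref{tourtiere} to control denominators. Your approach, by contrast, tries to verify the measure axioms directly from the closed formula \eqref{libellule}.

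The serious gap is in your Step~2. You assert that the refinement identity $\wt{\mu}_\delta(U_{u,v,s})=\sum_{i,k}\wt{\mu}_\delta(U_{u+ip^s,v+kp^s,s+1})$ reduces to the distribution relation $\sum_{\ell=0}^{p-1}\wt{B}_1(x+\tfrac{\ell}{p})=\wt{B}_1(px)$, with the outer $\wt{B}_1$ collapsing under the joint $(i,k)$-sum and the inner $\wt{B}_1$ collapsing under the $k$-sum. But \emph{both} factors in \eqref{libellule} depend on $v$, hence on $k$. After summing over $i$ (which does apply the distribution relation to the outer factor, assuming $\gcd(fd,p)=1$), one is left with a sum of the form $\sum_k\wt{B}_1(Y_0+\alpha k/p)\,\wt{B}_1(Z_0+\beta k/p)$ with $\alpha\neq 0\neq\beta$; no version of the distribution relation collapses such a coupled product. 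Worse, carrying out the $i$-sum produces the argument $\tfrac{a}{C'}(ph+\tfrac{pj}{f}+\cdots)$ rather than $\tfrac{a}{C'}(h+\tfrac{j}{f}+\cdots)$: the parameter $j$ is replaced by $pj$ modulo $f$. This is exactly the $p\star$ action on $\Div_f(N)$. So the refinement does not reproduce the level-$s$ formula for $\delta$; it reproduces the level-$s$ formula for $p\star\delta$. The additivity identity is therefore \emph{false} for a general degree-zero $\delta$ and only has a chance of holding when $p\star\delta=\delta$. Your plan quarantines condition~(2) of Definition~\ref{good_div} in Step~3, where in fact it is not needed (for $(u,v)=(0,0)$ the formula \eqref{libellule} is visibly independent of $s$, so $\wt{\mu}_\delta(U_{0,0,0})=\wt{\mu}_\delta(U_{0,0,1})$ is trivial); it is Step~2 that requires it, and the combinatorial identity that uses it is nontrivial and not supplied.

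A secondary concern: Step~1's integrality claim is asserted rather than proved. The formula produces generalized Dedekind--Rademacher sums with denominators a priori controlled by $C'=c/(fd)$, and the fixed prefactor of $12$ alone does not obviously clear them; a reciprocity identity from \cite{Hal}, or the Shintani-side bound via Proposition~\ref{tourtiere} as the paper does in Proposition~\ref{Eleni}, is genuinely needed and should be carried out. Your own last sentence signals awareness of this, and proposing to use the modular-unit $\dlog$ interpretation is sound, but then you are effectively back on the paper's original Eisenstein-series route, not a new elementary proof.
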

\begin{proof}
See Proposition 14.1 in \cite{Ch}. \fin
\end{proof}
In the next subsection we will give a different proof of Proposition \ref{tyran} by relating the
measure $\wt{\mu}_{\delta}$ to the measure
$\nu_{\wt{\delta}}$.

It will be convenient to rewrite \eqref{libellule} in a different way. 
Every element $1\leq h\leq\frac{c}{fd}$ may be written uniquely as $h=\frac{c}{fd'}r+h'$
for some $0\leq r\leq l-1$ and $1\leq h'\leq\frac{c}{fd'}$. We have
\begin{align*}
&\sum_{\substack{1\leq h'\leq\frac{c}{fd'}}}
\wt{B}_1\left(\frac{a}{\frac{c}{fd'}}\left(h'+\frac{v}{p^s}+\frac{j}{f}\right)
-\frac{d'fu}{p^s}\right)\wt{B}_1
\left(\frac{1}{\frac{c}{fd'}}\left(h'+\frac{v}{p^s}+\frac{j}{f}\right)\right)\\
&=\sum_{\substack{1\leq h'\leq\frac{c}{fd'}}}\sum_{0\leq r\leq l-1}
\wt{B}_1\left(\frac{a}{\frac{c}{fd}}\left(h'+r\frac{c}{fd'}+\frac{v}{p^s}+\frac{j}{f}\right)
-\frac{dfu}{p^s}\right)\wt{B}_1
\left(\frac{1}{\frac{c}{fd'}}\left(h'+r\frac{c}{fd'}+\frac{v}{p^s}+\frac{j}{f}\right)\right)\\
&=\sum_{\substack{1\leq h\leq\frac{c}{fd}}}
\wt{B}_1\left(\frac{a}{\frac{c}{fd}}\left(h+\frac{v}{p^s}+\frac{j}{f}\right)
-\frac{dfu}{p^s}\right)\wt{B}_1
\left(\frac{1}{\frac{c}{fd'}}\left(h+\frac{v}{p^s}+\frac{j}{f}\right)\right)
\end{align*}
where the first equality follows from the distribution relation 
$\sum_{j=0}^{l-1}\wt{B}_1(x+\frac{j}{l})=\wt{B}_1(lx)$ and that
$\wt{B}_1(x+1)=\wt{B}_1(x)$. The second equality follows from the definition of $h$. 
We may thus rewrite \eqref{libellule} as
\begin{align}\label{libellule2} \notag
&\wt{\mu}_{\delta}\left\{\infty\rightarrow\frac{a}{c}\right\}(U_{u,v,s})
=-12\sum_{1\leq h\leq\frac{c}{fd}}
\wt{B}_1\left(\frac{a}{\frac{c}{fd}}\left(h+\frac{j}{f}+\frac{v}{p^s}\right)
-\frac{fdu}{p^s}\right)\\
&\hspace{4cm}\cdot\left[\left(\frac{N}{d}\right)\wt{B}_1
\left(\frac{1}{\frac{c}{fd}}\left(h+\frac{j}{f}+\frac{v}{p^s}\right)\right)
-\left(\frac{N}{d'}\right)\wt{B}_1\left(\frac{1}{\frac{c}{fd'}}\left(h+\frac{j}{f}+\frac{v}{p^s}\right)\right)\right].
\end{align}
\begin{Rem}\label{hausen}
Let $f=1$ and let $\delta\in Div_f(N)=Div_f(N)$ be a fixed good divisor for the triple
$(p,f,N)$. 
One may verify that the right hand side of \eqref{libellule2} coincides with the 
measure which appears in Proposition
3.2 of \cite{Das2}. In particular, since in this special case the $p$-adic measures
which are used to define $u_{DD}$ and $u_C$ agree one gets that
$u_{DD}(\alpha_{\delta},\tau)=u_{C,\delta}(1,\tau)$. We would like to point 
out here that the factor $12$ which appears in the right hand side of \ref{libellule}
is not optimal. It will follow from
Proposition \ref{Eleni} that under the additional assumption that $l\geq 5$ one has that 
$\frac{1}{6}\wt{\mu}_{\delta}\left\{\infty\rightarrow\frac{a}{c}\right\}(U_{u,v,s})$ is an 
integer. This fact was used by Dasgupta in \cite{Das3} in the following case: 
let $N=l$ be a prime and set $\delta=[l]-[1]$. Looking at the right hand side
of \eqref{libellule2} one may check that
$\frac{1}{6}\wt{\mu}_{\delta}\left\{\infty\rightarrow\frac{a}{c}\right\}(U_{u,v,s})$ 
coincides with the measure which appears on line $(82)$ of \cite{Das3}.
\end{Rem}
\subsection{Explicit formulas of $\nu_{\wt{\delta}}(\mk{b},\tau,x,C)$ on balls of $\XX$}
In this subsection we give explicit formulas of the measure $\nu_{\wt{\delta}}(\mk{b},\tau,x,C)$ 
(see Definition \ref{grenier2}) when evaluated on balls of $\XX$.
Let $(p,f,N)$ be a triple as in the introduction, $\mk{f}=f\ca{O}$ and let 
$\epsilon>1$ be a generator of $\ca{O}(\mk{f}\infty)^{\times}$.
\begin{Prop}\label{fish}
Assume that $\mk{d}'$ is consecutive to $\mk{d}$ and let
$\eta=\frac{\mk{d}'}{\mk{d}}$, $l=\Norm(\eta)$, 
$d=\Norm(\mk{d})$ and $d'=ld$. Set 
\begin{align*}
\wt{\delta}=d[\mk{d}]-d'[\mk{d}'].
\end{align*}
Note that $\Deg(\wt{\delta})=0$. 
Let $U_{u,v,n}$ be a ball contained in $\XX$ and assume that $u\equiv v\equiv 0\pmod{f}$. 
Let $x=x_1+x_2\tau$ with $x_i\in\ZZ$ and let $C=C(1,\epsilon)$ where $\epsilon>1$ is
a generator of $\ca{O}(\mk{f}\infty)^{\times}$. Let be $\tau\in K\bs\QQ$ and suppose 
$\frac{\tau}{d'}\in H^{\ca{O}}(\mk{N})$ and set
$\mk{b}=I_{\tau}$. 
Then we have the following formula:
\begin{align}\label{tempou}
&\nu_{\wt{\delta}}(\mk{b},\tau,x,C)
\left(U_{u,v,n}\right)
=\sum_{h\pmod{d'c}}B_1^*(y_1(h))[dB_1^*\left(ly_2(h)\right)-d'B_1^*\left(y_2(h)\right)],
\end{align}
where
\begin{align*}
y_1(h)&=\frac{a}{d'c}\left(h-\frac{v}{fp^n}+\frac{x_2}{f}\right)+
\frac{u}{fd'p^n}+\frac{x_1}{fd'}\\
y_2(h)&=\frac{-1}{d'c}\left(h-\frac{v}{fp^n}+\frac{x_2}{f}\right).
\end{align*}
Here $B_1^*(y)=y'-\frac{1}{2}$ where $y'$ is the unique real number $0<y'\leq 1$ such that
$y-y'\in\ZZ$.
\end{Prop}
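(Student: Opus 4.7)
The plan is to unwind Definition \ref{grenier2} into a sum of two Shintani zeta values at $s=0$ and then apply Shintani's evaluation formula to each. By the very definition of $\nu_{\wt\delta}$ together with the specific shape $\wt\delta=d[\mk d]-d'[\mk d']$, one has
\[
\nu_{\wt\delta}(\mk b,\tau,x,C)(U_{u,v,n})
=d\cdot\zeta(\mk b(\mk d^\sigma)^{-1},\mk f,x,W,0)
-d'\cdot\zeta(\mk b(\mk d'^\sigma)^{-1},\mk f,x,W,0),
\]
where $W=C\cap\phi_\tau(U_{u,v,n})$. So the proposition reduces to evaluating the two Shintani values on the right explicitly and comparing with the closed form on the right of \eqref{tempou}.

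For each term, I would first parametrize the set of contributing $\mu$. Since $\mk b=I_\tau$ and $\tau/d'\in H^{\ca O}(\mk N)$, the lattice $\mk b(\mk d^\sigma)^{-1}\mk f=f\mk d^\sigma\Lambda_{\tau^\sigma}$ can be identified, via the relation $\mk d^\sigma\Lambda_{\tau^\sigma}=\Lambda_{d\tau^\sigma}/\mbox{(unit)}$ coming from $\tau/d'\in H^{\ca O}(\mk N)$, with an explicit rank-two $\ZZ$-lattice expressed in the basis $(1,\tau^\sigma)$. The congruence condition $\mu\in\phi_\tau(U_{u,v,n})$ cuts out a sublattice, and the hypothesis $u\equiv v\equiv 0\pmod f$ guarantees that this intersection is precisely a single coset of a finer lattice and not a more complicated union. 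The shift by $x=x_1+x_2\tau$ then translates this coset, and every contributing $\mu$ can be written uniquely as $\alpha\cdot 1+\beta\cdot\epsilon$ with $(\alpha,\beta)$ in an explicit translated sublattice of $\ZZ^2$.

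Next, I would invoke Shintani's evaluation at $s=0$ for the two-dimensional cone $C(1,\epsilon)$: up to the usual boundary conventions, $\zeta(\mk c,\mk f,x,C(1,\epsilon),0)$ equals a finite sum, indexed by representatives of the lattice points in the fundamental parallelogram of the translated sublattice, of products $B_1^*(\cdot)B_1^*(\cdot)$ of the sawtooth function applied to the two coordinate projections. Converting the $\epsilon$-coordinate back to the basis $(1,\tau^\sigma)$ via $\phi_\tau$, the two coordinate projections become precisely the affine functions $y_1(h)$ and $y_2(h)$ given in the statement, with $h$ enumerating the corner lattice points. Performing this computation for both terms simultaneously and collapsing the two outer sums over $h\pmod{dc}$ and $h\pmod{d'c}$ into a single sum over $h\pmod{d'c}$ using the distribution relation $\sum_{j=0}^{l-1}B_1^*(y+j/l)=B_1^*(ly)$ (in the same fashion as the reindexing shown between \eqref{libellule} and \eqref{libellule2}), one obtains the bracketed expression $dB_1^*(ly_2(h))-d'B_1^*(y_2(h))$, and the factor $B_1^*(y_1(h))$ becomes common to both terms.

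The main technical obstacle will be the bookkeeping of lattice indices in the parametrization step: one must verify that, under the hypotheses $\tau/d'\in H^{\ca O}(\mk N)$ and $f\mid u,v$, the intersection of $(\mk c^{-1}\mk f+x)$ with $\phi_\tau(U_{u,v,n})$ really is a single translate of a lattice described by the coefficients $\frac{a}{d'c}$, $\frac{1}{fd'p^n}$, $\frac{x_1}{fd'}$, $\frac{x_2}{f}$ appearing in $y_1,y_2$, rather than a disjoint union of several translates. Once this identification is made and the two cones $(\mk d)$ and $(\mk d'=\mk d\eta)$ are aligned so that the same $h$-index serves both, the remainder is routine Bernoulli manipulation, and \eqref{tempou} follows.
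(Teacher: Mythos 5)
Your proposal follows essentially the same route as the paper's proof: you unfold $\nu_{\wt\delta}$ into $d\cdot\zeta(\mk b(\mk d^\sigma)^{-1},\ldots,0)-d'\cdot\zeta(\mk b(\mk d'^\sigma)^{-1},\ldots,0)$, use $\mk b^{-1}=\Lambda_\tau$ together with $\tau/d'\in H^{\ca O}(\mk N)$ to pin down the lattices $\mk b^{-1}\mk d^\sigma=d\ZZ+\tau\ZZ$ and $\mk b^{-1}(\mk d')^\sigma=d'\ZZ+\tau\ZZ$, apply Shintani's closed form at $s=0$ for the cone $C(1,\epsilon)$ (the paper cites formula (77) of Dasgupta for this), and then collapse the extra $j\pmod l$ summation via the sawtooth distribution relation to land on the bracketed factor $dB_1^*(ly_2)-d'B_1^*(y_2)$. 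The only piece you don't flag is the paper's simplifying reduction to $p\equiv1\pmod f$ (used to make $\phi_\tau(U_{u,v,n})$ a single coset rather than a trace over $\langle p\bmod f\rangle$), but your observation that $f\mid u,v$ guarantees a single translate is the corresponding step, so the argument is the same in substance.
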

\begin{proof}
We will prove Proposition \ref{fish} under the simplifying assumption that
$p\equiv 1\pmod{f}$ since it simplifies the presentation. 
It is easy to adapt the proof to the general case since it only amounts to
taking a \lq\lq trace\rq\rq\s over the subgroup $\laa p\pmod{f}\raa$ of $(\ZZ/f\ZZ)^{\times}$. Note that
\begin{align*}
\phi_{\tau}(U_{u,v,n})=u-v\tau+p^N\ca{O}_{K_p}.
\end{align*}
Using the assumptions that $p\equiv 1\pmod{f}$, $(\mk{b},p)=1$ and that $u\equiv v\equiv 0\pmod{f}$  
we readily see that
\begin{align*}
(\mk{b}^{-1}\mk{d}^{\sigma}f+x)\cap\phi_{\tau}(U_{u,v,n})=\mk{b}^{-1}\mk{d}^{\sigma}fp^n+u-v\tau+p^n(x_1+x_2\tau).
\end{align*}

We have $\mk{b}^{-1}=\Lambda_{\tau}$. Since $\frac{\tau}{d'}\in H^{\ca{O}}(\mk{N})$ 
we may deduce from \eqref{thrill} that
\begin{align}\label{moon}
\mk{b}^{-1}\mk{d}^{\sigma}=d\ZZ+\tau\ZZ\s\s\s\mbox{and}\s\s\s
\mk{b}^{-1}(\mk{d}')^{\sigma}=d'\ZZ+\tau\ZZ.
\end{align}
Thus 
\begin{align*}
\mk{b}^{-1}\mk{d}^{\sigma}p^nf=p^nfd\ZZ+p^nfN\tau\ZZ.
\end{align*}
Let $\eta_{\tau}=\M{a}{b}{c}{\wt{d}}\in\Gamma_1(f)\cap\Gamma_0(N)$. Make the
crucial observation that $(a,fN)=1$. Note that $1<\epsilon=c\tau+\wt{d}$ and
thus $\epsilon^{-1}=-c\tau+a$ where $0<\epsilon^{-1}<1$.

Let $m=\cond(\ca{O})=1$ where $(m,N)=1$. Set
\begin{align*}
w_1=d'fp^n\s\s\s\mbox{and}\s\s\s w_2=d'fp^n\epsilon^{-1}.
\end{align*} 
For $i\in\{1,2\}$ we have 
\begin{align*}
\frac{w_i}{l}\in\mk{b}^{-1}\mk{d}^{\sigma}p^nf\s\s\s\mbox{but}\s\s\s
\frac{w_i}{l}\notin\mk{b}^{-1}(\mk{d}')^{\sigma}p^nf, 
\end{align*}
as required. Unfolding the definition of 
$\nu_{\wt{\delta}}(\mk{b},\tau, x,C)\left(U_{u,v,n}\right)$ we find that
\begin{align}\label{sunny}\notag
&\nu_{\wt{\delta}}(\mk{b},\tau,x,C)\left(U_{u,v,n}\right)\\[3mm]
&=dZ(\mk{b}^{-1}\mk{d}fp^n,u-v\tau+p^nx,w,0)-d'Z(\mk{b}^{-1}(\mk{d}')^{\sigma}fp^n,u-v\tau+p^nx,w,0),
\end{align} 
where $Z(\mk{a},y,w,s)$ is the function which appears in \eqref{tornado}. 
Applying the formula which appears in $(77)$ of \cite{Das3} to the right hand side of \eqref{sunny} we obtain
\begin{align}\label{6temps}\notag
&\nu_{\wt{\delta}}(\mk{b},\tau,x,C)\left(U_{u,v,n}\right)\\
&=d\sum_{y\in\Omega(\mk{b}^{-1}\mk{d}^{\sigma}pf^n,u-v\tau+p^nx,w,0)}B_1(y_1)B_1(y_2)-
d'\sum_{y'\in\Omega(\mk{b}^{-1}(\mk{d}')^{\sigma}pf^n,u-v\tau+p^nx,w,0)}B_1(y_1')B_1(y_2').
\end{align}

Using \eqref{moon} we find that $\mk{b}^{-1}\mk{d}^{\sigma}fp^n$ is the set of elements of the form
\begin{align*}
hfp^n\tau+jfp^nd\s\s\mbox{for}\s\s h,j\in\ZZ.
\end{align*}

Now consider the system of linear equations (in the $y_i$'s) 
\begin{align*}
hp^nfN\tau+jfp^nd+u-v\tau+p^n(x_1+x_2\tau)=y_1(d'fp^n)+y_2d'fp^n\epsilon^{-1},
\end{align*}
Solving the system we find
\begin{align}\label{system}
y_2 &=\frac{-1}{d'c}\left(h-\frac{v}{fp^n}+\frac{x_2}{f}\right)\\ \notag
y_1 &=\frac{j}{l}+\frac{u}{fd'p^n}+\frac{x_1}{fd'}+\frac{a}{d'c}\left(h-\frac{v}{fp^n}+\frac{x_2}{f}\right).
\end{align}
For each residue class modulo $d'c$, there exists a unique integer $h$ in that class such 
that $0<y_2\leq 1$. For this fixed $h$, and each possible residue class modulo $l$, there exists
a unique integer $j$ in that class such that $0<y_1\leq 1$. Thus the first summation in \eqref{6temps} equals
\begin{align}\label{rose}
\sum_{j\pmod{l}}\sum_{h\pmod{d'c}}B_1^*\left(
\frac{a}{d'c}\left(h-\frac{v}{fp^n}+\frac{x_2}{f}\right)+
\frac{j}{l}+\frac{u}{fd'p^n}+\frac{x_1}{fd'}\right)B_1^*\left(
\frac{1}{d'c}\left(h-\frac{v}{fp^n}-\frac{x_2}{f}\right)\right).
\end{align}
Now every element $0\leq h\leq\frac{d'c}{N}-1$ can be written uniquely as $h=\frac{dc}{N}r+s$ with
$0\leq r\leq l-1$ and $0\leq s\leq\frac{dc}{N}-1$. Thus \eqref{rose} may be written as
\begin{align*}
\hspace{-2cm}\sum_{r,s}B_1^*\left(
\frac{r}{l}+\frac{s}{d'c}+\frac{1}{d'c}\left(-\frac{v}{fp^n}+\frac{x_2}{f}\right)\right)
\sum_{j\pmod{l}}B_1^*\left(\frac{ra}{l}+
\frac{as}{d'c}-\frac{a}{d'c}\left(-\frac{v}{fp^n}+\frac{x_2}{f}\right)+
\frac{j}{l}+\frac{u}{fd'p^n}+\frac{x_1}{fd'}\right).
\end{align*}
Using the distribution relation $\sum_{j=0}^{m-1}B_1^*(x+\frac{j}{m})=B_1^*(mx)$ for $m=l$ and the
fact that $B^*(x+1)=B^*(x)$ we may rewrite the expression above as
\begin{align}\label{rose2}
\sum_{h\pmod{d'c}}\wt{B}_1^*\left(
\frac{-1}{dc}\left(h-\frac{v}{fp^n}+\frac{x_2}{f}\right)\right)
\wt{B}_1^*\left(
\frac{a}{d'c}\left(h-\frac{v}{fp^n}+\frac{x_2}{f}\right)+
\frac{u}{fd'p^n}+\frac{x_1}{fd'}\right).
\end{align}
So this computes the first summation of \eqref{6temps}.
A similar calculation to what we did shows that 
the second summation in \eqref{6temps} is equal to 
\begin{align}\label{rose3}
\sum_{h\pmod{d'c}}B_1^*\left(
\frac{-1}{d'c}\left(h-\frac{v}{fp^n}+\frac{x_2}{f}\right)\right)
B_1^*\left(
\frac{a}{d'c}\left(h-\frac{v}{fp^n}+\frac{x_2}{f}\right)+
\frac{u}{fd'p^n}+\frac{x_1}{fd'}\right).
\end{align}
This shows \eqref{tempou} and therefore it concludes the proof. \fin
\end{proof}
\begin{Prop}\label{tricky2}
We use the same notation as in the previous proposition. Let
\begin{align*}
\wt{\delta}=d[\mk{d},j]-d'[\mk{d}',j]\s\s\s\mbox{and}\s\s\s\delta=\Phi(\wt{\delta})
=d\left[\frac{N}{d},j\right]-d'\left[\frac{N}{d'},j\right].
\end{align*}
Let $\tau\in H^{\ca{O}}(\mk{N})$ and set $\mk{b}=I_{fN\tau}$. Note that
$\frac{Nf\tau}{d'}\in H^{\ca{O}}(\mk{N})$.
Let $\gamma=\M{-f^2N^2}{0}{0}{-fN}$ and define
\begin{align}\label{bouton}
\xi(U_{u,v,n}):=12\nu_{\wt{\delta}}^{\gamma}(\mk{b},fN\tau,fN\tau,C)(U_{u,v,n})-
\wt{\mu}_{\delta}\left\{\infty\rightarrow\frac{a}{c}\right\}(U_{u,v,n}).
\end{align}
Then for every ball $U_{u,v,n}\subseteq\XX$ one has $\frac{1}{3}\xi(U_{u,v,n})\in\ZZ$ and
\begin{align}\label{terreur}
\xi(U_{u,v,n})+\xi(U_{-u,-v,n})=0.
\end{align}
\end{Prop}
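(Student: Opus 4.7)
The plan is to derive explicit closed formulas for both terms in $\xi(U_{u,v,n})$ and compare them term-by-term. First, since $\nu_{\wt{\delta}}^\gamma(U)=\nu_{\wt{\delta}}(\gamma U)$ and $\gamma=\operatorname{diag}(-f^2N^2,-fN)$ is diagonal, one has $\gamma U_{u,v,n}=U_{-f^2N^2 u,\,-fNv,\,n}$. Applying Proposition \ref{fish} with $\tau$ replaced by $fN\tau$ (valid because $\tfrac{fN\tau}{d'}\in H^{\ca{O}}(\mk{N})$), $\mk{b}=I_{fN\tau}$, and $x=fN\tau$ (so that $x_1=0,\,x_2=1$ in the parametrization $x=x_1+x_2\cdot fN\tau$), evaluated on the reindexed ball, produces
\[
12\,\nu^{\gamma}_{\wt{\delta}}(\mk{b},fN\tau,fN\tau,C)(U_{u,v,n})=12\sum_{h\bmod d'c}B_1^*(y_1(h))\bigl[d\,B_1^*(\ell y_2(h))-d'\,B_1^*(y_2(h))\bigr]
\]
for explicit affine functions $y_1(h),y_2(h)$ whose coefficients involve the entries $(a,c)$ of the fundamental unit matrix $\eta_{fN\tau}$ (with $fN\mid c$ since $\eta_{fN\tau}\in\Gamma_0(fN)$). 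In parallel, using the distribution relation $\sum_{r=0}^{\ell-1}\wt{B}_1(x+r/\ell)=\wt{B}_1(\ell x)$ and $\wt{B}_1(x+1)=\wt{B}_1(x)$, one can recast \eqref{libellule2} as
\[
\wt{\mu}_{\delta}\{\infty\to\tfrac{a}{c}\}(U_{u,v,n})=-12\sum_{h\bmod d'c}\wt{B}_1(y_1(h))\bigl[d\,\wt{B}_1(\ell y_2(h))-d'\,\wt{B}_1(y_2(h))\bigr],
\]
with exactly the same arguments $y_1,\ell y_2,y_2$, where the cusp $\tfrac{a}{c}$ is identified (up to sign) with the off-diagonal data of $\eta_{fN\tau}$ and the overall sign flip is absorbed by the negative entries of $\gamma$.

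With this alignment, $\xi(U_{u,v,n})$ is a sum over $h\bmod d'c$ of differences $12\bigl[\wt{B}_1\,\wt{B}_1-B_1^*\,B_1^*\bigr]$ evaluated at matching arguments. The pointwise identity $\wt{B}_1(x)-B_1^*(x)=-\tfrac{1}{2}\indicator_{\ZZ}(x)$ expands each such difference into cross terms of the form $12\cdot(\pm\tfrac{1}{2})\wt{B}_1(\cdot)\indicator_{\ZZ}(\cdot)$, which are integer multiples of $6$ (since $2\wt{B}_1$ applied to a rational is rational with denominator that, after multiplication by the coefficients $d,d'$ and summation over $h\bmod d'c$, clears to give an integer), together with $12\cdot(\pm\tfrac{1}{4})\indicator_{\ZZ}(\cdot)\indicator_{\ZZ}(\cdot)$, which are integer multiples of $3$. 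Both kinds of contribution lie in $3\ZZ$, so $\tfrac{1}{3}\xi(U_{u,v,n})\in\ZZ$.

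For \eqref{terreur}, the substitution $(u,v)\mapsto(-u,-v)$ together with the reindexing $h\mapsto-h\bmod d'c$ sends each argument $y_i(h)$ to $-y_i(-h)$. Since $\wt{B}_1(-x)=-\wt{B}_1(x)$ identically, the $\wt{\mu}_\delta$-part of $\xi$ changes sign exactly; since $B_1^*(-x)=-B_1^*(x)$ holds off $\ZZ$ but $B_1^*(\cdot)=\tfrac{1}{2}$ on $\ZZ$, the $12\,\nu^\gamma$-part changes sign up to precisely the same $\indicator_{\ZZ}$-corrections produced in the previous paragraph. These corrections cancel between $\xi(U_{u,v,n})$ and $\xi(U_{-u,-v,n})$, yielding $\xi(U_{u,v,n})+\xi(U_{-u,-v,n})=0$. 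The delicate point in the argument will be the matching of arguments in the first paragraph: verifying that under the Atkin--Lehner twist $\tau\mapsto fN\tau$ and the right-action of $\gamma$, the cusp $\tfrac{a}{c}$ appearing in $\wt{\mu}_\delta\{\infty\to\tfrac{a}{c}\}$ lines up, up to sign, with the matrix entries of $\eta_{fN\tau}$, so that the two explicit sums truly coincide term by term.
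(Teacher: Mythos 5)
Your high-level strategy matches the paper's: apply Proposition~\ref{fish} (with $\tau$ replaced by $fN\tau$ and the $\gamma$-twist absorbed into the ball coordinates) to get an explicit formula for $12\nu^{\gamma}_{\wt{\delta}}$ in terms of $B_1^*$, observe that it coincides with the formula \eqref{libellule2} for $\wt{\mu}_\delta$ except that $B_1^*$ replaces $\wt{B}_1$, and then exploit the pointwise identity $\wt{B}_1(x)-B_1^*(x)=-\tfrac12\indicator_\ZZ(x)$. The paper does exactly this before deferring the elementary bookkeeping to an appendix.

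However, your parity argument for \eqref{terreur} has a sign error that needs to be fixed. Each term of $\wt{\mu}_\delta$ (and of the $B_1^*$-expression) is a \emph{product of two} $\wt{B}_1$-type factors, e.g.\ $\wt{B}_1(y_1(h))\cdot\bigl[d\,\wt{B}_1(\ell y_2(h))-d'\,\wt{B}_1(y_2(h))\bigr]$. When the arguments are replaced by their negatives (after any correct reindexing of $h$), \emph{both} factors change sign and the product is unchanged; thus $\wt{\mu}_\delta$ is \emph{even}, not odd, in $(u,v)\mapsto(-u,-v)$, contradicting your claim that ``the $\wt{\mu}_\delta$-part of $\xi$ changes sign exactly.'' Writing $B_1^*=\wt{B}_1+\tfrac12\indicator_\ZZ$ and expanding, one finds that the two cross-terms $\wt{B}_1\cdot\indicator_\ZZ$ are odd (they produce the sign change you want), but the pure $\tfrac14\indicator_\ZZ\cdot\indicator_\ZZ$ term is \emph{even}; so $\xi(U)+\xi(-U)$ reduces not to zero automatically but to a sum of the form $-6\sum_h \indicator_\ZZ(y_1(h))\bigl[d\,\indicator_\ZZ(\ell y_2(h))-d'\,\indicator_\ZZ(y_2(h))\bigr]$, whose vanishing is a genuine arithmetic assertion about when the various arguments land in $\ZZ$ simultaneously. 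Your argument as stated never establishes this vanishing; the phrase ``these corrections cancel between $\xi(U_{u,v,n})$ and $\xi(U_{-u,-v,n})$'' is precisely backwards for the $\indicator_\ZZ\cdot\indicator_\ZZ$ piece. A closely related gap affects the integrality claim: after the cancellation of the even piece, the surviving terms are $12\cdot\tfrac12\indicator_\ZZ\cdot\wt{B}_1=6\indicator_\ZZ\wt{B}_1$, and $6\wt{B}_1$ evaluated at a rational need not be an integer; one must use the specific form of $y_1,y_2$ and the sum over $h$, not a generic ``denominators clear'' heuristic. Both points require the careful case analysis of when $y_1(h),\,\ell y_2(h),\,y_2(h)$ lie in $\ZZ$, which is where the real work of the proof lies.
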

\begin{proof}
Using \eqref{tempou} we find that 
\begin{align*}
&12\nu_{\wt{\delta}}^{\gamma}(\mk{b},fN\tau,fN\tau,C)(U_{u,v,n})\\
&=-12d'
\sum_{\substack{1\leq h\leq\frac{cd'}{fN}}}
B_1^*\left(\frac{a}{\frac{cd'}{fN}}\left(h+\frac{v}{p^s}+\frac{j}{f}\right)
-\frac{\frac{N}{d'}fu}{p^s}\right)B_1^*
\left(\frac{1}{\frac{cd'}{fN}}\left(h+\frac{v}{p^s}+\frac{j}{f}\right)\right)\\ \notag
&-12d
\sum_{\substack{1\leq h\leq\frac{cd}{fN}}}
B_1^*\left(\frac{a}{\frac{cd}{fN}}\left(h+\frac{v}{p^s}+\frac{j}{f}\right)
-\frac{\frac{N}{d}fu}{p^s}\right)B_1^*
\left(\frac{1}{\frac{cd}{fN}}\left(h+\frac{v}{p^s}+\frac{j}{f}\right)\right).
\end{align*}
Using \eqref{libellule} one may check that the right hand side of the equality above 
gives the same expression as
$\wt{\mu}_{\delta}\left\{\infty\rightarrow\frac{a}{c}\right\}(U_{u,v,n})$,
except that one evaluates $\wt{B}_1$ rather than $B_1^*$. Once this observation is made, 
the proof of \eqref{terreur} is elementary. The complete proof may be found in Appendix \ref{appendix2}. \fin 
\end{proof}

We can now give a new proof of Proposition \ref{tyran}
\begin{Prop}\label{Eleni}
Let
\begin{align*}
\wt{\delta}=d[\mk{d},j]-d'[\mk{d}',j]\s\s\s\mbox{and}\s\s\s\delta=\Phi(\wt{\delta})
=d\left[\frac{N}{d},j\right]-d'\left[\frac{N}{d'},j\right].
\end{align*}
Then $m_l\cdot\wt{\mu}_{\delta}\left\{\infty\rightarrow\frac{a}{c}\right\}$ 
is a $\ZZ$-valued measure where 
\begin{align*}
m_l=\left\{
\begin{array}{ccc}
4 & \mbox{if} & l=2 \\
3 & \mbox{if} & l=3\\
1 & \mbox{if} & l\geq 5.
\end{array}
\right.
\end{align*}
\end{Prop}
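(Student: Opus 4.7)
The plan is to combine the identity of Proposition \ref{tricky2} with the denominator bound of Proposition \ref{tourtiere}. By Proposition \ref{tricky2}, for every ball $U_{u,v,n}\subseteq\XX$ one has
\[
\wt{\mu}_{\delta}\left\{\infty\to\tfrac{a}{c}\right\}(U_{u,v,n})=12\,\nu_{\wt{\delta}}^{\gamma}(\mk{b},fN\tau,fN\tau,C)(U_{u,v,n})-\xi(U_{u,v,n}),
\]
where $\xi(U_{u,v,n})\in 3\ZZ$ and $C=C(1,\epsilon)$ is a Shintani cone of dimension $m=2$. It therefore suffices to show that $\nu_{\wt{\delta}}^{\gamma}(\mk{b},fN\tau,fN\tau,C)(U_{u,v,n})$ lies in $\frac{1}{l^{2/(l-1)}}\,\ZZ$ for every such ball.

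To establish this denominator bound, I will unfold Definition \ref{grenier2}. Writing $\wt{\delta}=d\bigl([\mk{d},j]-l[\mk{d}',j]\bigr)$ with $\mk{d}'=\mk{d}\eta$ and $\Norm(\eta)=l$, the value $\nu_{\wt{\delta}}^{\gamma}(\mk{b},fN\tau,fN\tau,C)(U_{u,v,n})$ is, up to the constant $d$, a finite sum of differences
\[
\zeta(\mk{a},\mk{f},y,C,0)-l\,\zeta(\mk{a}\eta^{-1},\mk{f},y,C,0),
\]
obtained by decomposing the intersection of the translated lattice $\mk{b}^{-1}(\mk{d}^{\sigma})f+fN\tau$ with $\phi_{fN\tau}(U_{u,v,n})$ into finitely many translates of invertible $\ca{O}$-ideals. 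The standing assumption $(\cond(\ca{O}),N)=1$ guarantees that $\eta$ is an invertible prime of $\ca{O}$ which is $\ca{O}$-good for $C$, while the choice $\mk{b}=I_{fN\tau}$ and the identity of type \eqref{moon} ensure that the ideal classes occurring in the decomposition satisfy $v_{\eta}\le 0$. Proposition \ref{tourtiere} then applies termwise and bounds $\nu_{\wt{\delta}}^{\gamma}(U_{u,v,n})$ inside $\frac{1}{l^{2/(l-1)}}\,\ZZ$.

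A short case analysis finishes the proof. When $l\ge 5$, the bound $l^{2/(l-1)}<l$ forces the denominator, being a power of $l$, to equal $1$, so $\nu_{\wt{\delta}}^{\gamma}(U_{u,v,n})\in\ZZ$; combined with $\xi\in 3\ZZ$ this yields $\wt{\mu}_{\delta}(U_{u,v,n})\in\ZZ$, so $m_l=1$ works. When $l=3$ we have $l^{2/(l-1)}=3$, so $36\,\nu_{\wt{\delta}}^{\gamma}(U_{u,v,n})\in\ZZ$ and $3\xi(U_{u,v,n})\in\ZZ$, giving $3\,\wt{\mu}_{\delta}(U_{u,v,n})\in\ZZ$. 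When $l=2$ we have $l^{2/(l-1)}=4$, so $48\,\nu_{\wt{\delta}}^{\gamma}(U_{u,v,n})\in\ZZ$ and $4\xi(U_{u,v,n})\in\ZZ$, giving $4\,\wt{\mu}_{\delta}(U_{u,v,n})\in\ZZ$.

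The main obstacle is the decomposition step of the second paragraph. Proposition \ref{tourtiere} is stated for a single translated lattice $\mk{a}^{-1}\mk{f}+y$ intersected with a cone, whereas the restriction to the ball $\phi_{fN\tau}(U_{u,v,n})$ cuts this lattice into finitely many sublattices depending on $(u,v,n)$ and $j$. One must verify that each piece can be rewritten as $\mk{a}'^{\,-1}\mk{f}+y'$ with $\mk{a}'$ invertible and $\mk{f}$-int, that $\eta$ remains $\ca{O}$-good for $C$ after this refinement, and that $v_\eta(\mk{a}')\le 0$ holds uniformly. These checks are routine but tedious, and are exactly where the coprimality hypotheses among $\cond(\ca{O})$, $N$ and $p$ are used.
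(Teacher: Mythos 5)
Your proposal follows essentially the same route as the paper's own proof: reduce via Proposition \ref{tricky2} to bounding the $l$-adic denominator of $\nu_{\wt{\delta}}^{\gamma}(\mk{b},fN\tau,fN\tau,C)(U_{u,v,n})$, decompose the translated lattice intersected with a $p$-adic ball into finitely many pieces of the form $\mk{a}^{-1}+y_i$, and apply Proposition \ref{tourtiere} termwise; the paper carries out the decomposition explicitly by taking $\mk{a}^{-1}=\mk{b}^{-1}\mk{f}\mk{p}^n$ (so the checks you flag as ``routine but tedious'' are exactly $(\mk{f}\mk{p},\eta)=\ca{O}$ and $v_\eta(\mk{b})\le 0$ giving $v_\eta(\mk{a})\le 0$), and then runs the same three-way case analysis on $l$.
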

\begin{proof}
From Proposition \ref{tricky2} we see that it is enough to show that for every
ball $U_{u,v,n}\subseteq\XX$ one has
\begin{align*} 
m_l\cdot\nu_{\wt{\delta}}^{\gamma}(\mk{b},fN\tau,fN\tau,C)(U_{u,v,n})\in\ZZ.
\end{align*}
We may write $(\mk{b}^{-1}\mk{f}+z)$ in the following way:
\begin{align}\label{oeuf}
(\mk{b}^{-1}\mk{f}+z)\cap U_{u,v,n}=\bigcup_{i=1}^d (\mk{a}^{-1}+y_i),
\end{align} 
where $\mk{a}^{-1}=\mk{b}^{-1}\mk{f}\mk{p}^n$, $\mk{p}=p\ca{O}_K$ and $y_i\in K$. 
Since $(\mk{f}\mk{p},\eta)=\ca{O}$ and $v_{\eta}(\mk{b})\leq 0$ 
we again have $v_{\eta}(\mk{a})\leq 0$. From \eqref{oeuf} we get 
\begin{align}\label{baum}
\zeta(\mk{b},\mk{f},z,C\cap U_{u,v,n})=\sum_{i=1}^d\zeta(\mk{a},\mk{f},y_i,C,0).
\end{align}
From this one gets that
\begin{align*}
\zeta_{\wt{\delta}}(\mk{b},\mk{f},z,C\cap U_{u,v,n},0)=\sum_{i=1}^d
\zeta_{\wt{\delta}}(\mk{a},\mk{f},y_i,C,0).
\end{align*}
From Proposition \ref{tourtiere} we find that $\zeta_{\wt{\delta}}(\mk{a},\mk{f},y_i,C,0)\in\ZZ[\frac{1}{l}]$
with a denominator at most $l^{m/(l-1)}$. In particular, if $l>3$ we find that $\zeta_{\wt{\delta}}(\mk{a},\mk{f},y_i,C,0)\in\ZZ$.
If $l=3$ we have $\zeta_{\wt{\delta}}(\mk{a},\mk{f},y_i,C,0)\in\frac{1}{3}\ZZ$ and if $l=2$ we have
$\zeta_{\wt{\delta}}(\mk{a},\mk{f},y_i,C,0)\in\frac{1}{4}\ZZ$. This concludes the proof. \fin
\end{proof}

\section{The $p$-adic invariants $u_C$ and $u_D$}\label{def_invariants}

In this section we give the precise definitions of the $p$-adic invariants $u_C$ and
$u_D$. We also recall two conjectures which predict that $u_C$ and $u_D$ 
are strong $p$-units in an appropriate abelian extension of $K$. Finally, 
we state conjectural Shimura reciprocity laws which give an analytic description
of $G_{K^{ab}/K}$ on $u_C$ and $u_D$. For the rest of
this section we fix a triple $(p,f,N)$ and an order $\ca{O}$ of conductor coprime
to $N$. We let $\mk{N}\mk{N}^{\sigma}=N\ca{O}_K$ be a splitting of $N$ and we let $\mk{f}=f\ca{O}$.
Finally, we fix an element $\tau\in H^{\ca{O}}(\mk{N})$.
\subsection{Definition of $u_C$}\label{def_u_C}
Let $\XX=(\ZZ_p\times\ZZ_p)\bs(p\ZZ_p\times p\ZZ_p)$ be the set of primitive 
vectors of $\ZZ_p^2$. Assume furthermore 
that $\tau$ is reduced. Let $\delta\in \Div_f(N)$ be a good divisor and
let $(r,\tau)\in\ZZ/f\ZZ\times H^{\ca{O}}(\mk{N})$. 
\begin{Def}
Then the formula for the $p$-adic invariant 
$u_{C,\delta}(r,\tau)$ is given by
\begin{align}\label{flying}
u_{C,\delta}(r,\tau):=p^{3\zeta^*(\delta,(r,\tau),0)}\mint{\XX}{}(x-\tau y)
d\wt{\mu}_{\delta_r}\{\infty\rightarrow\eta_{\tau}\infty\}(x,y),
\end{align} 
where the measure $\wt{\mu}_{\delta_r}\{\infty\rightarrow\eta_{\tau}\infty\}$ is the one
which appears in Definition \ref{def_measure_balls}, the matrix $\eta_{\tau}$ is the 
one which appears in Definition \ref{matrix_def} and $\zeta^*(\delta,(r,\tau),s)$ is the
zeta function which appears in Definition \ref{def1}. 
\end{Def}
The multiplicative integral in \eqref{flying} indicates that one considers a limit of 
\lq\lq Riemann products\rq\rq rather than usual Riemann sums where the right hand side
of \eqref{flying} is taken to mean
\begin{align}\label{multiplicative product}
\mint{\XX}{}(x-\tau y)d\wt{\mu}_{\delta_r}\{\infty\rightarrow \gamma_{\tau}\infty\}(x,y):=
\lim_{||\mathcal{U}||\rightarrow 0}\prod_{U\in\mathcal{U}}(x_U-\tau y_U)^{\wt{\mu}_{\delta_r}
\{\infty\rightarrow \gamma_{\tau}\infty\}(U)}\in K_p^{\times},
\end{align}
where $\mathcal{U}$ is a cover of $\XX$ by disjoint compact open sets, $(x_U,y_U)$ is an arbitrary point of $U\in\mathcal{U}$, 
and the $p$-adic limit is taken over increasingly fine covers $\mathcal{U}$. 
The product in \eqref{multiplicative product}
makes sense since the measures $\wt{\mu}_{\delta_r}\{\infty\rightarrow\gamma\infty\}$ is $\ZZ$-valued 
and not only $\ZZ_p$-valued, thanks to Proposition \ref{Eleni}. The appellation of 
$p$-adic invariant for $u_C(r,\tau)$ is appropriate in the light of the following proposition:
\begin{Prop}
Assume that $(r,\tau)\sim_f(r',\tau')$ then 
\begin{align}\label{terrible}
u_{C,\delta}(r,\tau)=u_{C,\delta}(r',\tau') \pmod{\mu_{p^2-1}}.
\end{align}
\end{Prop}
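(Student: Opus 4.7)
The plan is to unpack the equivalence $(r,\tau)\sim_f (r',\tau')$ as $\tau' = \gamma\tau$ and $r' \equiv d^{-1} r \pmod{f}$ for some $\gamma = \M{a}{b}{c}{d} \in \Gamma_0(fN)$, and then show separately that (i) the $p$-adic prefactor $p^{3\zeta^*(\delta,(r,\tau),0)}$ is unchanged, and (ii) the multiplicative integrals agree up to a root of unity in $\mu_{p^2-1}$.

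For (i), I would invoke the invariance of $\wh{\zeta}((r,\tau),s)$ under the $\Gamma_0(f)$-action recorded just after Definition \ref{tarte}. Applying this term-by-term to the sum defining $\zeta^*$ in Definition \ref{def1}, and using that the Atkin--Lehner involution $\tau \mapsto \tau^* = -1/(fN\tau)$ conjugates $\Gamma_0(fN)$ into itself (so that a $\Gamma_0(fN)$-translate on $\tau$ corresponds to a $\Gamma_0(fN)$-translate on $\tau^*$), one obtains $\zeta^*(\delta,(r,\tau),0) = \zeta^*(\delta,(r',\tau'),0)$, which makes the $p$-power prefactor match exactly.

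For (ii), I would perform the change of variables $\V{x}{y} = \gamma\V{x''}{y''}$, equivalently $(x'',y'') = (x,y)\sharp\gamma^{-1}$, inside the Riemann product defining the multiplicative integral. Using $\tau' = \gamma\tau$ a direct computation gives
\begin{align*}
x - \tau y = (c\tau + d)(x'' - \tau' y'').
\end{align*}
The substitution turns $\wt{\mu}_{\delta_r}\{\infty \to \eta_\tau\infty\}$ into its right twist $\wt{\mu}_{\delta_r}^{\gamma^{-1}}\{\infty\to\eta_\tau\infty\}$ and produces an overall scalar $(c\tau+d)^{\mu(\XX)}$, which is $1$ because $\mu(\XX) = 0$ by Proposition \ref{tyran}. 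One is then reduced to comparing
\begin{align*}
\mint{\XX}{} (x''-\tau' y'')\, d\wt{\mu}_{\delta_r}^{\gamma^{-1}}\{\infty \to \eta_\tau\infty\}(x'',y'')
\quad\text{with}\quad
\mint{\XX}{} (x - \tau' y)\, d\wt{\mu}_{\delta_{r'}}\{\infty \to \eta_{\tau'}\infty\}(x,y).
\end{align*}
Next, since $\eta_{\tau'} = \gamma\eta_\tau\gamma^{-1}$ (the unique positive generator of the stabilizer is preserved up to conjugation), I would split the modular symbol via the cocycle identity as
\begin{align*}
\{\infty \to \eta_\tau\infty\} = \{\infty \to \gamma^{-1}\infty\} + \{\gamma^{-1}\infty \to \eta_{\tau'}\gamma^{-1}\infty\},
\end{align*}
and then apply $\Gamma_0(fN)$-equivariance of the measure-valued modular symbol on the second piece. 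The effect on the divisor index is precisely $r \mapsto d^{-1}r = r'$, which is the content of the $\star$-action. After this rearrangement the principal piece matches $\wt{\mu}_{\delta_{r'}}\{\infty\to\eta_{\tau'}\infty\}$ exactly, and leaves a residual measure $\nu$ of total mass zero supported on balls arising from the boundary symbol $\{\infty\to\gamma^{-1}\infty\}$ and its $\eta_{\tau'}$-translate.

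The hard part will be showing that the residual multiplicative integral $\mint{\XX}{}(x - \tau' y)\, d\nu$ lies in $\mu_{p^2-1}$. Using the explicit Bernoulli-polynomial formula \eqref{libellule2} for $\wt{\mu}_\delta$ together with the fact that the cusps $\gamma^{-1}\infty$ and $\eta_{\tau'}\gamma^{-1}\infty$ are $\Gamma_0(fN)$-equivalent to $\infty$, I expect the Riemann products defining the residual integral to telescope, after reduction modulo $p$, to Teichm\"uller representatives and hence to land in $\mu_{p^2-1} \subseteq K_p^\times$. This is where the tolerance $\mu_{p^2-1}$ in the statement is consumed: it absorbs the discrepancy coming from the fact that equivariance of the modular symbol on cusps does not lift exactly to equivariance on the full $\ZZ$-valued measures $\wt{\mu}_\delta$, but only up to such small boundary roots of unity.
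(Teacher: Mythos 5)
The paper does not actually prove this statement; it simply cites Theorem~1.2 of \cite{Ch1}, so there is no in-paper argument to match your proposal against. On its own merits, your sketch follows the standard Darmon--Dasgupta template (change of variables in the multiplicative integral combined with the cocycle relation of a measure-valued modular symbol), which is very likely the right framework. But as written there are concrete problems. First, the modular symbol decomposition you write down is incorrect: since $\eta_\tau=\gamma^{-1}\eta_{\tau'}\gamma$, the endpoint is $\eta_\tau\infty=\gamma^{-1}\eta_{\tau'}\gamma\infty$, not $\eta_{\tau'}\gamma^{-1}\infty$; the correct three-term decomposition is
\begin{align*}
\{\infty\to\eta_\tau\infty\}=\{\infty\to\gamma^{-1}\infty\}+\{\gamma^{-1}\infty\to\gamma^{-1}\eta_{\tau'}\infty\}+\{\gamma^{-1}\eta_{\tau'}\infty\to\gamma^{-1}\eta_{\tau'}\gamma\infty\},
\end{align*}
and one must then use both the $\gamma$-equivariance on the middle piece and the $\eta_{\tau'}$-invariance on the outer pieces. (There is also a small direction mix-up in the substitution: the identity $x-\tau y=(c\tau+d)(x''-\tau'y'')$ requires $\V{x''}{y''}=\gamma\V{x}{y}$, i.e.\ $(x'',y'')=(x,y)\sharp\gamma^{-1}$, which is not the same as $\V{x}{y}=\gamma\V{x''}{y''}$.)

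More seriously, two substantive ingredients are asserted rather than proved. One is the $\Gamma_0(fN)$-equivariance of the explicit Dedekind-sum-valued measure $\wt{\mu}_\delta\{\cdot\to\cdot\}$ together with the compatible $\ZZ/f\ZZ$-twist $r\mapsto d^{-1}r$; nothing in the paper's explicit formula \eqref{libellule} makes this a formality, and it is precisely the nontrivial "partial modular symbol" property. The other, and the heart of the matter, is your step showing that the residual multiplicative integral along the coboundary piece $\{\infty\to\gamma^{-1}\infty\}$ (and its $\eta_{\tau'}$-translate) lies in $\mu_{p^2-1}$. Your ``I expect the Riemann products to telescope to Teichm\"uller representatives'' is exactly the point where the tolerance $\mu_{p^2-1}$ in the statement is actually earned, and it is the main technical lemma; as presented it is a hope, not an argument. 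Note also that the paper itself indicates (in the remark immediately following the proposition) that the cleaner route, and one that even removes the $\mu_{p^2-1}$ ambiguity, is to pass through the identification $u_C(r,\tau)^{\nu(r,\tau)}=u_D(\wt{r}I_{\tau^*},\mk{f}^*)^{12}$ of Proposition \ref{klasse}; comparing your approach with that one, yours must independently control the torsion ambiguity, whereas the $u_D$-route inherits exact invariance from Dasgupta's Shintani-domain-independence results.
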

\begin{proof}
See Theorem 1.2 of \cite{Ch1}. \fin.
\end{proof}
\begin{Rem}
As a consequence of relating the $p$-adic invariant $u_C$ to the
$p$-adic invariant $u_D$ we will show that \eqref{terrible} continues to
hold without the modulo $\mu_{p^2-1}$.
\end{Rem}
Using the fact that $\tau$ is reduced and taking $\ord_p$ on both sides of \eqref{flying} 
one finds that
\begin{align}\label{chico}
\ord_p(u_{C,\delta}(r,\tau))=3\zeta_p^*(\delta,(r,\tau),0).
\end{align}

We let $*$ be the involution on $H(\mk{N})$ given by $\tau\mapsto\tau^*=\frac{-1}{fN\tau}$.
We note that $*$ preserves the orientation of $\tau$ in the sense that if $\tau>\tau^{\sigma}$
then $\tau^*>(\tau^*)^{\sigma}$. However, 
the involution $*$ does not preserve the stratification \eqref{strat1} since
in general the order $\ca{O}_{\tau}$ will be different from the order $\ca{O}_{\tau^*}$.

Now let us define an ad\'elic action of $C_K$ on $u_C$.
\begin{Def}\label{add_action}
Let $(r,\tau)\in\ZZ/f\ZZ\times H^{\ca{O}}(\mk{N})$ and set 
$\mk{b}=\wt{r}I_{\tau}$. Note that $[\mk{b},\mk{b}\mk{N}]\in\mathcal{L}^{\ca{O}}(\mk{N})$. Let us 
denote the class of $[\mk{b},\mk{b}\mk{N}]$ in $\mathcal{L}^{\ca{O}}(\mk{N})/\sim_{f}$ by 
$[[\mk{b},\mk{b}\mk{N}]]$. Let
$c\in C_K$ and let $c\star[[\mk{b},\mk{b}\mk{N}]]=[[\mk{a},\mk{a}\mk{N}]]$ where $\star$ is the
action which appears in Definition \ref{atyran}. For
$c\in C_K$ we define
\begin{align*}
c\star u_{C}(r,\tau):=u_{D}(r',\tau'),
\end{align*}
where $(r',\tau')\in\ZZ/f\ZZ\times H^{\ca{O}}(\mk{N})$ is such that 
\begin{align}\label{oparleur}
\Theta(\mk{a})\wt{r}I_{\tau^*}\sim_f\wt{r}'I_{\tau'^*}.
\end{align}
Here $\Theta:I_{\ca{O}}(f)\rightarrow I_{\ca{O}^*}(f)$ is the map which appears in Proposition \ref{lilas}
where $\ca{O}^*=\ca{O}_{\tau^*}$.
\end{Def} 
We would like to point out here that the existence of a pair 
$(r',\tau')\in\ZZ/f\ZZ\times H^{\ca{O}}(\mk{N})$ such that
\eqref{oparleur} holds is guaranteed by Corollary \ref{fusee}.
\begin{Conj}\label{u_H_conj}
Let $[(r,\tau)]\in(\ZZ/f\ZZ\times H^{\ca{O}}(\mk{N}))/\sim_f$ and assume that $\tau$ is reduced.
Let $\ca{O}=\ca{O}_{\tau}$, $\mk{f}=f\ca{O}$, $\wp=p\ca{O}_K$
and $L=K(\mk{f}\infty)^{\laa\sigma_{\wp}\raa}$. Then 
$u_C(r,\tau)$ is a strong $p$-unit in $L$. Let
$rec:C_K\rightarrow G_{K^{ab}/K}$ be the reciprocity map given by class field
theory. Let $c\in C_K$ be 
an id\`ele of $K$ and let $rec(c)=\sigma\in \Gal(K^{ab}/K)$. Then
\begin{align*}
u_{C,\delta}(r,\tau)^{\sigma^{-1}}=c\star u_{C,\delta}(r',\tau'),
\end{align*}
where $\star$ is the action which appears in Definition \ref{add_action}.
\end{Conj}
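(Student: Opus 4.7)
\medskip

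\textbf{Proof proposal.} Since the statement is a conjecture rather than a theorem, the plan is to outline a conditional reduction to the companion conjecture for $u_D$ (Conjecture \ref{alg_conj}) using Theorem \ref{vag_th}. The first step is to record the $\wp$-adic valuation part, which is unconditional: taking $\ord_p$ of both sides of \eqref{flying} and using that $\tau$ is reduced gives \eqref{chico}, so that $\ord_p(u_{C,\delta}(r,\tau))=3\zeta_p^*(\delta,(r,\tau),0)$. Via Lemma \ref{efkaristo} and Lemma \ref{convert}, this can be rewritten in terms of $\zeta_{\widetilde{\delta}}(\widetilde{r}I_\tau,\mk{f}\infty,0)$, placing the expected valuation in the format of the Gross--Stark predictions.

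Next I would show that $u_C(r,\tau)^{\varphi(f)}$ lies in the expected field $L=K(\mk{f}\infty)^{\langle\sigma_\wp\rangle}$ and is a strong $\wp$-unit, by invoking Theorem \ref{vag_th} (refined by Theorem \ref{prec_th} and Corollary \ref{fatigue} cited there): the theorem expresses $u_C^{\varphi(f)}$ as a product of $u_D$'s relative to the order $\ca{O}=\ca{O}_\tau$, and asserts that this identity is compatible with the Shimura reciprocity laws on both sides. Assuming Conjecture \ref{alg_conj} for each $u_D$ appearing in this expression, each factor is a strong $\wp$-unit in $L$ with a prescribed Galois action, hence so is $u_C(r,\tau)^{\varphi(f)}$. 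The adelic compatibility statement in Theorem \ref{vag_th} then translates Conjecture \ref{alg_conj}'s Shimura reciprocity law for the $u_D$'s into the identity
\begin{equation*}
(u_{C,\delta}(r,\tau)^{\varphi(f)})^{\sigma^{-1}} = c\star u_{C,\delta}(r',\tau')^{\varphi(f)}
\end{equation*}
for every $c\in C_K$ with $rec(c)=\sigma$, where $(r',\tau')$ is chosen as in Definition \ref{add_action}.

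The remaining step is the extraction of the $\varphi(f)$-th root, which I expect to be the main obstacle. Both sides of the predicted reciprocity law live in $L^\times$, but a priori one only controls $u_C$ modulo roots of unity (compare \eqref{terrible} which gives the statement modulo $\mu_{p^2-1}$). To upgrade this, I would proceed in two substeps: first, exhibit $u_C(r,\tau)$ as an element of $L$ by showing that the $\varphi(f)$-th root of a strong $\wp$-unit, when it already lies in $K_p^\times$ with prescribed valuation and Shimura-theoretic congruence behavior, must lie in $L$; second, pin down the ambiguity mod $\mu_L$ using the compatibility of \eqref{terrible} with the action $\star$ on $\ZZ/f\ZZ\times H^{\ca{O}}(\mk{N})/\sim_f$ coming from Definition \ref{add_action} and Proposition \ref{hard_prop}. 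Concretely, applying the $\varphi(f)$-th-power map commutes with $rec(c)$, so the identity displayed above forces the ratio $u_C(r,\tau)^{\sigma^{-1}}/(c\star u_C(r',\tau'))$ to be a $\varphi(f)$-th root of unity; evaluating at the identity idele and using that $u_C$ is well defined in $K_p^\times$ (not just up to roots of unity) then eliminates this ambiguity.

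The hardest point, I expect, is precisely this last root-of-unity cleanup: Theorem \ref{vag_th} is stated only up to powers of $\varphi(f)$ and $12$, so producing Conjecture \ref{u_H_conj} on the nose requires auxiliary inputs beyond what the main theorem supplies, namely either (a) an independent argument that $u_C(r,\tau)\in L$ (perhaps via a direct analysis of the measure $\widetilde{\mu}_\delta$ and its transformation law under $\eta_\tau$, using Lemma \ref{Sonne} and the distribution relations behind Proposition \ref{Eleni}), or (b) a strengthening of the compatibility statement in Theorem \ref{vag_th} at the level of specific representatives rather than just powers. Failing either, the reduction yields Conjecture \ref{u_H_conj} only up to a bounded root of unity, which is already a substantial refinement of \eqref{terrible}.
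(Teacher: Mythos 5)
The statement you are asked to prove is a \emph{conjecture}, not a theorem; the paper supplies no proof of it. What the paper does supply is a conditional reduction to Conjecture \ref{alg_conj} in Section \ref{down} (``From $K(\mk{f}^*\infty)$ to $K(\mk{f}\infty)$''), and your proposal is also a conditional reduction, so the relevant comparison is with that section.

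The route in the paper is different from yours in a way that matters. Instead of invoking Theorem \ref{vag_th} and worrying about a $\varphi(f)$-th root, the paper uses the exact identity from Proposition \ref{klasse}, namely $u_{C,\delta}(r,\tau)^{\nu(r,\tau)}=u_{D,\wt{\delta}}(\wt{r}I_{\tau^*},\mk{f}^*)^{12}$ together with the compatibility of Shimura reciprocity laws built into that proposition. There is no $\varphi(f)$-th root extraction to perform; the exponents are $\nu(r,\tau)$ and $12$, and the compatibility is proved on the nose, not just up to a power.

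The genuine obstacle --- which your proposal does not identify --- is not a root-of-unity ambiguity but a mismatch of orders and fields. The invariant on the right side of the Proposition \ref{klasse} identity is relative to the order $\ca{O}^*=\ca{O}_{\tau^*}$, whose conductor is $f\cdot\cond(\ca{O})$, so Conjecture \ref{alg_conj} only places $u_D(\wt{r}I_{\tau^*},\mk{f}^*)$ a priori in $K(\mk{f}^*\infty)^{\langle\sigma_\wp\rangle}$, a strictly larger field than the $L=K(\mk{f}\infty)^{\langle\sigma_\wp\rangle}$ appearing in Conjecture \ref{u_H_conj} (compare \eqref{trompe} with \eqref{trompe2}). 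The hard content, carried out in Lemma \ref{tricky}, Corollary \ref{fusee}, and Proposition \ref{carotte}, is to show that $\wt{\Theta}_1(\ker\wt{\Theta}_2)$ (equivalently $\Gal(K(\mk{f}^*\infty)/K(\mk{f}\infty))$) acts trivially on the relevant ideal classes, so that the Galois action factors through $\Gal(K(\mk{f}\infty)/K)$ and the invariant descends to $L$. This is Proposition \ref{klam}. Your sketch never engages with this descent problem, and because you reach for Theorem \ref{vag_th} (an integrality-of-lattice-of-units statement, stated only up to explicit exponents) rather than the identity of Proposition \ref{klasse}, the root-of-unity issue you then try to fix is an artifact of your chosen route rather than the actual difficulty. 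Finally, the paper's reduction is only carried out for $(r,\tau)\in\ZZ/f\ZZ\times H^{\ca{O}}(\mk{N},f)$ with $(A_\tau,f)=1$; your outline does not flag this restriction.
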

\begin{Rem}
We would like to warn the reader that
the statement of the Shimura reciprocity law for $u_C$ as it appears 
in Conjecture 4.1 of \cite{Ch1} is wrong (unless $\frac{A_{\tau}}{A_{\tau'}}\equiv 1\pmod{f}$). 
For some additional information which motivate 
\eqref{oparleur} see \cite{erratum_1}.
\end{Rem}
\subsection{Definition of $u_{D}$}
We now want to describe Dasgupta's invariant $u_D$. Let
$S$ be the set of places that contains the infinite places of $K$, $\wp=p\ca{O}_K$ 
and the ones that ramify in $K(\mk{f}\infty)$. We let $R=S\bs\{\wp\}$ and 
$T=\{\eta|\mk{N}\}$.
Let $\wt{\delta}\in\wt{\Div}(\mk{N})$ be a good divisor for the triple $(p,f,N)$. 
Let us fix an order $\ca{O}$ of $K$ such that $(\cond(\ca{O}),N)=1$.
Let $\mk{f}\subseteq\ca{O}$ be an arbitrary $\ca{O}$-ideal (not necessarily invertible)
and let $\mk{b}\subseteq K$ be an arbitrary invertible $\ca{O}$-ideals which is
$\mk{f}$-int. We also let $\mathcal{D}$ be a Shintani domain for 
the action of $\Gamma_{\mk{b}}(\mk{f})$ (see Definition \ref{units}) on the positive quadrant $Q=\RR_{>0}^2$ and
make the crucial assumption that $T$ is $\ca{O}$-good for $\mathcal{D}$. 
\begin{Def}
Following \cite{Das3}, we define 
\begin{align}\label{square}
u_{D,\wt{\delta}}(\mk{b},\mathcal{D}):=p^{\zeta_{\wt{\delta}}(\mk{b},
\mk{f}\infty,0)}\mint{\ca{O}_{K_p}^{\times}}{}
z\cdot d\nu(\mk{b},\mathcal{D})(z),
\end{align}
where $\nu(\mk{b},\mathcal{D})(U)=\zeta_{\wt{\delta}}(\mk{b},\mk{f},n_{\mk{b}},\mathcal{D}\cap U,0)$ and
$\zeta_{\wt{\delta}}(\mk{b},\mk{f}\infty,s)$ is the zeta function which appears in Definition \ref{def2}.
For the definition of the multiplicative integral which appears in \eqref{square} see Section 3 of \cite{Das3}.
\end{Def}
\begin{Rem}
We note that \eqref{square} was defined in \cite{Das3} when $\ca{O}=\ca{O}_K$, 
$\mk{b}$ is an $\ca{O}_K$-ideal coprime to $\mk{f}=f\ca{O}_K$ and 
$$
\wt{\delta}=\sum_{\mk{d}|\mk{N},r\in\ZZ/f\ZZ}(-1)^{\wt{\mk{e}}(\mk{d})}
\Norm(\mk{d})[\mk{d},r]\in\wt{\Div}_f(\mk{N}),
$$ 
where $\mk{N}$ square-free. We note that in this special case one has 
$\Gamma_{\mk{b}}(\mk{f})=\ca{O}_K(\mk{f}\infty)^{\times}$.
\end{Rem}
The following key proposition was proved in \cite{Das3}.
\begin{Prop}
Let $\mathcal{D}$ and $\mathcal{D}'$ be two Shintani domains
for the action of $\ca{O}(\mk{f}\infty)^{\times}$ on $Q$. Assume that there exists
a prime $\eta|\mk{N}$ which is $\ca{O}$-good for $\mathcal{D}$ and $\mathcal{D}'$. Suppose also 
that $\sigma_\mk{b}=\sigma_{\mk{b}'}$. Then
\begin{align*}
u_{D,\wt{\delta}}(\mk{b},\mathcal{D})=u_{D,\wt{\delta}}(\mk{b}',\mathcal{D}').
\end{align*}
\end{Prop}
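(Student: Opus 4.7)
The plan is to adapt the proof of the analogous result in \cite{Das3} to our broader setting (arbitrary order $\ca{O}$ of conductor coprime to $N$, and $\mk{f}$-int ideal $\mk{b}$). The statement splits naturally into two independent claims: (A) for fixed $\mk{b}$, the value $u_{D,\wt{\delta}}(\mk{b},\mathcal{D})$ does not depend on the Shintani domain $\mathcal{D}$, provided $\eta$ remains $\ca{O}$-good for $\mathcal{D}$; (B) if $\mk{b}\sim_{\mk{f}}\mk{b}'$ (equivalently, $\sigma_{\mk{b}}=\sigma_{\mk{b}'}$), then $u_{D,\wt{\delta}}(\mk{b},\mathcal{D})=u_{D,\wt{\delta}}(\mk{b}',\mathcal{D}')$ for any admissible pair of domains. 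Together these yield the proposition.

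For (A), the prefactor $p^{\zeta_{\wt{\delta}}(\mk{b},\mk{f}\infty,0)}$ is visibly intrinsic to $\mk{b}$, so the task reduces to proving the multiplicative integral is unchanged. After a common refinement into smaller cones, the fundamental domains $\mathcal{D}$ and $\mathcal{D}'$ for $\ca{O}(\mk{f}\infty)^{\times}=\laa\epsilon\raa$ can be paired cone by cone via translation by powers of $\epsilon$: each cone $C_i\subseteq\mathcal{D}$ corresponds to some cone $\epsilon^{k_i}C_i\subseteq\mathcal{D}'$. The transformation formula \eqref{paon1} applied with $\lambda=\epsilon^{k_i}$, a totally positive unit fixing $\mk{b}$ and congruent to $1\pmod{\mk{f}}$, relates the contributions of the paired cones to the measure, while on the $p$-adic side the substitution $z\mapsto\epsilon^{k_i}z$ identifies the multiplicative integrals over $C_i\cap\ca{O}_{K_p}^{\times}$ and $\epsilon^{k_i}C_i\cap\ca{O}_{K_p}^{\times}$ up to a multiplicative factor $(\epsilon^{k_i})^{\nu(C_i\cap\ca{O}_{K_p}^{\times})}$. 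Summing these exponents over all paired cones telescopes to the total measure $\zeta_{\wt{\delta}}(\mk{b},\mk{f},n_{\mk{b}},\mathcal{D}\cap\ca{O}_{K_p}^{\times},0)$, which vanishes by \eqref{two} thanks to the good-divisor condition $p\star\wt{\delta}=\wt{\delta}$. Proposition \ref{tourtiere} simultaneously guarantees $\ZZ$-integrality of all the measures involved, so the multiplicative integrals are well defined throughout.

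For (B), I write $\mk{b}'=\lambda\mk{b}$ for some $\lambda\in K^{\times}$ witnessing the equivalence (so $\lambda\gg 0$ and $\lambda-1\in\mk{f}\mk{b}^{-1}$, making $\lambda n_{\mk{b}}$ a valid choice of $n_{\mk{b}'}$). By (A), I can freely take $\mathcal{D}'=\lambda\mathcal{D}$, which is admissible because $\eta$-goodness is preserved under scaling by a unit at $\eta$ (which $\lambda$ is, since $\eta\nmid\mk{f}\mk{b}^{-1}$ by the $\mk{f}$-int assumption). Then \eqref{paon1} with this $\lambda$ gives $\nu(\mk{b}',\lambda\mathcal{D})(\lambda U)=\nu(\mk{b},\mathcal{D})(U)$ for every compact-open $U\subseteq\ca{O}_{K_p}$, and the substitution $z\mapsto\lambda z$ rescales the multiplicative integral by $\lambda^{\nu(\ca{O}_{K_p}^{\times})}=1$, once again by \eqref{two}. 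Since $\zeta_{\wt{\delta}}(\mk{b},\mk{f}\infty,s)$ depends on $\mk{b}$ only through its $\sim_{\mk{f}}$-class by \eqref{habiba}, the $p$-adic prefactors agree as well, completing (B). The main obstacle is technical rather than conceptual: in \cite{Das3} one has $\Gamma_{\mk{b}}(\mk{f})=\ca{O}(\mk{f}\infty)^{\times}$ in the coprime case, while for a merely $\mk{f}$-int ideal these two groups may differ, so the cone-pairing in (A) must be arranged to respect this asymmetry. The total-measure-zero identity \eqref{two} absorbs the discrepancy, but verifying that paired cones $\epsilon^{k_i}C_i$ retain $\ca{O}$-goodness of $\eta$ and that the base point $n_{\mk{b}}$ transforms consistently under $\epsilon^{k_i}$ requires a careful bookkeeping.
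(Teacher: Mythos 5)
Your proposal fleshes out precisely the argument the paper relies on, but the paper's own proof is a single sentence: it simply asserts that the arguments of Section 5 of \cite{Das3} carry over \emph{mutatis mutandis} to the broader setting. Your split into (A) independence of the Shintani domain for fixed $\mk{b}$ via cone-pairing under powers of $\epsilon$ and the total-measure-zero cancellation, and (B) transfer from $\mk{b}$ to $\mk{b}'=\lambda\mk{b}$ via the scaled domain $\lambda\mathcal{D}$ and the homogeneity property \eqref{paon1}, is exactly Dasgupta's strategy, and you correctly identify the one genuine novelty of the generalized setting: the replacement of $\ca{O}(\mk{f}\infty)^{\times}$ by the possibly larger group $\Gamma_{\mk{b}}(\mk{f})$ when $\mk{b}$ is only $\mk{f}$-int rather than coprime to $\mk{f}$. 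One small caution on (B): your parenthetical that $\lambda$ is automatically a unit at $\eta$ because ``$\eta\nmid\mk{f}\mk{b}^{-1}$ by the $\mk{f}$-int assumption'' is not quite forced --- being $\mk{f}$-int constrains the denominator of $\mk{b}$ away from $\mk{f}$, not from $\mk{N}$, so $v_{\eta}(\mk{f}\mk{b}^{-1})$ need not be positive and $\lambda\in\mk{f}\mk{b}^{-1}+1$ need not a priori be an $\eta$-unit. The fix (rescale the cone generators to lie in $\ca{O}\setminus(\eta\cap\ca{O})$, as Definition \ref{good_prime} permits, and invoke the freedom of choosing the representative $\lambda$ within its class) is routine, but it is part of the ``careful bookkeeping'' you flag rather than an immediate consequence. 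Since the paper offers no more detail than ``carries over mutatis mutandis,'' your version is in effect \emph{more} explicit than the reference proof, and is substantively correct.
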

\begin{proof}
All the proofs in Section $5$ of \cite{Das3} carry over mutatis mutandis to our more general setting. \fin
\end{proof}
\begin{Def}\label{crocus}
Let $\mathcal{D}$ be a fundamental domain for the action of 
$\ca{O}(\mk{f}\infty)^{\times}$ on $Q$. Assume that there exists a prime
$\eta|\mk{N}$ which is $\ca{O}$-good for $\mathcal{D}$. We set
\begin{align*}
u_{D,\wt{\delta}}(\mk{b},\mk{f}):=u_{D,\wt{\delta}}(\mk{b},\mathcal{D}).
\end{align*}
We say that $u_{D,\wt{\delta}}(\mk{b},\mk{f})$ is primitive of conductor $[\ca{O},\mk{g}]$ 
if the pair $[\mk{b},\mk{f}]$ is primitive of conductor $[\ca{O},\mk{g}]$ (resp. non-primitive) 
if the pair $[\mk{b},\mk{f}]$ is primitive of conductor $[\ca{O},\mk{g}]$ (resp. non-primitive).
\end{Def}

Now let us define an ad\'elic action of $C_K$ on $u_D$.
\begin{Def}\label{ad_action}
Let $\mk{b}$ be an invertible $\ca{O}$-ideal which is $\mk{f}$-int and let
$c\in C_K$. Note that $[\mk{b},\mk{b}\mk{N}]\in\mathcal{L}^{\ca{O}}(\mk{N})$. Let us 
denote the class of $[\mk{b},\mk{b}\mk{N}]$ in $\mathcal{L}^{\ca{O}}(\mk{N})/\sim_{\mk{f}}$ by 
$[[\mk{b},\mk{b}\mk{N}]]$ and let $c\star[[\mk{b},\mk{b}\mk{N}]]=[[\mk{a},\mk{a}\mk{N}]]$ where $\star$ is the
action which appears in Definition \ref{atyran}. We define
\begin{align*}
c\star u_{D}(\mk{b},\mk{f}):=u_{D}(\mk{a},\mk{f}).
\end{align*}
\end{Def} 

We may now state the main algebraicity conjecture on the $p$-adic invariant $u_D$ and the its
corresponding Shimura reciprocity formula.
\begin{Conj}\label{alg_conj}
Let $\mk{b}$ be an invertible $\ca{O}$-ideal which is $\mk{f}$-int.
Let $\mk{n}$ be an invertible $\ca{O}$-ideal such that
\begin{align*}
\Gamma_{\mk{b}}(\mk{f})\supseteq\ca{O}(\mk{n}\infty)^{\times}.
\end{align*} 
Then $u_{D,\wt{\delta}}(\mk{b},\mk{f})$ is a strong $p$-unit in 
$K(\mk{n}\infty)^{\laa\sigma_{\wp}\raa}$. Let
$rec:C_K\rightarrow G_{K^{ab}/K}$ be the reciprocity map given by class field
theory. Let $c\in C_K$ be 
an id\`ele of $K$ and let $rec(c)=\sigma\in \Gal(K^{ab}/K)$. Then 
\begin{align*}
u_{D,\wt{\delta}}(\mk{b},\mk{f})^{\sigma^{-1}}=c\star u_{D,\wt{\delta}}(\mk{b},\mk{f}).
\end{align*}
\end{Conj}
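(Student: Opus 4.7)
The plan is to reduce Conjecture \ref{alg_conj} in this broader setting to the case considered by Dasgupta in \cite{Das3}, modulo the Strong Gross Conjecture \ref{Gross_strong}. First I would establish clean invariance statements: the $p$-adic invariant $u_{D,\wt{\delta}}(\mk{b},\mk{f})$ depends on the class of $\mk{b}$ only modulo $\sim_{\mk{f}}$, and transforms predictably under multiplication by integral invertible $\ca{O}$-ideals coprime to $\mk{f}$ via the relation \eqref{habiba}. This reduces the algebraicity question to the case where $\mk{b}$ is an $\mk{f}$-int invertible $\ca{O}$-ideal of controlled type, which in particular lets me factor out the pole contribution and focus on the integral part of the measure $\nu_{\wt{\delta}}(\mk{b},\tau,x,W)$.

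The second step is to invoke the distribution relations described in Proposition \ref{pointe} to express $u_{D,\wt{\delta}}(\mk{b},\mk{f})$ relative to $\ca{O}$ as a product of analogous invariants $u_{D,\wt{\delta}'}(\mk{b}',\mk{f}')$ relative to the maximal order $\ca{O}_K$, chosen so that each pair $[\mk{b}',\mk{f}']$ is primitive. Combining this with the key identity \eqref{ville} and its refinement Theorem \ref{vag_th}, an appropriate integral power of $u_{D,\wt{\delta}}(\mk{b},\mk{f})$ lies in the multiplicative group generated by the $u_C$ invariants relative to $\ca{O}_K$. Conjecturally, each such $u_C$ is a strong $p$-unit in $K(\mk{f}\infty)^{\laa\sigma_\wp\raa}$, and the constraint $\Gamma_{\mk{b}}(\mk{f}) \supseteq \ca{O}(\mk{n}\infty)^{\times}$ will precisely cut out the subfield $K(\mk{n}\infty)^{\laa\sigma_\wp\raa}$ as the field of definition. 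This last point accounts for the descent to a strictly smaller field in the non-primitive case, consistent with the numerical phenomena discussed after Definition \ref{blow}.

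For the Shimura reciprocity law, the strategy is to unwind Definition \ref{ad_action} and match it with the Galois action. The $C_K$-action on the class $[[\mk{b},\mk{b}\mk{N}]] \in \mathcal{L}^{\ca{O}}(\mk{N})/\sim_{\mk{f}}$ factors through the generalized ideal class group $C_{\ca{O}}(\mk{g})$, which by class field theory is exactly $\Gal(K(\mk{n}\infty)^{\laa\sigma_\wp\raa}/K)$. Once algebraicity is in hand, the reciprocity formula reduces to tracking the effect of the adelic action on the measure $\nu_{\wt{\delta}}(\mk{b},\tau,x,W)$ via the transformation rule of Lemma \ref{Sonne}, and then appealing to the corresponding Shimura reciprocity law for $u_C$ transported through Theorem \ref{vag_th}. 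The compatibility of the two descriptions of the $C_K$-action, one on $u_D$ via Definition \ref{ad_action} and one on $u_C$ via Definition \ref{add_action}, is precisely encoded in the intertwining \eqref{oparleur}, so no additional adelic input is required beyond what Proposition \ref{hard_prop} already provides.

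The hard part will be the algebraicity statement itself: the $p$-adic invariant $u_D$ is defined purely through $p$-adic Shintani-type integration, and there is no direct analytic route to proving that it lies in an algebraic closure of $K$. Even in the original setting of \cite{Das3}, with $\ca{O} = \ca{O}_K$ and primitive pairs, this remains open and is essentially equivalent to a strengthening of the $p$-adic Gross--Stark conjecture. A realistic plan is therefore to prove Conjecture \ref{alg_conj} \emph{conditionally} on Conjecture \ref{Gross_strong} (or equivalently on Dasgupta's exact $\mk{p}$-adic formula for Gross--Stark units) in the primitive case, and then to propagate it to the full generality stated here using Proposition \ref{pointe} together with Theorem \ref{vag_th}. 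An unconditional proof would almost certainly require genuinely new input from the theory of $p$-adic $L$-functions or Eisenstein cohomology, and lies well beyond the scope of the methods developed in this paper.
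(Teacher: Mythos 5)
This statement is a \emph{conjecture}, and the paper offers no proof of it: Conjecture \ref{alg_conj} is stated as the algebraicity and Shimura reciprocity conjecture for the generalized Dasgupta invariant $u_{D,\wt{\delta}}(\mk{b},\mk{f})$, extending Dasgupta's original conjecture in \cite{Das3} to arbitrary orders and non-primitive pairs. You correctly recognize that there is nothing to prove here with the paper's methods, and that even the original (primitive, maximal-order) case is equivalent to a strengthening of the $p$-adic Gross--Stark conjecture and remains open.

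Your sketch of conditional reductions is broadly consistent with what the paper actually does around the conjecture, though with one caveat about logical direction. The paper uses Proposition \ref{pointe} and Proposition \ref{klasse} (your ``\eqref{ville}'') to deduce \emph{consequences} of Conjecture \ref{alg_conj} and to establish compatibility between Conjectures \ref{u_H_conj} and \ref{alg_conj}: in particular, Proposition \ref{klam} shows that Conjecture \ref{alg_conj} (applied to the non-primitive pair $[\mk{b}^*,\mk{f}^*]$) already forces the finer field of definition $K(\mk{f}\infty)^{\laa\sigma_\wp\raa}$ predicted by Conjecture \ref{u_H_conj}, via the observation that $\wt{\Theta}_1(\ker\wt{\Theta}_2)$ acts trivially. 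Your proposal runs this machinery ``forward'' as a reduction of the general conjecture to the primitive $\ca{O}_K$ case; that is a legitimate and useful observation, and it is essentially the content of Theorem \ref{prec_th} and Corollary \ref{fatigue}, which express $u_D$ (relative to $\ca{O}_K$) in terms of the $u_C$'s and vice versa. But this reduction does not appear in the paper as a proof of Conjecture \ref{alg_conj}, because the base case is itself conjectural; the paper instead presents these relations as mutual consistency checks among the conjectures. So your plan is not wrong, but be careful to present it as a \emph{reduction between open conjectures}, not as a proof, and to make explicit that the descent to $K(\mk{n}\infty)^{\laa\sigma_\wp\raa}$ rests on the claim in Remark \ref{axa}, which the paper only asserts as an expectation and does not establish.
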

\begin{Rem}
If $\mk{b}$ is an integral invertible $\ca{O}$-ideal then one has
that $\Gamma_{\mk{b}}(\mk{f})\supseteq \ca{O}(\mk{f}\infty)^{\times}$
and therefore, Conjecture \ref{alg_conj} predicts that $u_{D,\wt{\delta}}
(\mk{b},\mk{f})\in K(\mk{f}\infty)^{\laa\sigma_{\wp}\raa}$. Now let us assume that 
the pair $[\mk{b},\mk{f}]$ is primitive of conductor $[\ca{O},\mk{g}]$. In this case,
there exists a $\lambda\in K^{\times}$ such that $(\lambda\mk{b},\lambda\mk{f})=\ca{O}$
and $\lambda\mk{f}=\mk{g}$ so that
\begin{align*}
\Gamma_{\mk{b}}(\mk{f})=\Gamma_{\lambda\mk{b}}(\mk{g})\supseteq\ca{O}(\mk{g}\infty)^{\times}.
\end{align*}
Therefore, Conjecture \ref{alg_conj} predicts that
\begin{align*}
u_{D,\wt{\delta}}(\mk{b},\mk{f})\in K(\mk{g}\infty)^{\laa\sigma_{\wp}\raa}.
\end{align*}
\end{Rem}
\begin{Rem}\label{axa}
In general, the author thinks that the following strengthening on the field of definition of 
$u_D(\mk{b},\mk{f})$ should hold true. So let $\mk{b}$ be an invertible $\ca{O}$-ideal which is $\mk{f}$-int and 
suppose that
\begin{align*}
\Gamma_{\mk{b}}(\mk{f})\supseteq\ca{O}'(\mk{n}'\infty)^{\times},
\end{align*}
where $\ca{O}'$ is some order and $\mk{n}'$ is an invertible $\ca{O}'$-ideal. Then the author expects that
$u_D(\mk{b},\mk{f})\in K(\mk{n}')$.
\end{Rem}
\subsection{Relation between $u_C$ and $u_D$}
The next proposition gives a relationship between the invariant $u_C$ to the invariant $u_D$.
\begin{Prop}\label{klasse}
Let $(r,\tau)\in\ZZ/f\ZZ\times H^{\ca{O}}(\mk{N})$ and let  $\delta=\Phi(\wt{\delta})$ be a good divisor for the
triple $(p,f,N)$. Then we have
\begin{align}\label{talle}
u_{C,\delta}(r,\tau)^{\nu(r,\tau)}=u_{D,\wt{\delta}}(\wt{r}I_{\tau^*},\mk{f}^*)^{12},
\end{align}
where $\mk{f}^*=f\ca{O}_{\tau^*}$ and 
\begin{align}\label{creux}
\nu(r,\tau):=\left([\epsilon(\eta_{\tau}):\epsilon(\eta_{\tau^*})]\mu(r,\tau^*)\right)^{-1},
\end{align}
where $\mu(r,\tau^*)$ is the rational number which appears in \eqref{sang}.
Moreover, the Shimura reciprocity law for $u_C$ agrees with the one for $u_D$.
\end{Prop}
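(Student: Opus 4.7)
The plan is to compare the two multiplicative integrals by reducing them to integrals over the same space $\XX$ against measures that differ by the explicit error term $\xi$ controlled in Proposition \ref{tricky2}. First, I would reparameterize $u_{D,\wt\delta}(\wt r I_{\tau^*},\mk{f}^*)^{12}$: by Definition \ref{crocus}, we may use the Shintani domain $C(1,\epsilon)\cup C(1)$, and apply Lemma \ref{Sonne} with the matrix $\gamma=\bigl(\begin{smallmatrix}-f^2N^2&0\\0&-fN\end{smallmatrix}\bigr)$ from Proposition \ref{tricky2} to rewrite the multiplicative integral defining $u_D^{12}$ as one over $\ZZ_p^2$, pulled back via $\phi_{fN\tau}$. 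Since $fN\tau$ is reduced (as $\tau$ is), $\phi_{fN\tau}^{-1}(\ca{O}_{K_p}^\times)=\XX$, and the integrand $z\in\ca{O}_{K_p}^\times$ becomes (up to a uniform constant that cancels in the Riemann product) the integrand $(x-\tau y)$ appearing in \eqref{flying}.

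Next, I would invoke Proposition \ref{tricky2} directly, writing
$$12\,\nu_{\wt{\delta}}^{\gamma}(I_{fN\tau},fN\tau,fN\tau,C)(U)=\wt{\mu}_{\delta}\{\infty\to a/c\}(U)+\xi(U)$$
as $\ZZ$-valued measures on $\XX$. The first summand, when integrated against $(x-\tau y)$, yields exactly $u_{C,\delta}(r,\tau)$ by the definition in \eqref{flying}. For the second summand, the symmetry $\xi(U_{u,v,n})+\xi(U_{-u,-v,n})=0$ combined with the fact that $(x-\tau y)\mapsto -(x-\tau y)$ under $(x,y)\mapsto(-x,-y)$ shows that pairing each ball $U_{u,v,n}$ with $U_{-u,-v,n}$ in the Riemann product produces a contribution of the form $(-1)^{\xi(U)}$, which is $\pm 1$ (and in fact trivial since $\xi\in 3\ZZ$), so the error contributes trivially to the multiplicative integral. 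This gives the equality of multiplicative parts.

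It remains to match the $p$-power factors. By \eqref{chico}, $\ord_p(u_{C,\delta}(r,\tau))=3\zeta^*(\delta,(r,\tau),0)$, while from the definition of $u_D$ in \eqref{square}, $\ord_p(u_D^{12})=12\zeta_{\wt\delta}(\wt r I_{\tau^*},\mk{f}^*\infty,0)$. Applying Lemma \eqref{lievre} with $k=1$ to the pair $(r,\tau^*)\in\ZZ/f\ZZ\times H^{\ca{O}_{\tau^*}}(\mk{N})$ (which is legal because $\tau\in H^{\ca{O}}(\mk{N})\Leftrightarrow \tau^*\in H^{\ca{O}_{\tau^*}}(\mk{N})$ up to order) gives
$$4\mu(r,\tau^*)\,\zeta_{\wt\delta}(\wt r I_{\tau^*},\mk{f}^*\infty,0)=\zeta(\delta,(r,\tau^*),0),$$
and then Lemma \ref{efkaristo} converts $\zeta^*(\delta,(r,\tau),0)=[\epsilon(\eta_\tau):\epsilon(\eta_{\tau^*})]\zeta(\delta^*,(r,\tau^*),0)$, so that raising both sides to appropriate powers produces precisely the factor $\nu(r,\tau)^{-1}=[\epsilon(\eta_\tau):\epsilon(\eta_{\tau^*})]\mu(r,\tau^*)$ required by \eqref{creux}.

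Finally, for the compatibility of the Shimura reciprocity laws, I would chase through Definitions \ref{add_action} and \ref{ad_action}: the adelic action on $u_D$ is given by $c\star[[\mk{b},\mk{b}\mk{N}]]=[[\Theta(\mk{a})\mk{b},\Theta(\mk{a})\mk{b}\mk{N}]]$ where $\Theta$ is the map of Proposition \ref{lilas}, while the action on $u_C$ is defined precisely by \eqref{oparleur} to match this via the involution $*$, so the two descriptions transport into each other. \textbf{The main obstacle} will be the careful justification that the error contribution from $\xi$ vanishes in the multiplicative integral — this requires an explicit pairing of balls via $(u,v)\mapsto(-u,-v)$ that is compatible with the partition of $\XX$ used to form the Riemann product, together with verifying the integrality assertion $\xi/3\in\ZZ$ from Proposition \ref{tricky2} at the appropriate level of refinement so that no root-of-unity ambiguity remains.
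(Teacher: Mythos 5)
Your overall strategy — compare $12\nu_{\wt{\delta}}^{\gamma}$ to $\wt{\mu}_{\delta}$ via Proposition \ref{tricky2}, use the symmetry \eqref{terreur} to kill the error, and match $p$-power valuations through \eqref{chico}, Lemma \ref{lievre} and Lemma \ref{efkaristo} — is the same as the paper's.  However there are three genuine gaps.  First, the assertion that $(-1)^{\xi(U)}=1$ ``since $\xi\in 3\ZZ$'' is false: $3\ZZ$ contains odd integers, so divisibility by $3$ says nothing about the parity of $\xi(U)$.  What is actually needed is the product over all pairs $\prod_{\text{pairs}}(-1)^{\xi(U)}$, and justifying that this is $1$ requires more than pointwise divisibility by $3$; the paper deduces $\mint_{\XX}\xi=1$ from \eqref{terreur} but only after first reducing via \eqref{land1}, \eqref{land2} and \eqref{bouton}, so your argument as stated leaves a possible sign ambiguity unresolved.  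Second, Proposition \ref{tricky2} compares measures on $\XX$ built from the two-dimensional cone $C=C(1,\epsilon)$ alone, whereas $u_{D,\wt{\delta}}(\wt{r}I_{\tau^*},\mk{f}^*)$ is defined using a full Shintani domain $\mathcal{D}=C(1,\epsilon)\cup C(1)$.  You must show the lower-dimensional cone contributes trivially to the multiplicative integral; the paper does this via the antisymmetry \eqref{fleur2} of $\nu_{\wt{\delta}}^{\gamma}$ on $C(1)$ and $C(\epsilon)$, which gives the passage from $\mathcal{D}$ to $C(\epsilon,1)$ in \eqref{fleur}.  Without this step the reduction to Proposition \ref{tricky2} is incomplete.

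Third, the claim that the integrand $z$ becomes $(x-\tau y)$ ``up to a uniform constant that cancels in the Riemann product'' is not correct as stated: a nontrivial multiplicative constant does \emph{not} automatically cancel unless the measure has total mass zero, and in any case the change of variables from $\tau^*$ (the variable appearing in $u_D$) to $fN\tau$ (the variable appearing in Proposition \ref{tricky2}) is not a rescaling of the integrand but a genuine matrix change of variables.  The paper handles this by a second application of Lemma \ref{Sonne} with $\gamma=\M{0}{1}{1}{0}$ to establish \eqref{pays}, and it explicitly invokes the fact that $\nu_{\wt{\delta}}^{\gamma}(\mk{b},fN\tau,fN\tau,C)$ has total measure $0$ on $\XX$ (equation \eqref{land2}) to absorb the remaining scalar $f$ discrepancy between $\nu^{\gamma}$ and $\nu$ evaluated at $(fx,y)$.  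Your proposal omits both of these steps.  Finally, a minor slip: when applying Lemma \ref{lievre} at $k=1$ the right-hand side should read $\zeta(\delta^*,(r,\tau^*),0)$, not $\zeta(\delta,(r,\tau^*),0)$; with $\delta$ in place of $\delta^*$ your chain does not close against Lemma \ref{efkaristo}.
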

\begin{proof}
From Lemma \ref{tired} it is enough to prove \eqref{talle} for 
\begin{align*}
\wt{\delta}=(d[\mk{d}]-d'[\mk{d}'])\s\s\s\mbox{where}\s\s\s d=\Norm(\mk{d})\s\s\s\mbox{and}\s\s\s d'=\Norm(\mk{d}'),
\end{align*}
where $\mk{d}'$ consecutive to $\mk{d}$. We have 
\begin{align*}
\zeta^*(\delta,(r,\tau),0)=[\epsilon(\eta_{\tau}):\epsilon(\eta_{\tau^*})]
\zeta(\delta^*,(r,\tau^*),0)=4\mu(r,\tau^*)
\zeta_{\wt{\delta}}(\wt{r}A_{\tau^*}\Lambda_{\tau^*},\mk{f}\infty,0),
\end{align*}
where the first equality follows from Lemma \ref{efkaristo} and the second equality follows 
from Lemma \ref{lievre}. Using \eqref{chico}
with the equality above we find that
\begin{align*}
\nu(r,\tau)\cdot v_p(u_{C,\delta}(r,\tau))=12\cdot v_p(u_{D,\wt{\delta}}(\wt{r}A_{\tau^*}
\Lambda_{\tau^*},\mathcal{D})).
\end{align*} 
This proves that the $p$-adic valuation on both sides of \eqref{talle} are equal. Therefore it
remains to show that the multiplicative integrals on both sides of \eqref{talle} agree.

Let $\mk{b}=\wt{r}I_{\tau}$ and $\gamma=\M{-f^2N}{0}{0}{-fN}$. For $C'=C(1)$ or $C'=C(\epsilon)$, we have
\begin{align}\label{fleur2}
\nu_{\wt{\delta}}^{\gamma}(\mk{b},fN\tau,fN\tau,C')(-U)=-
\nu_{\wt{\delta}}^{\gamma}(\mk{b},fN\tau,fN\tau,C')(U).
\end{align}
From \eqref{fleur2}, we get that
\begin{align*}
\mint{\XX}{}(x-fN\tau y)d\nu_{\wt{\delta}}^{\gamma}(\mk{b},fN\tau,fN\tau,C(1))=
\mint{\XX}{}(x-\tau y)d\nu_{\wt{\delta}}^{\gamma}(\mk{b},fN\tau,fN\tau,C(\epsilon))=1.
\end{align*}
Thus one has
\begin{align}\label{fleur}
\mint{\XX}{}(x-fN\tau y)d\nu_{\wt{\delta}}^{\gamma}(\mk{b},fN\tau,fN\tau,C(\epsilon,1))=\mint{\XX}{}(x-fN\tau y)
d\nu_{\wt{\delta}}^{\gamma}(\mk{b},fN\tau,fN\tau,\mathcal{D}).
\end{align}
Let $\xi$ (which depends of $\wt{\delta}$) 
be the measure which appears in Proposition \ref{tricky2}. From \eqref{terreur} we deduce that
\begin{align}\label{land1}
\mint{\XX}{}\xi(x,y)=1.
\end{align} 
Moreover, because $\nu_{\wt{\delta}}^{\gamma}(\mk{b},fN\tau,fN\tau,C)$ has total measure $0$ we have
\begin{align}\label{land2}
\mint{\XX}{}(x-fN\tau y)d\nu_{\wt{\delta}}(\mk{b},fN\tau,fN\tau,C(1,\epsilon))(fx,y)=
\mint{\XX}{}(x-fN\tau y)d\nu_{\wt{\delta}}^{\gamma}(\mk{b},fN\tau,fN\tau,C(1,\epsilon))(x,y).
\end{align}

Combining \eqref{land1} and \eqref{land2} and \eqref{bouton} we find that
\begin{align*}
\left(\mint{\XX}{}(x-fN\tau y)d\nu_{\wt{\delta}}(\mk{b},fN\tau,fN\tau,C(\epsilon,1))(x,y)\right)^{12}=
\left(\mint{\XX}{}(x-\tau y)d\wt{\mu_r}\{\infty\rightarrow\eta_{\tau}\infty\}(x,y)\right)^{\nu(r,\tau)}.
\end{align*}
Thus in order to show the multiplicative parts on both sides of \eqref{talle} agree,
it is enough to show that
\begin{align}\label{pays}
\mint{\XX}{}(x-fN\tau y)d\nu_{\wt{\delta}}(\mk{b},fN\tau,fN\tau,C(\epsilon,1))(x,y)=
\mint{\XX}{}(x-\tau^* y)d\nu_{\wt{\delta}}(\mk{b}^*,\tau^*,1,C(\epsilon,1))(x,y),
\end{align}
where $\mk{b}^{*}=I_{\tau^*}$ and $\tau^*=\frac{-1}{fN\tau}$.

Applying Lemma \ref{Sonne} with the matrix $\gamma=\M{0}{1}{1}{0}$ we find that
\begin{align*}
\nu_{\wt{\delta}}^{\gamma}\left(\mk{b}^*,\tau^*,1,\frac{1}{fN_0\tau}C(\epsilon,1)\right)
=\nu_{\wt{\delta}}(\mk{b},fN\tau,fN\tau,C(\epsilon,1)).
\end{align*}
It thus follows that
\begin{align*}
\mint{\XX}{}(x-fN\tau y)d\nu_{\wt{\delta}}(\mk{b},fN\tau,fN\tau,C(\epsilon,1))(x,y)&=
\mint{\XX}{}(x-\tau^* y)d\nu_{\wt{\delta}}\left(\mk{b}^*,\tau^*,1,\frac{1}{fN_0}C(\epsilon,1)\right)(y,fNx)\\
&=\mint{\XX}{}(x-\tau^* y)d\nu_{\wt{\delta}}(\mk{b}^*,\tau^*,1,C(\epsilon,1))(x,y),
\end{align*}
where the last equality follows from the independence on the choice of the fundamental domain 
and the fact that $\nu_{\wt{\delta}}\left(\mk{b}^*,\tau^*,1,\frac{1}{fN_0}C(\epsilon,1)\right)(y,fNx)$
has total measure zero. This shows \eqref{pays}.

Finally, the fact that the two Shimura reciprocity laws agree follows directly from their 
definitions. This concludes the proof. \fin
\end{proof}
\subsection{From $K(\mk{f}^*\infty)$ to $K(\mk{f}\infty)$}\label{down}
In this subsection we would like to show the equivalence between 
Conjecture \ref{u_H_conj} and Conjecture \ref{alg_conj} when restricted to the
$p$-adic invariant $u_C(r,\tau)$ for pairs $(r,\tau)\in\ZZ/f\ZZ\times H^{\ca{O}}(\mk{N},f)$
(we only impose the restriction $(A_{\tau},f)=1$ in order to simplify the presentation).
It follows from Proposition 9.3 that one has
\begin{align}\label{tenemos}
u_{C}(r,\tau)^{\nu(r,\tau)}=u_{D}(\wt{r}I_{\tau^*},\mk{f}^*)^{12},
\end{align}
and that the Shimura reciprocity law of $u_C$ is compatible with the Shimura reciprocity
law of $u_D$. Therefore the equivalence between the two conjectures will follow if we can
show that the field of definition for $u_C(r,\tau)$ which is predicted by Conjecture \ref{u_H_conj}
agrees with the one predicted by Conjecture \ref{alg_conj}.

Let $(r,\tau)\in\ZZ/f\ZZ\times H^{\ca{O}}(\mk{N})$. 
Then Conjecture \ref{alg_conj} predicts that $u_{D}(\wt{r}I_{\tau^*},\mk{f}^*)$ is a strong
$p$-unit in $K(\mk{f}^*\infty)^{\laa\sigma_{\wp}\raa}$ where $\mk{f}^*=f\ca{O}_{\tau^*}$. Now assume that
$\tau\in H^{\ca{O}}(\mk{N},f)$ so that $\cond(\ca{O}_{\tau^*})=f\cdot \cond(\ca{O})$ and that
$r\in(\ZZ/f\ZZ)^{\times}$. Under these assumptions we have that $\nu(r,\tau)=1$ and that 
$K(\mk{f}\infty)\subseteq K(\mk{f}^*\infty)\subseteq K(\mk{f}^2\infty)$. 
Conjecture \ref{u_H_conj}, when applied to the left hand side of \eqref{tenemos}, predicts that
\begin{align}\label{trompe}
u_{C}(r,\tau)\in K(\mk{f}\infty)^{\laa\sigma_{\wp}\raa}.
\end{align} 
On the other hand, Conjecture \ref{alg_conj}, when applied to the right hand side of \eqref{tenemos}, only 
predicts, a priori, that
\begin{align}\label{trompe2}
u_C(r,\tau)=u_{D}(\wt{r}I_{\tau^*},\mk{f}^*)^{12}\in K(\mk{f}^*\infty)^{\laa\sigma_{\wp}\raa}.
\end{align}
In general $K(\mk{f}\infty)\subsetneqq K(\mk{f}^*\infty)$. Therefore
it seems that Conjecture \ref{u_H_conj}
gives a finer statement than Conjecture \ref{alg_conj}. In the remainder of the
section we prove the following:
\begin{Prop}\label{klam}
Conjecture \ref{alg_conj} $\Longrightarrow$ \eqref{trompe}.
\end{Prop}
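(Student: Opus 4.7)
The plan is to use Galois descent combined with the Shimura reciprocity law of Conjecture~\ref{alg_conj}. Set $\mk{b}:=\wt{r}I_{\tau^*}$, $M:=K(\mk{f}^*\infty)^{\langle\sigma_\wp\rangle}$, and $L:=K(\mk{f}\infty)^{\langle\sigma_\wp\rangle}$. By the identity \eqref{tenemos} (since $\nu(r,\tau)=1$) and Conjecture~\ref{alg_conj}, we have $u_C(r,\tau)=u_D(\mk{b},\mk{f}^*)^{12}\in M$. Since $L\subseteq M$, by Galois descent it suffices to show that $u_C(r,\tau)$ is fixed by every $\sigma$ in $H:=\Gal(M/L)$.

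Let $\sigma\in H$ correspond via the reciprocity map to $c\in C_K$. The restriction of $\sigma$ to $K(\mk{f}\infty)$ lies in $\langle\sigma_\wp\rangle$, and $\sigma_\wp$ acts trivially on $M$, so after modifying $c$ by a power of $\sigma_\wp$ I may assume its image in $C_{\ca{O}}(\mk{f})$ is trivial; thus $c$ is represented by a totally positive $\mu\in K^\times$ with $\mu\equiv 1\pmod{f\ca{O}}$. Writing $\mu-1=fa+fm\omega b$ in the $\ZZ$-basis $\{1,m\omega\}$ of $\ca{O}$ (where $m=\cond(\ca{O})$) shows that $\mu\in\ZZ+fm\omega\ZZ=\ca{O}_{\tau^*}$, and an analogous check gives $(\mu\ca{O})\cap\ca{O}_{\tau^*}=\mu\ca{O}_{\tau^*}$. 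By Definitions~\ref{atyran} and~\ref{ad_action} this yields $c\star[[\mk{b},\mk{b}\mk{N}]]=[[\mu\mk{b},\mu\mk{b}\mk{N}]]$ in $\mathcal{L}^{\ca{O}_{\tau^*}}(\mk{N})/\sim_{\mk{f}^*}$, and Conjecture~\ref{alg_conj} gives $u_D(\mk{b},\mk{f}^*)^{\sigma^{-1}}=u_D(\mu\mk{b},\mk{f}^*)$.

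It remains to establish $u_D(\mu\mk{b},\mk{f}^*)^{12}=u_D(\mk{b},\mk{f}^*)^{12}$. When $b\equiv 0\pmod{f}$, we have $\mu\equiv 1\pmod{\mk{f}^*}$, so the two pairs are $\sim_{\mk{f}^*}$-equivalent and the equality is immediate. In general the pairs are genuinely inequivalent, and the equality must follow from the $12$-th power absorbing a $12$-th root of unity discrepancy. I would establish this by a direct comparison of the two multiplicative integrals via the change of variable $z\mapsto\mu z$ on $\ca{O}_{K_p}^\times$: the homogeneity formula \eqref{paon1} for Shintani zeta functions, combined with Lemma~\ref{Sonne}, transports $\nu(\mu\mk{b},\mathcal{D})$ to a $\mu$-translate of $\nu(\mk{b},\mathcal{D})$, and the residual discrepancy between these two measures reflects precisely the gap between the unit groups $\ca{O}(\mk{f}\infty)^\times$ and $\ca{O}_{\tau^*}(\mk{f}^*\infty)^\times$.

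The main obstacle is this last step: pinning down the exact $12$-th root of unity discrepancy in the multiplicative integral. A perhaps cleaner alternative would be to transport the computation to the $\XX$-side via the identity of Proposition~\ref{klasse} and to use the manifest $\ZZ$-valuedness of the measure $\wt{\mu}_\delta$ provided by Proposition~\ref{Eleni}, together with the explicit formula \eqref{libellule2}, to verify directly that the translation of $u_C(r,\tau)$ by the principal class $\mu\ca{O}_{\tau^*}$ is trivial already at the level of $L$, which would bypass the root-of-unity bookkeeping entirely.
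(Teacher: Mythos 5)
Your framework is the right one and matches the paper's: reduce to Galois descent, and use the Shimura reciprocity law of Conjecture~\ref{alg_conj} to replace the action of $\sigma\in\Gal(M/L)$ by multiplication by a principal, totally positive $\mu\equiv 1\pmod{f\ca{O}}$, and then show that $u_D(\mu\mk{b},\mk{f}^*)=u_D(\mk{b},\mk{f}^*)$. However, the crucial step is left open, and your diagnosis of the obstruction is wrong, in a way that would lead you away from the actual argument. You assert that for general $\mu$ the pairs $[\mu\mk{b},\mk{f}^*]$ and $[\mk{b},\mk{f}^*]$ are ``genuinely inequivalent'' and that the $12$th power must absorb a root-of-unity discrepancy in the multiplicative integral. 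That is not what happens. The paper's argument (Proposition~\ref{carotte}, and in particular the lattice computation \eqref{huile1}--\eqref{huile2}) shows that the two pairs \emph{are} $\sim_{\mk{f}^*}$-equivalent: because $\mk{b}=\wt{r}I_{\tau^*}$ has the explicit $\ZZ$-basis coming from $Q_{\tau^*}$, one checks that every $\mu\in Q_{\ca{O},1}(f\infty)\cap\mk{b}^{-1}$ automatically lies in $f\mk{b}^{-1}+1$, which is exactly the condition defining $\sim_{\mk{f}^*}$ (cf.~Definition~\ref{tion}). Hence $\mu\mk{b}\sim_{\mk{f}^*}\mk{b}$, so $u_D(\mu\mk{b},\mk{f}^*)=u_D(\mk{b},\mk{f}^*)$ on the nose, with no $12$th root involved; the action of $\Gal(M/L)$ is literally trivial on $u_D(\mk{b},\mk{f}^*)$, not merely trivial on its $12$th power.

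Put differently, your proposal treats the congruence condition in $\sim_{\mk{f}^*}$ as if it were the naive $\mu\equiv 1\pmod{\mk{f}^*}$, whereas the definition uses $\mu\in\mk{b}^{-1}\mk{f}^*+1=f\mk{b}^{-1}+1$, which is \emph{weaker} precisely because $\mk{b}^{-1}\supsetneq\ca{O}^*$. This weakening is why $\mu\equiv 1\pmod{f\ca{O}}$ plus $\mu\in\mk{b}^{-1}$ suffices, and the verification that it suffices (\eqref{huile2}) is the genuinely non-formal content of the proposition. There is also a technical point you pass over silently: one must be able to choose, within each $\sim_{\mk{f}^*}$-class, a representative that is integral and coprime to $\mk{f}^*$; the paper resolves this by citing Corollary~1 of \cite{erratum_1}. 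Finally, your suggested fallback of analyzing the multiplicative integral directly (or transporting to the $\XX$-side) is chasing a discrepancy that is not there, so it would not converge; the right idea is the lattice identity showing the Galois action factors through $C_{\ca{O}}(f)$, as made precise by the last sentence of Proposition~\ref{carotte}.
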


Let us explain our strategy of proof.
The inclusions $K(\mk{f}\infty)\subseteq K(\mk{f}^*\infty)\subseteq K(\mk{f}^2\infty)$ give 
rise to the two restriction maps 
\begin{align}\label{tet_1}
res_1&:\Gal(K(\mk{f}^2\infty)/K)
\rightarrow \Gal(K(\mk{f}^*\infty)/K),\\ \notag
res_2&:\Gal(K(\mk{f}^2\infty)/K)
\rightarrow \Gal(K(\mk{f}\infty)/K),
\end{align}
which correspond to the two projections
\begin{align}\label{tet_2}
\wt{\Theta}_1&:I_{\ca{O}}(f)/P_{\ca{O},1}(f^2\infty)\rightarrow I_{\ca{O}^*}(f)/P_{\ca{O}^*,1}(f\infty),\\ \notag
\wt{\Theta}_2&:I_{\ca{O}}(f)/P_{\ca{O},1}(f^2\infty)\rightarrow I_{\ca{O}}(f)/P_{\ca{O},1}(f\infty).
\end{align}
When $\tau\in H^{\ca{O}}(\mk{N},f)$, 
we will show that 
$\left(P_{\ca{O},1}(f\infty)\cap I_{\ca{O}}(f)\pmod{\sim_f}\right)=ker_2(\wt{\Theta}_2)$ 
acts trivially on the class $[\wt{r}I_{\tau^*}]$ and therefore, 
from Conjecture \ref{alg_conj}, this will mean that the action of 
$\Gal(K(\mk{f}^2\infty)/K)$ on $u_D(\wt{r}I_{\tau^*},\mk{f}^*)$ factors through 
$\Gal(K(\mk{f}\infty)/K)$. Having this additional information, we thus see a postiori that  
Conjecture \ref{alg_conj} implies the finer statement \eqref{trompe}.

We want to prove the following:
\begin{Prop}\label{carotte}
Let $\ca{O}$ be an order of conductor coprime to $f$ and let 
$(r,\tau)\in(\ZZ/f\ZZ)^{\times}\times H^{\ca{O}}(\mk{N},f)$. Denote the class
of $(r,\tau)$ modulo $\sim_f$ by $[(r,\tau)]$. 
Assume that $\{(r_i,\tau_i)\}_{i\in I}$ 
is a complete set of representatives for the action of $C_{\ca{O}}(f)$ on $[(r,\tau)]$. 
Let $\ca{O}^*=\ca{O}_{\tau^*}$. Then
$\{(r_i,\tau_i^*)\}$ is a complete set of representatives for the action of $C_{\ca{O}^*}(f)$ on
$[(r,\tau^*)]$. Moreover, the action of $C_{\ca{O}^*}(f)$ on $[(r,\tau^*)]$ factors through 
$C_{\ca{O}}(f)$.
\end{Prop}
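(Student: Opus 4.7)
The proof has two parts corresponding to the two assertions. The substantive step is the factorization assertion; the description of orbit representatives will then follow formally.

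\textbf{Step 1 (Factorization).} With the two surjections $\wt\Theta_1\colon C_\ca{O}(f^2)\twoheadrightarrow C_{\ca{O}^*}(f)$ and $\wt\Theta_2\colon C_\ca{O}(f^2)\twoheadrightarrow C_\ca{O}(f)$ from \eqref{tet_2}, it suffices to show that $\wt\Theta_1(\ker\wt\Theta_2)$ acts trivially on the class $[[\wt rI_{\tau^*},\wt rI_{\tau^*}\mk N]]\in\mathcal{L}^{\ca O^*}(\mk N)/\sim_f$. Unpacking the definition of $\sim_f$, an element of $\ker\wt\Theta_2$ is represented by an ideal $\mk a=\beta\ca O$ with $\beta\in\ca O$ totally positive and $\beta\equiv 1\pmod{f\ca O}$. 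Since $(A_\tau,f)=1$ gives $\cond(\ca O^*)=f\cdot\cond(\ca O)$, one has $\ca O^*=\ZZ+f\ca O$; hence $\beta\in\ca O^*$, and a short calculation shows $\Theta(\mk a)=\beta\ca O\cap\ca O^*=\beta\ca O^*$ (for $x=\beta y\in\ca O^*$ with $y\in\ca O$, reducing modulo $f\ca O$ yields $y\equiv x\in\ZZ+f\ca O$, so $y\in\ca O^*$). The action of $\Theta(\mk a)$ therefore sends our class to $[[\beta\wt rI_{\tau^*},\beta\wt rI_{\tau^*}\mk N]]$; with the candidate $\lambda=\beta^{-1}\gg 0$, Definition \ref{tion} reduces the triviality of the action to the containment $\wt r(\beta-1)\in f I_{\tau^*}^{-1}=f\Lambda_{(\tau^*)^\sigma}$. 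From $\tau^\sigma=C/(A_\tau\tau)$ one computes $(\tau^*)^\sigma=-A_\tau\tau/(fNC)$, so $A_\tau\tau=(-fNC)\cdot(\tau^*)^\sigma\in\Lambda_{(\tau^*)^\sigma}$, whence $\ca O=\ZZ+A_\tau\tau\ZZ\subseteq\Lambda_{(\tau^*)^\sigma}$. Combined with $\beta-1\in f\ca O$, this yields the required inclusion.

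\textbf{Step 2 (Orbit description).} The action of $C_K$ on $\mathcal{L}^\ca O(\mk N)/\sim_f$ factors through $C_\ca O(f)$ (paragraph after Definition \ref{atyran}), so the $\tau$-side action on $[(r,\tau)]$ factors through $C_\ca O(f)$; by Step 1, so does the $\tau^*$-side action on $[(r,\tau^*)]$. Definition \ref{add_action} is engineered so that these two $C_\ca O(f)$-actions are intertwined by the formal correspondence $(r,\tau)\leftrightarrow(r,\tau^*)$: namely $c\star(r,\tau)=(r',\tau')$ holds precisely when $c\star(r,\tau^*)=(r',\tau'^*)$. This equivariance transports the complete set $\{(r_i,\tau_i)\}_{i\in I}$ of orbit representatives on the $\tau$-side to the set $\{(r_i,\tau_i^*)\}_{i\in I}$ on the $\tau^*$-side, and preserves distinctness: if $[(r_i,\tau_i^*)]=[(r_j,\tau_j^*)]$, the equivariance run backwards forces $[(r_i,\tau_i)]=[(r_j,\tau_j)]$ and hence $i=j$.

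The delicate point is the containment $\ca O\subseteq\Lambda_{(\tau^*)^\sigma}$, which is specific to the Atkin--Lehner involution and is precisely what accounts for the descent from the a priori field of definition $K(\mk f^*\infty)$ down to $K(\mk f\infty)$ discussed just before the proposition; the rest of the argument is bookkeeping about how Definition \ref{add_action} identifies the two actions and their orbits.
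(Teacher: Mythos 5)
Your Step 1 is essentially correct and is arguably cleaner than what the paper does: you observe the containment $\ca{O}\subseteq\Lambda_{(\tau^*)^\sigma}$ directly from the Atkin--Lehner computation $A_\tau\tau=-fNC\cdot(\tau^*)^\sigma$, where the paper instead works through explicit $\ZZ$-module identities (the displays \eqref{huile1} and \eqref{huile2}). Both reach the same conclusion, that $\wt\Theta_1(\ker\wt\Theta_2)$ acts trivially on $[\wt r I_{\tau^*}]$, and your packaging of the computation is tighter.

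Step 2, however, has a genuine gap. The claim that ``Definition \ref{add_action} is engineered so that $c\star(r,\tau)=(r',\tau')$ holds precisely when $c\star(r,\tau^*)=(r',\tau'^*)$'' is asserted, not proved, and it is not a formal consequence of that definition --- indeed the paper explicitly notes that the well-definedness of Definition \ref{add_action} itself rests on Corollary \ref{fusee}, which in turn rests on Lemma \ref{tricky}. Moreover, the intertwining is not the naive one you state: Lemma \ref{tricky} shows $(r,\tau)\sim_f(r',\tau')$ if and only if $(r',\tau^*)\sim_f(r,\tau'^*)$, with the $r$'s \emph{swapped}, and it takes a careful bookkeeping argument (the double-index reduction in the proof of Corollary \ref{fusee}) to convert this into the orbit statement you want. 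The paper's proof also has to invoke an external result (Corollary 1 of the cited erratum) to guarantee that each class $[\mk b_i]$ contains an integral representative coprime to $\mk f^*$, a point your argument skips entirely. What you call ``bookkeeping'' is, in fact, the substantive content of the orbit-description half of the proposition, and one needs to prove the $\Gamma_0(fN)$-matrix identity underlying Lemma \ref{tricky} rather than cite the definition of the ad\'elic action as giving it for free.
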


In order to prove the previous proposition we need to prove a lemma.
\begin{Lemma}\label{tricky}
Let $(r,\tau),(r',\tau')\in\ZZ/f\ZZ\times H^{\ca{O}}(\mk{N})$. Then we
have that $(r,\tau)\sim_f(r',\tau')$ if and only if $(r',\tau^*)\sim_f(r,\tau'^*)$.
\end{Lemma}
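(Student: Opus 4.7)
The plan is to exhibit an explicit bijection between the matrices $\gamma \in \Gamma_0(fN)$ sending $\tau$ to $\tau'$ and the matrices sending $\tau^*$ to $\tau'^*$, by means of the Atkin–Lehner involution, and then track how the two entries of $\gamma$ relevant to the $\ZZ/f\ZZ$-coordinate get swapped.

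First, I would set $W=\left(\begin{smallmatrix}0 & -1\\ fN & 0\end{smallmatrix}\right)$, so that $\tau^*=W\cdot\tau$ in the usual M\"obius sense. Because $(\tau^*)^*=\tau$, the statement of the lemma is symmetric, so it suffices to prove just one direction: assume $(r,\tau)\sim_f (r',\tau')$ and produce a matrix witnessing $(r',\tau^*)\sim_f (r,\tau'^*)$. Fix a witness $\gamma=\left(\begin{smallmatrix}a & b\\ c & d\end{smallmatrix}\right)\in\Gamma_0(fN)$ with $\tau'=\gamma\tau$ and $d^{-1}r\equiv r'\pmod{f}$.

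Next, I would compute
\begin{align*}
\gamma^*:=W\gamma W^{-1}=\M{d}{-c/(fN)}{-fNb}{a}.
\end{align*}
Since $fN\mid c$, the entry $-c/(fN)$ is an integer, and the $(2,1)$-entry $-fNb$ is visibly divisible by $fN$, so $\gamma^*\in\Gamma_0(fN)$. A direct M\"obius computation (or the relation $\tau'^*=W\tau'=W\gamma W^{-1}(W\tau)=\gamma^*\tau^*$) gives $\tau'^*=\gamma^*\tau^*$. Thus $\gamma^*$ is the natural candidate for a witness of $(r',\tau^*)\sim_f (r,\tau'^*)$; the only thing left to verify is the congruence on the $\ZZ/f\ZZ$-coordinate.

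The key identity is $ad\equiv 1\pmod{f}$: this follows from $\det\gamma=ad-bc=1$ together with $fN\mid c$, hence $f\mid bc$. Consequently $d\equiv a^{-1}\pmod{f}$. The hypothesis $d^{-1}r\equiv r'\pmod{f}$ therefore rewrites as $ar\equiv r'\pmod{f}$, i.e.\ $r\equiv a^{-1}r'\pmod{f}$. But $a$ is precisely the $(2,2)$-entry of $\gamma^*$, so this is exactly the required condition for $\gamma^*$ to witness $(r',\tau^*)\sim_f (r,\tau'^*)$. Applying the same construction to $\gamma^*$ recovers $\gamma$ (up to the involution $W^2\in\QQ^\times\cdot I$ acting trivially on the upper half plane), which both gives the converse direction and confirms that the correspondence $\gamma\leftrightarrow\gamma^*$ is a bijection. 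The only subtle point, and the one I would pay closest attention to while writing the proof up in full, is tracking the swap of the $(1,1)$- and $(2,2)$-entries under $W$-conjugation together with the use of $ad\equiv 1\pmod f$; there are no analytic obstacles, only a careful bookkeeping of matrix entries modulo $f$.
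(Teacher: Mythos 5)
Your proof is correct, and it follows the same essential route as the paper's: both produce the Atkin–Lehner conjugate $W\gamma W^{-1}=\M{d}{-c/(fN)}{-fNb}{a}$ of the witness matrix $\gamma$, check it lies in $\Gamma_0(fN)$, and verify the required congruence on the $\ZZ/f\ZZ$-coordinate. The only difference is one of streamlining: you confirm the congruence directly from the matrix definition of $\sim_f$ via $ad\equiv 1\pmod f$, whereas the paper translates the same conjugated matrix into a statement about the lattices $\wt{r}'A_{\tau^*}\Lambda_{\tau^*}$ and $\wt{r}A_{\tau'^*}\Lambda_{\tau'^*}$ and then invokes Proposition \ref{hard_prop} to return to pairs, a detour your argument shows to be unnecessary.
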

\begin{proof}
First note that since $(\wt{r},f)=(\wt{r}',f)$ and that $1\leq\wt{r},\wt{r}'\leq f$ one has
that $\frac{\wt{r}}{\wt{r}'}$ is a unit modulo $f$. 
From the Proposition \ref{hard_prop} we know that there exists a matrix
$\gamma=\M{a}{b}{c}{d}\in\Gamma_0(fN)$ such that $\gamma\tau=\tau'$ and 
$d^{-1}r\equiv r'\pmod{f}$. Therefore we have
\begin{align}\label{flirt}
\M{a}{b}{c}{d}\V{\tau}{1}=(c\tau+d)\V{\tau'}{1}.
\end{align}
We can rewrite \eqref{flirt} in the following way
\begin{align}\label{bomb}
\M{a}{-bfN}{-\frac{c}{fN}}{d}\V{1}{\tau^*}=(c\tau+d)\frac{\tau'}{\tau}\V{1}{{\tau'}^*}.
\end{align}
It thus follows that $a+bf\tau^*=(c\tau+d)\frac{\tau'}{\tau}$ and we can thus rewrite
\eqref{bomb} as 
\begin{align}\label{anchois}
\M{d}{-\frac{c}{fN}}{-bfN}{a}\V{\tau^*}{1}=(-bfN\tau^*+a)\V{\tau'^*}{1}.
\end{align}

From \eqref{anchois} we may deduce that
\begin{align*}
\frac{\wt{r}}{\wt{r}'}(bfN(\tau^*)^{\sigma}+a)\wt{r}'A_{\tau^*}\Lambda_{\tau^*}=\wt{r}A_{\tau'^*}\Lambda_{\tau'^*}.
\end{align*}
Since $\M{d}{-\frac{c}{fN}}{-bfN}{a}\in\Gamma_0(fN)$ and $\frac{dr'}{r}\equiv 1\pmod{f}$ 
we have
\begin{align*}
\frac{\wt{r}}{\wt{r}'}(-bfN(\tau^*)^{\sigma}+a)\in(\wt{r}'A_{\tau^*}\Lambda_{\tau^*})^{-1}f+1.
\end{align*}
It thus follows that $\wt{r}'A_{\tau^*}\Lambda_{\tau^*}\sim_f\wt{r}A_{\tau'^*}\Lambda_{\tau'^*}$ 
and therefore from Proposition \ref{hard_prop} we find that $(r',\tau^*)\sim_f(r,\tau'^*)$. \fin
\end{proof}

Let us record the following useful corollary
\begin{Cor}\label{fusee}
Let $[(r,\tau)]\in(\ZZ/f\ZZ)^{\times}\times H^{\ca{O}}(\mk{N})/\sim_{f}$ and let 
\begin{align*}
C_{\ca{O}}(f)\cdot[(r,\tau)]=\{[(r_i,\tau_i)]\}_{i\in I}.
\end{align*}
be the orbit of $[(r,\tau)]$ under the action of $C_{\ca{O}}(f)$. Then
\begin{align*}
C_{\ca{O}^*}(f)\cdot[(r,\tau)]=\{[(r_i,\tau_i^*)]\}_{i\in I}.
\end{align*}
\end{Cor}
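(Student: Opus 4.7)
Corollary \ref{fusee} is essentially the first assertion of Proposition \ref{carotte} recast in orbit form (reading $C_{\ca{O}^*}(f)\cdot[(r,\tau)]$ as $C_{\ca{O}^*}(f)\cdot[(r,\tau^*)]$). I outline how it would be derived.

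First, I would unpack Definition \ref{add_action}: for $c\in C_K$ with associated class $[\mk{a}]=\pi_{gn}(c)$, the relation $c\star[(r,\tau)]=[(r_i,\tau_i)]$ is defined by $\Theta(\mk{a})\wt{r}I_{\tau^*}\sim_f\wt{r_i}I_{\tau_i^*}$, where $\Theta$ is the map of Proposition \ref{lilas}. Invoking the bijection $\wt{\psi}$ of Proposition \ref{hard_prop}, this says precisely that $[(r_i,\tau_i^*)]$ is the image of $[(r,\tau^*)]$ under the standard adelic action of Definition \ref{atyran} on the stratum $\mathcal{L}^{\ca{O}^*}(\mk{N})/\sim_f$, with acting class $[\Theta(\mk{a})]\in C_{\ca{O}^*}(f)$.

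Second, this standard action factors through $C_{\ca{O}^*}(f)$ by the discussion following Definition \ref{atyran}, and the composition $C_K\to C_{\ca{O}_K}(gn)\to C_{\ca{O}^*}(f)$ is surjective by Proposition \ref{lilas}. Hence, as $c$ varies over $C_K$, the class $[\Theta(\mk{a})]$ exhausts $C_{\ca{O}^*}(f)$, and the set $\{[(r_i,\tau_i^*)]\}_{i\in I}$ is the image under $\wt{\psi}^{-1}$ of the orbit $C_{\ca{O}^*}(f)\cdot[\wt{r}I_{\tau^*}]$, which is exactly the orbit $C_{\ca{O}^*}(f)\cdot[(r,\tau^*)]$.

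Third, I would verify that the correspondence $[(r_i,\tau_i)]\leftrightarrow[(r_i,\tau_i^*)]$ is injective on classes, so no collapse occurs. Suppose $[(r_i,\tau_i^*)]=[(r_j,\tau_j^*)]$; applying Lemma \ref{tricky} together with the involution $\tau^{**}=\tau$ yields $(r_j,\tau_i)\sim_f(r_i,\tau_j)$. Combining this with the fact that $(r_i,\tau_i)$ and $(r_j,\tau_j)$ already lie in a common $C_{\ca{O}}(f)$-orbit, and using the hypotheses $r\in(\ZZ/f\ZZ)^{\times}$ and $\tau\in H^{\ca{O}}(\mk{N},f)$ (so that $(A_\tau,f)=1$ and $\epsilon(\eta_\tau)$ generates $\ca{O}(\mk{f}\infty)^{\times}$, cf.\ Remark \ref{fromage}), one extracts $(r_i,\tau_i)\sim_f(r_j,\tau_j)$, hence $i=j$.

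I expect the main obstacle to lie precisely in this injectivity step: Lemma \ref{tricky} only delivers the equivalence with the $r$-indices swapped, and promoting it to the unswapped statement $(r_i,\tau_i)\sim_f(r_j,\tau_j)$ is where the coprimality hypotheses must be exploited carefully. Once no-collapse is established, the first two steps immediately yield the claimed equality of orbits.
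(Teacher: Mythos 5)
Your plan is circular. It reads the orbit structure off Definition \ref{add_action}, but the paper explicitly remarks, immediately after that definition, that the existence of a pair $(r',\tau')\in\ZZ/f\ZZ\times H^{\ca{O}}(\mk{N})$ satisfying \eqref{oparleur} --- that is, the well-definedness of the very rule you are unpacking --- is ``guaranteed by Corollary \ref{fusee}.'' Moreover, the $(r_i,\tau_i)$ in the corollary are by hypothesis the orbit of $[(r,\tau)]$ under the \emph{transported} adelic action (the one defined right after Definition \ref{atyran} via $\wt{\psi}$, in the $\ca{O}$-stratum); the pairs produced by the $\Theta$-twisted rule of Definition \ref{add_action} are a priori a different list. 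Asserting that $\mk{c}\wt{r}I_{\tau}\sim_f\wt{r_i}I_{\tau_i}$ entails $\Theta(\mk{c})\wt{r}I_{\tau^*}\sim_f\wt{r_i}I_{\tau_i^*}$ --- which is exactly what your steps 1 and 2 use to conclude --- is the non-trivial content of the corollary, not something one can read off a definition.

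The paper's proof drops the adelic machinery and argues directly from Lemma \ref{tricky}. It normalizes the orbit as $\{[(r_{ij},\tau_i)]\}_{i,j}$ so that $(*,\tau_i)\sim_f(*,\tau_{i'})$ forces $i=i'$, supposes $(r_{ij},\tau_i^*)\sim_f(r_{i'j'},\tau_{i'}^*)$, applies Lemma \ref{tricky} to get $(r_{i'j'},\tau_i)\sim_f(r_{ij},\tau_{i'})$, scales the first coordinate by $s:=r_{ij}/r_{i'j'}$ (a unit mod $f$ because $r\in(\ZZ/f\ZZ)^{\times}$) to force $i=i'$, and then a short stabilizer argument at $\tau_i$ forces $r_{ij}=r_{i'j'}$; the reverse direction gives surjectivity. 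This is also where your step 3 goes off track: you import the hypothesis $\tau\in H^{\ca{O}}(\mk{N},f)$ and Remark \ref{fromage} (hence $(A_\tau,f)=1$), but those belong to Proposition \ref{carotte}; the corollary assumes only $\tau\in H^{\ca{O}}(\mk{N})$, and the paper's no-collapse argument needs only $r\in(\ZZ/f\ZZ)^{\times}$ together with the normalization of the index $i$ by the $\tau$-coordinate.
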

\begin{proof}
Without lost of generality, we may assume that the orbit $C_{\ca{O}}(f)\cdot[(r,\tau)]$ is written as
\begin{align*}
\{[(r_{ij},\tau_i)]\}_{i\in I,j\in J},
\end{align*}
so that if $i,i'\in I$ are such that $(*,\tau_i)\sim_f(*,\tau_{i'})$ then $i=i'$.
Assume that for $i\neq i'$ and some $j,j'\in J$ one has that 
$(r_{ij},\tau_i^*)\sim_f(r_{i'j'},\tau_{i'}^*)$. First note that
$\frac{r_{ij}}{r_{i'j'}}:=s$ is a unit modulo $f$. From Lemma \ref{tricky} we find that
$(r_{i'j'},\tau_i)\sim_f(r_{ij},\tau_{i'})$. Multiplying the first coordinate by $s$ we find that 
$(r_{ij},\tau_i)\sim_f(s r_{ij},\tau_{i'})$. In particular, this implies that $i=i'$ and that there
exists $\epsilon\in\ca{O}(\infty)^{\times}$ such that $\epsilon\equiv s\pmod{f}$. Now note that
$(r_{ij},\tau_i)\sim_f(s r_{ij},\tau_{i'})\sim_f(s^2 r_{i'j'},\tau_{i'})\sim_f(r_{i'j'},\tau_{i'})$.
Therefore we must have $r_{ij}=r_{i'j'}$. We thus conclude that the elements in the set 
$\{(r_{ij},\tau_i^{*})\}_{i\in I,j\in J}$ are inequivalent modulo $\sim_f$. 
Reversing this argument we see that this set gives a complete list of representatives
modulo $\sim_f$. This concludes the proof. \fin
\end{proof}

\noindent {\bf Proof of Proposition \ref{carotte}} Let $n=\cond(\ca{O})$. Note that if $\tau\in H^{\ca{O}}(\mk{N},f)$ then
$Q_{\tau}(x,y)=Ax^2+Bxy+Cy^2$ with $(A,f)=1$ and $B^2-4AC=n^2d_K$. Let $[(r,\tau)]$ be the equivalence class
of $(r,\tau)$ modulo $\sim_f$. Since $(A,f)=1$ and $N|A$ we have
$Q_{\tau^*}(x,y)=\sign(C)(Cf^2Nx^2-Bfxy+\frac{A}{N}y^2)$ so that $\cond(\ca{O}_{\tau^*})=fn$.
Let $\ca{O}^{*}=\ca{O}_{\tau^*}$ so that $\tau^*\in H^{\ca{O}^*}(\mk{N})$.
We let $\Theta_1,\Theta_2$ be as in \eqref{tet_1} and 
$\wt{\Theta}_1, \wt{\Theta}_2$ be as in \eqref{tet_2}.

We note that for 
$\lambda\ca{O}\in P_{\ca{O},1}(f\infty)$ one has that $\Theta_1(\lambda\ca{O})=\lambda\ca{O}^*$ 
(we leave the proof of this fact as an exercise).
We claim that $\wt{\Theta}_1(ker_2(\wt{\Theta}_2))$ \lq\lq acts trivially\rq\rq\s on the class $[(r,\tau^*)]$.
By acting trivially we mean that if 
$\lambda\ca{O}\in P_{\ca{O},1}(f\infty)$ is such that $\lambda\wt{r}I_{\tau^*}\in I_{\ca{O}^*}(f)$ then
$[\lambda\wt{r}I_{\tau^*}]=[\wt{r}I_{\tau^*}]$ where the brackets mean the class modulo
$\sim_f$. Let $\lambda\ca{O}\in P_{\ca{O},1}(f\infty)$ and assume that
$\lambda\in(\wt{r}I_{\tau^*})^{-1}f+1$. In particular one has that
$\lambda\in Q_{\ca{O},1}(f\infty)\cap(\wt{r}I_{\tau^*})^{-1}$. We have that
\begin{align}\label{huile1}
(\wt{r}I_{\tau^*})^{-1}f+1=\frac{1}{\wt{r}}\Lambda_{(\tau^*)^{\sigma}}f+1=
\frac{1}{\wt{r}}\left(f\ZZ+\left(\frac{-B-n\sqrt{D}}{NC}\right)\ZZ\right)+1,
\end{align}
and a direct computation shows that 
\begin{align}\label{huile2}
Q_{\ca{O},1}(f\infty)\cap(\wt{r}I_{\tau^*})^{-1}\subseteq\frac{1}{\wt{r}}
\left(f\ZZ+\left(\frac{-B-n\sqrt{D}}{NC}\right)\ZZ\right)+1.
\end{align}
Combining \eqref{huile1} and \eqref{huile2} we find that 
$\lambda\in Q_{\ca{O},1}(f\infty)\cap(\wt{r}I_{\tau^*})^{-1}\subseteq(\wt{r}I_{\tau^*})^{-1}f+1$.
Therefore $\wt{\Theta}_1(ker(\wt{\Theta}_2))$ acts trivially on $[(r,\tau^*)]$ as claimed.

Let $\{\mk{a}_i\}_{i\in I}$ be a complete set of 
representatives of $I_{\ca{O}}(f)$ modulo $P_{\ca{O},1}(f\infty)$ and let
$\psi([\mk{a}_i])=[(r_i,\tau_i)]$. Note that 
$(r_i,\tau_i)\in(\ZZ/f\ZZ)^{\times}\times H^{\ca{O}}(\mk{N},f)$. 
By assumption, for $i\neq j$ we have $[(r_i,\tau_i)]\not\sim_f [(r_j,\tau_j)]$ and therefore
$[(r_j^{-1},\tau_i)]\not\sim_f [(r_i^{-1},\tau_j)]$.
Thus, using Lemma \ref{tricky} we deduce that for $i\neq j$ one has
\begin{align}\label{gaul}
[(r_i^{-1},\tau_i^*)]\not\sim_f[(r_j^{-1},\tau_j^*)].
\end{align} 
Now let $Q_{\tau_i}(x,y)=A_ix^2+B_ixy+C_iy^2$, so that 
$$
Q_{\tau_i^*}(x,y)=\sign(C_i)\left(C_iNf^2x^2-B_ifxy+\frac{A_i}{N}y^2\right).
$$ 
Now set
\begin{align*}
\mk{b}_i=(\wt{r} A_{\tau^*}\Lambda_{\tau^*})^{-1}
(\wt{r}_i A_{\tau_i^*}\Lambda_{\tau_i^*}).
\end{align*} 
From Lemma \ref{tricky} one has that $\{\mk{b}_i\}_{i\in I}$ are inequivalent invertible $\ca{O}^*$-ideals 
modulo $\sim_f$. Moreover, as shown previously, for each $i$ one has that $\wt{\Theta}_1(ker(\wt{\Theta}_2))$ 
acts trivially on 
$[\mk{b}_i]$. Now it is not so obvious but it turns out that in each class 
$[\mk{b}_i]$ one may find a representative which is
integral and coprime to 
$\mk{f}^*$. Thanks to Corollary 1 of \cite{erratum_1}, this is indeed the case, therefore 
one may as well assume that each 
$\mk{b}_i\in I_{\ca{O}^*}(f)$ and that $\mk{b}_i\wt{r} A_{\tau^*}\Lambda_{\tau^*}\sim_f
\wt{r}_i A_{\tau_i^*}\Lambda_{\tau_i^*}$. It thus 
follows that the collection $\{\mk{b}_i\}_{i\in I}$ gives a complete set of
representatives of $I_{\ca{O}^*,1}(f)$ modulo $\wt{\Theta}_1(ker(\wt{\Theta}_2))$. Therefore
$\{[(r_i,\tau_i^*)]\}_{i\in I}$ gives a complete set of representatives of $[(r,\tau^*)]$
under the action $C_{\ca{O}^*}(f)$. Moreover, since $\wt{\Theta}_1(ker(\wt{\Theta}_2))$ acts trivially
on each class $[(r_i,\tau_i^*)]$ we see that the action of $C_{\ca{O}^*}(f)$ factors through 
$C_{\ca{O}}(f)$. This concludes the proof. \fin
\section{Norm formulas for $u_D$}
In this section we first prove a product formula for the $p$-adic invariant $u_D$ which involve different
orders of $K$. Second, we prove a non-trivial relationship between the $p$-adic 
invariants $u_D$ and $u_D^*$ where $u_D^*$ is the \lq\lq $p$-adic invariant $u_D$ twisted by the 
the involution $*$\rq\rq\s where $*$ is the involution given by $\tau\mapsto\frac{-1}{fN\tau}$.
Thirdly, we prove the main result of this paper (Theorem \ref{prec_th}) which may be viewed as a precise version of
Theorem \ref{vag_th}. Finally, we end this section by proving Proposition \ref{sam_err} which corrects
a mistake that appeared in \cite{Das3}.

For the rest of this subsection we fix two orders $\ca{O}$ and $\ca{O}'$ of conductor 
coprime to $N$, a splitting of $N$ namely a factorization of
$N\ca{O}_K$ as $\mk{N}\mk{N}^{\sigma}$, a good divisor 
$\wt{\delta}\in\wt{\Div}_f(\mk{N})$. For the whole section, we suppress the divisor $\wt{\delta}$ from the notation.
\begin{Prop}\label{pointe}
Let $\mk{b}\subseteq\ca{O}$ (resp. $\mk{b}'\subseteq\ca{O}'$) be an invertible $\ca{O}$-ideal
(resp. an invertible $\ca{O}'$-ideal). Let $\mk{g}\subseteq\ca{O}$ (resp. $\mk{g}\subseteq\ca{O}'$)
be an $\ca{O}$-ideal (resp. an $\ca{O}'$ ideal) which is not necessarily $\ca{O}$-invertible
(resp. $\ca{O}'$-invertible). Assume that $\mk{b}^{-1}\mk{g}\supseteq \mk{b}'^{-1}\mk{g}'$, 
$([\mk{b}^{-1}\mk{g}:\mk{b}'^{-1}\mk{g}'],pN)=1$, that 
$\Gamma_{\mk{b}}(\mk{g})\supseteq\Gamma_{\mk{b}'}(\mk{g}')$ and set 
$e=[\Gamma_{\mk{b}}(\mk{g}):\Gamma_{\mk{b}'}(\mk{g}')]$. Assume furthermore
that the following condition is satisfied:
\begin{align}\label{dag}\tag{$\dagger$}
\mbox{For all $x\in\mk{b}^{-1}\mk{g}$ ($x\neq -1$), $(x+1)\mk{b}'$ is $\mk{g}'$-int and 
$\Gamma_{(x+1)\mk{b}'}(\mk{g}')\supseteq\Gamma_{\mk{b}'}(\mk{g}')$}.
\end{align}
Let $\{\lambda\}$ be a complete set of representatives 
of $\mk{b}^{-1}\mk{g}/\mk{b}'^{-1}\mk{g}'$ where each
$\lambda$ is chosen to be totally positive and let $\mu_{\lambda}=\lambda+1$. Then
\begin{align}\label{zoller}
u_{D}(\mk{b},\mk{g})^e=\prod_{\lambda} u_{D}(\mu_\lambda\mk{b}',\mk{g}')^{h_{\lambda}},
\end{align}
where the $h_{\lambda}$'s are integers such that
\begin{align*}
h_{\lambda}=[\Gamma_{\mu_{\lambda}\mk{b}'}(\mk{g}'):\Gamma_{\mk{b}'}(\mk{g}')].
\end{align*}
Moreover, \eqref{zoller} is compatible with the ad\'elic action of $C_K$ on $u_D$ which appears
in Definition \ref{ad_action}.
\end{Prop}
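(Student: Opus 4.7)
The plan is to reduce \eqref{zoller} to two separate pieces: an identity of $p$-adic valuations (i.e.\ of zeta values at $s=0$) and an identity of multiplicative parts (a measure-theoretic identity on $\ca{O}_{K_p}^{\times}$), and then to verify that the bijection implicit in \eqref{zoller} commutes with the adelic action on both sides.

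First I would establish the zeta identity
$$e\cdot \zeta_{\wt{\delta}}(\mk{b}, \mk{g}\infty, s) \;=\; \left(\frac{\Norm(\mk{b}')}{\Norm(\mk{b})}\right)^{\!s}\! \sum_\lambda h_\lambda\,\zeta_{\wt{\delta}}(\mu_\lambda \mk{b}', \mk{g}'\infty, s),\qquad \Re(s)>1,$$
from which the equality of valuations in \eqref{zoller} follows by specializing at $s=0$. The starting point is the coset decomposition
$$\mk{b}^{-1}\mk{g} + 1 \;=\; \bigsqcup_\lambda (\mu_\lambda + \mk{b}'^{-1}\mk{g}') \;=\; \bigsqcup_\lambda \mu_\lambda\bigl((\mu_\lambda \mk{b}')^{-1}\mk{g}' + 1\bigr),$$
where the $\lambda=-1$ (i.e.\ $\mu_\lambda = 0$) term is excluded, as it is already forbidden by the definition of the zeta function. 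Rewriting the orbit sum defining $\zeta_{\wt{\delta}}(\mk{b}, \mk{g}\infty, s)$ modulo the smaller group $\Gamma_{\mk{b}'}(\mk{g}')$ introduces the factor $e$; decomposing according to $\lambda$, making the substitution $\mu = \mu_\lambda \mu'$, and finally regrouping each inner sum modulo $\Gamma_{\mu_\lambda \mk{b}'}(\mk{g}')$ via the index $h_\lambda$ produces the claim. Hypothesis $(\dagger)$ is precisely what is needed to guarantee that each right-hand zeta is defined and that $\Gamma_{\mu_\lambda \mk{b}'}(\mk{g}') \supseteq \Gamma_{\mk{b}'}(\mk{g}')$ holds.

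Second, I would upgrade this to a measure-theoretic identity by intersecting the Shintani domain with the preimage of a ball $U \subseteq \ca{O}_{K_p}^{\times}$. The coprimality $([\mk{b}^{-1}\mk{g}:\mk{b}'^{-1}\mk{g}'], pN)=1$ lets me choose each $\lambda$ so that $\mu_\lambda = \lambda + 1$ is a $p$-adic unit (avoiding $\lambda \equiv -1 \pmod{p}$), in which case $\mu_\lambda^{-1} U$ is again a ball of $\ca{O}_{K_p}^{\times}$ of the same radius. Running the same manipulation inside the ball gives a local identity of the form
$$e \cdot \nu(\mk{b}, \mathcal{D}_{\mk{b}})(U) \;=\; \sum_\lambda h_\lambda \,\nu(\mu_\lambda \mk{b}', \mu_\lambda^{-1}\mathcal{D}_{\mk{b}'})(\mu_\lambda^{-1} U)$$
after a compatible choice of Shintani fundamental domains. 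Passing to the Riemann products that define the multiplicative integrals and using the vanishing of the total $\nu$-mass on $\ca{O}_{K_p}^{\times}$ (a consequence of $p\star\wt{\delta}=\wt{\delta}$ together with \eqref{two}), the $\mu_\lambda$-prefactors produced by the change of variable $\mu = \mu_\lambda \mu'$ cancel, and the multiplicative parts of the two sides of \eqref{zoller} agree. Combined with the valuation identity from the previous paragraph, this proves \eqref{zoller}.

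The compatibility with the $C_K$-action is then nearly automatic: for $c \in C_K$, the action $\star$ amounts to multiplying each of $\mk{b}$, $\mk{b}'$, and every $\mu_\lambda \mk{b}'$ by a common representative of $\pi_{gn}(c)$, and this plainly respects the decomposition $\mk{b}^{-1}\mk{g} = \bigsqcup_\lambda (\lambda + \mk{b}'^{-1}\mk{g}')$. The hardest step will be the careful coordination of the three nested groups $\Gamma_{\mk{b}'}(\mk{g}') \subseteq \Gamma_{\mu_\lambda\mk{b}'}(\mk{g}') \subseteq \Gamma_{\mk{b}}(\mk{g})$ and of their Shintani fundamental domains so that the measure identity holds on each ball $U$; condition $(\dagger)$ is the minimal hypothesis that makes every one of these local rearrangements work.
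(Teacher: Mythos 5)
Your plan is essentially the paper's proof: the same coset decomposition $\mk{b}^{-1}\mk{g}+1=\bigsqcup_\lambda(\mu_\lambda+\mk{b}'^{-1}\mk{g}')$, the same split into a zeta-value identity at $s=0$ (for valuations) and a measure-theoretic identity on $\ca{O}_{K_p}^\times$ (for the multiplicative part), the same use of $(\dagger)$ to keep the right-hand invariants well defined and to produce the $h_\lambda$ regrouping, the same appeal to $([\mk{b}^{-1}\mk{g}:\mk{b}'^{-1}\mk{g}'],pN)=1$ to choose $\mu_\lambda$ coprime to $pN$ and cancel the prefactors using total measure zero, and the same coset-theoretic argument for $C_K$-compatibility. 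The only thing you left implicit is the initial reduction of $\wt{\delta}$ to a single consecutive-divisor term via Lemma \ref{tired}, which is routine.
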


We note that by assumption $\mk{b}$ is $\mk{g}$-int so that the left hand side of \eqref{zoller}
makes sense.  Moreover, using \eqref{dag} we find that
$\mu_{\lambda}\mk{b}'$ is $\mk{g}'$-int so that the $p$-adic invariant appearing in the product of 
the right hand side of \eqref{zoller} makes sense. We would like to emphasize here that the left hand side of 
\eqref{zoller} is a $p$-adic invariant relative to the order $\ca{O}$ and that the right hand side of 
\eqref{zoller} is a product of $p$-adic invariants relative to the order $\ca{O}'$. So when
$\ca{O}\neq\ca{O}'$, \eqref{zoller} gives a non-trivial relationship between $p$-adic invariants 
associated to different orders.
\begin{Rem}
We note that since by assumption $\Gamma_{\mk{b}}(\mk{g})\supseteq\Gamma_{\mk{b}'}(\mk{g}')$
it is expected (see Remark \ref{axa}) that the abelian extension generated by the 
$u_D(\mu_{\lambda}\mk{b}',\mk{g}')$'s over $K$ contains the abelian extension
generated by $u_D(\mk{b},\mk{g})$ over $K$. 
\end{Rem}
\begin{proof}
Using Lemma \ref{tired} it is enough to show the proposition for 
\begin{align*}
\wt{\delta}=\sum_{r\in\laa p\pmod{f}\raa}(d'[\mk{d}',r]-d[\mk{d},r]),
\end{align*}
where $\mk{d}'$ is consecutive to $\mk{d}$,
$\eta=\frac{\mk{d}'}{\mk{d}}$, $d=\Norm(\mk{d})$ and $d'=\Norm(\mk{d}')$. 
First note that since $\mk{b}$ (resp. $\mk{b}'$) is an $\ca{O}$-integral ideal (resp. $\ca{O}'$-integral) 
we have
\begin{align}\label{dieu}
\Gamma_{\mk{b}}(\mk{g})=(\mk{b}^{-1}\mk{g}+1)\cap\ca{O}(\infty)^{\times}
\s\s\s\mbox{and}\s\s\s\Gamma_{\mk{b}'}(\mk{g}')=(\mk{b}'^{-1}g'+1)\cap\ca{O}'(\infty)^{\times}.
\end{align} 
Let $\wt{\mathcal{D}}=\mathcal{D}_{\ca{O}}^{can}$ be the canonical fundamental domain for the action of 
$\ca{O}(\infty)^{\times}$ on the positive quadrant $Q$. Using the assumption that $(\cond(\ca{O}),N)=1$,
one may check that if $\eta|\mk{N}$ then $\eta$ is $\ca{O}$-good for $\wt{\mathcal{D}}$. Let
\begin{align*}
\mathcal{D}=\bigcup_{\epsilon}\epsilon\wt{\mathcal{D}},
\end{align*} 
where the set $\{\epsilon\}$ is a complete set of representatives of 
$\ca{O}(\infty)^{\times}/\Gamma_{\mk{b}}(\mk{f})$. Note that $\mathcal{D}$ is a
fundamental domain for the action of $\Gamma_{\mk{b}}(\mk{f})$ on the positive quadrant
$Q$. One may also check that for every prime 
$\eta|\mk{N}$, $\eta$ is $\ca{O}$-good for $\mathcal{D}$. Finally, set
\begin{align}\label{bald}
\mathcal{D}'=\bigcup_{\epsilon'}\epsilon'\mathcal{D},
\end{align} 
where $\{\epsilon'\}$ is a complete set of representatives of 
$\Gamma_{\mk{b}}(\mk{g})/\Gamma_{\mk{b}'}(\mk{g}')$.
In a similar way, $\mathcal{D}'$ is a fundamental domain for the
action of $\Gamma_{\mk{b}'}(\mk{g}')$ on $Q$ and for every prime $\eta|\mk{N}$,
$\eta$ is $\ca{O}'$-good for $\mathcal{D}'$. By assumption we have
\begin{align*}
\mk{b}^{-1}\mk{g}+1=\bigcup_{\lambda}(\mk{b}'^{-1}\mk{g}'+\lambda+1),
\end{align*}
where $\{\lambda\}$ is a complete set of totally positive representatives of 
$\mk{b}^{-1}\mk{g}/\mk{b}'^{-1}\mk{g}'$.

For every $\epsilon'$ we have
\begin{align}\label{tripatif}\notag
\epsilon'\mathcal{D}\cap (\mk{b}^{-1}\mk{g}+1)&=
\bigcup_{\lambda}\epsilon'\mathcal{D}\cap(\mk{b}'^{-1}\mk{g}'+\lambda+1)\\
&=\bigcup_{\lambda}\mu_{\lambda}
\left(\mu_{\lambda}^{-1}\epsilon'\mathcal{D}\cap((\mu_{\lambda}\mk{b}')^{-1}\mk{g}'+1)\right),
\end{align}
where $\mu_{\lambda}=1+\lambda$ (note that $\mu_{\lambda}\neq 0$ since $\lambda\gg 0$).
Let $U\subseteq\ca{O}_{K_p}^{\times}$ be an arbitrary compact-open set. Using \eqref{tripatif}, 
\eqref{bald} and the homogeneous property of the measure $\nu_{\wt{\delta}}$, we may deduce that
\begin{align}\label{sabre1}
\sum_{\epsilon'}\nu_{\wt{\delta}}(\mk{b},\mk{g},1,\epsilon'\mathcal{D})(U)
&=\sum_{\lambda}\nu_{\wt{\delta}}(\mu_{\lambda}\mk{b}',\mk{g}',1,
\mu_{\lambda}^{-1}\mathcal{D}')(\mu_{\lambda}^{-1}U)\\ \label{sabre11}
&=\sum_{\lambda}\nu_{\wt{\delta}}h_{\lambda}(\mu_{\lambda}\mk{b}',\mk{g}',1,
\mu_{\lambda}^{-1}\mathcal{D}_{\lambda}')(\mu_{\lambda}^{-1}U),
\end{align}
where $\mathcal{D}_{\lambda}'$ is a fundamental domain for the action of 
$\Gamma_{\mu_{\lambda}\mk{b}'}(\mk{g}')$ on $Q$ chosen so that
\begin{align*}
\mathcal{D}'=\bigcup_{\{\epsilon_i(\lambda)\}_i}\epsilon_i(\lambda)\mathcal{D}_{\lambda}',
\end{align*}  
where $\{\epsilon_i(\lambda)\}_i$ is a complete set of representatives of 
$\Gamma_{\mu_{\lambda}\mk{b}'}(\mk{g}')/\Gamma_{\mk{b}'}(\mk{g}')$. Moreover, we note that
$\eta$ is $\ca{O}$-good for the domain $\mathcal{D}_{\lambda}'$. Here 
the $h_{\lambda}$'s are positive integers such that
\begin{align}\label{cool}
h_{\lambda}=[\Gamma_{\mu_{\lambda}\mk{b}'}(\mk{g}'):\Gamma_{\mk{b}'}(\mk{g}')].
\end{align} 
If we take $U=\ca{O}_{K_p}$ in \eqref{sabre1} and we unfold the definition of $\nu_{\wt{\delta}}$ then we find
\begin{align}\label{tricot}
e\zeta_{\wt{\delta}}(\mk{b},\mk{g}\infty,0)=\sum_{\lambda}h_{\lambda}\zeta_{\wt{\delta}}
(\mu_{\lambda}\mk{b}',\mk{g}'\infty,0),
\end{align}
where $e=[\Gamma_{\mk{b}}(\mk{g}):\Gamma_{\mk{b}'}(\mk{g}')]$.
Using \eqref{tricot} we find that
\begin{align*}
ev_p(u_D(\mk{b},\mk{g}))=\sum_{\lambda}h_{\lambda}v_p(u_D(\mu_{\lambda}\mk{b}',\mk{g}')).
\end{align*}
We have an isomorphism $K_p^{\times}\simeq p^{\ZZ}\times\ca{O}_{K_p}^{\times}$. Let us denote the projection
on the $i$-th coordinate by $\pi_i$. From \eqref{tricot} we get that 
$\pi_1(u_D(\mk{b},\mk{g})^e)=\pi_1(\prod_{\lambda}u_D(\mu_{\lambda}\mk{b}',\mk{g}')^{h_{\lambda}})$. 
Thus in order to show \eqref{zoller} it remains to show that $\pi_2(u_D(\mk{b},\mk{g})^e)$
coincides with $\pi_2(\prod_{\lambda}u_D(\mu_{\lambda}\mk{b}',\mk{g}')^{h_{\lambda}})$. 
Using the fact that $p\star\wt{\delta}=\wt{\delta}$ one finds that
\begin{align*}
\nu_{\wt{\delta}}(\mk{b},\mk{g},1,\mathcal{D})(\ca{O}_{K_p}^{\times})
=\zeta_{\{p\}}(\mk{b},\mk{g}\infty,0)=0,
\end{align*}
and
\begin{align}\label{pilier2}
\nu_{\wt{\delta}}(\mu_{\lambda}\mk{b}',\mk{g}',1,\mu_{\lambda}^{-1}\mathcal{D}')(\ca{O}_{K_p}^{\times})
=\zeta_{\{p\}}(\mu_{\lambda}\mk{b}',\mk{g}'\infty,0)=0.
\end{align}
Now using the assumption that $([\mk{b}^{-1}\mk{g}:\mk{b}'^{-1}\mk{g}'],pN)=1$ 
we may assume that the $\lambda$'s are chosen in such a way that the $\mu_{\lambda}$'s 
are coprime to $pN$. So in particular, one has that
$\mu_{\lambda}\ca{O}_{K_p}^{\times}=\ca{O}_{K_p}^{\times}$. Now make the crucial observation
that $\eta$ is again $\ca{O}$-good for the cone $\mu_{\lambda}^{-1}\mathcal{D}'$ (it uses $(\mu_{\lambda},N)=1$). 
We have
\begin{align}\label{tansut}\notag
\mint{\ca{O}_{K_p}^{\times}}{}x\cdot d\nu_{\wt{\delta}}(\mu_{\lambda}\mk{b}',\mk{g},1,
\mu_{\lambda}^{-1}\mathcal{D}')(\mu_{\lambda}^{-1} x)&=
\mint{\ca{O}_{K_p}^{\times}}{}x\cdot d\nu_{\wt{\delta}}(\mu_{\lambda}\mk{b}',\mk{g}',1,
\mu_{\lambda}^{-1}\mathcal{D}')(x)\\
&=\mint{\ca{O}_{K_p}^{\times}}{}x\cdot d\nu_{\wt{\delta}}(\mu_{\lambda}\mk{b}',\mk{g}',1,
\mathcal{D}')(x),
\end{align}
where the first equality follows from \eqref{pilier2} and the second follows from 
the independence on the choice of the fundamental domain. From \eqref{tansut} and \eqref{sabre11} 
we deduce that
\begin{align}\label{tansut2}
\prod_{\lambda}\mint{\ca{O}_{K_p}^{\times}}{}x\cdot d\nu_{\wt{\delta}}(\mu_{\lambda}\mk{b}',\mk{g},1,
\mu_{\lambda}^{-1}\mathcal{D}')(\mu_{\lambda}^{-1} x)=\prod_{\lambda}
\pi_2(u_D(\mu_{\lambda}\mk{b}',\mk{g}')^{h_{\lambda}}).
\end{align} 
On the other hand, we also have
\begin{align}\label{tansut3} \notag
\prod_{\lambda}\mint{\ca{O}_{K_p}^{\times}}{}x\cdot d\nu_{\wt{\delta}}(\mu_{\lambda}\mk{b}',\mk{g}',1,
\mu_{\lambda}^{-1}\mathcal{D}')(\mu_{\lambda}^{-1}x)&=
\prod_{\lambda,\epsilon'}\mint{\ca{O}_{K_p}^{\times}}{}x\cdot d\nu_{\wt{\delta}}
(\mu_{\lambda}\mk{b}',\mk{g}',1,
\mu_{\lambda}^{-1}\epsilon'\mathcal{D})(\mu_{\lambda}^{-1}x)\\ \notag
&=\prod_{\epsilon'}\mint{\ca{O}_{K_p}^{\times}}{}x\cdot d\nu_{\wt{\delta}}
(\mk{b},\mk{g},1,\epsilon'\mathcal{D})(x)\\
&=\pi_2(u_D(\mk{b},\mk{g})^e).
\end{align}
The first equality follows from \eqref{bald}. The second equality follows from \eqref{sabre1}. 
The third equality follows from the independence on the choice of the fundamental domain. 
Finally, combining \eqref{tansut2} with \eqref{tansut3} we find that
\begin{align*}
\pi_2(u_D(\mk{b},\mk{g})^e)=\pi_2\left(\prod_{\lambda}u_D(\mu_{\lambda}\mk{b}',\mk{g}')^{h_{\lambda}}\right).
\end{align*} 
This concludes the proof of \eqref{zoller}.

Finally, it remains to show that \eqref{zoller} is compatible to 
the ad\'elic action of $C_K$. To fix the idea, assume that $\ca{O}'\subseteq\ca{O}$
(a similar argument works when this inclusion is reversed). Let $n=\cond(\ca{O})$
and let $g=[\ca{O}':\mk{g}']$. Let 
\begin{align*}
\pi:C_K\rightarrow C_{\ca{O}}(ng),
\end{align*}
be the natural map given by class field theory. We have natural surjective maps
\begin{align*}
p:C_{\ca{O}}(gn)\rightarrow C_{\ca{O}}(\mk{g})\s\s\s\mbox{and}\s\s\s
p':C_{\ca{O}}(gn)\rightarrow C_{\ca{O}'}(\mk{g}').
\end{align*} 
Let $c\in C_K$, $p(c)=[\mk{c}]$ and 
$p'(c)=[\mk{c}']$. Without loss
of generality we may assume that $\mk{c}$ and $\mk{c}'$ are chosen so that
$(\mk{c},\mk{b})=\ca{O}$,
$(\mk{c}',\mk{b}')=\ca{O}'$, $\mk{c}\cap\ca{O}'=\mk{c}'$,
$(\mk{c},\mk{g})=\ca{O}$ and $(\mk{c}',\mk{g}')=\ca{O}'$.
Let $\{\lambda\}$ be a complete set of representatives
of $\mk{b}^{-1}\mk{g}/{\mk{b}'}^{-1}\mk{g}'$ such that $\lambda\in\mk{c}^{-1}$ so that 
\begin{align}\label{tramp2}
\mk{b}^{-1}\mk{g}=\bigcup_{\lambda}({\mk{b}'}^{-1}\mk{g}'+\lambda).
\end{align}
Since every $\lambda\in\mk{b}^{-1}\mk{g}$, we find, after intersecting both sides of 
the equality \eqref{tramp2} with $\mk{c}^{-1}$, that
\begin{align}\label{tramp3}\notag
(\mk{b}\mk{c})^{-1}\mk{g}=\mk{c}^{-1}\cap(\mk{b}^{-1}\mk{g})=\bigcup_{\lambda}
\mk{c}^{-1}\cap(\mk{b}'^{-1}\mk{g}+\lambda)&=
\bigcup_{\lambda}(\mk{c}^{-1}\cap(\mk{b}'^{-1}\mk{g}')+\lambda)\\
&=\bigcup_{\lambda}((\mk{c}'\mk{b}')^{-1}\mk{g}'+\lambda).
\end{align}
The first equality follows from $\mk{c}\cap\mk{b}=\mk{c}\mk{b}$ and the second equality
follows from \eqref{tramp2}. The third equality
follows from the assumption that $\lambda\in\mk{c}^{-1}$ and the last equality
follows from the assumption that $\mk{c}\cap\mk{\ca{O}'}=\mk{c}'$ and $(\mk{c}',\mk{b}')=\ca{O}'$. 
Now a direct computation shows that
\begin{align*}
c\star u_D(\mk{b},\mk{g})=u_D(\mk{c}\mk{b},\mk{g})
=\prod_{\lambda}u_D(\mu_{\lambda}\mk{c}'\mk{b}',\mk{g}')=
\prod_{\lambda}c\star u_D(\mu_{\lambda}\mk{b}',\mk{g}').
\end{align*}
The first and the third equalities follow from the definition of $\star$ 
(see Definition \ref{ad_action}).
The second equality follows from \eqref{tramp3} and the definition of $u_D$.
This shows that the Shimura reciprocity law is compatible with the product \eqref{zoller}.
This concludes the proof. \fin
\end{proof}

Armed with Proposition \ref{pointe} we may now find a non-trivial relationship between the $p$-adic 
invariants $u_D$ and $u_D^*$ where $u_D^*$ is the \lq\lq $p$-adic invariant $u_D$ twisted by the 
the involution $*$\rq\rq\s where $*$ is the involution given by $\tau\mapsto\frac{-1}{fN\tau}$.
\begin{Prop}\label{prec_prop}
Let $\tau\in H^{\ca{O}}(\mk{N})$, $\mk{f}=f\ca{O}_{\tau}$ and $\mk{f}^*=f\ca{O}_{\tau^*}$ 
where $\tau^*=\frac{-1}{fN\tau}$. Set 
\begin{align}\label{solfege}
(\mk{b}^*)^{-1}=\Lambda_{\tau^*}\s\s\s\mbox{and}\s\s\s
\mk{b}^{-1}=\Lambda_{f\tau^*}.
\end{align}
Then
\begin{align}\label{voile1}
u_D(\mk{b}^*,\mk{f}^*)^e=\prod_{\lambda}u_D(\mu_{\lambda}\mk{b},\mk{f})^{h_{\lambda}},
\end{align}
where $\{\lambda\}$ is a complete set of totally positive representatives of $(\mk{b}^*)^{-1}/\mk{b}^{-1}$, 
$\mu_{\lambda}=\lambda f+1$ and the $h_{\lambda}$'s and $e$ are as in Proposition \ref{pointe}. Similarly,
if one sets 
\begin{align}\label{solfege2}
\mk{b}^{-1}=\Lambda_{\tau}\s\s\s\mbox{and}\s\s\s
(\mk{b}^*)^{-1}=\Lambda_{f\tau}.
\end{align}
one has that
\begin{align}\label{voile2}
u_D(\mk{b},\mk{f})^{e'}=\prod_{\lambda'}u_D(\mu_{\lambda'}\mk{b}^*,\mk{f}^*)^{h_{\lambda'}},
\end{align}
where $\{\lambda'\}$ is a complete set of totally positive representatives of $\mk{b}^{-1}/(\mk{b}^*)^{-1}$, 
$\mu_{\lambda'}=\lambda' f+1$ and the $h_{\lambda'}$'s and $e'$ are as in Proposition \ref{pointe}.
\end{Prop}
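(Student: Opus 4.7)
The plan is to deduce both identities by direct application of Proposition \ref{pointe}, after identifying the ideals in \eqref{solfege} and \eqref{solfege2} as invertible over the correct orders. The key starting observation is that $f\tau^* = \frac{-1}{N\tau}$, so by \eqref{rouge} one has $\ca{O}_{f\tau^*} = \ca{O}_\tau$; symmetrically $\ca{O}_{f\tau} = \ca{O}_{\tau^*}$. Consequently $\mk{b} = \Lambda_{f\tau^*}^{-1}$ in \eqref{solfege} is an invertible $\ca{O}_\tau$-ideal, and because $\ca{O}_\tau \subseteq \Lambda_{f\tau^*}$ (indeed $1 \in \Lambda_{f\tau^*}$) it is integral and therefore automatically $\mk{f}$-int. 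The analogous statement holds for $\mk{b}^*$ in \eqref{solfege2}, so the $p$-adic invariants on both sides of \eqref{voile1} and \eqref{voile2} are well defined.

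To prove \eqref{voile1}, I would apply Proposition \ref{pointe} with the data $(\mk{b}, \mk{g}, \ca{O}) = (\mk{b}^*, \mk{f}^*, \ca{O}_{\tau^*})$ and $(\mk{b}', \mk{g}', \ca{O}') = (\mk{b}, \mk{f}, \ca{O}_\tau)$. The relevant lattice inclusion reads
\[
(\mk{b}^*)^{-1}\mk{f}^* = f\Lambda_{\tau^*} \;\supseteq\; f\Lambda_{f\tau^*} = \mk{b}^{-1}\mk{f},
\]
with cyclic quotient of order $f$, coprime to $pN$ as required. A set $\{\lambda\}$ of totally positive representatives of $\Lambda_{\tau^*}/\Lambda_{f\tau^*}$ rescales to a set $\{f\lambda\}$ of totally positive representatives of $f\Lambda_{\tau^*}/f\Lambda_{f\tau^*}$, so the multiplier $\lambda+1$ of Proposition \ref{pointe} becomes $f\lambda+1 = \mu_\lambda$, exactly matching \eqref{voile1}; the exponents $e$ and $h_\lambda$ transfer unchanged from Proposition \ref{pointe}.

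The substantive work lies in verifying the remaining hypotheses of Proposition \ref{pointe}: the group inclusion $\Gamma_{\mk{b}^*}(\mk{f}^*) \supseteq \Gamma_\mk{b}(\mk{f})$ and condition $(\dagger)$. Using the explicit Atkin--Lehner formula for $Q_{\tau^*}$ in terms of $Q_\tau = Ax^2+Bxy+Cy^2$ and $d = (A,f)$, I would compute $A_{\tau^*}$ directly and show that any totally positive unit $u = 1+fv$ with $v \in \Lambda_{f\tau^*}$ and $u \in \ca{O}_\tau^\times$ automatically lies in $\ca{O}_{\tau^*}^\times$, using the coprimality $(f, Nd_K) = 1$; the congruence $u \equiv 1 \pmod{f\Lambda_{\tau^*}}$ is weaker than the defining $u \equiv 1 \pmod{f\Lambda_{f\tau^*}}$, so that part is immediate. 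Condition $(\dagger)$ then follows because for any $x \in f\Lambda_{\tau^*}$ with $x \neq -1$, the element $x+1$ is a local unit at every prime in the support of $\mk{f}$, so $(x+1)\mk{b}$ remains $\mk{f}$-int with the same $\Gamma$-group. This order-theoretic verification, comparing $\ca{O}_\tau$ and $\ca{O}_{\tau^*}$ whose conductors differ by the factor $f/(A,f)$, is the main obstacle.

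Finally, \eqref{voile2} is obtained by an entirely symmetric application of Proposition \ref{pointe}, now with $(\mk{b}, \mk{g}, \ca{O}) = (\mk{b}, \mk{f}, \ca{O}_\tau)$ and $(\mk{b}', \mk{g}', \ca{O}') = (\mk{b}^*, \mk{f}^*, \ca{O}_{\tau^*})$, exploiting the inclusion $\Lambda_\tau \supseteq \Lambda_{f\tau}$ of index $f$ together with the identity $\ca{O}_{f\tau} = \ca{O}_{\tau^*}$ established at the outset.
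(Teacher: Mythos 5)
Your proposal follows essentially the same route as the paper: both identify $\mathcal{O}_{f\tau^*}=\mathcal{O}_\tau$ and $\mathcal{O}_{f\tau}=\mathcal{O}_{\tau^*}$ via the Atkin--Lehner symmetry \eqref{rouge}, check that $\mathfrak{b}^*$ and $\mathfrak{b}$ give the required lattice inclusion $(\mathfrak{b}^*)^{-1}\mathfrak{f}^*\supseteq\mathfrak{b}^{-1}\mathfrak{f}$ with cyclic quotient of order $f$ coprime to $pN$, verify condition \eqref{dag}, and then apply Proposition \ref{pointe}, with \eqref{voile2} obtained by swapping the roles of $\tau$ and $\tau^*$. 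The level of detail you offer on the hypotheses (integrality of $\mathfrak{b}$, the index bookkeeping, the $\Gamma$-group inclusion and \eqref{dag}) is in line with the paper's own, which is similarly terse on those points.
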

\begin{proof}
First note that $\cond(\ca{O}^*)=f\cdot \cond(\ca{O})$, 
$\tau^*=\frac{-1}{fN\tau}\in H^{\ca{O}^*}(\mk{N})$, 
$\frac{1}{N\tau}\in H^{\ca{O}}(\mk{N})$,
$\mk{b}$ is $\mk{f}$-int and $\mk{b}^*$ is $\mk{f}^*$-integral. 
We have $(\mk{b}^*)^{-1}\supseteq\mk{b}^{-1}$ and that 
$(\mk{b}^*)^{-1}\mk{f}^*/\mk{b}^{-1}\mk{f}\simeq(\mk{b}^*)^{-1}/\mk{b}^{-1}\simeq\ZZ/f\ZZ$. 
Now make the following crucial observation:
\begin{enumerate}
\item The condition \eqref{dag} is satisfied: namely, for all $x\in(\mk{b}^{*})^{-1}\mk{f}^*$ ($x\neq -1$) one has
that $(1+x)\mk{b}$ is $\mk{f}$-int and that $\Gamma_{(1+x)\mk{b}}(\mk{f})\supseteq\Gamma_{\mk{b}}(\mk{f})$.
\end{enumerate}
Now applying Proposition \ref{pointe} we find that
\begin{align}\label{flicken}
u_D(\mk{b}^*,\mk{f}^*)^{e}=\prod_{\lambda}u_D(\mu_{\lambda}\mk{b},\mk{f})^{h_{\lambda}},
\end{align}
where $\mu_{\lambda}=1+f\lambda$ where 
$\{\lambda\}$ is a complete set of totally positive representatives of $(\mk{b}^*)^{-1}/\mk{b}^{-1}$,
$e$ and $h_{\lambda}$'s are chosen as in Proposition \ref{pointe}. This
proves \eqref{voile1}. The proof of \eqref{voile2} follows from \eqref{voile1} by 
replacing $\tau$ by $\tau^*$. \fin
\end{proof}

We may now give a precise version of Theorem \ref{vag_th} that was stated at the end of the introduction.
\begin{Th}\label{prec_th}
Let $(r,\tau)\in\ZZ/f\ZZ\times H^{\ca{O}}(\mk{N})$ and set 
\begin{align*}
(\mk{b}^*)^{-1}=\frac{1}{\wt{r}}\Lambda_{(\tau^*)^{\sigma}}
\s\s\s\mbox{and}\s\s\s\mk{b}^{-1}=\frac{1}{\wt{r}}\Lambda_{f(\tau^*)^{\sigma}},
\end{align*}
where $\tau^*=\frac{-1}{fN\tau}$ and $\sigma$ is the non-trivial automorphism of $\Gal(K/\QQ)$. 
Let $\ca{O}^*=\ca{O}_{\tau^*}$, $\mk{f}=f\ca{O}$ and $\mk{f}^*=f\ca{O}^*$. Then we have
\begin{align}\label{klos}
u_C(r,\tau)^{\nu(r,\tau)e}=u_D(\mk{b}^*,\mk{f}^*)^{12e}=\prod_{\lambda}u_D(\mu_{\lambda}\mk{b},\mk{f})^{12h_{\lambda}},
\end{align}
where $\mu_{\lambda}=1+f\lambda$, $\{\lambda\}$ being a complete set of totally positive 
representatives of $(\mk{b}^*)^{-1}/\mk{b}^{-1}$ and $\nu(r,\tau)$ is the integer which appears
in Proposition \ref{klasse} and the $h_{\lambda}$'s and $e$ are as in Proposition \ref{pointe}. 
Moreover, we have that
\begin{align}\label{tickle}
u_D(\mk{b},\mk{f})^{12e'}=\prod_{\lambda'}u_D(\mu_{\lambda'}\mk{b}^*,\mk{f}^*)^{12e'}
&=\prod_{\lambda'}c_{\lambda'}\star\left(u_D(\mk{b}^*,\mk{f}^*)^{12 h_{\lambda'}}\right)\\ \label{tickle2}
&=\prod_{\lambda'}c_{\lambda'}\star(u_C(r,\tau^*)^{\nu(r,\tau^*) h_{\lambda'}}),
\end{align}
where $\mu_{\lambda'}=1+f\lambda'$ and $\{\lambda'\}$ is a complete set of representatives
of $(f\mk{b}^{-1})/(\mk{b}^*)^{-1}$.Here
$c_{\lambda'}\in C_K$ is chosen so that $\pi(c_{\lambda'})\sim_{\mk{f}^*}\lambda'\ca{O}^*$,
where $\pi:C_K\rightarrow I_{\ca{O}^*}/P_{\ca{O}^*,1}(\mk{f}^*)$ is the natural projection map.
\end{Th}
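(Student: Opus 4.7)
The proof plan is to synthesize three earlier results: Proposition \ref{klasse} (the basic $u_C$--$u_D^{12}$ identity), Proposition \ref{prec_prop} (which rests on Proposition \ref{pointe}), and the Shimura-reciprocity compatibility encoded in Definition \ref{ad_action}.

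For the first chain \eqref{klos}, I first observe that by \eqref{blum}(i) the ideal $\mk{b}^*$ of the theorem equals $\wt{r}\,I_{\tau^*}$, so Proposition \ref{klasse} immediately gives $u_C(r,\tau)^{\nu(r,\tau)}=u_D(\mk{b}^*,\mk{f}^*)^{12}$; raising this to the $e$-th power yields the first equality. The second equality is then the $12$th power of \eqref{voile1} of Proposition \ref{prec_prop} applied to the same pair $(\mk{b}^*,\mk{b})$, once one checks that the hypotheses of Proposition \ref{pointe} are met in this setup: namely that $[\mk{b}^{-1}\mk{f}:(\mk{b}^*)^{-1}\mk{f}^*]=f$ is coprime to $pN$, that condition \eqref{dag} holds, and that $\Gamma_{\mk{b}^*}(\mk{f}^*)\supseteq\Gamma_{\mk{b}}(\mk{f})$.

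For the chain \eqref{tickle}, the first equality is the $12$th power of \eqref{voile2} of Proposition \ref{prec_prop}. To rewrite each factor as $c_{\lambda'}\star u_D(\mk{b}^*,\mk{f}^*)^{12h_{\lambda'}}$, I invoke the Shimura-reciprocity compatibility built into Proposition \ref{pointe}: the two $\ca{O}^*$-ideals $\mu_{\lambda'}\mk{b}^*$ and $\mk{b}^*$ differ by the principal factor $\mu_{\lambda'}\ca{O}^*$, and because $\mu_{\lambda'}=1+f\lambda'$ with $f\lambda'\in\mk{f}^*$ one has $\mu_{\lambda'}\ca{O}^*\sim_{\mk{f}^*}\lambda'\ca{O}^*$, so the id\`ele $c_{\lambda'}$ of the theorem realizes this class via class field theory; Definition \ref{ad_action} then gives $u_D(\mu_{\lambda'}\mk{b}^*,\mk{f}^*)=c_{\lambda'}\star u_D(\mk{b}^*,\mk{f}^*)$. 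Finally, \eqref{tickle2} is obtained by applying Proposition \ref{klasse} inside each factor to rewrite $u_D(\mk{b}^*,\mk{f}^*)^{12}$ in terms of the corresponding $u_C$ value.

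The hard part will be verifying the hypotheses of Proposition \ref{pointe} for the Atkin-Lehner pair $(\mk{b},\mk{b}^*)$. In particular, checking condition \eqref{dag} requires a careful analysis of how the involution $\tau\mapsto -1/(fN\tau)$ acts on the order --- recalling that $\cond(\ca{O}^*)$ can differ from $\cond(\ca{O})$ by a factor of $f/(A_\tau,f)$ --- and of the relationship between the unit groups $\Gamma_{\mk{b}}(\mk{f})$ and $\Gamma_{\mk{b}^*}(\mk{f}^*)$, since $\ca{O}^*\subsetneq\ca{O}$. This same bookkeeping pins down the precise values of the integers $e,h_\lambda,e',h_{\lambda'}$ that appear in the statement and, through the compatibility assertion in Proposition \ref{pointe}, propagates through the adelic action to yield the Shimura-equivariant form of the identities.
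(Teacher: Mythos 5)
Your proposal is correct and follows essentially the same route as the paper's own proof: both derive the first equality of \eqref{klos} from Proposition \ref{klasse}, the second from the product formula \eqref{voile1} of Proposition \ref{prec_prop} (itself a specialization of Proposition \ref{pointe}), the first equality of \eqref{tickle} from \eqref{voile2}, the second from the Shimura-reciprocity compatibility, and \eqref{tickle2} from Proposition \ref{klasse} combined with the compatibility of the two reciprocity laws. One small remark: you cite \eqref{voile2} for the first equality of \eqref{tickle}, whereas the paper's proof cites \eqref{voile1}; your citation is the correct one, since \eqref{tickle} expands $u_D(\mk{b},\mk{f})$ as a product over $\mu_{\lambda'}\mk{b}^*$, which is the content of \eqref{voile2}, so the paper's \eqref{voile1} there is a typo. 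Also, your closing paragraph anticipates verifying the hypotheses of Proposition \ref{pointe}; note that this verification is exactly what Proposition \ref{prec_prop} already establishes, so you are free to cite it directly rather than re-derive it.
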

We would like to emphasize here that the $p$-adic invariants $u_C(r,\tau)$ and 
$u_D(\mu_{\lambda}\mk{b},\mk{f})$'s which appear in \eqref{klos} are relative to the order $\ca{O}$. Similarly,
the $p$-adic invariants
$u_D(\mu_{\lambda}\mk{b},\mk{f})$ and $u_C(r,\tau^*)$'s which appear in \eqref{tickle2} are relative
to the order $\ca{O}^*$.
\begin{proof}
The first equality in \eqref{klos} follows from Proposition \ref{klasse}. The second equality
follows from \eqref{klos} follows from \eqref{voile1}. The first equality in \eqref{tickle} follows from 
\eqref{voile1} and the second
equality follows from the Shimura reciprocity law applied to $u(\mk{b}^*,\mk{f}^*)$. Finally \eqref{tickle2}
follows from Proposition \ref{klasse} and the compatibility of the Shimura reciprocity laws of 
$u_C$ and $u_D$. This concludes the proof.\fin
\end{proof}

We have the following corollary which may be viewed as a precise version of Theorem \ref{vag_th}.
\begin{Cor}\label{fatigue}
Let $\rho\in H^{\ca{O}_K}(\mk{N},f)$. If one sets $\tau=\rho$ then one has that
\begin{align}\label{klar}
u_C(r,\tau)^{\nu(r,\tau)}=\prod_{\lambda}u_D(\mu_{\lambda}\mk{b},\mk{f})^{12h_{\lambda}}.
\end{align}
where
the quantities $\mk{b},\mk{f},\mu_{\lambda}$ and $h_{\lambda}$ are chosen as in \eqref{klos} 
of Theorem \ref{prec_th}.
In a similar way, if one sets $\tau=\rho^*:=\frac{-1}{fN\rho}$, one has that
\begin{align}\label{klar2}
u_D(\mk{b},\mk{f})^{12}=
\prod_{\lambda'}c_{\lambda'}\star u_C(r,\tau^*),
\end{align}
where the quantities $\mk{b},\mk{f}$, $e'$ and $c_{\lambda'}$ 
are chosen as in \eqref{tickle2} of Theorem \ref{prec_th}.
\end{Cor}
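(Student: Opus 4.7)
My plan is to derive Corollary \ref{fatigue} directly from Theorem \ref{prec_th} by specializing $\tau$ and showing that, under the hypothesis $\rho\in H^{\ca{O}_K}(\mk{N},f)$, the auxiliary indices appearing in the exponents of Theorem \ref{prec_th} collapse to one.

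First I would handle \eqref{klar}. Take $\tau=\rho$ in Theorem \ref{prec_th}, so that $\ca{O}_\tau=\ca{O}_K$ and $(A_\rho,f)=1$. Comparing \eqref{klos} with \eqref{klar}, the only discrepancy is the extra factor $e$ in the exponent of $u_C(r,\tau)$ on the left-hand side; hence it suffices to prove $e=[\Gamma_{\mk{b}^*}(\mk{f}^*):\Gamma_{\mk{b}}(\mk{f})]=1$. By Remark \ref{fromage}, the group $\Gamma_{\mk{b}}(\mk{f})=\ca{O}_K(\mk{f}\infty)^{\times}$ is generated by $\epsilon(\eta_\rho)$. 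A direct matrix computation shows that conjugation by the Atkin--Lehner matrix $W=\M{0}{-1}{fN}{0}$ carries $\textrm{Stab}_{\Gamma_1(f)}(\rho)\cap\Gamma_0(fN)$ isomorphically onto $\textrm{Stab}_{\Gamma_1(f)}(\rho^*)\cap\Gamma_0(fN)$ and preserves the quantity $\epsilon(\cdot)$; in particular $W\eta_\rho W^{-1}=\eta_{\rho^*}$ and $\epsilon(\eta_{\rho^*})=\epsilon(\eta_\rho)$. Checking that the congruence conditions in Definition \ref{units} for the pair $(\mk{b}^*,\mk{f}^*)$ reduce to the same conditions as for $(\mk{b},\mk{f})$, one concludes $\Gamma_{\mk{b}^*}(\mk{f}^*)=\Gamma_{\mk{b}}(\mk{f})$, hence $e=1$.

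Second I would handle \eqref{klar2}. Take $\tau=\rho^*$, so $\tau^*=\rho$ by involutivity. Then \eqref{tickle2} becomes $u_D(\mk{b},\mk{f})^{12e'}=\prod_{\lambda'}c_{\lambda'}\star\bigl(u_C(r,\rho)^{\nu(r,\rho)h_{\lambda'}}\bigr)$, and we need $e'=1$, $\nu(r,\rho)=1$, and $h_{\lambda'}=1$ for every $\lambda'$. The argument for $e'=1$ is the same as above. From \eqref{creux} combined with the Atkin--Lehner identity $\epsilon(\eta_\rho)=\epsilon(\eta_{\rho^*})$ and $\mu(r,\rho^*)=1$ (which follows from Remark \ref{fromage} via the same circle of ideas), one gets $\nu(r,\rho)=1$. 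Finally, $h_{\lambda'}=[\Gamma_{\mu_{\lambda'}\mk{b}^*}(\mk{f}^*):\Gamma_{\mk{b}^*}(\mk{f}^*)]=1$ because both groups reduce to $\langle\epsilon(\eta_\rho)\rangle$.

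The main obstacle is the Atkin--Lehner identity $\epsilon(\eta_{\rho^*})=\epsilon(\eta_\rho)$ together with the careful verification that the stabilizer groups $\Gamma_{\mk{b}}(\mk{f})$ and $\Gamma_{\mk{b}^*}(\mk{f}^*)$ coincide even though they are attached to different orders ($\ca{O}_K$ versus $\ca{O}_f$). This amounts to a finite matrix calculation, together with a check that the two congruence conditions cut out the same subgroup of totally positive units of $K$.
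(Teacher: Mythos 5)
Your proposal follows the same skeleton as the paper's proof — specialize Theorem~\ref{prec_th} to $\tau=\rho$ (resp.\ $\tau=\rho^*$) and show that the auxiliary exponents collapse — but you replace the paper's bare-hands computation with an Atkin--Lehner conjugation argument, which is worth commenting on. The paper simply computes $Q_{\tau^*}$ and the explicit value of $\tau^*$ from the quadratic form of $\rho$ and then asserts ``one may check that $e=1$'' and ``a direct computation shows that $\nu(r,\tau^*)h_{\lambda'}=1$''. Your observation that $W\eta_\rho W^{-1}=\eta_{\rho^*}$ for $W=\M{0}{-1}{fN}{0}$, and hence $\epsilon(\eta_{\rho^*})=\epsilon(\eta_\rho)$, is correct and is a cleaner way to see why the fundamental units of the two orders $\ca{O}_K$ and $\ca{O}_{\rho^*}=\ca{O}_f$ (cut out by the relevant congruence conditions) coincide; the verification that $W$ carries $\operatorname{Stab}_{\Gamma_1(f)}(\rho)$ onto $\operatorname{Stab}_{\Gamma_1(f)}(\rho^*)$ uses $N\mid A_\rho$, $(A_\rho,f)=1$ and $f\mid c$ to get $fN\mid c$, all of which hold under $\rho\in H^{\ca{O}_K}(\mk{N},f)$.

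There is one localized gap you should repair. You invoke Remark~\ref{fromage} to get $\mu(r,\rho^*)=1$, but that remark applies only to $\tau\in H^{\ca{O}_K}(\mk{N})$ with $(A_\tau,f)=1$, i.e.\ to $\rho$, not to $\rho^*$: one has $\ca{O}_{\rho^*}=\ca{O}_f\ne\ca{O}_K$ and $A_{\rho^*}=|C|f^2N$ so that $(A_{\rho^*},f)=f$, so the hypotheses of the remark fail. What you actually want to prove is that $\Gamma_{\wt{r}I_{\rho^*}}(\mk{f}^*)=\langle\epsilon(\eta_{\rho^*})\rangle$. This does follow, but it requires the short congruence computation you are hand-waving past (showing $\ca{O}_f(\infty)^{\times}\cap\bigl(\mk{f}^*(\wt{r}I_{\rho^*})^{-1}+n\bigr)\subseteq\ca{O}_K(\mk{f}\infty)^{\times}$ using the explicit form $\rho^*=\frac{B+f\sqrt{d_K}}{fCN}$), and Remark~\ref{fromage} does not substitute for it. Once that is done, $\nu(r,\rho)=\bigl([\epsilon(\eta_\rho):\epsilon(\eta_{\rho^*})]\mu(r,\rho^*)\bigr)^{-1}=1$, and your separate claim that each $h_{\lambda'}=1$ (rather than merely the paper's $\nu(r,\tau^*)h_{\lambda'}=1$) is then correct and a mild sharpening.
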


\begin{proof}
First let $Q_{\rho}(x,y)=Ax^2+Bxy+Cy^2$ (note that $(A,f)=1$). Let us treat the first 
case, i.e., when $\tau=\rho$. We have
\begin{align}\label{fati}
Q_{\tau^*}(x,y)=\sign(C)\left(Cf^2Nx^2-Bfxy+\frac{A}{N}y^2\right),
\end{align}
so that
\begin{align}\label{fatig}
\tau^*=\frac{B+f\sqrt{d_K}}{fCN}
\end{align}
From \eqref{klos} of Theorem \ref{prec_th} we get
\begin{align*}
u_C(r,\tau)^{\nu(r,\tau)e}=\prod_{\lambda}u_D(\mu_{\lambda}\mk{b},\mk{f})^{12h_{\lambda}}.
\end{align*}
Using \eqref{fatig} one may check that $e=[\Gamma_{\mk{b}^*}(\mk{f}^*):\Gamma_{\mk{b}}(\mk{f})]=1$. 
This proves \eqref{klar}. 

Now let us treat the second case, i.e., when $\tau=\frac{-1}{fN\rho}=:\rho^*$. Then in this case we have 
that $Q_{\tau}(x,y)=\sign(C)\left(Cf^2Nx^2-Bfxy+\frac{A}{N}y^2\right)$. Therefore,
\begin{align}\label{fati2}
Q_{\tau^*}(x,y)=Ax^2+Bxy+Cy^2.
\end{align}
so that
\begin{align}\label{fatig2}
\tau^*=\frac{-B+\sqrt{d_K}}{\frac{A}{N}}.
\end{align}
Applying \eqref{tickle2} of Theorem \ref{prec_th} with the previous parameters we find that
\begin{align*}
u_D(\mk{b},\mk{f})^{12e'}=
\prod_{\lambda'}c_{\lambda'}\star\left(u_C(r,\tau^*)^{\nu(r,\tau^*)h_{\lambda'}}\right)
\end{align*}
Using \eqref{fatig2}, a direct computation shows that for all $\lambda'$ one has that 
$\nu(r,\tau^*)h_{\lambda'}=1$. Moreover, one also have $e'=1$. This concludes the proof of \eqref{klar2}. \fin
\end{proof}

\begin{Rem}\label{implication1}
In the special case where $f=1$ and $\tau\in H^{\ca{O}}(\mk{N})$ we may deduce 
from \eqref{klos} that
\begin{align}\label{star1}
u_C(1,\tau)=u_D(I_{\tau^*},\ca{O})^{12},
\end{align}
where $\tau^*=\frac{-1}{N\tau}$. Here, note that $I_{\tau^*}^{-1}=\Lambda_{(\tau^*)^{\sigma}}$ (for
the definition of $I_{\tau}$ see Definition \ref{ini_def}). Moreover, it follows from Remark 
\ref{hausen} (or the computation carried in Appendix \ref{goeland}) that
\begin{align}\label{star2}
u_{C}(1,\tau)=u_{DD}(\tau).
\end{align}
Combining \eqref{star2} with \eqref{star1} we deduce that
\begin{align}\label{star3}
u_D(I_{\tau^*},\ca{O})^{12}=u_{DD}(\tau).
\end{align}
It thus follows from \eqref{star3} that the $p$-adic invariant $u_D$ (once his definition
is extended to arbitrary orders as was done in Definition \ref{crocus}) 
may be viewed as a natural generalization of the Darmon-Dasgupta $p$-adic invariant $u_{DD}$
with the subtlety that the appearance of $\tau$ in the argument of $u_D$ is twisted by the Atkin-Lehner involution
of level $N$ namely $\tau\mapsto\frac{-1}{N\tau}=\tau^*$
\end{Rem}

For the rest of the section we assume that $f=1$ and we
let $\cond{\ca{O}}=n$ so that $\ca{O}=\ca{O}_n$ where $(n,N)=1$. For every 
$\tau\in H^{\ca{O}}(\mk{N})$ we may consider the $p$-adic invariant 
$$
u_D(I_{\tau^*},\ca{O})^{12}=u_{DD}(\tau)=u(\alpha_{\delta},\tau),
$$ 
where the first equality follows from Remark \ref{implication1}. The $p$-adic invariant
$u(\alpha_{\delta},\tau)$ is the the Darmon-Dasgupta $p$-adic invariant
constructed in \cite{Dar-Das}. Here $\delta\in\Div(N)$ is the fixed ambient divisor and
$\alpha_{\delta}$ is the associated modular unit to $\delta$ (see Appendix \ref{goeland}).
Conjecture \ref{u_H_conj}  implies that 
$u_D(I_{\tau^*},\ca{O})^{12}\in K(\ca{O}_n\infty)$. Moreover, for every $\ca{O}_K$-ideal 
$\mk{a}\subseteq\ca{O}_K$ we have a $p$-adic invariant $u_D(\mk{a},\mk{n})$
where $\mk{n}=n\ca{O}_K$. In a similar way, Conjecture \ref{alg_conj} predicts that 
$u_D(\mk{a},\mk{n})\subseteq K(\mk{n}\infty)$. Since 
$K(\ca{O}\infty)\subseteq K(\mk{n}\infty)$ it is natural to ask if a suitable 
product of $p$-adic invariants $u_D(\mk{a},\mk{n})$'s (where $\mk{a}$ here varies
and $\mk{n}$ is fixed) is equal to $u_D(I_{\tau^*},\ca{O})^{12}$. The proposition below 
gives a positive answer. Before stating it, let us make a few observations.

Let $\tau\in H^{\ca{O}_K}(\mk{N},n)$. We thus have that $Q_{\tau}(x,y)=Ax^2+Bxy+Cy^2$ with
$(A,n)=1$ and $d_K=B^2-4AC$. Set 
\begin{align}\label{tract}
\mk{b}^{-1}=\Lambda_{n(\tau^*)^{\sigma}}\s\s\s\mbox{and}\s\s\s(\mk{b}')^{-1}=\Lambda_{(\tau^*)^{\sigma}},
\end{align}
where $\tau^*=\frac{-1}{N\tau}$. Note that
\begin{align*}
Q_{\tau^*}(x,y)=\sign(C)\left(CNx^2-Bxy+\frac{A}{N}y^2\right),
\end{align*}
and that
\begin{align*}
Q_{n\tau^*}(x,y)=\sign(C)\left(CNn^2x^2-Bnxy+\frac{A}{N}y^2\right),
\end{align*}
so that $\End(\Lambda_{(\tau^*)^{\sigma}})=\ca{O}_K$ and $\End(\Lambda_{n(\tau^*)^{\sigma}})=\ca{O}_n$. 
We may now state the proposition.
\begin{Prop}\label{sam_err}
We have the following identity
\begin{align}\label{ticle}
u_D(I_{\tau^*},\ca{O}_n)^e=u_D(\mk{b},\ca{O}_n)^e=\prod_{\lambda}u_D(\mu_{\lambda}\mk{b}',n\ca{O}_K)^{h_{\lambda}},
\end{align}
where $\mu_{\lambda}=1+\lambda$ and $\{\lambda\}$ is a complete set of representatives 
$\mk{b}^{-1}/n(\mk{b}')^{-1}$, 
$$
h_{\lambda}=[\Gamma_{\mu_{\lambda}\mk{b}'}(n\ca{O}_K):\ca{O}_K(n\infty)^{\times}],
$$ 
and $e=[\ca{O}_n(\infty)^{\times}:\ca{O}_K(n\infty)^{\times}]$, i.e., $e$ 
is the order of the image of a generator $\epsilon_n$ of 
$\ca{O}_n(\infty)^{\times}$ inside the group $(\ZZ/n\ZZ)^{\times}$.
\end{Prop}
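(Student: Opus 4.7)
The plan is to apply Proposition \ref{pointe} with the data $\ca{O} = \ca{O}_n$, $\ca{O}' = \ca{O}_K$, $\mk{g} = \ca{O}_n$, $\mk{g}' = n\ca{O}_K$, and the ideals $\mk{b}, \mk{b}'$ defined in \eqref{tract}. The statement then splits into two claims: the first equality $u_D(I_{\tau^*}, \ca{O}_n) = u_D(\mk{b}, \ca{O}_n)$ is essentially a matter of interpretation, whereas the second equality $u_D(\mk{b}, \ca{O}_n)^e = \prod_\lambda u_D(\mu_\lambda\mk{b}', n\ca{O}_K)^{h_\lambda}$ will be the direct output of Proposition \ref{pointe}.

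For the first equality, $I_{\tau^*} = A_{\tau^*}\Lambda_{\tau^*}$ is an integral $\ca{O}_K$-ideal rather than an $\ca{O}_n$-ideal. Under the convention of Section \ref{notations} for mixing orders, $I_{\tau^*}$ relative to $\ca{O}_n$ is read as $I_{\tau^*} \cap \ca{O}_n$, which is $\ca{O}_n$-invertible by Proposition \ref{lilas}. Using $(\tau^*)^\sigma = -A\tau/(NC)$ (from $\tau^* + (\tau^*)^\sigma = B/(NC)$ and $\tau^* \cdot (\tau^*)^\sigma = A/(CN^2)$), a direct computation shows $I_{\tau^*} \cap \ca{O}_n$ and $\mk{b}$ represent the same class in $C_{\ca{O}_n}(\ca{O}_n)$, and since $u_D(\_,\ca{O}_n)$ depends only on this class, the equality follows.

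For the product formula, we must verify the four hypotheses of Proposition \ref{pointe}. With our choices, $\mk{b}^{-1}\mk{g} = \ZZ + n(\tau^*)^\sigma\ZZ$ and $(\mk{b}')^{-1}\mk{g}' = n\ZZ + n(\tau^*)^\sigma\ZZ$, so the inclusion holds with index exactly $n$, which is coprime to $pN$ by the standing assumption $(n, pNd_K)=1$. Since $\mk{b}$ is integral and $\mk{g} = \ca{O}_n$, the definition of $\Gamma_{\mk{b}}(\ca{O}_n)$ collapses to $\ca{O}_n(\infty)^\times$; similarly, $\mk{b}' = I_{\tau^*}$ being integral gives $\Gamma_{\mk{b}'}(n\ca{O}_K) = \ca{O}_K(n\infty)^\times$, so the required containment holds with $e = [\ca{O}_n(\infty)^\times : \ca{O}_K(n\infty)^\times]$. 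The key condition $(\dagger)$ is verified as follows: for any $x \in \mk{b}^{-1}$ with $x \neq -1$, the visible inclusion $\mk{b}^{-1} \subseteq (\mk{b}')^{-1}$ combined with $1 \in (\mk{b}')^{-1}$ yields $1+x \in (\mk{b}')^{-1}$, whence $(1+x)\mk{b}' \subseteq (\mk{b}')^{-1}\mk{b}' = \ca{O}_K$. Thus $(1+x)\mk{b}'$ is an integral $\ca{O}_K$-ideal, trivially $n\ca{O}_K$-int, and the inclusion $\Gamma_{(1+x)\mk{b}'}(n\ca{O}_K) \supseteq \ca{O}_K(n\infty)^\times$ follows at once by the same argument that handles $\mk{b}'$ itself.

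With all hypotheses checked, Proposition \ref{pointe} yields the claim: the representatives $\lambda$ run over $\mk{b}^{-1}/n(\mk{b}')^{-1}$ with $\mu_\lambda = 1+\lambda$ (no factor of $f$ appears since $\mk{g} = \ca{O}_n$ is not scaled), the exponents $h_\lambda = [\Gamma_{\mu_\lambda\mk{b}'}(n\ca{O}_K) : \ca{O}_K(n\infty)^\times]$ match, and the definition of $e$ coincides with the one given in the statement. The principal technical point is verifying $(\dagger)$, but this reduces to the elementary observation that the numerator lattice $\mk{b}^{-1}$ sits inside the denominator lattice $(\mk{b}')^{-1}$—a direct consequence of the definitions in \eqref{tract}—and that $\mk{b}'$ is integral so that no $\ca{O}_K$-denominators above $n$ can arise upon scaling by $1+x$.
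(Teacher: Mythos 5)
Your approach is identical to the paper's, which disposes of the proposition in one sentence: ``Apply Proposition~\ref{pointe} with the parameters $\ca{O}=\ca{O}_n$, $\ca{O}'=\ca{O}_K$, $\mk{g}=\ca{O}_n$ and $\mk{g}'=n\ca{O}_K$.'' Your expansion correctly fills in the routine verification of the hypotheses of Proposition~\ref{pointe} (the index computation, the identification of the two unit groups $\Gamma_{\mk{b}}(\ca{O}_n)=\ca{O}_n(\infty)^\times$ and $\Gamma_{\mk{b}'}(n\ca{O}_K)=\ca{O}_K(n\infty)^\times$, and condition $(\dagger)$ via integrality of $(1+x)\mk{b}'$) that the paper leaves to the reader.
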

\begin{proof}
Apply Proposition \ref{pointe} with the parameters $\ca{O}=\ca{O}_n$, $\ca{O}'=\ca{O}_K$, 
$\mk{g}=\ca{O}_n$ and $\mk{g}'=n\ca{O}_K$. \fin
\end{proof}
\begin{Rem}
We keep the same notation as in \eqref{tract}. 
We would like to point out a mistake which appears in Theorem 8.3 of \cite{Das3} which claims that
\begin{align}\label{lang}
u_{DD}(\mk{b},\ca{O}_n)^{\frac{1}{6}}=\prod_{a}u_D((a)\mk{b}',n\ca{O}_K)^{2},
\end{align}
where in the notation of \cite{Das3} the $p$-adic invariant 
$u_{DD}(\mk{b},\ca{O}_n)^{\frac{1}{6}}$ corresponds to the $p$-adic invariant
$u(\alpha,\tau)$. For an explanation concerning the appearance of this $6$-th root see Remark \ref{hausen}. Here the product
on the right hand side of \eqref{lang}
goes over a complete set of representatives of positive integers $\{a\}$ of $(\ZZ/n\ZZ)^{\times}/\laa\epsilon_n\raa$
which are coprime to $pnN$.
When $n=1$, the formula \eqref{lang} holds true and is (up to a $6$-th root extraction) 
equivalent to \eqref{ticle}. However
when $n>1$ the formula \eqref{lang} differs from the formula \eqref{ticle} since the set
$\{a\}$ is strictly smaller than the set $\{\lambda\}$. A complete set of totally positive representatives of 
$\mk{b}^{-1}/\mk{n}(\mk{b}')^{-1}$ is given for example by $\{r\}_{r=1}^{n}$. Note that
$u_D(r\mk{b}',\mk{n})=u_D(\frac{r}{\mk{d}}\mk{b}',\frac{\mk{n}}{\mk{d}})$ 
where $\mk{n}=n\ca{O}_K$ and $(\mk{n},r\mk{b}')=\mk{d}$. Since $\mk{d}$ is
an $\ca{O}_K$-ideal ($\ca{O}_K$ being the maximal order) it is automatically invertible. Therefore
$u_D(r\mk{b}',\mk{n})$ is a primitive $p$-adic invariant of conductor $\frac{\mk{n}}{\mk{d}}$.
Here the word primitive is used 
in the sense of Definition \ref{crocus}. It is thus essential to consider $p$-adic 
invariants which are primitive of conductor $\mk{d}$ for various divisors $\mk{d}|\mk{n}$. 
Let us note that the right hand side of equation $(85)$ in \cite{Das3} is indeed 
the (conjectural) Gross-Stark $p$-unit of $K(\ca{O}_n\infty)$. However, the 
left hand side of equation $(85)$ in \cite{Das3}  is a product of powers of Gross-Stark 
$p$-units for ring class fields of various conductors $\mk{d}|\mk{n}$.
The error is due to the discrepancy between the 
partial zeta-functions associated to ideal classes of the order $\ca{O}_n$, namely
\begin{align}\label{flame}
\zeta(\mk{b}',\ca{O}_n,w_1,s)=\Norm(\mk{b}')^{-s}
\sum_{\ca{O}_n(\infty)^{\times}\bs\{0\neq\mu\in\mk{b}'^{-1}\}}
\frac{w_1(\mu)}{|\Norm_{K/\QQ}(\mu)|^s},\hspace{1.5cm} \Re(s)>1,
\end{align}
and the partial zeta-functions associated to the extension $K(\ca{O}_n\infty)/K$, namely 
\begin{align}\label{flame2}
\zeta_R(\mk{b}',\ca{O}_n,w_1,s)=\Norm(\mk{b}')^{-s}
\sum_{\ca{O}_n(\infty)^{\times}\bs\{0\neq\mu\in\mk{b}'^{-1},(\mu,R)=1\}}
\frac{w_1(\mu)}{|\Norm_{K/\QQ}(\mu)|^s},\hspace{1.5cm} \Re(s)>1,
\end{align}
where $R=\{\mbox{$\nu$ is a place of $K$: $\nu|n$}\}$. When $n>1$ the existence of invertible ideals 
of $\ca{O}_n$ that are not relatively prime to $n$, implies that \eqref{flame2} differs from \eqref{flame}.  
Since the Darmon-Dasgupta invariant $u(\alpha,\tau)$ was constructed from special values of \eqref{flame} rather
than \eqref{flame2} it explains the discrepancy between $u(\alpha,\tau)=u_{DD}(\mk{b},\ca{O}_n)^{\frac{1}{6}}$ 
and the $p$-adic Gross-Sark $p$-unit of $K(\ca{O}_n\infty)$.
\end{Rem}

\appendix


\section{Special values of partial zeta functions and Dasgupta's refinement}\label{appendix1}
Let $F$ be a totally real number field and let $S$ be a finite set of places of $F$
which contains all the infinite places of $F$. Let $\mk{f}$ be an integral ideal of $F$
and let $M=F(\mk{f}\infty)$ be the narrow ray class field of $F$ of conductor $\mk{f}$.
For an ideal $\mk{a}$ of $F$ we denote by $\sigma_{\mk{a}}\in \Gal(M/F)$ the 
Frobenius at $\mk{a}$. For any $\sigma\in \Gal(M/F)$ let
\begin{align}\label{rabbit_a}
\zeta_S(M/F,\sigma,s):=\sum_{\substack{(\mk{a},S)=1\\ \sigma_{\mk{a}}=\sigma}}\frac{1}{\Norm(\mk{a})^s}.
\end{align}
It was proved by Siegel and Klingen that
the special values at negative integers of 
$\zeta_S(\sigma,s)$ are \textit{rational numbers} (see \cite{Kli} and \cite{Sie3}). 
One of the main tool which is used to study these rational values
is the so-called \textit{$q$-expansion principle}
(see \cite{D-R}) which says that for all integral ideals $\mk{b}$ coprime to $\mk{f}$ and all integers $k\geq 1$ one has
\begin{align}\label{bille}
\zeta_R(M/F,\sigma,1-k)-\Norm(\mk{b})^k\zeta_R(M/F,\sigma\sigma_{\mk{b}},1-k)\in\ZZ
\left[\frac{1}{\Norm(\mk{b})}\right].
\end{align}
It follows from Theorem 2.4 of \cite{Co} that the relation \eqref{bille}, when 
applied to all prime ideals $\mk{b}$ of $M$ coprime to $\mk{f}$, implies that
\begin{align}\label{mur}
w_k(M/F)\zeta_R(M/F,\sigma,1-k)\in\ZZ,
\end{align}
where $w_k(M/F)$ is the largest integer, say $e$, for which the abelian group $\Gal(M(\mu_e)/F)$ 
has an exponent which divides $k$. Here $\mu_e$ denotes the group of roots of unity of order $e$.
In particular, $w_1(M/F)$ is just the group of roots of unity of $M$. Thus
the denominator of the rational number $\zeta_R(\sigma,1-k)$ is a divisor of $w_k(M/F)$.
It follows from \eqref{bille} that if $\eta\in T$ then for all integers
$k\geq 1$ one has
\begin{align*}
\zeta_{S,T}(M/F,\sigma,1-k)\in\ZZ\left[\frac{1}{\Norm\eta}\right].
\end{align*} 
In particular, if $T$ contains two primes of different residue characteristics then 
the special value $\zeta_{S,T}(M/F,\sigma,1-k)$ is an integer. Another sufficient condition which guarantees 
the integrality of $\zeta_{S,T}(M/F,\sigma,1-k)$ is to assume the existence of a prime $\eta\in T$ such that
$(l,w_k(M/F))=1$ where $l=\Norm(\eta)$. This explains the raison d'\^etre of Assumption \ref{ass_inter}.

There is another key property of the special values $\zeta_R(M/F,\sigma,1-k)$ which plays 
a key role in the setting of $p$-adic Gross-Stark conjecture. Let $\tau$ be a complex conjugation of $M/F$ and
let $k\geq 1$ be an odd integer. Then
\begin{align}\label{rouet}
\zeta_R(M/F,\sigma\tau,1-k)=-\zeta_R(M/F,\sigma,1-k).
\end{align}
For a proof of \eqref{rouet} see for example p. 12 of \cite{Ch8}. The identity \eqref{rouet} forces
additional restrictions on the type of global $\mk{p}$-units which are predicted by the Strong Gross conjecture.
Let us explain it. If $\sigma:M\hookrightarrow\CC$ is a complex embedding and $u\in U_{\mk{p}}$ one
readily sees from the definition of $U_{\mk{p}}$ that
\begin{align}\label{strong}
u^{\sigma\tau_{\infty}\sigma^{-1}}=u^{-1},
\end{align}  
where $\tau_{\infty}$ is the complex conjugation of $\CC$. In particular, we see from
\eqref{strong} that every complex conjugation of $M$ acts by $-1$ on the multiplicative
group $U_{\mk{p}}$. It thus follows that $U_{\mk{p}}\subseteq L_{CM}$ where $L_{CM}$ stands for 
the largest CM subfield contained in $M$. Therefore if $\tau$ is
a complex conjugation of $M/F$ then $(u_{T}^{\sigma})^{\tau}=(u_{T}^{\sigma})^{-1}$
which is in harmony with the 
fact that 
\begin{align}\label{mouche}
\zeta_{R,T}(M/F,\sigma\tau,0)=-\zeta_{R,T}(M/F,\sigma,0),
\end{align}
where \eqref{mouche} follows from \eqref{rouet} and the definition of $\zeta_{R,T}(M/F,\sigma,s)$.

It is possible to use \eqref{pierre2} of Conjecture \ref{Gross_strong} to get $\mk{p}$-adic information about $u_T$. 
For example, if
the set of finite places of $S$ consists \textit{exactly} of the primes which divide 
$\mk{f}\mk{p}$ then one may consider the tower of fields $L_{n}=F(\mk{f}\mk{p}^n\infty)$ with $n$ increasing. 
In this case, class field theory provides an isomorphism
\begin{align}\label{bouteille}
rec_{\mk{p}}:F_{\mk{p}}^{\times}/\wh{E}_{\mk{p}}(\mk{f})\stackrel{\simeq}{\rightarrow} \Gal(L_{\infty}/M),
\end{align}
where $L_{\infty}=\bigcup_{n} L_n$, $E_{\mk{p}}(\mk{f})$ denotes the group 
of totally positive $\mk{p}$-units of $F$ which are
congruent to $1$ modulo $\mk{f}$ and $\wh{E}_{\mk{p}}(\mk{f})$ denotes its closure 
in $F_{\mk{p}}^{\times}$. Let 
\begin{align*}
g_T:=\displaystyle\lim_{\substack{\longleftarrow\\ n}} 
rec_{\mk{p}}^{L_n}(u_T)\in \Gal(L_{\infty}/M),
\end{align*} 
where the transition maps are given by the restrictions. Then \eqref{pierre2} 
predicts that 
\begin{align}\label{price}
rec_{\mk{p}}^{-1}(g_T)\equiv u_T\pmod{\wh{E}_{\mk{p}}(\mk{f})}.
\end{align}
In general the $\ZZ_p$-rank
of $\wh{E}_{\mk{p}}(\mk{f})$ will be larger than one and therefore \eqref{price} does not
provide an exact formula for $u_T$. 
It is explained in \cite{Das3} that  by expanding the set $S$ in an appropriate way one
may gain more $\mk{p}$-adic information about $u_T$. In Section 5.4 of \cite{Das3}, Dasgupta shows that by 
repeating this process indefinitely one can specify $u_T$ in $F_{\mk{p}}^{\times}$ to any specified 
degree of $\mk{p}$-adic accuracy. However there is a lack of explicitness in this process which makes it not very suitable for numerical
computations. Also, even from a theoretical point of view, this process
is not completely satisfactory since it is rather indirect. 

We would like to mention one more key result which follows from Lemma 5.17 of \cite{Das3} and
\eqref{mouche} which is not stated explicitly in \cite{Das3}.
\begin{Prop}\label{candle}(Dasgupta)
Assume that $M$ is linearly disjoint from $F(\zeta_{p^m})$ for all $m\geq 1$. Let $\tau_{\infty}$ be a complex conjugation of
$M/F$ and suppose that $\mk{c}$ is an $\ca{O}_K$-ideal of $F$ such that $\sigma_{\mk{c}}=\tau_{\infty}$. Then 
\begin{align}\label{roulant}
u_{D}(\mk{a}\mk{c},\mk{f})=u_{D}(\mk{a},\mk{f})^{-1}.
\end{align} 
\end{Prop}

Even when $M$ is not linearly disjoint from $F(\zeta_{p^m})$
it is expected that \eqref{roulant} holds. 
The author does not know of a proof of \eqref{roulant} which avoids the artificial
process of enlarging the set of places $S$. It would be quite interesting to find a more
direct proof of \eqref{roulant}.

\section{A norm formula from $u_C$ to $u_{DD}$}\label{goeland}
We now discuss the compatibility of our $p$-adic invariant with the one constructed by Darmon and Dasgupta.
One can relate the modular unit considered in \cite{Ch1} with the modular
units used in \cite{Dar-Das}.
We have for any positive integer $N$ the identity
\begin{align}\label{norm}
\prod_{j=1}^{N-1}g_{(\frac{j}{N},0)}(N\tau)^{12}=
\zeta\frac{\Delta(\tau)}{\Delta(N\tau)},
\end{align}
for some $\zeta\in\mu_N$ where 
$$
g_{(\frac{j}{N},0)}(\tau)=q_{\tau}^{\frac{1}{2}\wt{B}_2(\frac{j}{N})}
(1-q_{\frac{j\tau}{N}})\prod_{n \geq 1} (1-q_{\tau}^nq_{\frac{j\tau}{N}})(1-q_{\tau}^nq_{\frac{-j\tau}{N}}),
$$ 
and 
$$
\Delta(\tau)=q\prod_{n\geq 1} (1-q^n)^{24},
$$ 
where $q=e^{2\pi i\tau}$, $\tau\in\ca{H}=\{x+iy\in\CC:y>0\}$. Here 
$\wt{B}_2(x)=B_2(\{x\})$ where $B_2(x)=x^2-x+\frac{1}{6}$ and $0\leq\{x\}<1$ denotes the fractional part of $x$. 
We note that
\begin{align*}
g_{0,0)}(\tau)^{12}=\Delta(\tau).
\end{align*} 
We can thus rewrite \eqref{norm} as
\begin{align}
\prod_{j=0}^{N-1}g_{(\frac{j}{N},0)}(N\tau)^{12}=
\zeta\Delta(\tau),
\end{align}
for some $\zeta\in\mu_N$.

As in \cite{Dar-Das}, choose a divisor $\delta=\sum_{d|N} n_{d}[d]\in Div(N)$ such that
$$
\sum_{d|N}n_{d}d=0 \s\s \textrm{and}\s\s \sum_{d|N} n_{d}=0.
$$ 
To such a divisor Darmon and Dasgupta associate the modular unit
\begin{align}\label{beate}
\alpha_{\delta}(\tau):=\prod_{d|N}\Delta(d\tau)^{n_{d}},
\end{align}
which is $\Gamma_0(N)$-invariant. More generally, to a divisor 
$\delta=\sum_{d|N,r\in\ZZ/f\ZZ}n(d,r)[d,r]\in Div_f(N)$ we associate the
modular unit
\begin{align}\label{siegel_units}
\beta_{\delta}(z):=\prod_{d|N,r\in\ZZ/f\ZZ}g_{(\frac{r}{f},0)}(fd\tau)^{12n(d,r)}.
\end{align} 
If we set $\delta'=\sum_{d|N,r\in\ZZ/f\ZZ}n(d,r)[d,r]\in Div_f(N)$
with $n(d,r)=n_{d}$ for all $r\in\ZZ/f\ZZ$, then we readily see that
$\delta'$ is a good divisor with respect to any prime number $p$.
Using equation (\ref{norm}) with $N=f$ we find
\begin{align}\label{hermana}
\beta_{\delta'}(\tau)&=\prod_{d|f,r\in\ZZ/f\ZZ}g_{(\frac{r}{f},0)}(fd\tau)^{12n(d,r)} \notag \\  \notag
&=\zeta\prod_{d}\Delta(d\tau)^{n_{d}}\\
&=\zeta\alpha_{\delta}(\tau),
\end{align}
for some $\zeta\in\mu_f$. 

Let $p$ be a prime number inert in $K$. 
Assume that $(p,f,N)$ satisfy the usual assumptions and let $\tau\in H^{\ca{O}_f}(\mk{N},f)$
where $\ca{O}_f$ stands for the unique order of conductor $f$ of $K$. By definition of 
$\delta'$ and $u_{C,\delta'}(1,\tau)$ we find that
\begin{align*}
u_{C,\delta'}(1,\tau)=\prod_{r=0}^{f-1} u_{C,\delta}(r,\tau).
\end{align*}
Using \eqref{hermana} we deduce directly from the definitions of the $p$-adic invariants $u_C$ 
and $u_{DD}$ (see \cite{Dar-Das} for the definition of $u_{DD}$) that
\begin{align*}
u_{C,\delta'}(1,\tau)=u_{DD,\delta}(\tau).
\end{align*}
Therefore it follows that
\begin{align}\label{trio}
\prod_{r=0}^{f-1} u_{C,\delta}(r,\tau)=u_{DD,\delta}(\tau).
\end{align}
Conjecture 2.14 of \cite{Dar-Das} predicts that $u_{DD,\delta}(\tau)\in K(\ca{O}_f\infty)$. 
Conjecture \ref{u_H_conj} predicts that for each $r\in\ZZ/f\ZZ$, $u_{C,\delta}(r,\tau)$ is a strong $p$-unit 
of $K(f\ca{O}_f\infty)$. Now let us make the following additional assumption:
\begin{Assumption}\label{leo}
Let $\ca{O}_K=\ZZ+\omega\ZZ$ and
let $\epsilon$ be the generator of $\ca{O}_{K}(f\infty)^{\times}$ such that $\epsilon>1$. Assume that
$\epsilon=u+vf\omega$ with $u,v\in\ZZ$ and $(v,f)=1$.
\end{Assumption}
Note that $u\equiv 1\pmod{f}$ and that $\epsilon\in\ca{O}_f^{\times}$. Then we have the following proposition:
\begin{Prop}
Conjecture \ref{u_H_conj} and Assumption \ref{leo} imply that
for each $d|f$ one has
\begin{align}\label{gogo1}
\prod_{\substack{0\leq r\leq f-1\\ (r,f)=d}} u_{C,\delta}(r,\tau)\in K(\ca{O}_f\infty),
\end{align}
where $K(\ca{O}_f\infty)$ stands for the narrow ring class field of $K$ of conductor $f$. 
In particular, one has
\begin{align}\label{gogo2}
\prod_{r=0}^{f-1} u_{C,\delta}(r,\tau)\in K(\ca{O}_f\infty).
\end{align}
\end{Prop}
\begin{proof}
The generalized ideal class group $I_{\ca{O}_f}(1)/P_{\ca{O}_f}(\infty)$ corresponds from
class field theory to the narrow ring class field extension $K(\ca{O}_f\infty)$. Here
\begin{align*}
P_{\ca{O}_f}(\infty)=
\left\{\frac{\alpha}{\beta}\ca{O}_f:\alpha,\beta\in\ca{O}_f,\frac{\alpha}{\beta}\gg 0\right\},
\end{align*}
and two ideals $\mk{a},\mk{b}\in I_{\ca{O}_f}(1)$ are equivalent modulo $P_{\ca{O}_f}(\infty)$ if and
on if there exists $\lambda\ca{O}_f\in P_{\ca{O}_f}(\infty)$ such that $\lambda\mk{a}=\mk{b}$. 
Similarly, one may define $I_{\ca{O}_f}(f)/P_{\ca{O}_f}(f\infty)$ where 
\begin{align*}
P_{\ca{O}_f}(f\infty)=
\left\{\frac{\alpha}{\beta}\ca{O}_f:\alpha,\beta\in\ca{O}_f,(\alpha\ca{O}_f,f\ca{O}_f)=
(\beta\ca{O}_f,f\ca{O}_f)=\ca{O}_f,\frac{\alpha}{\beta}\gg 0\right\}.
\end{align*}
One can show that the natural map
\begin{align}\label{typhoon}
I_{\ca{O}_f}(f)/P_{\ca{O}_f}(f\infty)&\stackrel{\simeq}{\rightarrow} I_{\ca{O}_f}(1)/P_{\ca{O}_f}(\infty)\\
                              [\mk{a}]&\mapsto[\mk{a}], \notag
\end{align}
is an isomorphism. In order to show that \eqref{gogo1} lies in $K(\ca{O}_f\infty)$ 
it is enough to show that
$ker(res)$ acts trivially on the left hand side of \eqref{gogo1} where 
$$
res: \Gal(K(f\ca{O}_{f}\infty)/K)\rightarrow \Gal(K(\ca{O}_f\infty)/K).
$$
Using class field theory and \eqref{typhoon} one has that the restriction map above corresponds to the map
\begin{align*}
\pi:I_{\ca{O}_{f}}(f)/P_{\ca{O}_{f},1}(f\infty)&\rightarrow I_{\ca{O}_{f}}(f)/P_{\ca{O}_f}(f\infty)\\
                  [\mk{a}]&\mapsto[\mk{a}].
\end{align*}
Therefore the subgroup $ker(res)$ corresponds to 
$\left(P_{\ca{O}_f}(f\infty)\cap I_{\ca{O}_f}(f\infty)\right)/P_{\ca{O}_{f},1}(f\infty)$. 
Let $\Theta:I_{\ca{O}_f}(f)\rightarrow I_{\ca{O}_{f^2}}(f)$ be the map given 
by $\mk{a}\mapsto\mk{a}\cap\ca{O}_{f^2}$. Using formula \eqref{oparleur} we see 
that in order to show that \eqref{gogo1} is in $K(\ca{O}_f\infty)$ it is enough to show that 
the collection of ideal classes
\begin{align}\label{feynman}
\{\wt{r}I_{\tau^*}\pmod{\sim_f}:1\leq r\leq f\s\mbox{and}\s (r,f)=d\}
\end{align} 
is stable as a set under the action of $\Theta(P_{\ca{O}_f}(f\infty)\cap I_{\ca{O}_f}(f\infty))$. 
A direct computation shows that
\begin{align*}
P_{\ca{O}_f}(f\infty)\cap I_{\ca{O}_f}(f\infty)=
\left\{\alpha\ca{O}_{f}:\alpha=a+bf\omega,
a,b\in\ZZ,(a,f)=1\s\mbox{and}\s\alpha\gg 0\right\}.
\end{align*}
We note that if $\alpha=a+bf\omega$ and $b\equiv 0\mod{f}$ then 
$a+bf\omega\in\ca{O}_{f^2}$. Moreover, one may check that
$\Theta(\alpha\ca{O}_f)=\alpha\ca{O}_{f^2}$ and therefore $\Theta(\alpha\ca{O}_f)$ 
is again a principal ideal. Now let $\alpha\ca{O}_f\in P_{\ca{O}_f}(f\infty)\cap I_{\ca{O}_f}(f\infty)$.
Then using Assumption \ref{leo} one sees that it is always possible to multiply $\alpha$ by a suitable
power of $\epsilon$ so that $\epsilon^m\alpha:=\alpha'=a'+b'f\omega$ with $b'\equiv 0\pmod{f}$. 
It thus follows that
\begin{align*}
\Theta(P_{\ca{O}_f}(f\infty)\cap I_{\ca{O}_f}(f\infty))=
\left\{\alpha\ca{O}_{f^2}:\alpha=a+bf^2\omega,
a,b\in\ZZ,(a,f)=1\s\mbox{and}\s\alpha\gg 0\right\}.
\end{align*}

Now let $\epsilon$ be as in Assumption \ref{leo} and note that
$\epsilon'=\epsilon^{f}=u'+v'f^2\omega$ for $u',v'\in\ZZ$, $u'\equiv 1\pmod{f}$
and $(v',f)=1$. Now let $\alpha\ca{O}_{f^2}\in\Theta(P_{\ca{O}_f}(f\infty)\cap I_{\ca{O}_f}(f\infty))$
where $\alpha=a+bf^2\omega$. In a similar way to what we did previously we see that we may multiply
$\alpha$ by a suitable
power of $\epsilon'$ so that $(\epsilon')^{m'}\alpha:=\alpha'=a'+b'f^2\omega$ with $b'\equiv 0\pmod{f}$.
Now note that the multiplication by $\alpha'$ only reshuffles the ideal classes in \eqref{feynman}. Therefore 
the Shimura reciprocity law of Conjecture \ref{u_H_conj} predicts that 
$\prod_{\substack{1\leq r\leq f\\ (r,f)=d}}u_{C,\delta}(r,\tau)$ lies in $K(\ca{O}_f\infty)$. This concludes the
proof. \fin
\end{proof}
\begin{Rem}
The author thinks that Conjecture \ref{u_H_conj} alone (i.e. without the help of Assumption \ref{leo}) should
imply relation \eqref{gogo2}. 
\end{Rem} 
\begin{Prop}\label{ano_way}
The identity \eqref{zoller} in Proposition \ref{pointe} 
implies \ref{trio} with the following parameters: $\tau\in H^{\ca{O}_f}(\mk{N},f)$,
$\mk{b}^{-1}=\Lambda_{\frac{1}{N\tau^{\sigma}}}$, $\mk{g}=\ca{O}_f$, $(\mk{b}')^{-1}=
\Lambda_{(\tau^*)^{\sigma}}$, 
$\mk{g}'=f\ca{O}_{(\tau^*)^{\sigma}}$ where $\tau^*=\frac{-1}{fN\tau}$. 
\end{Prop}
\begin{proof}
We suppress the divisor $\delta$ for the notation. First note that
$\End_K(\mk{b})=\ca{O}_f$ and that $\End_K(\mk{b}')=\ca{O}_{f^2}$. Moreover, if 
$Q_{\tau}(x,y)=Ax^2+Bxy+Cy^2$ ($(A,f)=1$) then we have $B^2-4AC=f^2d_K$. A direct computation shows that 
\begin{align*}
Q_{\tau^*}(x,y)=\sign(C)\left(CNf^2x^2-Bfxy+\frac{A}{N}y^2\right),
\end{align*}
so that
\begin{align}\label{rousse}
\tau^*=\frac{B+f\sqrt{d_K}}{fCN}.
\end{align}
On one hand we have
\begin{align}\label{rock_1}
u_{D}(\mk{b},\ca{O}_f)^{12}=u_{D}(I_{\frac{1}{N\tau}},\ca{O}_f)^{12}=u_{DD}(\tau),
\end{align}
where the first equality follows from the identity $\mk{b}=I_{\frac{1}{N\tau}}$. 
For the second equality see the discussion in Remark \ref{implication1}. 

Now for every $r\in\ZZ/f\ZZ$ we have that
\begin{align}\label{rock_2}
u_{D}(\wt{r}\mk{b}',\mk{g}')^{12}=u_{D}(\wt{r}I_{\tau^*},f\ca{O}_{\tau^*})^{12}.
\end{align}
Using \eqref{rock_2} in \eqref{zoller} of Proposition \ref{pointe} we find that
\begin{align}\label{rock_3}
\prod_{r=1}^{f} u_{D}(\wt{r}\mk{b}',\mk{g}')^{12h_r}
=u_{D}(\mk{b},\ca{O}_f)^{12e},
\end{align}
where
\begin{align*}
e=[\Gamma_{\mk{b}}(\mk{g}):\Gamma_{\mk{b}'}(\mk{g}')]\s\s\s\mbox{and}\s\s\s 
h_r=[\Gamma_{\wt{r}\mk{b}'}(\mk{g}'):\Gamma_{\mk{b}'}(\mk{g}')].
\end{align*}
Now note that $\{\wt{r}\}_{\wt{r}=1}^f$
is a complete set of totally positive representatives of $\mk{b}^{-1}\mk{g}/(\mk{b}')^{-1}\mk{g}'$. 
Using Proposition \ref{klasse} we find that 
\begin{align}\label{rock_5}
u_{D}(\wt{r}I_{\tau^*},f\ca{O}_{\tau^*})^{12\nu(r,\tau)^{-1}}=u_{C}(r,\tau),
\end{align}
\begin{align*}
\nu(r,\tau)={\left([\epsilon(\eta_{\tau}):\epsilon(\eta_{\tau^*})]
[\Gamma_{\wt{r}\mk{b}'}(\mk{g}'):\epsilon(\laa\eta_{\tau^*}\raa)]\right)}^{-1}\in\QQ_{>0}.
\end{align*}
For the definitions of $\epsilon(\eta_{\tau})$ and
$\epsilon(\eta_{\tau^*})$ (see Definition \ref{matrix_def}). 
Using \eqref{rousse}, direct computations show that 
\begin{align*}
\laa\epsilon(\eta_{\tau})\raa=\laa\epsilon(\eta_{\tau^*})\raa=
\ca{O}_{f^2}(\infty)^{\times}\cap\ca{O}_K(f\infty)^{\times}.
\end{align*}
and that 
\begin{align*}
\ca{O}_{f^2}(\infty)^{\times}\cap\ca{O}_K(f\infty)^{\times}=
\Gamma_{\mk{b}'}(\mk{g'})=\Gamma_{\mk{b}}(\mk{g}).
\end{align*}
In particular, we have
\begin{align}\label{model}
\nu(r,\tau)=[\Gamma_{\mk{b}'}(\mk{g}'):\Gamma_{\wt{r}\mk{b}'}(\mk{g}')]=h_r^{-1}\s\s\s\mbox{and}\s\s\s
e=1.
\end{align}
Using \eqref{model} in \eqref{rock_3} we find that
\begin{align}\label{rock_4}
\prod_{r=1}^{f} u_{D}(\wt{r}\mk{b}',\mk{g}')^{12\nu(r,\tau)^{-1}}=u_D(\mk{b},\mk{f})^{12}.
\end{align}
Finally, combining \eqref{rock_4} with \eqref{rock_5} and \eqref{rock_1} we find that
\begin{align*}
\prod_{r=1}^{f} u_{C}(r,\tau)=u_{DD}(\tau).
\end{align*}
This concludes the proof. \fin
\end{proof}

\section{Additional proofs}\label{appendix2}
\subsection{\bf Proof of Proposition \ref{hard_prop}} 
Let $(r,\tau),(r',\tau')\in(\ZZ/f\ZZ\times H^{\ca{O}}(\mk{N}))/\sim_f$ and
let $I_{\tau}=A\Lambda_{\tau}$, $I_{\tau'}=A'\Lambda_{\tau'}$ where
$Q_{\tau}(x,y)=Ax^2+Bxy+Cy^2$ and $Q_{\tau'}(x,y)=A'x^2+B'xy+C'y^2$. We have
$\End_K(\Lambda_{\tau})=\End_K(\Lambda_{\tau'})=\ca{O}$ where 
$\disc(\ca{O})=D=B^2-4AC=B'^2-4A'C'$. 

We first show that the map $\wt{\psi}$ is well defined, i.e., 
if $(r,\tau)\sim_f(r',\tau')$ then $\psi(r,\tau)\sim_f\psi(r',\tau')$. 
So assume that there exists a matrix 
$\gamma=\M{a}{b}{c}{d}\in\Gamma_0(fN)$ such that $\gamma\tau=\tau'$ and $d^{-1}r\equiv r'\pmod{f}$. 

We want to show that $[\wt{r}A\Lambda_{\tau},\wt{r}A\Lambda_{N\tau}]
\sim_f [\wt{r}'A'\Lambda_{\tau}',\wt{r}'A'\Lambda_{N\tau}']$. We have
\begin{align}\label{priere1}
\Lambda_{\tau}=(c\tau+d)\Lambda_{\tau'},
\end{align}
and
\begin{align}\label{priere2}
\M{a}{b}{c}{d}\M{\tau}{\tau^{\sigma}}{1}{1}=
\M{c\tau+d}{0}{0}{c\tau^{\sigma}+d}\M{\tau'}{(\tau')^{\sigma}}{1}{1}.
\end{align}
Taking the determinant of \eqref{priere2} we find
\begin{align}\label{girafe}
\frac{(\tau-\tau^{\sigma})}{(\tau'-\tau'^{\sigma})}=(c\tau+d)(c\tau^{\sigma}+d).
\end{align}
Since $\tau=\frac{-B+\sqrt{D}}{2A}$ and $\tau'=\frac{-B'+\sqrt{D}}{2A'}$ we find
$(\tau-\tau^{\sigma})=\frac{\sqrt{D}}{A}$ 
and $(\tau'-\tau'^{\sigma})=\frac{\sqrt{D}}{A'}$. Substituting in \eqref{girafe}
we find
\begin{align*}
\frac{A'}{A}(c\tau+d)^{-1}=(c\tau^{\sigma}+d).
\end{align*}
From \eqref{priere1} we may deduce that 
$\frac{A'}{A}(c\tau+d)^{-1}(A\Lambda_{\tau})=A'\Lambda_{\tau'}$ and thus
$\frac{\wt{r}'}{\wt{r}}(c\tau^{\sigma}+d)(\wt{r}A\Lambda_{\tau})=\wt{r}'A'\Lambda_{\tau'}$.
Since $c\equiv 0\pmod{f}$ and $\frac{\wt{r}'}{\wt{r}}d\equiv 1\pmod{f}$ we see
that 
$$
\frac{\wt{r}'}{\wt{r}}(c\tau^{\sigma}+d)\in(\wt{r}A\Lambda_{\tau})^{-1}f+1=\frac{1}{\wt{r}}\Lambda_{\tau^{\sigma}}f+1.
$$ 
Moreover, since $N|c$ we also have
\begin{align*}
\frac{\wt{r}'}{\wt{r}}(c\tau^{\sigma}+d)(\wt{r}A\Lambda_{N\tau})=\wt{r}'A'\Lambda_{N\tau'}.
\end{align*}
It thus follows that $\psi(r,\tau)\sim_f\psi(r',\tau')$.

Let us now show now that the map $\psi$ is injective. 
Assume that $\psi(r,\tau)\sim_f\psi(r',\tau')$, i.e., 
$$
[\wt{r}A\Lambda_{\tau},\wt{r}A\Lambda_{N\tau}]
\sim_f[\wt{r}'A'\Lambda_{\tau'},\wt{r}'A'\Lambda_{N\tau'}].
$$ 
Then
there exists a $\lambda\in (\wt{r}A\Lambda_{\tau})^{-1}f+1=\frac{1}{\wt{r}}\Lambda_{\tau^{\sigma}}f+1$ such that 
\begin{align}\label{fraise1}
\lambda\wt{r}A\Lambda_{\tau}=\wt{r}'A'\Lambda_{\tau'}\s\s\s\mbox{and}\s\s\s
\lambda\wt{r}A\Lambda_{N\tau}=\wt{r}'A'\Lambda_{N\tau'}.
\end{align}
We note that $A'$ is the smallest integer such that $A'\Lambda_{\tau'}\subseteq\ca{O}$ where
$\ca{O}=\ca{O}_{\tau}=\ca{O}_{\tau'}$. Write $\lambda=\frac{1}{\wt{r}}(c\tau^{\sigma}+d)$ with
$c,d\in\ZZ$, $c\equiv 0\pmod{f}$ and $d\equiv\frac{\wt{r}}{\wt{r}'}\pmod{f}$ and let $(c,d)=N$. We claim that $N=1$.
If not then
\begin{align*}
\frac{1}{N}(c\tau^{\sigma}+d)A\Lambda_{\tau}\subseteq\ca{O}.
\end{align*}  
Therefore we would have $\frac{A'}{N}\Lambda_{\tau'}\subseteq\ca{O}$ which contradicts the minimality
of $A'$. Since $(c,d)=1$ there exists $a,b\in\ZZ$
such that $ad-bc=1$. Note that $\gamma=\M{a}{b}{c}{d}\in\Gamma_0(f)$. Let $\tau''=\gamma\tau$. Then we have 
\begin{align}\label{poisson}
(c\tau+d)\Lambda_{\tau''}=\Lambda_{\tau}.
\end{align}
We let $Q_{\tau''}(x,y)=A''x^2+B''xy+C''y^2$. We have
\begin{align*}
\frac{A''}{A}=(c\tau^{\sigma}+d)(c\tau+d).
\end{align*}
Therefore from \eqref{poisson} we deduce that
\begin{align}\label{fraise2}
A''\Lambda_{\tau''}=(c\tau^{\sigma}+d)A\Lambda_{\tau}.
\end{align}
Combining \eqref{fraise1} with \eqref{fraise2} we get
\begin{align*}
A''\Lambda_{\tau''}=A'\Lambda_{\tau'}.
\end{align*}
From this we deduce that $A''=A'$ and $\tau''=(\tau'+n)$ for some $n\in\ZZ$. From this
it follows that
\begin{align*}
\M{1}{-n}{0}{1}\M{a}{b}{c}{d}\V{\tau}{1}=(c\tau+d)\V{\tau'}{1}.
\end{align*}
Since $\M{1}{-n}{0}{1}\M{a}{b}{c}{d}\in\Gamma_0(f)$. We thus have shown that
$(r,\tau)\sim_f (r',\tau')$ if and only if $\psi(r,\tau)\sim_f\psi(r',\tau')$.
From this it follows that the map $\psi$ is well defined and injective. It remains to 
show that $\psi$ is surjective.

Let $\mk{a}$ be an arbitrary integral invertible $\ca{O}_n$-ideal. For a vector
$\V{v_1}{v_2}$ we let 
$$
\left\laa\V{v_1}{v_2}\right\raa=\ZZ v_1+\ZZ v_2,
$$ 
be the $\ZZ$-lattice generated by $v_1$ and $v_2$. We can always find 
$\gamma=\M{a}{b}{c}{d}\in M_2(\ZZ)$ such that
\begin{align*}
\left\laa\M{a}{b}{c}{d}\V{n\omega_1}{1}\right\raa=\mk{a}.
\end{align*}
Let $(a,c)=m$ and $a'=a/m$, $c'=c/m$. We thus have $(a',c')=1$. 
Let $u,v\in\ZZ$ be such that $-ua'-vc'=1$. Then
\begin{align*}
\left\laa\M{A}{B}{0}{D}\V{n\omega_1}{1}\right\raa=\mk{a}.
\end{align*}
where 
\begin{align*}
\M{A}{B}{0}{D}=\M{u}{v}{c'}{-a'}\M{a}{b}{c}{d}.
\end{align*}
We thus have $\mk{a}=\ZZ(An\omega+B)+\ZZ D$. So $\mk{a}=D(\ZZ\frac{An\omega+B}{D}+\ZZ)$. 
Let $(A,B,D)=r$ and let $\frac{A}{r}=A'$, $\frac{B}{r}=B'$ and $\frac{C}{r}=C'$.
Without loss of generality we may assume that and $D',A'>0$. 
Now set $\tau=\frac{A'n\omega+B'}{D'}$. 
Note that $\tau>\tau^{\sigma}$ so that $(r,\tau)\in\ZZ/f\ZZ\times H^{\ca{O}_{\tau}}(\mk{N})$.
By construction we have $\wt{\psi}(r,\tau)=[\mk{a}]$. Since $\mk{a}$ was 
arbitrary it follows that $\wt{\psi}$ is surjective. \fin 

\subsection{\bf Proof of Proposition \ref{lilas}} 
Let $\mk{a},\mk{a}'\in I_{\ca{O}}(\mk{f}n)$ and
write $\mk{a}=rA_{\tau}\Lambda_{\tau}$
with $r\in\ZZ_{\geq 1}$ and $\tau\in K\bs\QQ$. By assumption we have
that $(rr'A_{\tau}A_{\tau'},fn)=1$ and that $\ca{O}_{\tau}=\ca{O}_{\tau'}=\ca{O}_m$. 
Since $(rA_{\tau},n)=1$ 
we have $\Theta(\mk{a})=\mk{a}\cap\ca{O}'=rA_{\tau}\Lambda_{n\tau}$. Note that $\End_K(\Lambda_{n\tau})=\ca{O}'$, so that 
indeed, the image of $\Theta$ lies in $I_{\ca{O}'}(\mk{f}')$. Let us show that $\Theta$ is multiplicative. Since
$\Theta(\mk{a}\mk{a}')\supseteq\Theta(\mk{a})\Theta(\mk{a}')$ the multiplicativity of $\Theta$ is equivalent to 
\begin{align}\label{tablis1}
[\ca{O}':\Theta(\mk{a})\Theta(\mk{a}')]=[\ca{O}':\Theta(\mk{a}\mk{a}')].
\end{align}
Since $\mk{a}$ is $\ca{O}$-invertible we have 
\begin{align}\label{tablis2}
[\ca{O}:\mk{a}\mk{a}']=[\ca{O}:\mk{a}][\ca{O}:\mk{a}'].
\end{align}
Now for an arbitrary ideal $\mk{b}\in I_{\ca{O}}(\mk{f}n)$ one has
\begin{align}\label{tablis3}
[\ca{O}:\mk{b}]=[\ca{O}':\Theta(\mk{b})].
\end{align}
Now \eqref{tablis1} follows from \eqref{tablis3} and \eqref{tablis2}. This shows $(i)$.
Let us show $(ii)$. Let us assume that $\mk{a}\sim_{fn}\mk{a}'$, i.e., 
$\lambda\mk{a}=\mk{a}'$ where $\lambda\in\mk{a}^{-1}fn+1$. Since $\mk{a}^{-1}=
\frac{1}{r}\Lambda_{\tau^{\sigma}}$ this implies that 
$\lambda\in\frac{fn}{r}\Lambda_{\tau^\sigma}+1$. Since $(rA_{\tau},fn)=1$ we can always
find a positive integer $b$ such that $b\lambda\in\ca{O}$ and $b\equiv 1\pmod{fn}$ so that
$\mk{a}'\sim_{fn}b\mk{a}'$. Therefore, without loss of generality we may assume that 
$\lambda\in\ca{O}\cap\left(\frac{fn}{r}\Lambda_{\tau^\sigma}+1\right)=(1+fn\ca{O})$. 
Since $\Theta(\lambda\ca{O})=\lambda\ca{O}'$ and $\Theta(\mk{a}')=
\Theta(\lambda\ca{O})\Theta(\mk{a})$ we find that
$\lambda rA_{\tau}\Lambda_{n\tau}=\Theta(\mk{a}')$. Since 
$$
\lambda\in 1+fn\ca{O}\subseteq\frac{f}{r}\Lambda_{n\tau}+1,
$$
we find that $\Theta(\mk{a})\sim_{f}\Theta(\mk{a}')$. We thus have proved the implication 
$\mk{a}\sim_f\mk{a}'\Longrightarrow\Theta(\mk{a})\sim_f\Theta(\mk{a}')$. Therefore the
map $\Theta$ gives rise to a  
map $\wt{\Theta}:C_{\ca{O}}(n\mk{f})\rightarrow C_{\ca{O}'}(\mk{f}')$. It remains to
show that the map $\wt{\Theta}$ is onto. So let $[\mk{b}]\in C_{\ca{O}'}(\mk{f}')$. Then
we can write $\mk{b}$ as $\mk{b}=rA_{\tau}\Lambda_{\tau}$ with $r,A_{\tau}\in\ZZ_{\geq 1}$,
$(rA_{\tau},f)=1$ and $\tau\in K\bs\QQ$ is such that $\ca{O}_{\tau}=\ca{O}'$. We claim that we can always find a matrix
$\gamma\in\Gamma_1(f)$ such that $\gamma\tau=\tau'$ and $(A_{\tau'},fn)=1$. 
Let $Q_{\tau'}(x,y)=A'x^2+B'xy+C'y^2$ so that $\tau'=\frac{-B'+mn\sqrt{d}_K}{A'}$. 
Set $\rho=\frac{-B'+m\sqrt{d}_K}{A'}$ and note that
$\ca{O}_{\rho}=\ca{O}_m$ and that $rA_{\tau'}\Lambda_{\rho}$ is integral. Since
$rA_{\tau}\Lambda_{\tau}\sim_f rA_{\tau'}\Lambda_{\tau'}$ and 
$\Theta\left(rA_{\tau'}\Lambda_{\rho}\right)=rA_{\tau'}\Lambda_{\tau'}$,
we see that $\Theta$ is onto. We leave the proof of the existence of the matrix $\gamma$ to the reader. \fin
\subsection{\bf Proof of Corollary \ref{tricky2}} First note that for $x\in\RR$ one has
\begin{align*}
\wt{B}_1(x)-B_1^*(x)=
\left\{
\begin{array}{rcc}
0 & \mbox{if} & 0<x<1 \\
-\frac{1}{2} & \mbox{if} & x\in\ZZ.
\end{array}
\right.
\end{align*}
Moreover, for $x\in\RR$ one has that
\begin{align}\label{tonton}
\wt{B}_1(-x)=-\wt{B}_1(x)
\end{align}
Let
\begin{align*}
y_1(h)=\frac{a}{\frac{c}{fd}}\left(h+\frac{j}{f}+\frac{v}{p^n}\right)
-\frac{fdu}{p^n},\s\s\s
\s\s\s y_2(h)=\frac{1}{\frac{c}{fd}}\left(h+\frac{j}{f}+\frac{v}{p^n}\right),\s\s\s\s
y_2'(h)=ly_2(h),
\end{align*}
be the arguments appearing the $\wt{B}_1$'s of \eqref{libellule2}.

Comparing \eqref{libellule2} with \eqref{tempou} we readily see that the two expressions
are the same except that in \eqref{libellule2} one evaluates $\wt{B}_1$ instead of $B_1^*$.
First note that if $f>1$ and $j\neq 0$ then for all $1\leq h\leq\frac{c}{fd}$, one has
$y_1(h),y_2(h),y_2'(h)\in\QQ\bs\ZZ$ and therefore $\xi(U_{u,v,n})=0$. So it is enough to prove \eqref{terreur} when $j=0$. 
Note that since $U_{u,v,n}$ is a ball contained in $\XX$ one has that $n\geq 1$ and $(u,v)\not\equiv(0,0)\pmod{p}$.
Note in particular that for a fixed value of $h$ one cannot have both $y_1(h)$ and $y_2(h)$ in $\ZZ$. We also note
that if $y_2(h)\in\ZZ$ then $y_2'(h)\in\ZZ$. Let
\begin{align*}
R^*(y_1(h),y_2(h),y_2'(h),u,v):=
B_1^*\left(y_1(h)\right)\left[\left(\frac{N}{d}\right)B_1^*(y_2(h))
-\left(\frac{N}{d'}\right)B_1^*(y_2'(h))\right],
\end{align*} 
and similarly define $\wt{R}(y_1(h),y_2(h),y_2'(h),u,v)$ by replacing $B_1^*$ by $\wt{B}_1$
in the expression above. In order to show that $\xi(U_{u,v,n})+\xi(U_{-u,-v,n})=0$ it is enough to 
show that 
\begin{align}\label{tile}
\sum_{\substack{1\leq h\leq\frac{c}{fd}\\ y_1(h)\in\ZZ\vee y_2(h)\in\ZZ\vee y_2'(h)\in\ZZ}}
[R^*(y_1(h),y_2(h),y_2'(h),u,v)+R^*(y_1(h),y_2(h),y_2'(h),-u,-v)],
\end{align}
is equal to zero and similarly if one replaces $R^*$ by $\wt{R}$. We will treat the following two cases: 
\begin{enumerate}
\item (1st case $v\equiv 0\pmod{p^n}$). Note that in this case for all $h$ one has $y_1(h)\notin\ZZ$. 
Note that the two following $5$-tuples can be paired:
\begin{align*}
(y_1(h),y_2(h),y_2'(h),u,0)\longleftrightarrow(-y_1(h),y_2(h),y_2'(h),-u,0).
\end{align*}
Therefore using \eqref{tonton} we see that \eqref{tile} equals zero.
\item (2nd case $v\not\equiv 0\pmod{p^n}$) In this case $y_2(h)\notin\ZZ$ and
$y_2'(h)\notin\ZZ$. Therefore we may assume that $y_1(h)\in\ZZ$. Note that the following two $5$-tuples 
can be paired:
\begin{align*}
(y_1(h),y_2(h),y_2'(h),u,v)\longleftrightarrow(-y_1(h),-y_2(h),-y_2'(h),-u,-v).
\end{align*}
Therefore using \eqref{tonton} one obtains that \eqref{tile} is equal to zero.
\end{enumerate}
A similar argument applies if one replaces $R^*$ by $\wt{R}$.
This concludes the proof of \eqref{terreur}. Finally, the fact that $\frac{1}{3}\xi(U_{u,v,n})$ is an integer 
comes from the observation that 
$$
4R^*(y_1(h),y_2(h),y_2'(h),u,v)\in\ZZ
$$
and similarly if one replaces $R^*$ by $\wt{R}$. \fin

\bibliographystyle{alpha}
\bibliography{biblio}

\begin{thebibliography}{Cha09d}

\bibitem[Cha]{Ch_T}
H.~Chapdelaine.
\newblock Elliptic units in ray class fields of real quadratic number fields,
  version with a few corrections and supplements.
\newblock available at
  \texttt{http://www.mat.ulaval.ca/fileadmin/Pages{\_}personnelles{\_}des{\_}p%
rofs/hchapd/thesis{\_}final.pdf}.

\bibitem[Cha07]{Ch}
H.~Chapdelaine.
\newblock {\em Elliptic units in ray class fields of real quadratic number
  fields}.
\newblock PhD thesis, McGill University, 2007.

\bibitem[Cha09a]{Ch3}
H.~Chapdelaine.
\newblock Computation of $p$-units in ray class fields or real quadratic number
  fields.
\newblock {\em {M}ath. {C}omp., 34 pages}, 78:2307--2345, 2009.

\bibitem[Cha09b]{erratum_1}
H.~Chapdelaine.
\newblock Erratum to {$p$}-units in ray class fields of real quadratic fields.
\newblock {\em submitted to Compositio Math., 5 pages}, 1:6, 2009.

\bibitem[Cha09c]{Ch4}
H.~Chapdelaine.
\newblock Functional equation for partial zeta functions twisted by additive
  characters.
\newblock {\em {A}cta {A}rith., 16 pages}, 136:213--228, 2009.

\bibitem[Cha09d]{Ch1}
H.~Chapdelaine.
\newblock $p$-units in ray class fields of real quadratic number fields.
\newblock {\em {C}ompositio {M}ath.}, 145:364--392, 2009.

\bibitem[Cha10]{Ch8}
H.~Chapdelaine.
\newblock Some arithmetic properties of partial zeta functions weighted by sign
  characters,.
\newblock {\em J. {N}umber {T}heory}, 130:803--814, 2010.

\bibitem[Coa77]{Co}
J.~Coates.
\newblock {\em On $p$-adic {L}-functions and {I}wasawa's theory}.
\newblock Academic Press, 1977.

\bibitem[Dar04]{Dar2}
H.~Darmon.
\newblock {\em Rational points on modular elliptic curves}.
\newblock AMS Publication, 2004.

\bibitem[Das07]{Das2}
S.~Dasgupta.
\newblock Computations of elliptic units for real quadratic number fields.
\newblock {\em Canadian Journal of Mathematics}, pages 553--574, 2007.

\bibitem[Das08]{Das3}
S.~Dasgupta.
\newblock Shintani zeta functions and {G}ross-{S}tark units for totally real
  fields.
\newblock {\em Duke Mathematical J.}, 143:225--279, 2008.

\bibitem[DD06]{Dar-Das}
H.~Darmon and S.~Dasgupta.
\newblock Elliptic units for real quadratic fields.
\newblock {\em Annals of Mathematics (2)}, 163:301--346, 2006.

\bibitem[DR80]{D-R}
P.~Deligne and K.A. Ribet.
\newblock Values of abelian {$L$}-functions at negative integers over totally
  real fields.
\newblock {\em Inventiones Math.}, 59:227--286, 1980.

\bibitem[Gro81]{Gr1}
B.~Gross.
\newblock $p$-adic {$L$}-series at s=0.
\newblock {\em J. Fac. Sci. Univ. Tokyo}, 28:979--994, 1981.

\bibitem[Gro88]{Gr2}
B.~Gross.
\newblock On the values of abelian {L}-functions at {$s=0$}.
\newblock {\em J. Fac. Sci. Univ. Tokyo}, 35:177--197, 1988.

\bibitem[Hal85]{Hal}
U.~Halbritter.
\newblock Some new reciprocity formulas for generalized {Dedekind} sums.
\newblock {\em Results Math.}, 8:21--46, 1985.

\bibitem[Kli62]{Kli}
H.~Klingen.
\newblock Uber die {W}erte der {D}edekindschen {Z}eta funktionen.
\newblock {\em Math. Ann.}, 145:265--272, 1962.

\bibitem[Neu99]{Neu}
J.~Neukirch.
\newblock {\em Algebraic number theory}.
\newblock Springer-Verlag Berlin Heidelberg, 1999.

\bibitem[Shi76]{Shin}
T.~Shintani.
\newblock On evaluation of zeta functions of totally real algebraic number
  fields at non positive integers.
\newblock {\em J. Fac. Sci. Univ. Tokyo Sect. 1A}, 23:393--417, 1976.

\bibitem[Sie68]{Sie2}
C.L. Siegel.
\newblock {Bernoullische Polynome und quadratische Zahlk\"{o}rper}.
\newblock {\em {Nach. Akad. Wiss. G\"{o}ttingen Math.-Phys. K1. II}}, pages
  7--38, 1968.

\bibitem[Sie69]{Sie3}
C.L. Siegel.
\newblock Berechnung von {Z}etafunktionen an ganzzahligen {S}tellen.
\newblock {\em {Nach. Akad. Wiss. G\"{o}ttingen Math-Phys. Kl. II}}, pages
  87--102, 1969.

\end{thebibliography}

\vspace{1cm}
\begin{small}
\noindent {\sc Hugo Chapdelaine, D\'epartement de math\'ematiques et de statistique, Universit\'e Laval,
Qu\'ebec, Canada G1K 7P4} \\
{\sf hugo.chapdelaine@mat.ulaval.ca}
\end{small}

\end{document}